\documentclass[oneside,english,letterpaper]{amsart}
\usepackage[T1]{fontenc}
\usepackage[utf8]{inputenc}
\usepackage{lmodern}
\usepackage[english]{babel}
\usepackage{amsmath,amsthm,amssymb,mathrsfs}
\usepackage{enumitem,xcolor,microtype,csquotes}
\providecommand{\IfDocumentMetadataT}[1]{}

\usepackage[backend=biber,style=numeric,sorting=nyt,
  doi=false,url=false,maxbibnames=10]{biblatex}
\AtEveryBibitem{\clearfield{note}}
\usepackage[hidelinks,pdfusetitle,bookmarksopen=true]{hyperref}
\hypersetup{pdftitle={Determinantal capacity and L-infinity estimates}}
\usepackage{xurl}
\numberwithin{equation}{section}

\theoremstyle{plain}
\newtheorem{thm}{Theorem}[section]
\newtheorem{lem}[thm]{Lemma}
\newtheorem{prop}[thm]{Proposition}
\newtheorem{cor}[thm]{Corollary}
\theoremstyle{definition}
\newtheorem{defn}[thm]{Definition}
\newtheorem{example}[thm]{Example}

\theoremstyle{remark}
\newtheorem{rem}[thm]{Remark}

\def\pdv{\partial}
\newcommand{\gar}{\ifmmode\mathrm{G\mathring{a}rding}\else G{\aa}rding\fi}
\renewcommand{\d}{\mathrm{d}}
\newcommand{\ddc}{\d\d^c}
\def\R{\mathbb R}
\def\N{\mathbb N}
\def\x{\mathbf{x}}

\def\v{\mathbf{v}}
\def\one{\mathbf{1}}
\def\D{\mathrm D}
\def\Newt{\operatorname{Newt}}
\def\supp{\operatorname{supp}}
\def\Capacity{\operatorname{Cap}}
\def\tr{\operatorname{tr}}
\def\GS{\mathrm G}
\def\pol{\Pi^\uparrow}

\title {Determinantal capacity and $L^\infty$ estimates}

\begin{document}

\author{Hao Fang}
\address{Department of Mathematics, University of Iowa, Iowa City, IA 52246,
USA}
\email{hao-fang@uiowa.edu}
\author{Biao Ma}
\address{School of Mathematical Sciences, East China Normal University, 500
Dongchuan Road, Shanghai, China.}
\email{bma@math.ecnu.edu.cn}
\thanks{H. F.'s work is partially supported by a Simons Foundation mathematics
collaboration grant. B. M. is partially supported
by NSFC grant (No. 12471052). J. Y. Wu is partially supported by NSFC grant (No. 12371207).
}

\author{Jinyang Wu}
\address{Institute of Mathematical Sciences (IMS), ShanghaiTech University, 393 Middle Huaxia Road, Shanghai, China}
\email{wujy3@shanghaitech.edu.cn}

\begin{abstract}
We introduce determinantal ellipticity, or det-ellipticity, a quantitative structural
condition for fully nonlinear elliptic operators on compact K\"ahler
manifolds that provides the link between
the $L^\infty$-estimates and algebraic/combinatorial properties of a large class of Hessian elliptic operators.

On the analytic side, we introduce $\mathcal D$-subsolutions extending the determinant sublevel condition of Sui--Sun.  Using the auxiliary comparison method of Guo--Phong--Tong
and pluripotential theory for complex Monge--Amp\`ere equations, we obtain relative $L^\infty$-estimates in big cohomology classes under a determinant-entropy bound for viscosity supersolutions and singular reference potentials. Det-ellipticity
provides a systematic construction of such subsolutions.

On the algebraic side, we study G{\aa}rding elliptic polynomial
operators of degree $d$. We characterize the determinant increment
bound with exponent $d/n$ by positivity of the determinantal capacity
of the top homogeneous part. This is equivalent to balanced-point
conditions for the Newton polytopes of rank-one scalarizations and
to slope semistability of an associated subspace polymatroid.
Together with the analytic hypotheses, these criteria yield relative
$L^\infty$ estimates for the corresponding fully nonlinear equations.
\end{abstract}
\maketitle

\section{Introduction}
\label{sec:introduction}
In this paper, we study a priori $L^\infty$ estimates, which play a fundamental role in the analysis of fully nonlinear complex Hessian equations.

Let $(X,\rho)$ be a compact K\"ahler
manifold of complex dimension $n$, let $\mathcal A$ be an admissible
subset of the bundle of real $(1,1)$-forms, and let $F$
be a smooth elliptic operator on $\mathcal A$. We consider  admissible
solutions of
\begin{equation}
\label{eq:intro-equation}
F_x(\omega+dd^c u)=\psi(x),
\qquad
\omega+dd^c u\in\mathcal A_x,
\qquad
\sup_Xu=0.
\end{equation}

Many classical geometric PDEs fall into this category. We concentrate on the
$L^\infty$ estimates. For the complex Monge--Amp\`ere   equation, Yau obtained
the required $L^\infty$ estimate by Moser iteration \cite{MR480350}, while
Ko{\l}odziej later established the sharp estimate for right-hand sides in
$L^p$, with $p>1$, using pluripotential theory \cite{MR1618325}.
B{\l}ocki developed a different approach based on the
Alexandrov--Bakelman--Pucci (ABP) maximum principle \cite{MR2156505}.
More recently, Guo, Phong, and Tong introduced an auxiliary
Monge--Amp\`ere method combining a pointwise comparison, exponential
integrability, entropy estimates, and a De Giorgi iteration
\cite{MR4593734}. This method was subsequently adapted by
Guo and Phong to general fully nonlinear equations
\cite{guo2024uniformlinftyestimatessubsolutions}.

Sui--Sun~\cite[condition~(3)]{MR4567566} formulated a determinant
sublevel condition for spectral equations, bounding the determinant
of positive increments in a prescribed sublevel set without
bounding their individual eigenvalues. This weakens the classical
$\mathcal C$-subsolution condition of Guan and
Sz\'ekelyhidi~\cite{MR3284698,MR4727578,MR3807322}.
Under this condition and their structural hypotheses, they proved
$L^\infty$ estimates for general spectral equations when the
auxiliary class is K\"ahler~\cite[Theorem~1.1]{MR4567566}.
Using smooth auxiliary Monge--Amp\`ere equations on K\"ahler
approximations, they further obtained relative $L^\infty$ estimates
for Hessian quotient equations with nef and big auxiliary
classes~\cite[Theorem~1.2]{MR4567566}.

In the analytic part of our work, we formulate determinant
subsolutions for general admissible pairs, without a spectral
assumption. Our main observation is that the auxiliary
Monge--Amp\`ere comparison method
\cite{MR4593734,guo2024uniformlinftyestimatessubsolutions}
can be combined directly with pluripotential theory:
we solve the auxiliary equations in the non-pluripolar sense
and apply local pluripotential--viscosity comparison.
This yields relative estimates in arbitrary big classes and
accommodates viscosity supersolutions and singular reference
potentials.

Let $\chi$ be a smooth closed real $(1,1)$-form and let
$\gamma\in C(X,\mathbb R)$. We call $(\omega,\chi,\gamma)$ a
\emph{determinantal subsolution}, or a $\mathcal D$-subsolution,
of \eqref{eq:intro-equation} if, for every $x\in X$ and
$P\in\mathcal Q_x^+$, the conditions
\begin{equation}
\label{eq:intro-D-condition}
\omega(x)-\chi(x)+P\in\mathcal A_x,
\qquad
F_x\bigl(\omega(x)-\chi(x)+P\bigr)\leq\psi(x)
\end{equation}
imply
\begin{equation}
\label{eq:intro-D-bound}
\det_{\rho(x)}P\leq e^{\gamma(x)}.
\end{equation}

Let $\chi$ be a smooth closed real $(1,1)$-form on $X$.
We impose only the cohomological positivity condition that
$[\chi]$ is big; neither nefness of $[\chi]$ nor pointwise
positivity of $\chi$ is required. Write
\begin{equation}
\label{eq:intro-volume}
V_\chi:=\operatorname{vol}([\chi])>0
\end{equation}
and let
\begin{equation}
\label{eq:intro-envelope}
W_\chi
:=
\left(
\sup\{w\in\operatorname{PSH}(X,\chi):w\leq0\}
\right)^*
\end{equation}
be its canonical envelope with minimal singularities.
For $q>0$, define the determinant entropy by
\begin{equation}
\label{eq:intro-D-entropy}
\operatorname{Ent}_{\mathcal D,q}(\chi,\gamma,\rho)
:=
\int_X
\left(1+(\gamma-\log V_\chi)_+\right)^q
\frac{e^\gamma}{V_\chi}\rho^n.
\end{equation}

\begin{thm}
\label{thm:intro-relative-Linfty}
Fix a K\"ahler form $\rho$ on $X$ and let $q>n$.
Let $(F,\mathcal A)$ be an admissible pair satisfying QSH,
and let $u\in C^\infty(X)$ be an admissible solution of
\eqref{eq:intro-equation}. Suppose that $(\omega,\chi,\gamma)$
is a $\mathcal D$-subsolution, where $\chi$ is a smooth closed
real $(1,1)$-form with $[\chi]$ big and
$\gamma\in C(X,\mathbb R)$.
If
\[
\operatorname{Ent}_{\mathcal D,q}(\chi,\gamma,\rho)\leq H,
\]
then
\[
\sup_X(W_\chi-u)\leq C,
\]
where
\[
C=C\left(
X,\rho,\chi,q,H,
b_{\mathcal A}
+\|\operatorname{tr}_\rho\omega\|_{L^\infty(X)}
\right).
\]
If $\chi\geq0$, then $W_\chi=0$ and
$\|u\|_{L^\infty(X)}\leq C$.
\end{thm}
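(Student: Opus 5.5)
We follow the auxiliary Monge--Amp\`ere comparison of Guo--Phong--Tong, with the smooth auxiliary equations replaced by non-pluripolar ones in the big class $[\chi]$. Write $\omega_u:=\omega+dd^cu$. For $s>0$ set $\Omega_s:=\{W_\chi-u>s\}$ and
\[
A_s:=\int_{\Omega_s}(W_\chi-u-s)\,\frac{e^{\gamma}}{V_\chi}\rho^n .
\]
By Boucksom--Eyssidieux--Guedj--Zeriahi, we solve in the non-pluripolar sense
\[
\langle(\chi+dd^c\phi_s)^n\rangle=\frac{V_\chi}{A_s}\,\tau_k(W_\chi-u-s)\,\frac{e^\gamma}{V_\chi}\rho^n,
\qquad \phi_s\in\mathcal E(X,\chi),\quad \sup_X\phi_s=0,
\]
where $\tau_k$ is a smooth positive approximation of $x\mapsto x_+$. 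The total mass is $V_\chi$ up to an error that vanishes as $k\to\infty$. The density is in $L^{p}$ for every $p>1$ because $\gamma$ is continuous and $u$ is smooth. Kołodziej-type estimates in big classes then show that $\phi_s$ has minimal singularities, $\phi_s\ge W_\chi-C_s$, and is continuous on the ample locus.

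\textbf{Comparison step.} Let $\epsilon=\frac{n}{n+1}$ and $\Lambda_s=\bigl(\frac{n+1}{n}\bigr)^{n/(n+1)}A_s^{1/(n+1)}$ as in GPT. We claim that on $\Omega_s$
\[
W_\chi-u-s\le \epsilon\,\Lambda_s(-\phi_s+\Lambda_s')^{n/(n+1)}
\]
for a suitable constant $\Lambda_s'$. We argue by contradiction at a maximum point $x_0$ of $\Psi:=W_\chi-u-s-\epsilon\Lambda_s(-\phi_s+\Lambda_s')^{n/(n+1)}$. This maximum is attained at a point of the ample locus, since there $W_\chi$ and $\phi_s$ are bounded relative to each other. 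Both $W_\chi$ and $\phi_s$ are only $\chi$-psh and not $C^2$, so the usual second-order argument is replaced by viscosity theory. The function $\phi_s$ is a viscosity subsolution of the local Monge--Amp\`ere equation with continuous right-hand side (Eyssidieux--Guedj--Zeriahi). Hence at $x_0$ we obtain a quadratic test function touching from above, and hence a positive Hermitian $P$ with
\[
\omega(x_0)-\chi(x_0)+P\le\omega_u(x_0)
\]
and
\[
\det_\rho P\ \ge\ c_n\,\frac{V_\chi}{A_s}\,(W_\chi-u-s)\,\frac{e^{\gamma}}{V_\chi}\cdot\Lambda_s^{n}(\dots)^{-n/(n+1)}
\]
in the standard GPT form. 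The QSH hypothesis (quasi-subharmonicity / monotonicity along $\mathcal Q^+$) together with $F(\omega_u)=\psi$ gives $\omega-\chi+P\in\mathcal A$ and $F(\omega-\chi+P)\le\psi$. Here the constant $b_{\mathcal A}+\|\tr_\rho\omega\|_\infty$ enters to absorb the $\chi$- versus $\omega$-shift. The $\mathcal D$-subsolution property then yields $\det_\rho P\le e^{\gamma(x_0)}$, and the exponentials cancel, contradicting the lower bound once the constants are chosen. This local pluripotential--viscosity comparison is the main obstacle: we must justify touching from above at points where $W_\chi$ may be singular, and keep all constants independent of $s$. The approach is to work on the ample locus and to use the exhaustion by the minimal-singularity property.

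\textbf{Integrability and iteration.} From the comparison, the $\alpha$-invariant (uniform Skoda) estimate for $\chi$-psh functions normalized with $\sup=0$ gives
\[
\int_{\Omega_s}\exp\!\Big(\beta\frac{(W_\chi-u-s)^{(n+1)/n}}{A_s^{1/n}}\Big)\rho^n\le C .
\]
We combine this with the Young/entropy inequality against the measure $e^\gamma\rho^n/V_\chi$, using the bound $\operatorname{Ent}_{\mathcal D,q}\le H$ with $q>n$. This yields the GPT recursion: for
\[
\phi(s):=\int_{\Omega_s}\frac{e^\gamma}{V_\chi}\rho^n ,
\]
we have $r\,\phi(s+r)\le C\,\phi(s)^{1+\delta_0}$ with $\delta_0>0$ determined by $q>n$. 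Uniform smallness of $\phi(s_0)$ follows from an $L^1$ bound on $W_\chi-u$, which comes from comparing $u$ with $\omega$-psh functions using $\tr_\rho\omega$. De Giorgi's lemma then gives $\phi(s)=0$ for $s\ge s_\infty(X,\rho,\chi,q,H,\dots)$. Since $e^\gamma>0$, this means $\sup_X(W_\chi-u)\le C$.

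\textbf{The case $\chi\ge0$.} If $\chi\ge0$, then $0\in\operatorname{PSH}(X,\chi)$, so $W_\chi=0$. This gives $-u\le C$, and with $\sup_X u=0$ we conclude $\|u\|_{L^\infty}\le C$.
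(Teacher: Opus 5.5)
Your outline (non-pluripolar auxiliary equation in $[\chi]$, GPT-type comparison, entropy, De~Giorgi) matches the paper's. However, the comparison step is not established and fails as formulated, and this is the step where bigness of $[\chi]$ actually matters.

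\textbf{The comparison step.} The function you compare with $u$ is $w=W_\chi-s-c(-\phi_s+\Lambda_s')^{r}$, $r=n/(n+1)$, and it need not be $\chi$-psh. With $\lambda=cr(-\phi_s+\Lambda_s')^{r-1}$ one only gets $\chi+dd^cw\geq(\chi+dd^cW_\chi)+\lambda(\chi+dd^c\phi_s)-\lambda\chi$. Absorbing $-\lambda\chi$ requires $\chi+dd^cW_\chi\geq\lambda\chi$. This holds when $\chi\geq0$ (so $W_\chi=0$), but not for a general big class.

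The missing idea is to put the envelope inside the power: $v=W_\chi-s-\varepsilon(W_\chi-\phi_{s,k}+\Lambda)^r=G(W_\chi,\phi_{s,k})$ with $G(x,y)=x-s-\varepsilon(x-y+\Lambda)^r$. Then $G$ is convex on $\{x\geq y\}$, $G_x+G_y=1$, and $0<G_y\leq1$ because $\varepsilon=r^{-r}A_{s,k}^{1/(n+1)}$ and $\Lambda=rA_{s,k}$. Hence $\chi+dd^cv\geq G_y(\chi+dd^c\phi_{s,k})$. Since neither potential is $C^2$, the paper proves this on $\Omega_\chi$ by regularizing $W_\chi+\theta\geq\phi_{s,k}+\theta$ ($dd^c\theta=\chi$ locally) with a common kernel (Lemma~\ref{lem:comparison-function-psh}).

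For the same reason your viscosity step fails. At a maximum of $\Psi$ you cannot isolate an upper test for $\phi_s$, because $W_\chi$ enters $\Psi$ and has no $C^2$ lower test. What you need is the Bedford--Taylor lower bound for the composite $v$. Then, since $u$ is smooth, $u$ plus a constant is itself an upper test for $v$ at the maximum. The pluripotential--viscosity equivalence then gives $\chi+dd^cu\geq0$ and $\det_\rho(\chi+dd^cu)\geq c_*e^\gamma$ there, contradicting the $\mathcal D$-bound. For smooth $u$ this is a legitimate shortcut past Lemma~\ref{lem:strict-factor-comparison}.

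Finally, nothing forces the maximum into $\Omega_\chi$. $W_\chi$ can be finite on $X\setminus\Omega_\chi$ (e.g.\ for nef and big classes), where no continuity is available. The paper replaces $v$ by $(1-\delta)v+\delta\zeta_0$, where $\chi+dd^c\zeta_0\geq\delta_0\rho$ and $\zeta_0\to-\infty$ along $X\setminus\Omega_\chi$. It pays the factor $(1-\delta)^n$ with the strict margin $c_*=(1-\delta)^n\bigl(1+m/(4\varepsilon_{s,k}K_{s,k})\bigr)>1$ produced by the gap $m=\sup_X(v-u)>0$ (Lemma~\ref{lem:comparison-function-below-U}).

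\textbf{The determinant step and QSH.} From $\omega-\chi+P\leq\omega_u$ you get neither $\omega-\chi+P\in\mathcal A$ nor $F(\omega-\chi+P)\leq\psi$. By (A1), $\mathcal A_x$ is upward closed, not downward closed, and QSH is only the trace bound $\tr_\rho A\geq-b_{\mathcal A}$. Instead take $P=\chi+dd^cu(x_0)$ itself: then $\omega-\chi+P=\omega_u(x_0)\in\mathcal A_{x_0}$ with $F=\psi$, and the $\mathcal D$-condition applies directly, as in the last assertion of Proposition~\ref{prop:determinant-reduction}. QSH is used only to get $\int_X(-u)\rho^n\leq C$, via $\Delta_\rho u\geq-b_{\mathcal A}-\|\tr_\rho\omega\|_{L^\infty}$ and the Green function (Lemma~\ref{lem:admissible-L1}). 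Note that $u$ is not $\omega$-psh, so comparison with $\omega$-psh functions is not available.

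\textbf{The energy bound is missing.} $\Lambda_s'$ must be of order $A_s$ to keep the coefficient of $\chi+dd^c\phi_s$ at most $1$. So Skoda only gives $\int\exp\bigl(\beta(W_\chi-u-s)_+^{(n+1)/n}A_s^{-1/n}\bigr)\rho^n\leq Ce^{CA_s}$, not $\leq C$. Your recursion constant therefore needs an a priori bound on $A_0=\int_X h\,d\mu$, where $h=(W_\chi-u)_+$ and $d\mu=e^\gamma\rho^n/V_\chi$. In GPT this bound comes from the solution being $\omega$-psh; here $u$ is only quasi-subharmonic.

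The paper proceeds in two steps. First it derives the tail bound $\mu\{h>s\}\leq C(H+L)/\log s$ from the $L^1(\rho^n)$ bound and the order-one entropy (Lemma~\ref{lem:fasterdecayphi}). Then it bounds $A_0$ by absorption, using the comparison in polynomial form, $\int_{X_s}(h-s)^{q/r}d\mu\leq CA_s^{q/n}\bigl(M_q+A_s^q\Phi(s)\bigr)$. Here $M_q$ bounds $\int_X(-\varphi)^q\,d\mu$ uniformly over $\operatorname{PSH}_0(X,\chi)$ (Lemma~\ref{lem:energygrowth-unified}). The energy bound also yields the starting smallness $\Phi(s_0)\leq E_0/s_0$, which an $L^1(\rho^n)$ bound alone does not give.

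\textbf{Normalization.} BEGZ requires total mass exactly $V_\chi$. So normalize the auxiliary equation by $A_{s,k}=\int_X\tau_k(W_\chi-u-s)\,d\mu$, not by $A_s$.
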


Here QSH denotes uniform quasi-subharmonicity
(Definition~\ref{def:QSH}); it supplies the normalized $L^1$
bound through Lemma~\ref{lem:admissible-L1}.
The ABP estimate in Appendix~\ref{ABP} also applies under the
$\mathcal D$-subsolution condition, but requires
$\chi\geq\delta\rho$ and depends on this positivity constant.
The relative estimate above requires no such lower bound.
Its viscosity and singular formulations are given in
Section~\ref{sec:relative-Linfty}.

Theorem \ref{thm:intro-relative-Linfty} recovers some of the smooth spectral results of
Sui--Sun~\cite{MR4567566}, including their relative estimate for
Hessian quotient equations in nef and big auxiliary classes.
Our results further apply to nonspectral admissible pairs, big non-nef
auxiliary classes, viscosity supersolutions, and singular
reference potentials.

Our second observation is the structural condition of
\emph{det-ellipticity}, which bridges the analytic and algebraic parts of
the paper.  An admissible pair $(F,\mathcal A)$ is det-elliptic if there
exist $\eta,\beta>0$ such that
$$
F_x(A+P)-F_x(A)\geq\eta\,\det_\rho(P)^\beta
$$
for $A\in\mathcal A_x$ and $P\in\mathcal Q_x^{++}$.  This pointwise
coercivity provides the determinant control underlying the
$\mathcal D$-subsolutions and hence the relative $L^\infty$ estimate.
Its precise definition and analytic consequences are given in Section~\ref{sec:admissible-equations}.

Motivated by the search for admissible equations to which
Theorem~\ref{thm:intro-relative-Linfty} applies, we introduce a broad class of operators that
contains many of the standard examples in complex geometric PDE.

The algebraic framework  of the paper is \emph{$\mathcal K$-G{\aa}rding polynomials},    a conic
extension of the theory of G{\aa}rding polynomials developed in \cite{Fang-MaGard} by the first and second-named authors.   The \gar{} polynomial theory studies
polynomials on a real vector space $W$  and is organized around a
distinguished positivity component that satisfies a positive ray test.
Motivated by elliptic PDE, we replace the positive orthant by an arbitrary
closed convex cone $\mathcal K\subset W$ and introduce
$\mathcal K$-G{\aa}rding polynomials, with the corresponding component and
derivative-nesting conditions imposed in directions of $\mathcal K$.
The original  \gar{} theory is recovered as a special case.  An affine scalarization
theorem and a top-part closure result provide the bridge back to the earlier G{\aa}rding-polynomial
theory, making its structural results available for the study of the conic
setting.

This conic formulation is naturally adapted to complex elliptic PDE.
Let $V$ be a complex vector space, let $\mathcal Q(V)$ denote the real
vector space of Hermitian forms on $V$, and take
$\mathcal K=\mathcal Q(V)^+$, the cone of semipositive forms.  We call a
polynomial
\[
G:\mathcal Q(V)\longrightarrow\mathbb R
\]
a \emph{G{\aa}rding elliptic operator} if it is
$\mathcal Q(V)^+$-G{\aa}rding.  
Such an operator has a distinguished positivity component
$\mathcal C_G$ and a first-derived set $\mathcal C_G^{(1)}$.
We use the open region
\[
\mathcal A_G:=\operatorname{int}\mathcal C_G^{(1)},
\qquad \mathcal C_G\subseteq\mathcal A_G,
\]
on which $(G,\mathcal A_G)$ satisfies the admissible-pair conditions;
see Proposition~\ref{prop:positive-branch-admissible}.
The analytic application also requires QSH on the chosen region.

The class of G{\aa}rding elliptic operators includes the complex
Monge--Amp\`ere, complex Hessian, and mixed-form operators. For polarized spectral polynomials, we also
prove admissibility, QSH, and a determinant increment bound directly
in Proposition~\ref{thm:spectral-lift-polarization}.  

Our third observation is that, for an operator of degree $d$, the
determinant increment bound with exponent $d/n$ is characterized algebraically  by
capacity and polymatroid semistability.

Let $G$ be a G{\aa}rding elliptic polynomial operator of degree $d\geq1$,
let $
H:=G^{\mathrm{top}}$
be its top homogeneous part, and set {$\mathcal A=\operatorname{int}\mathcal C_G^{(1)}$}.
Define the determinantal capacity of $H$ by
\begin{equation}
\label{eq:intro-det-capacity}
\operatorname{Cap}_{\det}(H)
:=
\inf_{P\in\mathcal Q(V)^{++}}
\frac{H(P)}{\det_\rho(P)^{d/n}}.
\end{equation}

For a $\rho$-unitary basis
$\mathbf v=(v_1,\ldots,v_n)$, define the rank-one scalarization
\[
g_{\mathbf v}(x)
:=
H\left(
\sum_{i=1}^n x_i\,v_i\otimes\overline{v_i}
\right).
\]
For a complex subspace $U\subseteq V$, let $\pi_U$ be the
$\rho$-orthogonal projection onto $U$ and set
\[
r_H(U):=\deg_t H(I+t\pi_U).
\]
We prove that $r_H$ is normalized, nondecreasing, and submodular.
It therefore defines an integral subspace polymatroid
\[
\mathcal P_H:=(V,r_H).
\]

The core algebraic result of the paper is summarized in the following theorem.
\begin{thm}[Determinantal capacity and slope semistability]
\label{thm:intro-capacity-characterization}
Let $G$ be a G{\aa}rding elliptic polynomial operator of degree $d\geq1$,
let $H=G^{\mathrm{top}}$, and set $\mathcal A=\operatorname{int}\mathcal C_G^{(1)}$.
Then the following are equivalent:
\begin{enumerate}[label=\textup{(\arabic*)},nosep]
\item\label{enu:intro-capacity-1}
$(G,\mathcal A)$ is det-elliptic with exponent $d/n$: there exists
$\eta>0$ such that
\[
G(A+P)-G(A)\geq\eta\det_\rho(P)^{d/n}
\]
for every $A\in\mathcal A$ and $P\in\mathcal Q(V)^+$.
\item\label{enu:intro-capacity-2} For some $\eta>0$,
\[
H(P)\geq\eta\det_\rho(P)^{d/n}
\]
for every $P\in\mathcal Q(V)^+$.
\item\label{enu:intro-capacity-3} $\operatorname{Cap}_{\det}(H)>0$.
\item\label{enu:intro-capacity-4} For every $\rho$-unitary basis $\mathbf v$,
$\frac dn\mathbf 1\in\operatorname{Newt}(g_{\mathbf v})$.
\item\label{enu:intro-capacity-5} The subspace polymatroid $\mathcal P_H$ is slope semistable;
equivalently,
$r_H(U)\geq\frac dn\dim_{\mathbb C}U$
for every complex subspace $U\subseteq V$.
\end{enumerate}
Moreover, the optimal constant in \ref{enu:intro-capacity-1} and \ref{enu:intro-capacity-2} is
$\eta_{\mathrm{opt}}=\operatorname{Cap}_{\det}(H)$.
\end{thm}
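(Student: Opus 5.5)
We prove the cycle of equivalences \ref{enu:intro-capacity-1}$\Rightarrow$\ref{enu:intro-capacity-2}$\Leftrightarrow$\ref{enu:intro-capacity-3}$\Rightarrow$\ref{enu:intro-capacity-1} together with \ref{enu:intro-capacity-3}$\Leftrightarrow$\ref{enu:intro-capacity-4}$\Leftrightarrow$\ref{enu:intro-capacity-5}.

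\textbf{Top part versus full operator.} For \ref{enu:intro-capacity-1}$\Rightarrow$\ref{enu:intro-capacity-2}, fix $A\in\mathcal A$ and $P\in\mathcal Q(V)^{++}$. Apply \ref{enu:intro-capacity-1} to $tP$ and divide by $t^d$. Since $G(A+tP)-G(A)=t^dH(P)+O(t^{d-1})$ and $\det_\rho(tP)^{d/n}=t^d\det_\rho(P)^{d/n}$, letting $t\to\infty$ gives $H(P)\geq\eta\det_\rho(P)^{d/n}$ on $\mathcal Q^{++}$. The inequality extends to $\mathcal Q^+$ by continuity. The same limit shows that $\eta_{\rm opt}$ in \ref{enu:intro-capacity-1} is at most $\operatorname{Cap}_{\det}(H)$.

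The equivalence \ref{enu:intro-capacity-2}$\Leftrightarrow$\ref{enu:intro-capacity-3} is the definition of the capacity, with the optimal constant equal to $\operatorname{Cap}_{\det}(H)$. On singular $P$ the right-hand side vanishes, and $H\geq0$ on $\mathcal Q^+\subseteq\mathcal C_G$ by the top-part closure result.

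For \ref{enu:intro-capacity-2}$\Rightarrow$\ref{enu:intro-capacity-1} with the same constant, fix $A\in\mathcal A$ and $P\in\mathcal Q^{++}$ and set $\phi(t)=G(A+tP)$. By the affine scalarization theorem, $t\mapsto G(A+tP)$ is a real-rooted (G{\aa}rding) polynomial in $t$ along directions of $\mathcal K$. Its leading coefficient is $H(P)$, and since $A\in\operatorname{int}\mathcal C^{(1)}_G$, all roots of $\phi'$ lie in $t<0$. Hence $\phi'(t)=dH(P)\prod_j(t+s_j)$ with $s_j>0$, so $\phi'(t)\geq dH(P)t^{d-1}$ for $t\geq0$. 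Integrating over $[0,1]$ gives
\[
G(A+P)-G(A)\geq H(P)\geq\operatorname{Cap}_{\det}(H)\det_\rho(P)^{d/n}.
\]
This yields \ref{enu:intro-capacity-1} and $\eta_{\rm opt}=\operatorname{Cap}_{\det}(H)$.

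\textbf{Newton polytopes.} For \ref{enu:intro-capacity-3}$\Leftrightarrow$\ref{enu:intro-capacity-4}, diagonalize $P=\sum e^{y_i}v_i\otimes\overline{v_i}$ in a $\rho$-unitary basis. Then $\operatorname{Cap}_{\det}(H)=\inf_{\mathbf v}\inf_y g_{\mathbf v}(e^y)e^{-\frac dn\langle\mathbf 1,y\rangle}$. Because $g_{\mathbf v}$ has nonnegative coefficients (Gårding on the orthant), this is a Gurvits-type capacity. The inner infimum is positive iff $\frac dn\mathbf 1\in\operatorname{Newt}(g_{\mathbf v})$, by convex duality for log-sum-exp: it is bounded below iff the point lies in the hull of the support. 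This gives \ref{enu:intro-capacity-3}$\Rightarrow$\ref{enu:intro-capacity-4}.

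The converse requires a bound on the inner infimum that is uniform in $\mathbf v$. This is the main obstacle, since the supports of $g_{\mathbf v}$ vary with the unitary frame and $U(n)$-compactness alone does not give uniformity: the inner capacity is only upper semicontinuous in $\mathbf v$. I would handle it by showing that the minimizing sequence can be taken along a fixed frame. For a minimizing sequence $P_k$ with normalized determinant, pass to a subsequence whose frames converge and whose eigenvalue exponents diverge along a flag. Then invoke the combinatorial characterization in the next step, which depends only on the flag.

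\textbf{Polymatroid.} For \ref{enu:intro-capacity-4}$\Leftrightarrow$\ref{enu:intro-capacity-5}, first show that for a basis adapted to a subspace $U=\operatorname{span}(v_i:i\in S)$, $\max\{\sum_{i\in S}\alpha_i:\alpha\in\operatorname{Newt}(g_{\mathbf v})\}=r_H(U)$. This holds because $g_{\mathbf v}(\mathbf 1+t\mathbf 1_S)=H(I+t\pi_U)$ has degree equal to that maximum, using coefficient nonnegativity to exclude cancellation. Then $\operatorname{Newt}(g_{\mathbf v})$ is the base polytope of the matroid-like rank function $S\mapsto r_H(\operatorname{span}v_S)$, which is submodular by the stated properties of $r_H$. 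Gårding/Lorentzian polynomials have M-convex support, so the Newton polytope equals the base polytope of its rank function.

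Membership $\frac dn\mathbf 1\in\operatorname{Newt}(g_{\mathbf v})$ is then equivalent to $\frac dn|S|\leq r(S)$ for all $S$ (the total being $d$). Ranging over all frames, this is exactly $r_H(U)\geq\frac dn\dim U$ for all $U$. The flag reduction in the previous paragraph uses the same rank inequalities, which closes \ref{enu:intro-capacity-5}$\Rightarrow$\ref{enu:intro-capacity-3} uniformly.
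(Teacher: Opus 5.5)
Your treatment of (4)$\Rightarrow$(3), which you correctly single out as the crux, is not actually carried out. The plan ``the minimizing sequence can be taken along a fixed frame \dots\ invoke the combinatorial characterization, which depends only on the flag'' never explains how rank inequalities on a limiting flag bound $g_{\mathbf v_k}(\mathbf x^k)$ when the frames $\mathbf v_k$ move. Your closing claim that (5)$\Rightarrow$(3) holds ``uniformly'' rests on the same sketch.

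Your stated reason for needing flags is also wrong. Every $g_{\mathbf v}$ has nonnegative coefficients (Corollary~\ref{cor:homogeneous-K-scalarization}), and these depend continuously on $\mathbf v$. So for $\mathbf v$ near $\mathbf v_0$ one has $[\mathbf x^\alpha]g_{\mathbf v}\geq(1-\epsilon)[\mathbf x^\alpha]g_{\mathbf v_0}$ for all $\alpha$ (trivially where the right side is $0$). Hence $g_{\mathbf v}\geq(1-\epsilon)g_{\mathbf v_0}$ on $\Gamma_n^+$, and $\operatorname{Cap}_\beta(g_{\mathbf v})\geq(1-\epsilon)\operatorname{Cap}_\beta(g_{\mathbf v_0})$. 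The capacity is therefore lower as well as upper semicontinuous in $\mathbf v$, and compactness of $U(n)$ does suffice. Applied to your minimizing sequence, this is exactly the ``fixed frame'' reduction, with no flag analysis.

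The paper gets uniformity differently. It notes that $g_{\mathbf v}^n$ is Lorentzian with $M$-convex support and $\operatorname{Newt}(g_{\mathbf v}^n)=n\operatorname{Newt}(g_{\mathbf v})$. Under (4), the integral point $d\mathbf 1=n\beta$ therefore lies in $\operatorname{supp}(g_{\mathbf v}^n)$, so $\operatorname{Cap}_\beta(g_{\mathbf v})^n\geq[\mathbf x^{d\mathbf 1}]g_{\mathbf v}^n$. That coefficient is continuous and positive on the compact group $U(n)$.

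Your proof of (2)$\Rightarrow$(1) rests on a false premise: G{\aa}rding polynomials need not be real-rooted along rays. For example, $G(A)=p(\operatorname{tr}_\rho A)$ with $p(t)=t^3-3t-3$ is G{\aa}rding elliptic by Lemma~\ref{lem:positive-linear-pullback}. Yet $t\mapsto G(A+t\rho)=p(\operatorname{tr}_\rho A+nt)$ has two non-real roots. With $p(t)=t^4/4-3t^2/2-3t$, which is also G{\aa}rding, even $\phi'$ has non-real roots. So the factorization $\phi'(t)=dH(P)\prod_j(t+s_j)$ with $s_j>0$ is unavailable.

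The inequality you want is still true, for a different reason. For $A\in\mathcal C_G^{(1)}$, all Taylor coefficients $\partial_P^kG(A)$ with $k\geq1$ are nonnegative: apply Theorem~\ref{thm:affine-scalarization-K-Garding} to $\partial_PG$ at $A\in\mathcal C_{\partial_PG}$. This gives $G(A+P)-G(A)\geq\partial_P^dG(A)/d!=H(P)$ at once, which is the paper's argument (Proposition~\ref{prop:positive-branch-admissible}).

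A smaller slip: $\mathcal Q^+\not\subseteq\mathcal C_G$ in general (e.g.\ $0\notin\mathcal C_{\det_\rho}$). Instead, $H\geq0$ on $\mathcal Q^+$ follows from $H>0$ on $\mathcal Q^{++}$ (Proposition~\ref{prop:conic-top-part}) and continuity.

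Your remaining steps agree with the paper: (1)$\Rightarrow$(2), the optimal constant, and (4)$\Leftrightarrow$(5) via adapted frames and $M$-convexity.
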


Theorem~\ref{thm:intro-capacity-characterization} characterizes
det-ellipticity with exponent $d/n$ entirely through the top homogeneous
part. Its polymatroid formulation is intrinsic and identifies the
subspace degeneracies that obstruct this bound. 

Our determinantal capacity is related to Gurvits' capacity theory
for stable and hyperbolic homogeneous polynomials \cite{MR2411443}.
Harvey--Lawson proved determinant majorization for classical
hyperbolic G{\aa}rding--Dirichlet operators satisfying the central-ray
hypothesis \cite{HarveyLawsonMajorization,MR4589710,MR4965192}.
For the exponent $d/n$, our criterion also applies to nonhomogeneous
operators and to a G{\aa}rding class extending beyond hyperbolicity.
Even within the hyperbolic class, positive determinant capacity does
not require the central-ray hypothesis or attainment of the capacity;
see Remark~\ref{rem:comparison-HL}. Pham has extended determinant
majorization to hyperbolic polynomials on Euclidean Jordan algebras
\cite{Pham26}.

We give several applications illustrating the two main features of the
relative estimate.  First, for the $J$-equation, a singular strict barrier
naturally defines a viscosity $\mathcal D$-subsolution.  The determinant
reduction of Section~\ref{sec:relative-Linfty} then gives a uniform lower bound for smooth
perturbations relative to the barrier.  We formulate this as
Proposition~\ref{prop:J-relative}; this is the form suited to the semistable $J$-equation in our companion work.  Example~\ref{ex:J-fourfold-null-curve} illustrates how a singular
barrier yields uniform local $L^\infty$ bounds away from a null
curve on a semistable fourfold, without symmetry assumptions
on the background metrics.

We also consider critical degenerations of the deformed Hermitian--Yang--Mills (dHYM) equation,
also known as the Leung--Yau--Zaslow (LYZ) equation,
and an inverse $\sigma_2$ equation.  In both cases
the natural background forms converge to semipositive big forms, while
a direct algebraic calculation gives a uniform determinant bound.
The relative estimate therefore remains uniform at the boundary of
the smooth solvability region, even though the corresponding
second-order estimates may degenerate.

Following \cite{fang2026liouvillerigidityrealcomplex}, this is the second paper in our program
on fully nonlinear geometric PDEs based on the G{\aa}rding polynomial
framework developed in \cite{Fang-MaGard,fang2026idealgaardingpolynomials}.
Subsequent work will address semistable $J$-equations and the
$C^2$ and higher-order estimates for related fully nonlinear equations.

We conclude with a brief outline of the paper.
Section~\ref{sec:admissible-equations} introduces admissible pairs, det-ellipticity, and
$\mathcal D$-subsolutions. Section~\ref{sec:relative-Linfty} proves the relative $L^\infty$-estimate in big
classes. Section~\ref{sec:K-Garding} develops $\mathcal K$-G{\aa}rding polynomials and
G{\aa}rding elliptic operators, while Section~\ref{sec:det-ellipticityanddet} establishes the
capacity, Newton-polytope, and slope-semistability characterizations.
Section~\ref{sec:examples-applications} gives applications to the $J$-equation, dHYM/LYZ equations, and an inverse $\sigma_2$ equation.

`\paragraph{\textbf{Declaration of AI assistance.}}
All ideas and mathematical arguments are those of the authors.
Multiple AI services were used to assist in auditing the arguments
and editing the manuscript. The authors take full responsibility
for the mathematical content and the final text.`

\section{Admissible equations and \texorpdfstring{$\mathcal D$}{D}-subsolutions}
\label{sec:admissible-equations}

Let $(X,\rho)$ be a connected compact K\"ahler manifold of complex
dimension $n$. Let $\mathcal Q=\Lambda_{\mathbb R}^{1,1}(X)$, and
denote its fiber at $x\in X$ by $\mathcal Q_x$. The metric $\rho$
identifies $\mathcal Q_x$ with the space of $n\times n$ Hermitian
matrices. We denote by $\mathcal Q_x^+$ and $\mathcal Q_x^{++}$ the
sets of positive semidefinite and positive definite forms,
respectively. For $A\in\mathcal Q_x$ with $\rho$-eigenvalues
$\lambda_1,\ldots,\lambda_n$, we write
$$
\det_\rho A=\frac{A^n}{\rho^n}=\prod_{j=1}^n\lambda_j,
\qquad
\det_{\rho,+}A=\prod_{j=1}^n\max\{\lambda_j,0\}.
$$
We write $\|A\|_\rho=\max_j|\lambda_j|$ for the operator norm with
respect to $\rho$. Thus $\det_{\rho,+}A=\det_\rho A$ when $A\geq0$
and vanishes otherwise.

\subsection{Admissible pairs, quasi-subharmonicity and det-ellipticity}
\label{subsec:det-ellipticity}

\begin{defn}
	\label{def:admissible-pair}
	Let $\mathcal A\subset\mathcal Q$ be open and
	$F\in C^\infty(\mathcal A,\mathbb R)$. We call $(F,\mathcal A)$ an
	\emph{admissible pair} if, for every $x\in X$, the following hold:
	\begin{enumerate}[label=\textnormal{(A\arabic*)}, ref=\textnormal{(A\arabic*)}]
		\item\label{admp:A1}
		$\mathcal A_x:=\mathcal A\cap\mathcal Q_x$ is nonempty and
		$\mathcal A_x+\mathcal Q_x^+\subset\mathcal A_x$.
		\item\label{admp:A2}
		$F_x(A+P)>F_x(A)$ for every $A\in\mathcal A_x$ and
		$P\in\mathcal Q_x^{++}$.
		\item\label{admp:A3}
		For every $A_0\in\partial\mathcal A_x$,
		$$
		\limsup_{\substack{A\in\mathcal A_x\\ A\to A_0}}F_x(A)\leq0.
		$$
	\end{enumerate}
\end{defn}

Given an admissible pair $(F,\mathcal A)$, a smooth closed real
$(1,1)$-form $\omega$, and $\psi\in C^\infty(X)$ with $\psi>0$, we
consider the equation
\begin{equation}
	\label{eq:main-equation}
	\left\{
	\begin{aligned}
		F_x\bigl(\omega_u(x)\bigr)&=\psi(x),\qquad x\in X,\\
		\omega_u(x)&=\omega(x)+\ddc u(x)\in\mathcal A_x,\\
		\sup_Xu&=0.
	\end{aligned}
	\right.
\end{equation}

Here and in what follows, we write $\omega_v=\omega+\ddc v$ for any
potential $v$.



We introduce two structural conditions on admissible pairs. The first is
standard, whereas the second is one of the main structural hypotheses of
the paper.

\begin{defn}[Uniform quasi-subharmonicity]
	\label{def:QSH}
	Let $(F,\mathcal A)$ be an admissible pair. We say that $\mathcal A$
	satisfies the \emph{uniform quasi-subharmonicity condition}, abbreviated
	\emph{(QSH)}, if there exists a constant $b_{\mathcal A}\geq0$ such that
	\begin{equation}
		\label{eq:QSH}
		\tr_\rho A\geq -b_{\mathcal A}
	\end{equation}
	for every $x\in X$ and every $A\in\mathcal A_x$.
\end{defn}

The uniform quasi-subharmonicity condition is used through the
following lemma.
\begin{lem}
	\label{lem:admissible-L1}
	Let $(F,\mathcal A)$ be an admissible pair satisfying QSH,
	and let $u\in C^2(X)$ solve \eqref{eq:main-equation}. Then
	$$
	\int_X(-u)\,\rho^n
	\leq C(X,\rho)
	\left(b_{\mathcal A}
	+\|\operatorname{tr}_\rho\omega\|_{L^\infty(X)}\right).
	$$
\end{lem}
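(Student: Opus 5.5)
Since $\omega_u(x)\in\mathcal A_x$ for every $x$, QSH gives the pointwise inequality
\[
\tr_\rho(\omega+\ddc u)\geq -b_{\mathcal A},
\]
that is,
\[
\Delta_\rho u\geq -\bigl(b_{\mathcal A}+\|\tr_\rho\omega\|_{L^\infty(X)}\bigr)=:-B,
\]
with the normalization in which $\tr_\rho\ddc u=\Delta_\rho u$ (up to a fixed dimensional constant, which we absorb). So $u$ is a $B$-quasi-subharmonic function with $\sup_Xu=0$. The plan is to combine this with the standard Green's function argument on the compact Kähler manifold $(X,\rho)$, as in the classical $L^1$ bound for $\omega$-psh functions.

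\textbf{Step 1.} Fix the Green function $G_\rho(x,y)$ of $\Delta_\rho$. It satisfies
\[
G_\rho\geq -K_\rho, \qquad \int_XG_\rho(x,y)\rho^n(y)=0,
\]
and for $C^2$ functions
\[
v(x)=\frac{1}{V}\int_Xv\,\rho^n-\int_XG_\rho(x,y)\Delta_\rho v(y)\rho^n(y),
\qquad V=\int_X\rho^n .
\]
With the sign convention in which $G_\rho$ is bounded below, this representation holds.

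\textbf{Step 2.} Choose $x_0$ with $u(x_0)=0$, which exists since $X$ is compact and $\sup_Xu=0$. Write
\[
-\int_XG_\rho\,\Delta_\rho u=-\int_X(G_\rho+K_\rho)\,\Delta_\rho u ,
\]
which is allowed because $\int_X\Delta_\rho u\,\rho^n=0$. Then $G_\rho+K_\rho\geq0$ and $\Delta_\rho u\geq -B$ give
\[
-\int(G_\rho+K_\rho)\Delta_\rho u\leq B\int(G_\rho+K_\rho)\rho^n=BK_\rho V .
\]
Evaluating the representation at $x_0$ yields
\[
0=u(x_0)\leq \frac1V\int_Xu\,\rho^n+BK_\rho V ,
\]
hence
\[
\int_X(-u)\rho^n\leq K_\rho V^2 B .
\]
This is the claimed bound with $C(X,\rho)=K_\rho V^2$, after adjusting constants.

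\textbf{Main obstacle.} The only delicate point is bookkeeping: the sign and normalization conventions between $\tr_\rho\ddc$ and $\Delta_\rho$, and the existence of the uniform lower bound $K_\rho$ for the Green function. Both are standard for a compact Riemannian manifold. Note that the argument uses only $u\in C^2$ and (A1)–QSH, and never uses the equation $F=\psi$ itself beyond admissibility.
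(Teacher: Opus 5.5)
Your proof is correct and follows the paper's argument. Applying QSH to $\omega_u(x)\in\mathcal A_x$ gives $\Delta_\rho u\geq -\bigl(b_{\mathcal A}+\|\operatorname{tr}_\rho\omega\|_{L^\infty(X)}\bigr)$, and the Green-function representation at a maximum point of $u$ then gives the $L^1$ bound; you spell out the Green-function step (shifting $G_\rho$ by its lower bound and using $\int_X\Delta_\rho u\,\rho^n=0$), which the paper leaves implicit.
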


\begin{proof}
	By \eqref{eq:QSH},
	$$
	\Delta_\rho u
	=\operatorname{tr}_\rho\omega_u-\operatorname{tr}_\rho\omega
	\geq-b_{\mathcal A}
	-\|\operatorname{tr}_\rho\omega\|_{L^\infty(X)}.
	$$
	The conclusion follows from $\sup_Xu=0$ and the Green-function
	estimate on $(X,\rho)$.
\end{proof}


\begin{defn}
	\label{def:det-elliptic}
	An admissible pair $(F,\mathcal A)$ is called
	\emph{determinantally elliptic}, or \emph{det-elliptic}, if there
	exist constants $\eta,\beta>0$ such that
	\begin{equation}
		\label{eq:det-elliptic}
		F_x(A+P)-F_x(A)\geq\eta\,\det_\rho(P)^\beta
	\end{equation}
	for every $x\in X$, $A\in\mathcal A_x$, and
	$P\in\mathcal Q_x^{++}$.
\end{defn}

Condition~\eqref{eq:det-elliptic} is a quantitative strengthening of
condition~\ref{admp:A2}. By continuity, it also holds for $P\geq0$.
Analytically, det-ellipticity provides a systematic mechanism for producing
$\mathcal D$-subsolutions and hence relative $L^\infty$ estimates.

\subsection{\texorpdfstring{$\mathcal C$}{C}-subsolutions and
	\texorpdfstring{$\mathcal D$}{D}-subsolutions}
\label{subsec:D-subsolutions}

We first recall the definition of viscosity solutions for \eqref{eq:main-equation}.



\begin{defn}
	\label{def:viscosity-solutions}
	Let $M\subset X$ be open. A real-valued function
	$\underline u\in\operatorname{USC}(M)$ is a \emph{viscosity
		subsolution} of \eqref{eq:main-equation} on $M$ if every
	$q\in C^2$ touching $\underline u$ from above at $x\in M$ satisfies
	$$
	\omega(x)+\ddc q(x)\in\mathcal A_x,
	\qquad F_x\bigl(\omega(x)+\ddc q(x)\bigr)\geq\psi(x).
	$$
	A real-valued function $\overline u\in\operatorname{LSC}(M)$ is a
	\emph{viscosity supersolution} if every $q\in C^2$ touching
	$\overline u$ from below at $x\in M$ satisfies
	$$
	\omega(x)+\ddc q(x)\in\mathcal A_x
	\quad\Longrightarrow\quad
	F_x\bigl(\omega(x)+\ddc q(x)\bigr)\leq\psi(x).
	$$
	A function $u\in C(M)$ is a \emph{viscosity solution} if it
	satisfies both conditions.
\end{defn}

When a potential is smooth and admissible, the viscosity notions of subsolution and supersolution agree with the pointwise ones.



With the basic solution concepts in place, we now introduce the
structural $\mathcal C$- and $\mathcal D$-subsolutions that drive the
main estimates. We use a shifted form of the $\mathcal C$-subsolution
condition.

\begin{defn}
	\label{def:C-subsolution}
	Let $\underline u\in C^\infty(X)$, let $\chi$ be a smooth closed real
	$(1,1)$-form, and let $R>0$. We call the triple
	$(\omega_{\underline u},\chi,R)$ a \emph{$\mathcal C$-subsolution}
	of \eqref{eq:main-equation} if, for every $x\in X$ and
	$P\in\mathcal Q_x^+$ satisfying
	$$
	\left\{
	\begin{aligned}
		\omega_{\underline u}(x)-\chi(x)+P&\in\mathcal A_x,\\
		F_x\bigl(\omega_{\underline u}(x)-\chi(x)+P\bigr)&\leq\psi(x),
	\end{aligned}
	\right.
	$$
	one has $\|P\|_\rho\leq R$.
	When $\chi$ and $R$ are fixed, we also call
	$\omega_{\underline u}$, or simply $\underline u$, a
	$\mathcal C$-subsolution.
\end{defn}

For $\chi=\delta\rho$ with $\delta>0$, this is the uniform shifted
form of Sz\'ekelyhidi's $\mathcal C$-subsolution condition
\cite{MR3807322}; see also
\cite[Definition~1.1]{guo2024uniformlinftyestimatessubsolutions}.
Related subsolution conditions were developed by Guan; see
\cite{MR3284698,MR4727578}.  In the spectral setting
$$
\mathcal A_x=\lambda_\rho^{-1}(\Gamma),
\qquad F_x=f\circ\lambda_\rho,
$$
where $\lambda_\rho$ is the eigenvalue map, the conclusion
$\|P\|_\rho\leq R$ is equivalent to $P\leq R\rho(x)$ and bounds
every eigenvalue of the positive increment. The definition is given without reference to eigenvalues.

The following definition extends the determinant sublevel condition
of Sui--Sun~\cite[condition~(3)]{MR4567566} from spectral equations
to general admissible pairs.

\begin{defn}
	\label{def:D-subsolution}
	Let $\underline u\in C^\infty(X)$, let $\chi$ be a smooth closed real
	$(1,1)$-form, and let $\gamma\in C(X,\mathbb R)$. We call the triple
	$(\omega_{\underline u},\chi,\gamma)$ a \emph{determinantal
		subsolution}, or a \emph{$\mathcal D$-subsolution}, of
	\eqref{eq:main-equation} if, for every $x\in X$ and
	$P\in\mathcal Q_x^+$ satisfying
	\begin{equation}
		\label{eq:D-subsolution-test}
		\left\{
		\begin{aligned}
			\omega_{\underline u}(x)-\chi(x)+P&\in\mathcal A_x,\\
			F_x\bigl(\omega_{\underline u}(x)-\chi(x)+P\bigr)&\leq\psi(x),
		\end{aligned}
		\right.
	\end{equation}
	one has $\det_{\rho(x)}P\leq e^{\gamma(x)}$.
	When $\chi$ and $\gamma$ are fixed, we also call
	$\omega_{\underline u}$, or simply $\underline u$, a
	$\mathcal D$-subsolution.
\end{defn}

Both definitions are pointwise and apply on open subsets of $X$. The two notions measure the same admissible sublevel set in
different ways. A $\mathcal C$-subsolution
$(\omega_{\underline u},\chi,R)$ gives a $\mathcal D$-subsolution
$(\omega_{\underline u},\chi,n\log R)$. The determinant bound
alone does not bound the spectrum.

As in Sz\'ekelyhidi's use of $\mathcal C$-subsolutions, a
$\mathcal D$-subsolution suffices for the ABP argument under a
uniform positivity assumption; see Appendix~\ref{ABP}. 

Det-ellipticity produces $\mathcal D$-subsolutions directly from
the operator, with an explicit density.
\begin{prop}
	\label{prop:det-elliptic-D-subsolution}
	Let $\underline u$ be smooth. Suppose that $(F,\mathcal A)$ is
	det-elliptic and that $\omega_{\underline u}-\chi\in\mathcal A$. Then every $P\geq0$ satisfying
	\eqref{eq:D-subsolution-test} obeys
	$$
	\det_\rho P\leq d(x),
	\qquad
	d(x):=\left[
	\eta^{-1}\bigl(\psi(x)-F_x(\omega_{\underline u}(x)-\chi(x))\bigr)_+
	\right]^{1/\beta}.
	$$
	Consequently, $(\omega_{\underline u},\chi,\gamma)$ is a
	$\mathcal D$-subsolution for every continuous $\gamma$ with
	$e^\gamma\geq d$.
\end{prop}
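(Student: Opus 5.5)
The argument is a direct pointwise application of the det-ellipticity inequality \eqref{eq:det-elliptic}, with base point $A:=\omega_{\underline u}(x)-\chi(x)$. This base point lies in $\mathcal A_x$ by hypothesis. Fix $x\in X$ and $P\in\mathcal Q_x^+$ satisfying \eqref{eq:D-subsolution-test}.

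First, I extend \eqref{eq:det-elliptic} from $P\in\mathcal Q_x^{++}$ to $P\geq0$, as noted after Definition~\ref{def:det-elliptic}. For $\epsilon>0$ the form $P+\epsilon\rho(x)$ is positive definite, and $A+P+\epsilon\rho(x)\in\mathcal A_x$ by \ref{admp:A1}. Hence
\[
F_x(A+P+\epsilon\rho)-F_x(A)\geq\eta\det_\rho(P+\epsilon\rho)^\beta .
\]
The point $A+P$ lies in $\mathcal A_x$ by the first line of \eqref{eq:D-subsolution-test}, which is open. $F_x$ is smooth, hence continuous, there, and $\det_\rho$ is continuous. Letting $\epsilon\to0$ therefore gives $F_x(A+P)-F_x(A)\geq\eta\det_\rho(P)^\beta$.

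Next, I combine this with the second line of \eqref{eq:D-subsolution-test}:
\[
\eta\det_\rho(P)^\beta\leq F_x(A+P)-F_x(A)\leq\psi(x)-F_x(A)\leq\bigl(\psi(x)-F_x(A)\bigr)_+ .
\]
Since $P\geq0$, we have $\det_\rho P\geq0$. Dividing by $\eta>0$ and taking the $1/\beta$ power, which is monotone on $[0,\infty)$, yields $\det_\rho P\leq d(x)$.

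Finally, if $\gamma$ is continuous with $e^\gamma\geq d$, then $\det_{\rho(x)}P\leq e^{\gamma(x)}$ for every admissible test $P$. This is exactly Definition~\ref{def:D-subsolution}, so $(\omega_{\underline u},\chi,\gamma)$ is a $\mathcal D$-subsolution.

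The only mildly delicate point is the limiting argument for semidefinite $P$. That argument needs $A+P\in\mathcal A_x$, which the test condition supplies, so that continuity of $F_x$ applies at the limit. No genuine obstacle is expected.
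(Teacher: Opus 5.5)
Your proposal is correct and follows the paper's proof: you set $A=\omega_{\underline u}(x)-\chi(x)$, apply \eqref{eq:det-elliptic}, and bound the increment by $\psi(x)-F_x(A)$ using the sublevel condition. Your $\epsilon\rho$ limiting step just writes out the paper's remark after Definition~\ref{def:det-elliptic} that the inequality extends to $P\geq0$ by continuity.
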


\begin{proof}
	Set $A=\omega_{\underline u}(x)-\chi(x)$. By \eqref{eq:det-elliptic},
	$$
	\eta\,\det_\rho(P)^\beta
	\leq F_x(A+P)-F_x(A)
	\leq\psi(x)-F_x(A).
	$$
	This proves the assertion.
\end{proof}


\subsection{Weak and singular \texorpdfstring{$\mathcal D$}{D}-subsolutions}
\label{subsec:weak-D-subsolutions}

We extend Definition~\ref{def:D-subsolution} to nonsmooth
potentials.

\begin{defn}
	\label{def:viscosity-D-subsolution}
	Let $M\subset X$ be open, let  $\chi$ be a smooth closed real $(1, 1)$-form, let $\underline u\in\operatorname{USC}(M)$
	be real-valued, and let $\gamma\in C(M,\mathbb R)$. We call the triple
	$(\omega_{\underline u},\chi,\gamma)$ a \emph{viscosity
		$\mathcal D$-subsolution} of \eqref{eq:main-equation} on $M$ if,
	for every $\varphi\in C^2$ touching $\underline u$ from above at
	$x\in M$ and every $P\in\mathcal Q_x^+$ satisfying
	\begin{equation}
		\label{eq:viscosity-D-test}
		\left\{
		\begin{aligned}
			\omega(x)+\ddc\varphi(x)-\chi(x)+P&\in\mathcal A_x,\\
			F_x\bigl(\omega(x)+\ddc\varphi(x)-\chi(x)+P\bigr)&\leq\psi(x),
		\end{aligned}
		\right.
	\end{equation}
	one has $\det_{\rho(x)}P\leq e^{\gamma(x)}$.
	When $\chi$ and $\gamma$ are fixed, we also call
	$\omega_{\underline u}$, or simply $\underline u$, a viscosity
	$\mathcal D$-subsolution.
\end{defn}

For nonsmooth potentials, $\mathcal D$-subsolution will always
mean viscosity $\mathcal D$-subsolution.  We suppress the fixed data also when discussing
regularizability and singularities.

\begin{prop}
	\label{prop:D-consistency-stability}
	For smooth $\underline u$, the pointwise and viscosity
	$\mathcal D$-subsolution definitions for
	$(\omega_{\underline u},\chi,\gamma)$ are equivalent.
	Moreover, suppose that $\underline u_\nu$ are continuous viscosity
	$\mathcal D$-subsolutions with data $(\chi,\gamma_\nu)$ and that
	$\underline u_\nu\to\underline u$ and $\gamma_\nu\to\gamma$
	locally uniformly. Then $\underline u$ is a viscosity
	$\mathcal D$-subsolution with data $(\chi,\gamma)$.
\end{prop}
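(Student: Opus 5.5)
The proof has two parts: consistency for smooth potentials, and stability under locally uniform limits. Both rest on one monotonicity fact. If $B\in\mathcal A_x$ satisfies $F_x(B)\le\psi(x)$ and $S\ge0$ is such that $B-S\in\mathcal A_x$, then $F_x(B-S)\le F_x(B)\le\psi(x)$. This follows from \ref{admp:A2} for $S>0$, and by continuity of $F_x$ on the open set $\mathcal A_x$ for $S\ge0$.

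\emph{Consistency.} Let $\underline u$ be smooth and satisfy the pointwise condition of Definition~\ref{def:D-subsolution}.
\begin{itemize}[nosep]
\item Let $\varphi\in C^2$ touch $\underline u$ from above at $x$. Then $\ddc\varphi(x)\ge\ddc\underline u(x)$; write $\ddc\varphi(x)=\ddc\underline u(x)+S$ with $S\ge0$.
\item If $P\ge0$ satisfies \eqref{eq:viscosity-D-test}, then $P':=P+S\ge0$ satisfies \eqref{eq:D-subsolution-test} with the same matrix $\omega_{\underline u}(x)-\chi(x)+P'$. Hence $\det_\rho(P+S)\le e^{\gamma(x)}$.
\item Since $\det_\rho$ is monotone on $\mathcal Q_x^+$, this gives $\det_\rho P\le e^{\gamma(x)}$.
\end{itemize}
Conversely, if the viscosity condition holds, take $\varphi=\underline u$, which touches itself from above at every point. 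Then \eqref{eq:viscosity-D-test} reduces to \eqref{eq:D-subsolution-test}. So the monotonicity fact is not even needed here.

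\emph{Stability.} I use the standard perturbation argument.
\begin{itemize}[nosep]
\item Let $\varphi\in C^2$ touch $\underline u$ from above at $x_0$. Replace $\varphi$ by $\varphi_\varepsilon=\varphi+\varepsilon|z-x_0|^4$ in a coordinate chart. The touching is then strict, and $\ddc\varphi_\varepsilon(x_0)=\ddc\varphi(x_0)$ because the quartic term has vanishing Hessian at $x_0$.
\item By locally uniform convergence, $\underline u_\nu-\varphi_\varepsilon$ attains a local max at points $x_\nu\to x_0$. Then $\varphi_\varepsilon+c_\nu$ touches $\underline u_\nu$ from above at $x_\nu$.
\item Fix $P\ge0$ satisfying \eqref{eq:viscosity-D-test} at $x_0$. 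To get an open margin, replace $P$ by $P_t=(1-t)P$ for small $t>0$, and use the shift $-tP-\tau\rho$.
\item If $F_x(\cdot)<\psi(x)$ strictly, openness of $\mathcal A$ (Definition~\ref{def:admissible-pair}) and continuity of $F$, $\psi$, $\ddc\varphi_\varepsilon$ and $\omega,\chi$ in $x$ give the following. Transporting $P$ to $x_\nu$ via local frames $P_\nu\to P$, the test conditions hold at $x_\nu$ for $\nu$ large. Hence $\det_{\rho(x_\nu)}P_\nu\le e^{\gamma_\nu(x_\nu)}$. Passing to the limit, using $\gamma_\nu\to\gamma$ uniformly and continuity of $\gamma$, gives $\det_\rho P\le e^{\gamma(x_0)}$.
\end{itemize}

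\emph{Main obstacle: the equality case $F=\psi$ in \eqref{eq:viscosity-D-test}.} Here the test conditions need not persist under perturbation. I would replace $P$ by $P-\tau\rho$ when $P>0$. By \ref{admp:A2}, this makes $F$ strictly smaller, provided $\omega+\ddc\varphi-\chi+P-\tau\rho$ stays in the open set $\mathcal A_x$, which holds for small $\tau$. Applying the strict case gives $\det_\rho(P-\tau\rho)\le e^{\gamma(x_0)}$, and letting $\tau\to0$ recovers $\det_\rho P\le e^{\gamma(x_0)}$.

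If $P$ is only semidefinite, $\det_\rho P=0$ and the bound $\det_\rho P\le e^{\gamma(x_0)}$ is trivial. So strictness is only needed for $P>0$. The real care is in making these perturbations uniform in $\nu$ while transporting $P$ between fibers. I would handle this with a continuous local unitary frame, or simply by noting that the constraints are open conditions in the finite-dimensional data $(x,P)$.
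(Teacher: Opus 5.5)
Your proof is correct and follows essentially the same route as the paper. For consistency, you test $\underline u$ against itself, and conversely apply the pointwise condition to $P+(\ddc\varphi-\ddc\underline u)(x)$ and use monotonicity of $\det_\rho$. For stability, you add a quartic term to make the contact strict, and replace $P>0$ by $P-\tau\rho$, using openness of $\mathcal A$ and (A2) so that the sublevel test becomes strict and persists at the approximating contact points $x_\nu$. The only cosmetic difference is that the paper argues by contradiction, where a violating $P$ is automatically positive definite, whereas you argue directly and dismiss semidefinite $P$ separately. Your intermediate $(1-t)P$ step is unnecessary.
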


\begin{proof}
	The first assertion is elementary: testing a smooth potential with
	itself gives one implication, and the converse follows by applying
	the pointwise condition to $P+H$, where
	$H=\ddc\varphi(x)-\ddc\underline u(x)\ge0$, together with the
	monotonicity of $\det_\rho$.
	
	For stability, suppose the bound fails for a test $\varphi$ touching
	$\underline u$ from above at $x_0$: for some $P\geq0$ satisfying
	\eqref{eq:viscosity-D-test},
	$\det_{\rho(x_0)}P>e^{\gamma(x_0)}$. Since $\gamma(x_0)\in\mathbb R$,
	$e^{\gamma(x_0)}>0$, hence $P>0$. For sufficiently small
	$\varepsilon>0$,
	$$
	P-\varepsilon\rho(x_0)>0,\qquad
	\det_{\rho(x_0)}\bigl(P-\varepsilon\rho(x_0)\bigr)>e^{\gamma(x_0)}.
	$$
	By openness of $\mathcal A$, the form
	$\omega(x_0)+\ddc\varphi(x_0)-\chi(x_0)+P-\varepsilon\rho(x_0)$
	remains admissible, and condition~\ref{admp:A2} makes the inequality
	strict: $F_{x_0}(\,\cdot\,)<\psi(x_0)$. Add a positive quartic term
	to $\varphi$ in local coordinates to make the contact strict
	without changing its Hessian, extend $P$ smoothly nearby, and use
	local uniform convergence to obtain upper contacts of
	$\underline u_\nu$ at points $x_\nu\to x_0$. For large $\nu$ the
	strict admissibility and the gap persist at $x_\nu$, hence
	$$
	\det_{\rho(x_\nu)}\bigl(P(x_\nu)-\varepsilon\rho(x_\nu)\bigr)
	\leq e^{\gamma_\nu(x_\nu)}.
	$$
	Letting $\nu\to\infty$ contradicts the strict determinant gap,
	proving stability.
\end{proof}

Motivated by Proposition~\ref{prop:D-consistency-stability}, we
introduce the following definition.

\begin{defn}
	\label{def:regularizable-D-subsolution}
	Let $\underline u$ be a continuous viscosity $\mathcal D$-subsolution
	on $M$, with $\chi$ and $\gamma$ fixed. We call $\underline u$
	\emph{locally regularizable} if every point of $M$ has a neighborhood
	$V\Subset M$ on which there are smooth functions $\underline u_\nu$
	and $\gamma_\nu$ such that
	$$
	\underline u_\nu\longrightarrow\underline u,
	\qquad \gamma_\nu\longrightarrow\gamma
	\quad\hbox{locally uniformly on }V,
	$$
	and $(\omega_{\underline u_\nu},\chi,\gamma_\nu)$ is a smooth
	$\mathcal D$-subsolution on $V$, with the background $\omega$ and
	the equation unchanged.
	
	If $M=X\setminus Z$ for a closed set $Z$ and
	$\underline u(x)\to-\infty$ as $x\to Z$, we call $\underline u$ a
	\emph{singular $\mathcal D$-subsolution}. It is singular locally
	regularizable if it also has the preceding approximation property
	on $X\setminus Z$.
\end{defn}

\section{Relative \texorpdfstring{$L^\infty$}{L-infinity} estimates}
\label{sec:relative-Linfty}

\subsection{Determinant reduction and the main estimate}
\label{subsec:viscosity-relative-Linfty}

We retain the fixed equation and notation of
Section~\ref{sec:admissible-equations}. The scalar inequality used
in the proof is the following.

\begin{defn}
	\label{def:viscosity-determinant-bound}
	Let $M\subset X$ be open, let $\chi$ be a smooth closed real
	$(1,1)$-form, and let $\gamma\in C(M,\mathbb R\cup\{\pm\infty\})$,
	continuous in the extended sense, with the conventions
	$e^{-\infty}=0$ and $e^{+\infty}=+\infty$. We say that a
	real-valued function $U\in\operatorname{LSC}(M)$ satisfies the
	\emph{$(\chi,\gamma)$-determinant bound in the viscosity sense}
	if every $q\in C^2$ touching $U$ from below at $x\in M$ satisfies
	\begin{equation}
		\label{eq:viscosity-determinant-bound}
		\det_{\rho(x),+}\bigl(\chi(x)+\ddc q(x)\bigr)\leq e^{\gamma(x)}.
	\end{equation}
	At points with $\gamma=-\infty$ this reads
	$\det_{\rho(x),+}(\chi(x)+\ddc q(x))=0$; at points with
	$\gamma=+\infty$ the inequality is void.
\end{defn}

When $\chi$ and $\gamma$ are fixed, we call \eqref{eq:viscosity-determinant-bound} the
\emph{viscosity determinant bound}. Note that only lower tests with $\chi+\ddc q\geq0$
impose a condition.

\begin{prop}
	\label{prop:determinant-reduction}
	Let $M\subset X$ be open, fix $\chi$ as above and a finite $\gamma\in C(M,\mathbb R)$, and let
	$\overline u$ be a viscosity supersolution of
	\eqref{eq:main-equation} on $M$. If $\underline u$ is a locally
	regularizable $\mathcal D$-subsolution, then
	$U=\overline u-\underline u$ satisfies the viscosity determinant
	bound on $M$.
	
	If $\overline u\in C^2(M)$ is an admissible supersolution, the same
	conclusion holds for every continuous viscosity
	$\mathcal D$-subsolution $\underline u$, without assuming local
	regularizability.
\end{prop}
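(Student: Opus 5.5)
We first treat the second assertion, where $\overline u\in C^2(M)$ is an admissible supersolution and $\underline u$ is a continuous viscosity $\mathcal D$-subsolution. Let $q\in C^2$ touch $U=\overline u-\underline u$ from below at $x$, and assume $P:=\chi(x)+\ddc q(x)\geq0$; otherwise $\det_{\rho,+}$ vanishes and there is nothing to prove. Then $\varphi:=\overline u-q$ is $C^2$ and touches $\underline u$ from above at $x$, because $\underline u\leq\overline u-q$ near $x$ with equality at $x$. Thus $\varphi$ is an admissible test function in Definition~\ref{def:viscosity-D-subsolution}, and
\[
\omega(x)+\ddc\varphi(x)-\chi(x)+P=\omega(x)+\ddc\overline u(x)=\omega_{\overline u}(x).
\]
This form lies in $\mathcal A_x$ because $\overline u$ is admissible. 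Moreover, $F_x(\omega_{\overline u}(x))\leq\psi(x)$ because $\overline u$ is a pointwise supersolution. Hence \eqref{eq:viscosity-D-test} holds, so $\det_{\rho(x)}P\leq e^{\gamma(x)}$, which is exactly \eqref{eq:viscosity-determinant-bound}.

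For the first assertion, $\overline u$ is only LSC, so the test function $q$ cannot be transferred directly. Regularization of the subsolution is used instead. The statement is local, so fix $x_0$ and a neighborhood $V$ with smooth $\mathcal D$-subsolutions $\underline u_\nu\to\underline u$ and $\gamma_\nu\to\gamma$ locally uniformly.

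The plan is to prove the bound for $U_\nu=\overline u-\underline u_\nu$ and then pass to the limit. Viscosity determinant bounds are stable under locally uniform perturbations and locally uniform convergence of $\gamma_\nu$, by the same perturbation argument as in Proposition~\ref{prop:D-consistency-stability}. The steps of that argument are:
\begin{itemize}
\item make the contact strict with a quartic term;
\item choose nearby contact points $x_\nu\to x_0$ for $U_\nu$;
\item use continuity of $\chi$, $\rho$, $\det_{\rho,+}$ and $\gamma_\nu\to\gamma$.
\end{itemize}
A strict failure $\det_{\rho,+}(\chi+\ddc q)(x_0)>e^{\gamma(x_0)}$ then yields a failure for $U_\nu$ at $x_\nu$ for large $\nu$.

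For fixed smooth $\underline u_\nu$, let $q$ touch $U_\nu$ from below at $x$ with $P=\chi(x)+\ddc q(x)\geq0$. Then $\underline u_\nu+q$ is $C^2$ and touches $\overline u$ from below at $x$. If
\[
A:=\omega(x)+\ddc(\underline u_\nu+q)(x)=\omega_{\underline u_\nu}(x)-\chi(x)+P
\]
lies in $\mathcal A_x$, the supersolution property gives $F_x(A)\leq\psi(x)$. The pointwise $\mathcal D$-subsolution property of $\underline u_\nu$ then gives $\det P\leq e^{\gamma_\nu(x)}$.

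The main obstacle is the case $A\notin\mathcal A_x$. There the supersolution definition imposes nothing, but the $\mathcal D$-subsolution test also does not apply, so a bound on $\det P$ is still needed. To handle it, I would perturb $P$ upward: replace $q$ by $q+\tfrac{t}{2}|z-x|^2$ in local coordinates, or simply enlarge $P$ to $P+t\rho$ for large $t$ through the test function. By \ref{admp:A1}, $A+t\rho$ eventually lies in $\mathcal A_x$. Let $t_*\geq0$ be the entry time. Near $t_*$ the operator $F_x$ is $\leq$ about $0<\psi(x)$ by \ref{admp:A3}, so the supersolution inequality holds for $t$ slightly above $t_*$. The $\mathcal D$-subsolution property then bounds $\det(P+t\rho)\geq\det P$, and monotonicity of $\det_\rho$ on $\mathcal Q^+$ closes the argument. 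The modified test function still touches from below, because adding a function vanishing to second order at $x$ with positive Hessian keeps $\underline u_\nu+q+\tfrac t2|z-x|^2\geq$ the original near $x$. I expect this boundary-of-$\mathcal A$ case, with the correct use of \ref{admp:A3} and the sign $\psi>0$, to be the delicate point.
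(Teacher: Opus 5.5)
Your proof is correct and follows essentially the paper's argument. The second assertion is verbatim the paper's. For the first, you run the same two-case pointwise argument at a lower contact point of $\overline u-\underline u_\nu$, using \ref{admp:A3} and $\psi>0$ to handle $A\notin\mathcal A_x$ through the increment $P+t\rho$. The only difference is packaging: you treat $\nu\to\infty$ as a separate stability step, whereas the paper perturbs to $q-\varepsilon|z-x_0|^2$ and works directly at minima $x_\nu\to x_0$.

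One correction: your closing claim that $\underline u_\nu+q+\tfrac t2|z-x|^2$ still touches $\overline u$ from below is false, since adding a nonnegative function can push it above $\overline u$. It is also unnecessary, because in that case the supersolution property is never used. Condition \ref{admp:A3} gives $F_x(A+t\rho)\le\psi(x)$ directly, and the pointwise $\mathcal D$-test for the smooth $\underline u_\nu$ involves only the form $\omega_{\underline u_\nu}(x)-\chi(x)+P+t\rho=A+t\rho$.
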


\begin{proof}
	Let $q$ touch $\overline u-\underline u$ from below at $x_0$.
	We need only consider $P:=\chi(x_0)+\ddc q(x_0)>0$. In local coordinates, set
	$q_\varepsilon(z)=q(z)-\varepsilon|z-x_0|^2$. For the paired local
	regularizations $(\underline u_\nu,\gamma_\nu)$ supplied by
	Definition~\ref{def:regularizable-D-subsolution}, the functions
	$\overline u-\underline u_\nu-q_\varepsilon$ have local minima at points
	$x_\nu\to x_0$. Set
	$$
	P_\nu:=\chi(x_\nu)+\ddc q_\varepsilon(x_\nu),
	\qquad
	A_\nu:=\omega(x_\nu)+\ddc(\underline u_\nu+q_\varepsilon)(x_\nu).
	$$
	For large $\nu$, we have $P_\nu>0$ and
	$A_\nu=\omega_{\underline u_\nu}(x_\nu)-\chi(x_\nu)+P_\nu$.
	
	If $A_\nu\in\mathcal A_{x_\nu}$, the supersolution inequality and
	the defining property of $\underline u_\nu$ give
	$\det_{\rho(x_\nu)}P_\nu\leq e^{\gamma_\nu(x_\nu)}$.
	If $A_\nu\notin\mathcal A_{x_\nu}$, put
	$$
	t_\nu:=\inf\{t\geq0:A_\nu+t\rho(x_\nu)\in\mathcal A_{x_\nu}\}.
	$$
	The set is nonempty: any fixed element of the nonempty fiber
	$\mathcal A_{x_\nu}$ is dominated by $A_\nu+t\rho(x_\nu)$ for
	large $t$, and condition~\ref{admp:A1} applies. It is an upper
	interval, and its endpoint lies on $\partial\mathcal A_{x_\nu}$.
	By condition~\ref{admp:A3} and $\psi(x_\nu)>0$, there is
	$t>t_\nu$ such that
	$$
	A_\nu+t\rho(x_\nu)\in\mathcal A_{x_\nu},
	\qquad
	F_{x_\nu}\bigl(A_\nu+t\rho(x_\nu)\bigr)\leq\psi(x_\nu).
	$$
	Applying the smooth $\mathcal D$-subsolution property with the
	positive increment $P_\nu+t\rho(x_\nu)$ yields
	$$
	\det_{\rho(x_\nu)}P_\nu
	\leq\det_{\rho(x_\nu)}\bigl(P_\nu+t\rho(x_\nu)\bigr)
	\leq e^{\gamma_\nu(x_\nu)}.
	$$
	Letting $\nu\to\infty$ and then $\varepsilon\downarrow0$, we obtain
	$$
	\det_{\rho(x_0)}P\leq e^{\gamma(x_0)}.
	$$
	
	For the last assertion, $\overline u-q$ is itself a $C^2$ upper
	test for $\underline u$ at $x_0$. At that point,
	$$
	\omega+\ddc(\overline u-q)-\chi+P
	=\omega_{\overline u}\in\mathcal A,
	\qquad F(\omega_{\overline u})\leq\psi.
	$$
	Definition~\ref{def:viscosity-D-subsolution} gives
	\eqref{eq:viscosity-determinant-bound} directly.
\end{proof}

\begin{rem}
	Only conditions~\ref{admp:A1} and~\ref{admp:A3} enter this
	reduction; the strict monotonicity condition~\ref{admp:A2} is
	not used.
\end{rem}

We now fix the notation used in the rest of the section. Let
$\chi$ be a smooth closed real $(1,1)$-form. When $[\chi]$ is big,
its volume $V_\chi>0$, its canonical envelope $W_\chi$, and the
determinant entropy $\operatorname{Ent}_{\mathcal D,q}(\chi,\gamma,\rho)$
are those defined in \eqref{eq:intro-volume},
\eqref{eq:intro-envelope}, and \eqref{eq:intro-D-entropy},
respectively. We write $\operatorname{PSH}_0(X,\chi)$ for the
elements of $\operatorname{PSH}(X,\chi)$ with supremum zero.

\begin{thm}
	\label{thm:determinant-relative-Linfty}
	Let $(X, \rho)$ be a compact K\"{a}hler manifold, let
	$\chi$ be a smooth closed real $(1,1)$-form with $[\chi]$ big,
	and let $Z\subset X$ be closed and proper. Suppose that
	$\gamma\in C(X\setminus Z,[-\infty,+\infty))$ in the extended sense,
	that $e^\gamma>0$ almost everywhere on $X\setminus Z$, and that
	its zero extension belongs to $L^{p_0}(X,\rho^n)$ for some $p_0>1$.
	Let $U\in C(X,\mathbb R\cup\{+\infty\})$ satisfy
	$\{U=+\infty\}=Z$ and the viscosity determinant bound
	\eqref{eq:viscosity-determinant-bound} on $X\setminus Z$.
	Assume that, for some $M>0$,
	\begin{equation}
		\label{eq:thm-L1-hypothesis}
		\int_X(W_\chi-U)_+\,\rho^n\leq M,
	\end{equation}
	and that one of the following cases holds:
	\begin{enumerate}[label=\textnormal{(\alph*)}]
		\item\label{enu:thm-exp}
		\emph{Exponential case.} For some $\alpha>0$ and $q>n$,
		\[
		\operatorname{Ent}_{\mathcal D,q}(\chi,\gamma,\rho)\leq M,
		\qquad
		\sup_{\varphi\in\operatorname{PSH}_0(X,\chi)}
		\int_Xe^{-\alpha\varphi}\rho^n\leq M.
		\]
		\item\label{enu:thm-poly}
		\emph{Power case.} For some $p>1$ and $q>np'$, where $p'=p/(p-1)$,
		\[
		\int_X e^{p(\gamma-\log V_\chi)}\,\rho^n\leq M,
		\qquad
		\sup_{\varphi\in\operatorname{PSH}_0(X,\chi)}
		\int_X(-\varphi)^q\rho^n\leq M.
		\]
	\end{enumerate}
	Then $\sup_X(W_\chi-U)\leq C$. One may take
	$C=C(n,q,\alpha,M)$ in case~\ref{enu:thm-exp}, and
	$C=C(p,q,n,M,\int_X \rho^n)$ in case~\ref{enu:thm-poly}.
\end{thm}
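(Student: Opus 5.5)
We follow the auxiliary Monge--Amp\`ere comparison of Guo--Phong--Tong, but we solve the auxiliary equations in the big class $[\chi]$ in the non-pluripolar sense rather than on smooth K\"ahler approximations. For $s>0$ we work on the superlevel set $\Omega_s:=\{W_\chi-U>s\}$. It avoids $Z$ because $U=+\infty$ there, and it is open since $U$ is continuous. We set $A_s:=\int_{\Omega_s}(W_\chi-U-s)\,\rho^n$; the quantity $A_s$ is finite by \eqref{eq:thm-L1-hypothesis}. By Boucksom--Eyssidieux--Guedj--Zeriahi we solve
\[
\langle(\chi+\ddc\psi_{s,k})^n\rangle
=\frac{\tau_k(W_\chi-U-s)}{A_{s,k}}\,\frac{e^{\gamma}}{1}\,\mathbf 1_{\Omega_s}\,\rho^n\cdot c_{s,k},
\qquad \sup_X\psi_{s,k}=0,
\]
with $\psi_{s,k}\in\operatorname{PSH}(X,\chi)$ of minimal singularities. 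Here $\tau_k$ is a smooth positive approximation of $x\mapsto x_+$, and $c_{s,k}$ normalizes the total mass to $V_\chi$. The density lies in $L^{p}$ for some $p>1$, using $e^\gamma\in L^{p_0}$, the a.e.\ positivity, and the continuity of $U$ on $\Omega_s$. Hence Kolodziej/BEGZ give $\psi_{s,k}$ with $\psi_{s,k}-W_\chi$ bounded.

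\textbf{Key comparison.} We claim that on $\Omega_s$,
\[
W_\chi-U-s\le \epsilon\,(-\psi_{s,k}+W_\chi-\psi_{s,k}\cdots)^{n/(n+1)}
\]
or, in GPT form,
\[
-\varepsilon(-\psi_{s,k}+\Lambda)^{n/(n+1)}\le U-W_\chi+s,
\]
with $\varepsilon^{n+1}\sim A_{s,k}/V_\chi$ up to constants. To prove it, suppose the function $\Phi:=U-W_\chi+s+\varepsilon(-\psi+\Lambda)^{n/(n+1)}$ is negative somewhere. We then use the local pluripotential--viscosity comparison principle on a small ball to reach a contradiction. Concretely, at an interior minimum region we compare the viscosity determinant bound for $U$, in which lower tests only see $\det_+(\chi+\ddc q)\le e^\gamma$, with the Monge--Amp\`ere measure of the concave transform of $\psi$. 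The transform satisfies
\[
(\chi+\ddc(W_\chi-\varepsilon(-\psi+\Lambda)^{n/(n+1)}))^n\ge c\,\varepsilon^n(-\psi+\Lambda)^{-n/(n+1)}(\chi+\ddc\psi)^n,
\]
modulo the singular part handled through minimal singularities of $W_\chi$, and this density dominates $e^\gamma$ exactly on the bad set. We would adapt the Eyssidieux--Guedj--Zeriahi viscosity/pluripotential equivalence to state that a continuous viscosity supersolution of $\det_+\le e^\gamma$ cannot be touched from below by a psh function whose Monge--Amp\`ere density exceeds $e^\gamma$. We then pass $k\to\infty$.

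\textbf{Integrability and iteration.} From the comparison we get $(W_\chi-U-s)^{(n+1)/n}\le C A_s^{1/n}(-\psi+\Lambda)$ on $\Omega_s$. In case \ref{enu:thm-exp} we apply the uniform Skoda bound $\sup\int e^{-\alpha\varphi}\le M$, which gives exponential integrability of $(W_\chi-U-s)^{(n+1)/n}/A_s^{1/n}$. Combined with Young's inequality and the entropy bound $\operatorname{Ent}_{\mathcal D,q}\le M$ in the GPT entropy lemma, this yields $\int_{\Omega_s}(W_\chi-U-s)^{(n+1)/n}\ldots$ and then $A_s\le C\,|\Omega_s|^{1+\delta}$ with $\delta>0$ precisely because $q>n$. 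The De Giorgi lemma for $\phi(s)=|\Omega_s|$ then bounds $\sup(W_\chi-U)$ by $s_0+C$, where $s_0$ is controlled through $M$ and Chebyshev. In case \ref{enu:thm-poly} we use H\"older with $e^{p\gamma}$ and the $L^q$ bound on $-\varphi$ in place of Skoda; the condition $q>np'$ supplies the gain exponent.

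The main obstacle is the comparison step. We must justify the viscosity-versus-non-pluripolar comparison on the open set $\Omega_s$ where $U$ is only continuous and $\psi$ is merely bounded relative to $W_\chi$, and we must control the non-K\"ahler locus of $[\chi]$, where $W_\chi$ is singular. We would first work on the ample locus, where $W_\chi$ is locally bounded and $\chi+\ddc W_\chi$ has a local potential. There we would regularize $\psi$ locally by Demailly/BK decreasing smooth approximations and apply the classical viscosity maximum principle. The set near the singular locus is harmless because there $W_\chi-U\to-\infty$ relative to $\psi$.
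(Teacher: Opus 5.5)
Your comparison step, which you rightly flag as the crux, does not work as written in a big class, for two reasons.

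\textbf{(i) The barrier need not be $\chi$-plurisubharmonic.} Take the GPT-form barrier $W_\chi-s-\varepsilon(-\psi_{s,k}+\Lambda)^{r}$ with $r=n/(n+1)$, and put $\lambda=\varepsilon r(\Lambda-\psi_{s,k})^{r-1}$. The chain rule gives only $\chi+\ddc v\geq(\chi+\ddc W_\chi)-\lambda\chi+\lambda(\chi+\ddc\psi_{s,k})$. Nothing controls the term $-\lambda\chi$ where $\chi+\ddc W_\chi$ degenerates. So when $\chi\not\geq0$, your displayed Monge--Amp\`ere lower bound does not follow, even on the ample locus. The paper instead puts the envelope inside the power: $T=W_\chi-\phi_{s,k}+\Lambda_{s,k}$. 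Then $v=G(W_\chi,\phi_{s,k})$ with $G(x,y)=x-s-\varepsilon(x-y+\Lambda)^r$, where $G$ is convex, $G_x,G_y\geq0$, and $G_x+G_y=1$. This yields $\chi+\ddc v\geq(1-\lambda)(\chi+\ddc W_\chi)+\lambda(\chi+\ddc\phi_{s,k})$, and $T$ is bounded because $\phi_{s,k}$ has minimal singularities.

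\textbf{(ii) The non-ample locus is not harmless.} $W_\chi-U$ need not tend to $-\infty$ near $X\setminus\Omega_\chi$; for a semipositive big non-K\"ahler $\chi$, for instance, $W_\chi=0$. So the extremum may lie where the potentials are not known to be continuous, and your argument provides no comparison there. The missing device is the perturbation $v_\delta=(1-\delta)v+\delta\zeta_0$ by Boucksom's K\"ahler current $\zeta_0$, which has analytic singularities exactly along $X\setminus\Omega_\chi$ and satisfies $\chi+\ddc\zeta_0\geq\delta_0\rho$. This perturbation does three things:
\begin{itemize}
\item it forces the maximum of $v_\delta-U$ into $\Omega_\chi\setminus Z$;
\item the spare $\delta\delta_0\rho$ pays for a quadratic bump that makes the maximum strict;
\item the loss $(1-\delta)^n$ is absorbed by the strict factor $1+m/(4\varepsilon_{s,k}K_{s,k})$, which comes from $\tau_k(t)\geq t$ on $\{v>U\}$ and a global bound $T^r\leq K_{s,k}$ --- and that bound again requires the bounded $T$.
\end{itemize}

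The iteration is also set up with the wrong measure. Write $h=(W_\chi-U)_+$ and $X_s=\{h>s\}$ (your $\Omega_s$). Because the auxiliary density is $e^\gamma$-weighted, the pointwise bound $rA_{s,k}^{-1/n}(h-s)^{(n+1)/n}\leq-\phi_{s,k}+rA_{s,k}$ closes only when integrated against $\mu=e^\gamma\rho^n/V_\chi$. Concretely:
\begin{itemize}
\item the normalizing energy is $A_s=\int_{X_s}(h-s)\,d\mu$;
\item the gain $\delta=1/n-1/\theta$ comes from H\"older's inequality in $L^\theta(\mu)$;
\item De Giorgi must be run on $\Phi(s)=\mu(X_s)$.
\end{itemize}
An $L(\log L)^q$ bound on $e^\gamma$ gives no power comparison between $\mu(X_s)$ and $\rho^n(X_s)$, so an iteration on $|\Omega_s|$ does not close.

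On the other hand, the hypothesis only bounds $\int_X h\,\rho^n$. Chebyshev with $M$ therefore does not give the weighted energy bound $A_0=\int_X h\,d\mu\leq E_0$, which you need both to start the iteration and to close the growth estimate. The paper obtains it from the entropy tail estimate $\mu\{h>s\}\lesssim1/\log s$, followed by absorption in $A_s^r\leq C_1\bigl(\Phi(s)^{1-r/\theta}+A_s^r\Phi(s)\bigr)$.

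Finally, $\Phi(S)=0$ gives $W_\chi\leq U+S$ only $\mu$-almost everywhere. To conclude everywhere you need $e^\gamma>0$ almost everywhere off $Z$, together with the sub-mean-value property of $W_\chi$.
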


Sui--Sun established relative $L^\infty$ estimates for smooth
Hessian quotient equations with nef and big auxiliary
classes~\cite[Theorem~1.2]{MR4567566}, as well as quantitative
stability for complex $k$-Hessian
equations~\cite[Theorem~1.5]{MR4567566}.
Our contribution here concerns the relative estimate in the
general setting of admissible pairs, without a spectral
assumption. Theorem~\ref{thm:determinant-relative-Linfty}
allows arbitrary big auxiliary classes and determinant
viscosity supersolutions, with constants controlled, in the
entropy case, by determinant entropy and an unweighted
relative $L^1$ bound.

The density $e^\gamma/V_\chi$ is finite and continuous on
$X\setminus Z$, where it may vanish. Infinite densities are
treated in Theorem~\ref{thm:infinite-density-relative-Linfty}.

An a priori bound for the weighted energy $A_0$ defined in
\eqref{eq:level-set-quantities} can replace the unweighted
$L^1$ hypothesis~\eqref{eq:thm-L1-hypothesis}: in this case,
use part~\ref{enu:unified-growth} in place of
part~\ref{enu:unified-moments}. The weighted moment and mass
bounds are still required.

\subsection{Technical tools}
\label{subsec:big-class-scalar-estimate}

Write
\[
\Omega_\chi:=\operatorname{Amp}([\chi])
\]
for the ample locus of $[\chi]$. A potential
$\varphi\in\operatorname{PSH}(X,\chi)$ has minimal singularities
precisely when its difference from $W_\chi$ is bounded on $X$.

We collect the pluripotential results used below. The existence
of a K\"ahler current with analytic singularities is due to
Boucksom~\cite[Theorem~3.17]{MR2050205}, and the regularity of
the envelope is due to
Berman--Demailly~\cite[Theorem~1.4]{MR2884031}.
The solvability of the Monge--Amp\`ere equation is due to
Boucksom--Eyssidieux--Guedj--Zeriahi
\cite[Theorems~A and B]{MR2746347}, while the local H\"older
continuity of its solutions on the ample locus is due to
Demailly--Dinew--Guedj--Pham--Ko\l odziej--Zeriahi
\cite[Theorem~D]{MR3191972}.

\begin{prop}
	\label{prop:big-class-inputs}
	Let $(X,\rho)$ be a compact K\"ahler manifold, and let
	$\chi$ be a smooth closed real $(1,1)$-form with $[\chi]$
	big.
	
	The envelope $W_\chi$ has minimal singularities and is
	continuous on $\Omega_\chi$. There exists
	$\zeta_0\in\operatorname{PSH}(X,\chi)$ with analytic
	singularities, smooth precisely on $\Omega_\chi$, such that
	\begin{equation}
		\label{eq:strict-big-class-potential}
		\chi+\ddc\zeta_0\geq\delta_0\rho
	\end{equation}
	for some $\delta_0>0$.
	
	If $g\geq0$ belongs to $L^p(X,\rho^n)$ for some $p>1$ and
	$\int_Xg\,\rho^n=V_\chi$, there exists a unique
	$\varphi\in\operatorname{PSH}_0(X,\chi)$ satisfying
	$$
	\left\langle(\chi+\ddc\varphi)^n\right\rangle
	=g\rho^n,
	$$
	where angle brackets denote the non-pluripolar product.
	This solution has minimal singularities and is locally
	H\"older continuous on $\Omega_\chi$. In particular,
	there exists $C_g<\infty$ such that
	$0\leq W_\chi-\varphi\leq C_g$ on $X$.
\end{prop}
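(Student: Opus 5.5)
The proposition only assembles known pluripotential results, so the plan is to check each assertion against the cited sources and reconcile their normalizations with the ones used here. I expect no new argument to be needed. I would proceed in the following order.

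\emph{Step 1: the envelope.} The family $\{w\in\operatorname{PSH}(X,\chi):w\leq0\}$ is nonempty since $[\chi]$ is big, and it is bounded above by $0$. Its upper semicontinuous regularization $W_\chi$ is therefore $\chi$-psh. Since $W_\chi$ is the regularization of a family bounded by $0$, it is itself $\leq0$, so $W_\chi$ belongs to the family and is the maximal element.
\begin{itemize}
\item If $\varphi\in\operatorname{PSH}(X,\chi)$, then $\varphi-\sup_X\varphi$ is admissible in the family. Hence $\varphi\leq W_\chi+\sup_X\varphi$, which is exactly the statement that $W_\chi$ has minimal singularities.
\item Continuity on $\Omega_\chi$ follows from Berman--Demailly~\cite[Theorem~1.4]{MR2884031}, which gives that $W_\chi$ is locally $C^{1,1}$ on the ample locus.
\end{itemize}

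\emph{Step 2: the strict potential.} By Boucksom~\cite[Theorem~3.17]{MR2050205}, the ample locus $\Omega_\chi$ is the intersection, over all Kähler currents $T\in[\chi]$ with analytic singularities, of the loci where $T$ is smooth. The same theorem shows that a single such current realizes this intersection. Write that current as $T=\chi+\ddc\zeta_0$. Being a Kähler current means $T\geq\delta_0\rho$ for some $\delta_0>0$, which is \eqref{eq:strict-big-class-potential}. By construction $\zeta_0$ has analytic singularities and is smooth precisely on $\Omega_\chi$.

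\emph{Step 3: the Monge--Amp\`ere equation.}
\begin{itemize}
\item For $0\leq g\in L^p$ with $p>1$ and total mass $V_\chi=\operatorname{vol}([\chi])$, BEGZ~\cite[Theorem~A]{MR2746347} give a $\chi$-psh $\varphi$ with full Monge--Amp\`ere mass solving $\langle(\chi+\ddc\varphi)^n\rangle=g\rho^n$. The solution is unique up to an additive constant, so normalizing $\sup_X\varphi=0$ makes it unique in $\operatorname{PSH}_0(X,\chi)$.
\item Theorem~B of the same paper, a Kołodziej-type capacity argument for $L^p$ densities, shows that $\varphi$ has minimal singularities.
\item Local Hölder continuity on $\Omega_\chi$ is \cite[Theorem~D]{MR3191972}.
\item For the final two-sided bound: $\varphi\leq0$ and $\varphi$ is $\chi$-psh, so $\varphi\leq W_\chi$ by maximality of $W_\chi$ (Step 1). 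Minimal singularities of $\varphi$ give $W_\chi\leq\varphi+C_g$. Together these yield $0\leq W_\chi-\varphi\leq C_g$.
\end{itemize}

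\emph{Main obstacle.} The only delicate point is bookkeeping rather than analysis. Boucksom's theorem must be read so that a \emph{single} current is smooth exactly on $\Omega_\chi$, not merely on a subset. Also, the mass normalization $\int_X g\rho^n=V_\chi$ must match the BEGZ convention that the total non-pluripolar mass of a full-mass $\chi$-psh function equals $\operatorname{vol}([\chi])$. I would check these two conventions first.
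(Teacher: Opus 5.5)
Your proposal is correct and matches the paper, which also proves this proposition purely by citation: Boucksom's Theorem~3.17 for $\zeta_0$, Berman--Demailly for the envelope, BEGZ Theorems~A and~B for existence, uniqueness and minimal singularities, and DDGPKZ Theorem~D for local H\"older continuity on $\Omega_\chi$. Your maximality argument for the envelope correctly fills in the elementary facts that $W_\chi$ has minimal singularities and that $0\leq W_\chi-\varphi\leq C_g$; one small correction is that Berman--Demailly give a locally bounded complex Hessian of $W_\chi$ on $\Omega_\chi$ (hence $C^{1,\alpha}$ for every $\alpha<1$), not full $C^{1,1}$ regularity, though this already yields the continuity you need.
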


Here and below, angle brackets denote non-pluripolar products of
closed positive $(1,1)$-currents. On compact K\"ahler manifolds,
these products are well defined and closed
\cite[Proposition~1.6, Theorem~1.8]{MR2746347}; on an open set where
the potentials are locally bounded, they agree with the
Bedford--Taylor products. The volume is defined in
\cite[Definition~1.17]{MR2746347} and is independent of the
representative by \cite[Theorem~1.16]{MR2746347}; the ample locus is
defined in \cite[Definition~1.13]{MR2746347}.

We also need uniform exponential integrability. We record its
uniformity in the background form for later use.

\begin{lem}
	\label{lem:uniform-alpha-integrability}
	If $\chi\leq K\rho$ for some $K>0$, there are constants
	$\alpha,C_\alpha>0$, depending only on $(X,\rho,K)$, such that
	\begin{equation}
		\label{eq:uniform-alpha-integrability}
		\sup_{\varphi\in\operatorname{PSH}_0(X,\chi)}
		\int_Xe^{-\alpha\varphi}\rho^n\leq C_\alpha.
	\end{equation}
\end{lem}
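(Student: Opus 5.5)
The idea is to reduce to the classical uniform Skoda integrability for the Kähler form $\rho$ itself, which holds by compactness of normalized $\rho$-psh functions.

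\textbf{Step 1 (comparison of classes).} Since $\chi\leq K\rho$, every $\varphi\in\operatorname{PSH}(X,\chi)$ satisfies
\[
K\rho+\ddc\varphi\geq\chi+\ddc\varphi\geq0 .
\]
Hence $\varphi\in\operatorname{PSH}(X,K\rho)$, and $\psi:=\varphi/K\in\operatorname{PSH}(X,\rho)$ with $\sup_X\psi=0$. It therefore suffices to find $\alpha_1,C_1>0$ depending only on $(X,\rho)$ with
\[
\sup_{\psi\in\operatorname{PSH}_0(X,\rho)}\int_Xe^{-\alpha_1\psi}\rho^n\leq C_1 .
\]
We then take $\alpha=\alpha_1/K$ and $C_\alpha=C_1$.

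\textbf{Step 2 (uniform Skoda for $\rho$).} The family $\operatorname{PSH}_0(X,\rho)$ is compact in $L^1(X,\rho^n)$, by the standard Hartogs-type compactness for quasi-psh functions with normalized supremum. Cover $X$ by finitely many coordinate balls $B_j\Subset B_j'$ with smooth potentials $\rho=\ddc h_j$ on $B_j'$. Then $\psi+h_j$ is psh on $B_j'$, and by compactness it is bounded above uniformly and bounded in $L^1(B_j')$. This gives a uniform bound on the Riesz masses of $\psi+h_j$, and the Lelong numbers of $\psi$ are bounded by a constant $\nu_0(X,\rho)$.

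The uniform local version of Skoda's theorem then applies: for psh functions on a ball with bounded $L^1$ norm and upper bound, one has $\int_{B_j}e^{-c(\psi+h_j)}\leq C$ whenever $c<2/\nu_0$. This follows either from Hörmander's $L^2$ lemma, or from Zeriahi's uniform version of Skoda's integrability theorem. Summing over the finite cover, and absorbing the bounded $h_j$, gives the bound for $\alpha_1$ small. This is exactly Tian's $\alpha$-invariant positivity, and I would cite it (Tian; Guedj--Zeriahi) rather than reprove it.

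\textbf{Main obstacle.} The real content is uniformity over the whole family, not integrability of a single function. This needs the $L^1$-compactness, which rests on the $\sup=0$ normalization together with the uniform $L^1$ bound obtained from Green's function, as in Lemma~\ref{lem:admissible-L1}. That compactness is what yields uniform Lelong-number and mass bounds. Once compactness is granted, the scaling in Step 1 is immediate. The constants depend only on $(X,\rho)$ and on $K$ through $\alpha=\alpha_1/K$.
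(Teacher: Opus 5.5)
Your proposal is correct and follows the paper's route: you rescale $\varphi\mapsto\varphi/K$ to land in $\operatorname{PSH}_0(X,\rho)$ and then invoke the uniform Skoda integrability theorem for that normalized family. The paper simply cites the latter (from the proof of Proposition~4.3 in Boucksom--Eyssidieux--Guedj--Zeriahi), and your Step~2 spells out the standard argument behind that citation: $L^1$-compactness, uniform Lelong-number bounds, and Zeriahi's uniform Skoda theorem.
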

The assertion is the uniform form of Skoda's integrability theorem,
applied to the rescaled normalized family
$\{\varphi/K:\varphi\in\operatorname{PSH}_0(X,\chi)\}\subset\operatorname{PSH}_0(X,\rho)$;
see \cite[proof of Proposition~4.3]{MR2746347}.

We use the pluripotential--viscosity equivalence and local comparison
principle of Guedj--Zeriahi
\cite[Proposition~6.21, 6.25 and Theorem~6.31]{MR3617346}.
The comparison theorem applies to
$(\ddc v)^n=e^{\varepsilon v}f\,\beta^n$
with $\varepsilon>0$ and continuous $f\geq0$.
Here $\beta=\ddc|z|^2$, and $(\ddc b)^n_+$ denotes the
positive-determinant operator at lower test functions, with the
convention of Section~\ref{sec:admissible-equations}.

\begin{lem}
	\label{lem:strict-factor-comparison}
	Let $B\Subset\mathbb C^n$ be a ball, let
	$g\in C(\overline B)$ be nonnegative, and let $c>1$.
	Suppose that $a\in\operatorname{PSH}(B)$ and
	$b\in\operatorname{LSC}(B)$ are bounded and satisfy
	$$
	(\ddc a)^n_{\mathrm{BT}}\geq cg\,\beta^n,
	\qquad
	(\ddc b)^n_+\leq g\,\beta^n,
	$$
	where the second inequality is understood in the viscosity
	sense. Then
	$$
	\sup_B(a-b)
	\leq\limsup_{z\to\partial B}(a-b)(z).
	$$
\end{lem}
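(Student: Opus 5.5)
The plan is to reduce the statement to the local comparison principle of Guedj--Zeriahi for equations of the form $(\ddc v)^n=e^{\varepsilon v}f\beta^n$ with $\varepsilon>0$. The idea is to use the slack factor $c>1$ to absorb the exponential weight $e^{\varepsilon v}$. Since this weight is bounded for bounded $v$, we only need to choose $\varepsilon$ small and shift both functions by a common constant.

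\emph{Normalization.} Since $a$ and $b$ are bounded on $B$, fix a constant $K$ with $b+K\geq0$ on $B$, and set $a'=a+K$ and $b'=b+K$. Then $a'-b'=a-b$, so it suffices to prove the conclusion for $(a',b')$. Since $a'$ is bounded above, we can choose $\varepsilon>0$ so small that $\varepsilon\sup_B a'\leq\log c$. Now put $f:=g$, which is continuous and nonnegative on $\overline B$, and consider the equation $(\ddc v)^n=e^{\varepsilon v}g\,\beta^n$ on $B$.

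\emph{Supersolution property of $b'$.} Let $q\in C^2$ touch $b'$ from below at $x$. By hypothesis, in the viscosity sense of Section~\ref{sec:admissible-equations}, we have $(\ddc q)^n_+(x)\leq g(x)\beta^n$. Since $b'(x)\geq0$, we get $g(x)\leq e^{\varepsilon b'(x)}g(x)$. Hence $b'$ is a bounded LSC viscosity supersolution of the $\varepsilon$-equation. The positive-determinant convention here matches the Guedj--Zeriahi notion of supersolution: only tests with $\ddc q(x)\geq0$ impose a condition.

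\emph{Subsolution property of $a'$.} This is the point I expect needs care, because $a'$ is only bounded psh, not continuous, and the right-hand side $e^{\varepsilon a'}g$ is not continuous. I would avoid this by first applying the pluripotential--viscosity equivalence \cite[Proposition~6.21]{MR3617346} with the continuous density $cg$. The Bedford--Taylor inequality $(\ddc a')^n\geq cg\,\beta^n$ then makes $a'$, which is USC, a viscosity subsolution of $(\ddc v)^n\geq cg\,\beta^n$. Now let $q$ touch $a'$ from above at $x$. Then $(\ddc q(x))^n\geq cg(x)\beta^n$, and $e^{\varepsilon q(x)}=e^{\varepsilon a'(x)}\leq c$ by the choice of $\varepsilon$. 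Combining these gives $(\ddc q(x))^n\geq e^{\varepsilon q(x)}g(x)\beta^n$. So $a'$ is a viscosity subsolution of the $\varepsilon$-equation.

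\emph{Conclusion.} Both functions are bounded, $a'$ is a USC viscosity subsolution and $b'$ is an LSC viscosity supersolution. The comparison theorem \cite[Theorem~6.31]{MR3617346} therefore yields
\[
\sup_B(a'-b')\leq\limsup_{z\to\partial B}(a'-b')(z),
\]
which is the claim, since $a'-b'=a-b$.

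The main obstacle is verifying that the cited equivalence and comparison statements apply in exactly this generality: bounded rather than continuous $a$, LSC $b$, and the positive-part convention for supersolutions. If \cite[Proposition~6.21]{MR3617346} required continuity of $a$, I would first treat $g>0$. I would then pass to the general case by replacing $g$ with $g+\delta$ and $c$ with a slightly smaller constant $c'>1$, and approximating $a$ from above by continuous psh functions on a slightly smaller ball.
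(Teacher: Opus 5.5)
Your argument is the paper's own. Your constant $K$ plays the role of $-\inf_B b$, the choice $e^{\varepsilon\sup_B a'}\le c$ uses the slack factor $c>1$ in the same way, and passing through the continuous density $cg$ matches the paper's appeal to the pluripotential--viscosity equivalence. The one step that does not work as written is the last one.

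Because of the factor $e^{\varepsilon v}$, the equation $(\ddc v)^n=e^{\varepsilon v}g\,\beta^n$ is not invariant under adding constants. So \cite[Theorem~6.31]{MR3617346} can only be used after the boundary $\limsup$ has been normalized to be nonpositive: it yields $a'\le b'$ from $\limsup_{z\to\partial B}(a'-b')\le0$. The form $\sup_B(u-v)\le\limsup_{z\to\partial B}(u-v)$ that you invoke is false for this equation. For example, take the unit ball and $g\equiv1$. Then $v=|z|^2$ is a supersolution, and $u=\tfrac12|z|^2+\kappa$ is a subsolution whenever $\kappa\le-\tfrac12-\tfrac{n\log2}{\varepsilon}$. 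Yet $\sup_B(u-v)=\kappa>\kappa-\tfrac12=\limsup_{z\to\partial B}(u-v)$. Consequently, your deduction breaks when $m:=\limsup_{z\to\partial B}(a-b)<0$. In that case $b'+m$ need not be nonnegative, so its supersolution property can fail. Likewise $a'-m$ need not satisfy $e^{\varepsilon\sup_B(a'-m)}\le c$.

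The repair is to normalize in the opposite order, which is what the paper does by first adding a constant to $b$. First replace $b$ by $b+m$. This is allowed because $m$ is finite (since $a,b$ are bounded) and the hypothesis $(\ddc b)^n_+\le g\,\beta^n$ does not see constants. Only then choose $K$ with $b+m+K\ge0$ and $\varepsilon$ with $e^{\varepsilon\sup_B(a+K)}\le c$. The comparison theorem with nonpositive boundary $\limsup$ then gives $a\le b+m$, which is the claim.
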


\begin{proof}
After adding a constant to $b$, assume that
$\limsup_{z\to\partial B}(a-b)(z)\leq0$.
Set $A=\inf_B b$ and choose $\varepsilon>0$ such that
\[
e^{\varepsilon(\sup_B a-A)}<c.
\]
Since
\[
e^{\varepsilon(a-A)}<c,
\qquad
e^{\varepsilon(b-A)}\geq1,
\]
the pluripotential--viscosity equivalence shows that $a$ is
a viscosity subsolution, while $b$ is a viscosity supersolution, of
\[
(\ddc v)^n=e^{\varepsilon(v-A)}g\,\beta^n.
\]
The local comparison principle
\cite[Theorem~6.31]{MR3617346}, applied with the continuous
nonnegative density $e^{-\varepsilon A}g$, gives $a\leq b$.
\end{proof}

\subsection{The auxiliary Monge--Amp\`ere comparison}
\label{subsec:auxiliary-MA-comparison}

For the proof of Theorem~\ref{thm:determinant-relative-Linfty}, set
\begin{equation}
	\label{eq:singular-density-entropy}
	f:=\frac{e^\gamma}{V_\chi},
	\qquad
	d\mu:=f\,\rho^n,
	\qquad
	h:=(W_\chi-U)_+,
\end{equation}
where $h=0$ on $Z$ (we interpret $W_\chi-U=-\infty$ on $Z$). For $s\geq0$, define
\begin{equation}
	\label{eq:level-set-quantities}
	X_s=\{h>s\},
	\qquad
	A_s=\int_{X_s}(h-s)\,d\mu,
	\qquad
	\Phi(s)=\mu(X_s).
\end{equation}
The extension of $U$ is bounded below on $X$, so $h$ is bounded
for each fixed $U$. 

For $k\geq1$, let $\tau_k(t)=t_++1/k$, with
$\tau_k(-\infty)=1/k$, and set
\begin{equation}
	\label{eq:auxiliary-normalization}
	A_{s,k}=\int_X\tau_k(W_\chi-U-s)\,d\mu
	=A_s+\frac{\mu(X)}k.
\end{equation}
Since $X\setminus Z$ is nonempty and $e^\gamma>0$ almost everywhere on $X \setminus Z$,
$A_{s,k}>0$. Consider
\begin{equation}
	\label{eq:auxiliary-MA-extended}
	\left\langle(\chi+\ddc\phi_{s,k})^n\right\rangle
	=\frac{\tau_k(W_\chi-U-s)}{A_{s,k}}e^\gamma\rho^n,
	\qquad
	\sup_X\phi_{s,k}=0.
\end{equation}
Its right-hand side has mass $V_\chi$, since $e^\gamma=V_\chi f$ and
$\int_X\tau_k(W_\chi-U-s)\,d\mu=A_{s,k}$. Its density lies in
$L^{p_0}$: $\tau_k(W_\chi-U-s)$ is bounded and the zero extension of
$e^\gamma$ belongs to $L^{p_0}$. By
Proposition~\ref{prop:big-class-inputs}, there is a full-mass solution
with minimal singularities, continuous on $\Omega_\chi$, and
\begin{equation}
	\label{eq:auxiliary-minimal-singularities}
	0\leq W_\chi-\phi_{s,k}\leq C_{s,k}
	\quad\hbox{on }\Omega_\chi.
\end{equation}

Fix $s$ and $k$, and set
\begin{equation}
	\label{eq:epsilon-Lambda}
	r=\frac n{n+1},
	\qquad
	\varepsilon_{s,k}=r^{-r}A_{s,k}^{1/(n+1)},
	\qquad
	\Lambda_{s,k}=rA_{s,k}.
\end{equation}
These choices give
$\Lambda_{s,k}^{1-r}=r\varepsilon_{s,k}$,
$r^n\varepsilon_{s,k}^{n+1}=A_{s,k}$, and $n(1-r)=r$.
The comparison function is
\begin{equation}
	\label{eq:comparison-function}
	v=W_\chi-s-\varepsilon_{s,k} T^r,
	\qquad
	T=W_\chi-\phi_{s,k}+\Lambda_{s,k},
	\quad\hbox{on }\Omega_\chi.
\end{equation}

\begin{lem}

\label{lem:comparison-function-psh}

The function $v$ extends to a $\chi$-plurisubharmonic function
with minimal singularities on $X$, and satisfies

\begin{equation}
\label{eq:comparison-function-lower-bound}
\chi+\ddc v
\geq\frac{r\varepsilon_{s,k}}{T^{1-r}}
(\chi+\ddc\phi_{s,k})
\quad\hbox{on }\Omega_\chi.
\end{equation}

Moreover,
\begin{equation}
\label{eq:comparison-function-MA-lower-bound}
(\chi+\ddc v)^n_{\mathrm{BT}}
\geq
\frac{\tau_k(W_\chi-U-s)}
{\varepsilon_{s,k}T^r}\,e^\gamma\rho^n
\quad\hbox{on }\Omega_\chi.
\end{equation}

\end{lem}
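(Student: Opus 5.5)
The proof is a chain-rule computation on the ample locus, where every function involved is continuous, followed by an extension across $X\setminus\Omega_\chi$ using minimal singularities.

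\textbf{Step 1: properties of $T$.} Both $W_\chi$ and $\phi_{s,k}$ have minimal singularities. By \eqref{eq:auxiliary-minimal-singularities},
\[
\Lambda_{s,k}\le T\le C_{s,k}+\Lambda_{s,k}
\quad\hbox{on }\Omega_\chi .
\]
Since $A_{s,k}>0$ we have $\Lambda_{s,k}>0$, so $T$ is positive and bounded there. Hence $\varepsilon_{s,k}T^r$ is bounded on $\Omega_\chi$. It follows that $v$ differs from $W_\chi$ by a bounded function there. This gives the extension: $v$ will be shown to be $\chi$-psh on $\Omega_\chi$ and bounded above by $W_\chi-s$. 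Its upper semicontinuous extension across the pluripolar analytic set $X\setminus\Omega_\chi$ is then $\chi$-psh on $X$ by the standard extension theorem for quasi-psh functions bounded above. The two-sided bound $|v-W_\chi|\le C$ persists after extension, giving minimal singularities.

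\textbf{Step 2: the pointwise inequality \eqref{eq:comparison-function-lower-bound}.} Write $\chi_W=\chi+\ddc W_\chi\ge0$ and $\chi_\phi=\chi+\ddc\phi_{s,k}\ge0$. On $\Omega_\chi$ we have $\ddc T=\chi_W-\chi_\phi$. The function $t\mapsto t^r$ is concave and increasing, so in the sense of currents (regularizing first if needed)
\[
\ddc(T^r)\le rT^{r-1}\ddc T .
\]
Therefore
\[
\chi+\ddc v\ge \chi_W-r\varepsilon_{s,k}T^{r-1}(\chi_W-\chi_\phi)
=(1-r\varepsilon_{s,k}T^{r-1})\chi_W+r\varepsilon_{s,k}T^{r-1}\chi_\phi .
\]
Since $T\ge\Lambda_{s,k}$ and $r-1<0$,
\[
r\varepsilon_{s,k}T^{r-1}\le r\varepsilon_{s,k}\Lambda_{s,k}^{r-1}=1,
\]
using the identity $\Lambda_{s,k}^{1-r}=r\varepsilon_{s,k}$. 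So the coefficient of $\chi_W$ is nonnegative and can be dropped, which yields \eqref{eq:comparison-function-lower-bound}. It also shows $\chi+\ddc v\ge0$, completing the plurisubharmonicity needed in Step 1.

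\textbf{Step 3: justifying Step 2 for non-smooth potentials.} This is the main obstacle, because $W_\chi$ and $\phi_{s,k}$ are only continuous on $\Omega_\chi$. I would argue locally. On a small ball in $\Omega_\chi$, take local potentials and approximate $W_\chi$ and $\phi_{s,k}$ by decreasing smooth quasi-psh sequences (Demailly or local convolution), with only a slight loss of positivity. Continuity makes the convergence locally uniform. The smooth chain-rule inequality of Step 2 then passes to the limit in the weak topology.

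\textbf{Step 4: the Monge--Amp\`ere bound \eqref{eq:comparison-function-MA-lower-bound}.} Take $n$-th powers in Step 2 using monotonicity of Bedford--Taylor products for locally bounded psh functions; the factor $r\varepsilon_{s,k}T^{r-1}$ is a continuous positive function. This gives
\[
(\chi+\ddc v)^n_{\mathrm{BT}}\ge (r\varepsilon_{s,k})^nT^{-n(1-r)}(\chi_\phi)^n .
\]
On $\Omega_\chi$ the non-pluripolar product $\langle\chi_\phi^n\rangle$ agrees with the Bedford--Taylor one, so \eqref{eq:auxiliary-MA-extended} applies. Using $n(1-r)=r$ and $r^n\varepsilon_{s,k}^n=A_{s,k}/\varepsilon_{s,k}$, the right side becomes
\[
\frac{A_{s,k}}{\varepsilon_{s,k}T^{r}}\cdot\frac{\tau_k(W_\chi-U-s)}{A_{s,k}}e^\gamma\rho^n,
\]
which is exactly \eqref{eq:comparison-function-MA-lower-bound}.
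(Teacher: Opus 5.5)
Your proposal is correct. Steps 2--4 follow the paper's argument. The paper regularizes $W_\chi+\theta\geq\phi_{s,k}+\theta$ (with $\ddc\theta=\chi$) on a coordinate ball using the same convolution kernel for both functions. This preserves the order, hence $T_j\geq\Lambda_{s,k}$, with no loss of positivity. It then phrases your concave chain rule as convexity of $G(x,y)=x-s-\varepsilon_{s,k}(x-y+\Lambda_{s,k})^r$, with $0<G_y\leq1$ and $G_x=1-G_y$.

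The genuine difference is the extension across $X\setminus\Omega_\chi$. You quote the removable-singularity theorem for quasi-psh functions bounded above near a closed pluripolar set. The paper instead shifts the analytic-singularity potential $\zeta_0$ below $v$, extends $v_\delta=(1-\delta)v+\delta\zeta_0$ by $-\infty$, and takes the upper semicontinuous regularization of the increasing limit as $\delta\downarrow0$. Your route is shorter and valid: the bounds $W_\chi-C\leq v\leq W_\chi$ survive because a quasi-psh function equals the essential limsup of its values. The paper's route is self-contained. More to the point, it produces exactly the functions $v_\delta$, tending to $-\infty$ along $X\setminus\Omega_\chi$, that Lemma~\ref{lem:comparison-function-below-U} reuses to force the maximum of $v_\delta-U$ into $\Omega_\chi\setminus Z$.

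Two small points need tightening.

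In Step 3, use the same kernel for both functions. Independent Demailly regularizations need not preserve $T_j\geq\Lambda_{s,k}$, and would leave an error term to control.

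In Step 4, monotonicity of Bedford--Taylor products gives $(\ddc u)^n\geq(\ddc w)^n$ when $u-w$ is psh. However, $\lambda(\chi+\ddc\phi_{s,k})$ with the variable factor $\lambda=r\varepsilon_{s,k}T^{r-1}$ is not closed, so that monotonicity does not apply directly. You can fix this in either of two ways:
\begin{itemize}
\item freeze $\lambda$ at its minimum on small balls and let the balls shrink; or
\item as the paper does, take $n$-th powers at the smooth level, $(\ddc V_j)^n\geq\lambda_j^n(\ddc y_j)^n$, and pass to the limit by Bedford--Taylor continuity. Here $y_j$ regularizes $\phi_{s,k}+\theta$, $V_j$ is the corresponding regularized comparison function, and $\lambda_j$ is the corresponding coefficient.
\end{itemize}
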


\begin{proof}

On a coordinate ball compactly contained in $\Omega_\chi$, let
$\ddc\theta=\chi$ and regularize the continuous psh functions
$W_\chi+\theta\geq\phi_{s,k}+\theta$ by the same convolution
kernel, obtaining $x_j\geq y_j$. Set
\[
G(x,y)=x-s-\varepsilon_{s,k}(x-y+\Lambda_{s,k})^r,
\qquad V_j=G(x_j,y_j),\qquad \lambda_j=G_y(x_j,y_j).
\]
By \eqref{eq:epsilon-Lambda}, $G$ is convex on $\{x\geq y\}$,
with $0<G_y\leq1$ and $G_x=1-G_y$. Hence
\[
\ddc V_j\geq(1-\lambda_j)\ddc x_j+\lambda_j\ddc y_j
\geq\lambda_j\ddc y_j\geq0,
\qquad
(\ddc V_j)^n\geq\lambda_j^n(\ddc y_j)^n.
\]
The potentials and coefficients converge locally uniformly,
with $V_j\to v+\theta$ and
$\lambda_j\to r\varepsilon_{s,k}/T^{1-r}$.
Passing to the limit proves that $v$ is $\chi$-psh and gives
\eqref{eq:comparison-function-lower-bound}.
Bedford--Taylor continuity and
\eqref{eq:auxiliary-MA-extended}--\eqref{eq:epsilon-Lambda}
give \eqref{eq:comparison-function-MA-lower-bound}.

By \eqref{eq:comparison-function} and
\eqref{eq:auxiliary-minimal-singularities},
$W_\chi-C\leq v\leq W_\chi$ on $\Omega_\chi$.
Choose $\zeta_0$ from \eqref{eq:strict-big-class-potential},
shifted so that $\zeta_0\leq v$. The functions
\[
v_\delta=(1-\delta)v+\delta\zeta_0,\qquad 0<\delta<1,
\]
are $\chi$-psh, bounded above, and tend to $-\infty$ along
$X\setminus\Omega_\chi$, so they extend by $-\infty$ to $X$.
The upper semicontinuous regularization of their increasing
limit as $\delta\downarrow0$ extends $v$.
The bounds $W_\chi-C\leq v\leq W_\chi$ then give minimal singularities.

\end{proof}

\begin{lem}
	\label{lem:comparison-function-below-U}
	The function in \eqref{eq:comparison-function} satisfies $v\leq U$
	on $X$.
\end{lem}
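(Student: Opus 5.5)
The plan is to argue by contradiction at a maximum point of $v-U$, and to compare $v$ and $U$ locally via Lemma~\ref{lem:strict-factor-comparison}.

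\textbf{Setup and the non-ample locus.} On $Z$ we have $U=+\infty$, so $v\leq U$ holds trivially there. The real difficulty is the locus $X\setminus\Omega_\chi$, where $v$ need not be $-\infty$ and the estimates of Lemma~\ref{lem:comparison-function-psh} are unavailable. To avoid this, I would first prove the inequality for the perturbations
\[
v_\delta=(1-\delta)v+\delta\zeta_0,\qquad 0<\delta<1,
\]
with $\zeta_0$ as in \eqref{eq:strict-big-class-potential}, shifted so that $\zeta_0\leq v$. Each $v_\delta$ equals $-\infty$ on $X\setminus\Omega_\chi$, and $v_\delta\leq v$. Since $v$ is the usc regularization of the increasing limit of the $v_\delta$, the bound $v_\delta\leq U$ for all $\delta$ passes to the limit: $U$ is lsc and finite off $Z$.

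\textbf{Locating the maximum.} Fix $\delta$. The function $v_\delta-U$ is usc on the compact $X$, so it attains its maximum $m$ at some $x_0$. Suppose $m>0$. Then $x_0\in\Omega_\chi\setminus Z$, and at $x_0$
\[
W_\chi-U-s\;\geq\; v-U+\varepsilon_{s,k}T^r\;\geq\; v_\delta-U+\varepsilon_{s,k}T^r\;>\;\varepsilon_{s,k}T^r.
\]
Hence $\tau_k(W_\chi-U-s)\geq(W_\chi-U-s)_+>\varepsilon_{s,k}T^r$ at $x_0$.

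Combining \eqref{eq:comparison-function-lower-bound}–\eqref{eq:comparison-function-MA-lower-bound} with $\chi+\ddc\zeta_0\geq\delta_0\rho$ gives
\[
(\chi+\ddc v_\delta)^n\;\geq\;(1-\delta)^n\,\frac{\tau_k(W_\chi-U-s)}{\varepsilon_{s,k}T^r}\,e^\gamma\rho^n .
\]
The coefficient $(1-\delta)^n\,\tau_k/(\varepsilon_{s,k}T^r)$ exceeds $1$ at $x_0$ once $\delta$ is small relative to the gap. Here there is a subtlety, since $\tau_k(W_\chi-U-s)$ is only lsc in $x$. I would handle it by taking $\delta$ small first and arguing on the set where $v-U>0$. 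Alternatively, I would use $\tau_k(W_\chi-U-s)\geq W_\chi-U-s\geq v_\delta-U+\varepsilon_{s,k}T^r$ near $x_0$, where $v_\delta-U$ is close to $m>0$ on a neighbourhood in the measure-theoretic sense required by the Bedford--Taylor inequality. Either way, I choose a small coordinate ball $B\ni x_0$ on which the factor is at least some $c>1$.

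\textbf{Local comparison and contradiction.} On $B$, write $\chi=\ddc\theta$ and $\rho^n=g_0\beta^n$ with $g_0$ smooth and positive, and set $g=e^\gamma g_0$, which is continuous and nonnegative. Put $a=v_\delta+\theta$, which is psh and bounded on $B$. Put
\[
b=U+\theta-\varepsilon'|z-x_0|^2 .
\]
Any lower test $q'$ for $b$ corresponds to the lower test $q'+\varepsilon'|z-x_0|^2$ for $U+\theta$. Since $\det_+$ is monotone, $(\ddc b)^n_+\leq g\beta^n$ holds in the viscosity sense by the viscosity determinant bound \eqref{eq:viscosity-determinant-bound}. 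Also $(\ddc a)^n_{\mathrm{BT}}\geq cg\beta^n$. Then $a-b$ equals $m$ at $x_0$ but is at most $m-\varepsilon'\operatorname{rad}(B)^2$ near $\partial B$. This contradicts Lemma~\ref{lem:strict-factor-comparison}, so $m\leq0$.

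The main obstacle I expect is securing the uniform factor $c>1$ on a whole ball, because $\tau_k(W_\chi-U-s)$ has only one-sided semicontinuity. My first attempt would be to replace $\tau_k(W_\chi-U-s)$ by the usc lower bound $v_\delta-U+\varepsilon_{s,k}T^r$ (itself dominated by a continuous bound after shrinking $B$), and then reduce $c$ slightly.
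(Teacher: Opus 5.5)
Your overall strategy is the right one: perturb by $\zeta_0$ to push the maximum into $\Omega_\chi\setminus Z$, use $\tau_k(W_\chi-U-s)\ge v-U+\varepsilon_{s,k}T^r$ to get a strict factor, then invoke Lemma~\ref{lem:strict-factor-comparison}. However, the step that produces the strict interior maximum fails. With $b=U+\theta-\varepsilon'|z-x_0|^2$ you get $a-b=(v_\delta-U)+\varepsilon'|z-x_0|^2$. This equals $m$ at $x_0$ but may be as large as $m+\varepsilon'R^2$ on $\partial B$, so there is no contradiction.

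The opposite sign, $b=U+\theta+\varepsilon'|z-x_0|^2$, does give a strict maximum. But then a lower test $q'$ for $b$ corresponds to the lower test $q'-\varepsilon'|z-x_0|^2$ for $U+\theta$, and $\ddc q'=\ddc(q'-\varepsilon'|z-x_0|^2)+\varepsilon'\beta$. Monotonicity of $\det_+$ now runs the wrong way: adding $\varepsilon'\beta$ to a semipositive form with a few large eigenvalues can raise $\det_+$ by an unbounded amount. So the bound $(\ddc b)^n_+\le g\beta^n$ is not preserved, and strictness cannot be manufactured on the viscosity side.

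The missing idea is to create it on the pluripotential side, using the spare positivity that $\zeta_0$ supplies. By \eqref{eq:strict-big-class-potential}, $\chi+\ddc v_\delta\ge(1-\delta)(\chi+\ddc v)+\delta\delta_0\rho$ on $\Omega_\chi$. Hence $w=v_\delta+\kappa(R^2-|z-x_0|^2)$ with $\kappa\beta\le\delta\delta_0\rho$ still satisfies $(\chi+\ddc w)^n\ge(1-\delta)^n(\chi+\ddc v)^n$ in the Bedford--Taylor sense. At the same time $(w-U)(x_0)=m+\kappa R^2$ and $w-U\le m$ on $\partial B$. This second use of $\zeta_0$ is exactly what the paper does.

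The order of quantifiers must also change. You fix $\delta$ and then take $m=\max_X(v_\delta-U)$, which depends on $\delta$. Choosing "$\delta$ small relative to the gap" is then circular. The factor you can guarantee on a ball is about $(1-\delta)^n\bigl(1+m/(2\varepsilon_{s,k}K_{s,k})\bigr)$, where $T^r\le K_{s,k}$, and this can fall below $1$ when $m$ is small. So $v_\delta\le U$ cannot be proved for each fixed $\delta$ this way.

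Argue instead from $m:=\sup_X(v-U)>0$, which is independent of $\delta$:
\begin{enumerate}
\item Pick $x_1\in\Omega_\chi\setminus Z$ with $(v-U)(x_1)>3m/4$, using the extension property of $v$ and continuity of $U$.
\item Choose $\delta$ with $(v_\delta-U)(x_1)>m/2$ and $(1-\delta)^n\bigl(1+m/(4\varepsilon_{s,k}K_{s,k})\bigr)>1$.
\end{enumerate}

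Your semicontinuity worry then disappears. $W_\chi$, $\phi_{s,k}$ and $\zeta_0$ are continuous on $\Omega_\chi$, and $U$ is continuous off $Z$. So $v_\delta-U$ is continuous on $\Omega_\chi\setminus Z$ and exceeds $m/4$ on a small ball around its maximum point. There, $v-U\ge v_\delta-U>m/4$ together with $\tau_k(t)\ge t$ gives the factor $c_*>1$ pointwise.

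Finally, passing from values on $\Omega_\chi$ to $X\setminus\Omega_\chi$ through the upper semicontinuous regularization requires upper semicontinuity of $U$, which holds here because $U$ is continuous. Lower semicontinuity, as you invoke it, is the wrong direction.
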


\begin{proof}
	Keep $\zeta_0\leq v$ as in the preceding proof. Then
	$v_\delta\leq v$, $v_\delta=-\infty$ on
	$X\setminus\Omega_\chi$, and
	\begin{equation}
		\label{eq:vdelta-positive-current}
		\chi+\ddc v_\delta
		\geq(1-\delta)\frac{r\varepsilon_{s,k}}{T^{1-r}}
		(\chi+\ddc\phi_{s,k})+\delta\delta_0\rho.
	\end{equation}
	Suppose that $m=\sup_X(v-U)>0$. The extension property of $v$ and
	continuity of $U$ off $Z$ give a point
	$x_1\in\Omega_\chi\setminus Z$ with $(v-U)(x_1)>3m/4$.
	By \eqref{eq:auxiliary-minimal-singularities}, choose $K_{s,k}<\infty$
	such that $T^r\leq K_{s,k}$ on $\Omega_\chi$. We may fix
	$\delta>0$ so small that
	$$
	(v_\delta-U)(x_1)>m/2,
	\qquad
	c_*:=(1-\delta)^n
	\left(1+\frac{m}{4\varepsilon_{s,k} K_{s,k}}\right)>1.
	$$
	The function $v_\delta-U$ tends to $-\infty$ both along
	$X\setminus\Omega_\chi$ and along $Z$. It therefore attains a
	maximum $m_\delta>m/2$ at a point
	$x_\delta\in\Omega_\chi\setminus Z$.
	
	Choose a coordinate ball $B_R$ centered at $x_\delta$ such that
	$$
	\overline{B_R}\Subset\Omega_\chi\setminus Z,
	\qquad
	v_\delta-U>m/4\quad\hbox{on }\overline{B_R}.
	$$
	For $\kappa>0$ small enough that
	$\kappa\ddc|z-x_\delta|^2\leq\delta\delta_0\rho$ on this ball, set
	$$
	w=v_\delta+\kappa(R^2-|z-x_\delta|^2).
	$$
	It follows from \eqref{eq:vdelta-positive-current} that
	\[
	\chi+\ddc w\geq(1-\delta)r\varepsilon_{s,k} T^{r-1}
	(\chi+\ddc\phi_{s,k}).
	\]
	Thus $w$ is locally $\chi$-plurisubharmonic.
Bedford--Taylor multilinearity and positivity, together with
\eqref{eq:comparison-function-MA-lower-bound}, give on $B_R$
\begin{align*}
	(\chi+\ddc w)^n_{\mathrm{BT}}
	&=\Bigl((1-\delta)(\chi+\ddc v)
	+\underbrace{\delta(\chi+\ddc\zeta_0)
	-\kappa\ddc|z-x_\delta|^2}_{\geq0}\Bigr)^n_{\mathrm{BT}}\\
	&\geq(1-\delta)^n(\chi+\ddc v)^n_{\mathrm{BT}}\\
	&\geq(1-\delta)^n
	\frac{\tau_k(W_\chi-U-s)}{\varepsilon_{s,k}T^r}\,
	e^\gamma\rho^n\\
	&\geq(1-\delta)^n
	\left(1+\frac{v-U}{\varepsilon_{s,k}T^r}\right)
	e^\gamma\rho^n
	\geq c_*e^\gamma\rho^n.
\end{align*}
Here we used $W_\chi-U-s=v-U+\varepsilon_{s,k}T^r$,
$\tau_k(t)\geq t$, and $v-U\geq v_\delta-U>m/4$.
	
	Take a smooth local potential $\theta$ of $\chi$ and write
	$e^\gamma\rho^n=g\beta^n$ on $B_R$. The function $g$ is nonnegative
	and continuous on $\overline{B_R}$. The functions $w+\theta$ and
	$U+\theta$ meet the hypotheses of
	Lemma~\ref{lem:strict-factor-comparison}. On the other hand,
	$$
	(w-U)(x_\delta)=m_\delta+\kappa R^2,
	\qquad
	\max_{\partial B_R}(w-U)\leq m_\delta,
	$$
	contradicting that lemma. Hence $v\leq U$.
\end{proof}

In particular, on $X_s\cap\Omega_\chi$ we have
$h-s\leq\varepsilon_{s,k} T^r$. Raising this inequality to the power
$1/r$, and using $W_\chi\leq0$, gives
\begin{equation}
	\label{eq:pointwise-level-comparison}
	r A_{s,k}^{-1/n}(h-s)^{(n+1)/n}
	\leq-\phi_{s,k}+rA_{s,k}.
\end{equation}
This holds almost everywhere on $X_s$, since
$X\setminus\Omega_\chi$ is a proper analytic set.

\subsection{Energy bounds and De Giorgi iteration}
\label{subsec:entropy-energy-iteration}

For the energy bound, we adapt the entropy tail estimate and
absorption argument of Guo--Phong~\cite{GuoPhongEntropyEnergy}.
Both
cases~\ref{enu:thm-exp} and~\ref{enu:thm-poly} of
Theorem~\ref{thm:determinant-relative-Linfty} reduce to a weighted
moment bound with exponent $\theta>n$, while the superlevel estimate
of Lemma~\ref{lem:fasterdecayphi} below only uses the order-one
entropy $\operatorname{Ent}_{\mathcal D,1}$. Throughout the rest of the
section we use
\[
\text{case~\ref{enu:thm-exp}: }\theta=q,\qquad
\text{case~\ref{enu:thm-poly}: }\theta=q/p'.
\]
\begin{lem}
	\label{lem:generalizedyoungs}
	For $t\geq0$, $b\in\mathbb R$, and $\sigma>0$,
\begin{equation}
	\label{eq:entropy-young}
	t^\sigma e^b\leq e^b(1+b_+)^\sigma+C_\sigma e^{2t}.
\end{equation}
\end{lem}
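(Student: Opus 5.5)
Split according to whether the exponent $b$ is below or above $2t$; in one regime the first term on the right dominates, in the other the second.

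\emph{Case $b_+\geq t$.} Here $t\leq b_+\leq 1+b_+$, so $t^\sigma\leq(1+b_+)^\sigma$. Multiplying by $e^b>0$ gives $t^\sigma e^b\leq e^b(1+b_+)^\sigma$, and the term $C_\sigma e^{2t}$ is nonnegative.

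\emph{Case $b_+<t$.} Then $b\leq b_+<t$, so $e^b\leq e^t$ and hence $t^\sigma e^b\leq t^\sigma e^t$. It remains to compare $t^\sigma$ with $e^t$. Set
\[
C_\sigma:=\sup_{t\geq0}t^\sigma e^{-t}=(\sigma/e)^\sigma<\infty,
\]
the supremum being attained at $t=\sigma$. With this choice,
\[
t^\sigma e^b\leq t^\sigma e^t\leq C_\sigma e^{2t}.
\]
The first term on the right of \eqref{eq:entropy-young} is nonnegative, so the inequality follows.

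The two cases exhaust all $t\geq0$ and $b\in\mathbb R$, which proves \eqref{eq:entropy-young}. There is no real obstacle: the only choice is where to split, and $b_+\gtrless t$ is natural because it makes $t^\sigma\le(1+b_+)^\sigma$ immediate in one regime and $e^b\le e^t$ in the other. The constant $C_\sigma=(\sigma/e)^\sigma$ depends only on $\sigma$. In the application, $b$ will be $\gamma-\log V_\chi$ and $t$ a multiple of $h$ or $-\phi_{s,k}$. The first term then produces the determinant entropy $\operatorname{Ent}_{\mathcal D,\sigma}$, and the second term is absorbed by the exponential integrability of Lemma~\ref{lem:uniform-alpha-integrability}.
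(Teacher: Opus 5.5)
Your proof is correct and is essentially the paper's argument: the paper splits at $t\le 1+b_+$ instead of $t\le b_+$, but in both versions the first term handles small $t$, and for larger $t$ the bound $e^b\le e^t$ combined with $\sup_{t\ge0}t^\sigma e^{-t}<\infty$ gives the second term. Two small inaccuracies do not affect the proof: your opening sentence describes a split at $b$ versus $2t$, which is not the split you use, and in Lemma~\ref{lem:fasterdecayphi} the lemma is applied with $t=\tfrac12\log h$, not a multiple of $h$.
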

\begin{proof}
	If $t\leq1+b_+$, the first term suffices. Otherwise $b_+<t$,
	and $t^\sigma e^b\leq t^\sigma e^t\leq C_\sigma e^{2t}$.
\end{proof}

In the exponential case, apply
\eqref{eq:entropy-young} with $\sigma=q$, $t=-\alpha\varphi/2$, and
$b=\log f$ on $\{f>0\}$. It follows that
\begin{equation}
	\label{eq:entropy-weighted-moment}
	\sup_{\varphi\in\operatorname{PSH}_0(X,\chi)}
	\int_X(-\varphi)^\theta d\mu \leq M_\theta,
\end{equation}
with $M_\theta=C(\alpha,q)M$; also $\mu(X)\leq M$. The order-one
entropy is controlled, since $q>n\geq1$ and
$\operatorname{Ent}_{\mathcal D,1}\leq\operatorname{Ent}_{\mathcal D,q}$.

In the power case, H\"older's inequality gives the same bound with
\[
M_\theta=M,\qquad
\mu(X)\leq M^{1/p}\rho^n(X)^{1/p'}.
\]
This case also controls the order-one entropy. Indeed, the elementary
estimate
\[
e^b(1+b_+)\leq C_p(e^b+e^{pb}),\qquad b\in\mathbb R,
\]
applied with $b=\log f$, gives
\[
\operatorname{Ent}_{\mathcal D,1}(\chi,\gamma,\rho)
\leq C_p\bigl(\mu(X)+\int_Xf^p\rho^n\bigr)
\leq C(p,q,n,M,\int_X\rho^n).
\]

\begin{lem}
	\label{lem:fasterdecayphi}
	With the notation of \eqref{eq:singular-density-entropy}, suppose
	that
	\[
	\operatorname{Ent}_{\mathcal D,1}(\chi,\gamma,\rho)\leq H,
	\qquad
	\int_Xh\,\rho^n\leq L.
	\]
	Then
	\begin{equation}
		\label{eq:entropy-tail}
		\mu\{h>s\}\leq\frac{C(H+L)}{\log s},
		\qquad s>1.
	\end{equation}
\end{lem}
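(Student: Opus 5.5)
We split $\{h>s\}$ according to whether the density $f=e^\gamma/V_\chi$ is small or large relative to a threshold that grows with $s$. On the part where $f$ is large, the entropy bound will control the measure; on the part where $f$ is small, the $L^1$ bound on $h$ together with Chebyshev will do it. Concretely, fix $s>1$ and set $E_s=\{h>s\}$. We decompose
\[
\mu(E_s)=\int_{E_s\cap\{f\le s^{1/2}\}}f\,\rho^n+\int_{E_s\cap\{f>s^{1/2}\}}f\,\rho^n .
\]

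For the first term we use $f\le s^{1/2}$ and Chebyshev:
\[
\int_{E_s\cap\{f\le s^{1/2}\}}f\,\rho^n\le s^{1/2}\,\rho^n(E_s)\le s^{1/2}\,\frac{L}{s}=\frac{L}{s^{1/2}} .
\]
For the second term, on $\{f>s^{1/2}\}$ we have $\log f>\tfrac12\log s>0$. Since $\gamma-\log V_\chi=\log f$, this gives $1+(\gamma-\log V_\chi)_+\ge \tfrac12\log s$. Hence
\[
\int_{\{f>s^{1/2}\}}f\,\rho^n\le \frac{2}{\log s}\int_X\bigl(1+(\gamma-\log V_\chi)_+\bigr)f\,\rho^n=\frac{2\operatorname{Ent}_{\mathcal D,1}}{\log s}\le\frac{2H}{\log s}.
\]

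Combining the two terms, and using that $s^{-1/2}\le C/\log s$ for $s>1$ (the function $\log s/\sqrt s$ is bounded), we obtain
\[
\mu\{h>s\}\le \frac{CL}{\log s}+\frac{2H}{\log s}\le\frac{C(H+L)}{\log s}.
\]
The constant $C$ here is absolute.

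Two points need checking. First, $Z$ contributes nothing: there $h=0$, so $Z\cap E_s=\emptyset$. Second, on $\{f=0\}$ the integrand vanishes, so no convention for $\log 0$ is needed. The only real choice in the argument is the threshold $s^{1/2}$. It is picked so that the Chebyshev term decays polynomially, which dominates the logarithmic rate, while $\log f$ stays comparable to $\log s$ on the complementary set.
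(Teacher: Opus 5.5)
Your argument is correct. It uses the same mechanism as the paper, a split into a high-density part controlled by the entropy and a low-density part controlled by the $L^1$ bound, but it organizes the split differently.

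The paper applies the Young-type inequality \eqref{eq:entropy-young} of Lemma~\ref{lem:generalizedyoungs} pointwise, with $\sigma=1$, $t=\tfrac12\log h$, $b=\log f$. Unwinding that lemma's proof, this is a split at the $h$-dependent threshold $\tfrac12\log h\approx 1+\log_+f$, roughly $f\approx h^{1/2}$:
\begin{itemize}
\item on the high-density side, $\tfrac12(\log h)f\le f(1+\log_+f)$;
\item on the low-density side, $\tfrac12(\log h)f\le\tfrac12 h^{1/2}\log h\le Ch$.
\end{itemize}
Integrating over $X_s$ gives the logarithmic moment bound $\int_{X_s}\log h\,d\mu\le 2(H+CL)$, and the tail estimate follows from $\log h>\log s$ there.

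You instead split at the level-dependent threshold $f=s^{1/2}$. On the low-density part you apply Chebyshev for $\rho^n$ rather than integrating $h$ against the weight. That piece then decays polynomially, like $Ls^{-1/2}$, so the logarithmic rate comes entirely from the entropy term.

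Each route has its advantage. Yours is self-contained and gives explicit absolute constants: $\mu\{h>s\}\le(2H+2L/e)/\log s$. The paper's reuses the same inequality that produces the weighted moment bound \eqref{eq:entropy-weighted-moment}, and it yields the slightly stronger $\log$-moment bound as a byproduct. Only the tail bound is used later, to choose $s_0$ in Lemma~\ref{lem:energygrowth-unified}.
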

\begin{proof}
	On $X_s=\{h>s\}$ with $s>1$ we have $h>s>1$.
	Applying \eqref{eq:entropy-young} with $\sigma=1$,
	$t=\frac12\log h$, and $b=\log f$ on $X_s\cap\{f>0\}$, multiplying by $\rho^n$, and integrating, we obtain
	$$
	\begin{aligned}
	\tfrac12(\log s)\mu(X_s)
&\leq\int_{X_s}\frac{\log h}{2}\,d\mu\\
&\leq\int_Xf(1+\log_+f)\rho^n
+C\int_{X_s}h\,\rho^n\\
&=\operatorname{Ent}_{\mathcal D,1}(\chi,\gamma,\rho)
+C\int_{X_s}h\,\rho^n
\leq H+CL.
	\end{aligned}
	$$
Here we used
$\int_Xf(1+\log_+f)\rho^n
=\operatorname{Ent}_{\mathcal D,1}(\chi,\gamma,\rho)$,
which follows from $f=e^\gamma/V_\chi$ and
$\log_+f=(\gamma-\log V_\chi)_+$.
	This completes the proof.
\end{proof}

We formulate the next lemma in terms of the precise hypotheses used
in its proof, so that it also applies to
Theorem~\ref{thm:infinite-density-relative-Linfty} below.

\begin{lem}
	\label{lem:energygrowth-unified}
	With the notation of \eqref{eq:singular-density-entropy},
	\eqref{eq:level-set-quantities}, and
	\eqref{eq:auxiliary-normalization}, suppose
	$\mu(X)\leq M_0$, that \eqref{eq:entropy-weighted-moment}
	holds for some $\theta>n$, and that for each $s,k$ there is
	$\phi_{s,k}\in\operatorname{PSH}_0(X,\chi)$ satisfying
	\eqref{eq:pointwise-level-comparison} $\mu$-almost everywhere on $X_s$.
	Then:
	\begin{enumerate}[label=\textnormal{(\roman*)}]
		\item\label{enu:unified-moments}
		If the entropy and $L^1$ bounds of Lemma~\ref{lem:fasterdecayphi}
		hold, then $A_0\leq E_0$, where $E_0$ depends only on
		$(n,\theta,M_\theta,M_0,H,L)$.
		\item\label{enu:unified-growth}
		If $A_0\leq E_0$ is given, then
		\begin{equation}
			\label{eq:energy-and-level-growth}
			A_s\leq B_0\Phi(s)^{1+\delta},\qquad
			\delta=\frac1n-\frac1\theta>0,
		\end{equation}
		where $B_0$ depends only on $(n,\theta,M_\theta,M_0,E_0)$.
	\end{enumerate}
	In particular, these constants do not depend on the initial bound for $h$.
\end{lem}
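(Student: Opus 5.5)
The plan is to integrate the pointwise comparison \eqref{eq:pointwise-level-comparison} against $d\mu$ over $X_s$, as in the Guo--Phong--Tong scheme, and to use the weighted moment bound \eqref{eq:entropy-weighted-moment} for the auxiliary potentials $\phi_{s,k}\in\operatorname{PSH}_0(X,\chi)$. Raise \eqref{eq:pointwise-level-comparison} to the power $\theta$ after dividing by the right-hand side's normalization. More precisely, on $X_s$ we have
\[
(h-s)^{(n+1)/n}\leq r^{-1}A_{s,k}^{1/n}\bigl(-\phi_{s,k}+rA_{s,k}\bigr),
\]
and integrating the $\theta$-th power (or using H\"older with exponent $\theta$) gives
\[
\int_{X_s}(h-s)^{\theta(n+1)/n}\,d\mu\leq C A_{s,k}^{\theta/n}\bigl(M_\theta+A_{s,k}^\theta\Phi(s)\bigr).
\]
Then H\"older's inequality yields
\[
A_s=\int_{X_s}(h-s)\,d\mu\leq\Bigl(\int_{X_s}(h-s)^{\theta(n+1)/n}d\mu\Bigr)^{n/(\theta(n+1))}\Phi(s)^{1-n/(\theta(n+1))}.
\]
Let $k\to\infty$ so that $A_{s,k}\to A_s$.

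For part \ref{enu:unified-growth}, we have $A_s\leq A_0\leq E_0$ and $\Phi(s)\leq M_0$. The term $A_{s}^\theta\Phi(s)$ is then bounded by a constant times $\Phi(s)$, so the bracket is $\leq C(M_\theta+E_0^\theta M_0)$. This gives
\[
A_s\leq C A_s^{1/(n+1)}\Phi(s)^{1-n/(\theta(n+1))}.
\]
Absorbing yields
\[
A_s^{n/(n+1)}\leq C\Phi(s)^{1-n/(\theta(n+1))},
\]
that is,
\[
A_s\leq B_0\Phi(s)^{(n+1)/n-1/\theta}=B_0\Phi(s)^{1+\delta}.
\]
The case $A_s=0$ is trivial. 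This is routine exponent bookkeeping.

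For part \ref{enu:unified-moments}, I would apply the same inequality at $s$ large, but now $A_s$ is not a priori bounded, so the absorption must handle the $A_s^{\theta}$ term. Apply the estimate with $s=s_0$. The $A_{s_0}^{\theta/n}A_{s_0}^{\theta}\Phi(s_0)$ contribution gives, after H\"older, a term like $A_{s_0}\Phi(s_0)^{c}$ with $c>0$. Use Lemma~\ref{lem:fasterdecayphi}: choosing $s_0$ depending only on $(H,L)$ makes $\Phi(s_0)$ so small that this term absorbs into the left side, half of $A_{s_0}$. The remaining term gives $A_{s_0}\leq C(n,\theta,M_\theta)$, after the same power absorption as above.

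Finally, bound $A_0\leq A_{s_0}+s_0\mu(X)\leq A_{s_0}+s_0M_0$, since $(h)_+\leq(h-s_0)_++s_0$. This absorption step, which exploits the logarithmic tail decay to defeat the superlinear $A^{\theta}$ growth without any a priori bound on $h$, is the main obstacle. I expect care is needed to split $-\phi+rA$ via $(a+b)^\theta\leq 2^\theta(a^\theta+b^\theta)$ so that the $A$-term carries a positive power of $\Phi(s_0)$.
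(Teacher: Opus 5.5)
Your proposal is correct and follows the paper's proof essentially step for step: raise the pointwise comparison to the power $\theta$, use the weighted moment bound, let $k\to\infty$, apply H\"older, and then either absorb at a level $s_0$ chosen via Lemma~\ref{lem:fasterdecayphi} (part (i)) or use $A_s\leq E_0$ and $\Phi(s)\leq M_0$ (part (ii)). Three details can be sharpened: the $A$-term contributes exactly $C_1A_{s_0}\Phi(s_0)$, so $c=1$; $s_0$ depends on $C_1=C(n,\theta,M_\theta)$ as well as on $(H,L)$; and the absorption uses the qualitative finiteness of $A_{s_0}$, which holds because $h$ is bounded for each fixed $U$.
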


\begin{proof}
	Put $r=n/(n+1)$. Raise \eqref{eq:pointwise-level-comparison} to
	$\theta$, integrate over $X_s$, and use the weighted moment bound:
	\[
	\int_{X_s}(h-s)^{\theta/r}d\mu
	\leq C(n,\theta)A_{s,k}^{\theta/n}
	\bigl(M_\theta+A_{s,k}^\theta\Phi(s)\bigr).
	\]
	Since $A_{s,k}\to A_s$, this gives
	\begin{equation}
		\label{eq:level-high-moment}
		\int_{X_s}(h-s)^{\theta/r}d\mu
		\leq C(n,\theta)A_s^{\theta/n}
		\bigl(M_\theta+A_s^\theta\Phi(s)\bigr).
	\end{equation}
	If $A_s=0$, the desired growth estimate is immediate. Otherwise
	H\"older gives
	\[
	A_s\leq\left(\int_{X_s}(h-s)^{\theta/r}d\mu\right)^{r/\theta}
	\Phi(s)^{1-r/\theta}.
	\]
	Dividing the resulting inequality by $A_s^{1/(n+1)}$ yields
	\begin{equation}
		\label{eq:energy-absorption}
		A_s^r\leq C_1\bigl(\Phi(s)^{1-r/\theta}+A_s^r\Phi(s)\bigr),
		\qquad C_1=C(n,\theta,M_\theta).
	\end{equation}
	For part~\ref{enu:unified-moments}, Lemma~\ref{lem:fasterdecayphi}
	gives a controlled $s_0>1$ with
	$C_1\Phi(s_0)\leq1/2$. Absorption bounds $A_{s_0}$, hence
	\[
	A_0\leq A_{s_0}+s_0\mu(X)\leq E_0.
	\]
	For part~\ref{enu:unified-growth}, use $A_s\leq E_0$ and
	$\Phi(s)\leq M_0$ in \eqref{eq:energy-absorption}. Its right side
	is bounded by $C\Phi(s)^{1-r/\theta}$, and
	$(1-r/\theta)/r=1+1/n-1/\theta$ gives the assertion.
\end{proof}

\begin{thm} \cite[Lemma~2]{MR4593734}
	\label{thm:Degiorgi}
	Let $\Phi:[0,\infty)\to[0,\infty)$ be decreasing and suppose
	\[
	t\Phi(s+t)\leq B\Phi(s)^{1+\delta}
	\quad(s\geq0,\ 0<t\leq1)
	\]
	for $B,\delta>0$. If $\Phi(s_0)^\delta\leq(2B)^{-1}$, then
	$\Phi(s)=0$ for $s\geq s_0+(1-2^{-\delta})^{-1}$.

\end{thm}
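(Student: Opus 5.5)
This is the classical De Giorgi lemma. The plan is to run the standard geometric iteration: starting from $s_0$, take a sequence of levels whose increments shrink geometrically, and show that $\Phi$ decays along them at a super-geometric rate.

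Set $s_j=s_0+\sum_{i=1}^{j}2^{-\delta i}$ and $t_j=2^{-\delta j}\in(0,1]$ for $j\ge1$. Since $\sum_{i\ge1}2^{-\delta i}=2^{-\delta}/(1-2^{-\delta})<(1-2^{-\delta})^{-1}$, every $s_j$ lies below $s_\infty:=s_0+(1-2^{-\delta})^{-1}$. Applying the hypothesis with $s=s_j$ and $t=t_{j+1}$ gives
\[
\Phi(s_{j+1})\le 2^{\delta(j+1)}B\,\Phi(s_j)^{1+\delta}.
\]
The induction claim is
\[
\Phi(s_j)\le 2^{-j}\Phi(s_0),
\]
which holds trivially at $j=0$. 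For the inductive step, substitute the claim at $s_j$ into the recursion:
\[
\Phi(s_{j+1})\le 2^{\delta(j+1)}B\,2^{-j(1+\delta)}\Phi(s_0)^{\delta}\,\Phi(s_0).
\]
The power of $2$ equals $2^{\delta-j}$, so using $B\Phi(s_0)^\delta\le 1/2$ the right side is at most $2^{\delta-j-1}\Phi(s_0)$. This misses the target $2^{-(j+1)}\Phi(s_0)$ by a factor $2^{\delta}$.

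Fixing this exponent bookkeeping is the main obstacle. The first adjustment I would try is to use the increments $t_j=2^{-\delta(j-1)}$ instead, so the first step has $t=1$; this removes the stray $2^\delta$ in the $j=0$ case. If the bookkeeping still does not close, I would aim for the inductive claim
\[
\Phi(s_j)\le 2^{-j/\delta}\,\Phi(s_0)\cdot c
\]
for a suitable constant $c$. The factor $2^{-j(1+\delta)/\delta}$ produced by raising the claim to the power $1+\delta$ then beats the loss $2^{\delta(j+1)}$ only if $\delta$-dependent constants are tracked carefully. This is the classical choice in Kinderlehrer–Stampacchia, and I would follow that computation exactly. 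The increments can be rescaled by a constant so that their total sum is at most $(1-2^{-\delta})^{-1}$, and the smallness hypothesis $\Phi(s_0)^\delta\le(2B)^{-1}$ is exactly what absorbs the leading constant.

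With the decay $\Phi(s_j)\to0$ in hand, monotonicity of $\Phi$ gives $\Phi(s_\infty)\le\Phi(s_j)$ for every $j$, hence $\Phi(s_\infty)=0$. Since $\Phi$ is decreasing and nonnegative, $\Phi(s)=0$ for all $s\ge s_\infty=s_0+(1-2^{-\delta})^{-1}$. Because the statement is quoted from \cite[Lemma~2]{MR4593734}, it would also be acceptable to cite that reference after checking that its normalization of the increments matches the threshold $(1-2^{-\delta})^{-1}$ claimed here.
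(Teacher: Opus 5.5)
Your approach is sound, and your first adjustment is already a complete proof. As written, however, the proposal leaves the decisive inductive step unchecked and falls back on a scheme that does not prove the statement with its stated constants.

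\textbf{The first adjustment closes.} Take $s_{j+1}=s_j+2^{-\delta j}$ for $j\ge0$, i.e.\ $t=2^{-\delta j}\in(0,1]$ at the $j$-th step. The hypothesis gives $\Phi(s_{j+1})\le 2^{\delta j}B\,\Phi(s_j)^{1+\delta}$. Under the inductive bound $\Phi(s_j)\le 2^{-j}\Phi(s_0)$ this yields
\[
\Phi(s_{j+1})\le 2^{\delta j}\,2^{-j(1+\delta)}\,B\Phi(s_0)^{\delta}\,\Phi(s_0)\le 2^{-j}\cdot\tfrac12\,\Phi(s_0)=2^{-(j+1)}\Phi(s_0).
\]
So the index shift removes the stray factor $2^{\delta}$ at every step, not only at $j=0$. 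Moreover, $\sum_{j\ge0}2^{-\delta j}=(1-2^{-\delta})^{-1}$ is exactly the threshold in the statement, and your final monotonicity paragraph then finishes the proof. You should commit to this choice and display this computation.

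\textbf{The fallback should be dropped.} It is not merely unnecessary; it does not give the stated result. The Kinderlehrer--Stampacchia scheme with decay $2^{-j/\delta}$ needs total displacement $d=2^{(1+\delta)/\delta}B\Phi(s_0)^{\delta}$. Under $B\Phi(s_0)^\delta\le\tfrac12$, which allows equality, this is only bounded by $2^{1/\delta}$. That bound is strictly larger than $(1-2^{-\delta})^{-1}$ for every $\delta\neq1$; for example, at $\delta=2$ one has $\sqrt2>4/3$. In addition, for $\delta<1$ its first increment $d/2$ can exceed $1$, outside the range $0<t\le1$ where the hypothesis applies. Rescaling the increments cannot fix this without weakening either the smallness condition or the threshold.

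\textbf{Comparison with the paper.} The paper chooses the step adaptively, $s_{j+1}=s_j+2B\Phi(s_j)^{\delta}$. Here $t\le1$ holds at the first step because $2B\Phi(s_0)^\delta\le1$. The hypothesis then gives the halving $\Phi(s_{j+1})\le\Phi(s_j)/2$ in one line. The geometric bound on the steps is a consequence rather than an input, since $s_{j+1}-s_j\le2^{-\delta}(s_j-s_{j-1})\le2^{-j\delta}$. This requires stopping if $\Phi(s_j)=0$, because the hypothesis needs $t>0$.

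Your corrected fixed-step version instead carries the closed-form bound $\Phi(s_j)\le2^{-j}\Phi(s_0)$ through the induction. It needs no case distinction and yields the identical threshold. The adaptive version has one advantage: its actual displacement $\sum_j2B\Phi(s_j)^\delta$ may be much shorter when $\Phi(s_0)$ is small.
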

\begin{proof}
	Set $s_{j+1}=s_j+2B\Phi(s_j)^\delta$ until $\Phi(s_j)=0$.
	Otherwise induction gives $\Phi(s_{j+1})\leq\Phi(s_j)/2$ and
	$s_{j+1}-s_j\leq2^{-j\delta}$. Thus
	$s_j\uparrow s_\infty\leq s_0+(1-2^{-\delta})^{-1}$, and
	monotonicity gives $\Phi(s_\infty)\leq\Phi(s_j)\to0$.
\end{proof}

\begin{proof}[Proof of Theorem~\ref{thm:determinant-relative-Linfty}]
	The hypotheses of Lemma~\ref{lem:energygrowth-unified} are
	satisfied with the exponent $\theta$ fixed above.
	We obtain $A_0\leq E_0$ and \eqref{eq:energy-and-level-growth}, with
	exactly the dependence stated in the theorem. Since
	\[
	t\Phi(s+t)\leq A_s,\qquad \Phi(t)\leq E_0/t,
	\]
	choose $s_0\geq1+E_0(2B_0)^{1/\delta}$. Theorem~\ref{thm:Degiorgi}
	gives $\Phi(S)=0$ for a controlled $S$.
	
	Because $f>0$ almost everywhere on $X\setminus Z$, this implies
	$W_\chi\leq U+S$ almost everywhere there. Since $W_\chi$ is
	$\chi$-psh and $U+S$ is continuous, the almost-everywhere
	inequality holds everywhere on $X\setminus Z$ (add a local
	smooth potential of $\chi$ and use the submean inequality on
	shrinking balls). On $Z$ the inequality is automatic.
\end{proof}

\begin{thm}
	\label{thm:infinite-density-relative-Linfty}
	Theorem~\ref{thm:determinant-relative-Linfty} remains valid if
	$\gamma\in C(X\setminus Z,[-\infty,+\infty])$ in the extended sense,
	provided
	\[
	S=\{x\in X\setminus Z:\gamma(x)=+\infty\}
	\]
	is locally pluripolar. All other qualitative hypotheses and the
	bounds in case~\ref{enu:thm-exp} or case~\ref{enu:thm-poly} remain
	unchanged, and the constant has the same dependence.
\end{thm}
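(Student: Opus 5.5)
The plan is to rerun the proof of Theorem~\ref{thm:determinant-relative-Linfty} essentially unchanged and to locate exactly where finiteness of $\gamma$ was used. Since $S$ is locally pluripolar it has Lebesgue measure zero. The density $f=e^\gamma/V_\chi$ is therefore a well-defined a.e.\ finite function. Its zero extension is still in $L^{p_0}$ by hypothesis, and all integral quantities $\mu$, $A_s$, $\Phi(s)$, $\operatorname{Ent}_{\mathcal D,q}$ are unaffected by $S$. In particular the auxiliary equation \eqref{eq:auxiliary-MA-extended} is still solvable by Proposition~\ref{prop:big-class-inputs}, with a solution of minimal singularities that is continuous on $\Omega_\chi$. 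Lemma~\ref{lem:comparison-function-psh} also goes through: the lower bound \eqref{eq:comparison-function-MA-lower-bound} only involves the measure $\tau_k e^\gamma\rho^n$, which puts no mass on $S$. Lemmas~\ref{lem:fasterdecayphi}, \ref{lem:energygrowth-unified}, Theorem~\ref{thm:Degiorgi}, and the final a.e.-to-everywhere step then apply verbatim once $v\le U$ is known. The whole problem thus reduces to proving Lemma~\ref{lem:comparison-function-below-U} in the presence of $S$.

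In that proof, the only place $\gamma<\infty$ enters is the local comparison Lemma~\ref{lem:strict-factor-comparison} on a ball $B_R\Subset\Omega_\chi\setminus Z$. There the supersolution $U+\theta$ satisfies $(\ddc b)^n_+\le g\beta^n$ only off $S$, and $g$ is no longer continuous. I would handle this by the standard removal of pluripolar sets in viscosity theory. On $B_R$ choose a psh function $\psi_S\le 0$ with $S\cap \overline{B_{R}}\subset\{\psi_S=-\infty\}$; local pluripolarity provides this after shrinking $R$. Replace $w$ by $w_\epsilon=w+\epsilon\psi_S$. Then $w_\epsilon-U$ equals $-\infty$ on $S$, and $(\ddc(w_\epsilon+\theta))^n\ge (\ddc(w+\theta))^n$ by positivity of mixed terms. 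For small $\epsilon$, the maximum of $w_\epsilon-U$ on $\overline{B_R}$ is still strictly larger than its boundary values, because $\psi_S$ is finite at $x_\delta$ after a generic choice of centre. Since $\psi_S\not\equiv-\infty$, the centre $x_\delta$ can be perturbed to a non-polar point while keeping the strict interior gap $\kappa R^2$. The maximum point then avoids $S$.

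The difficulty is that Lemma~\ref{lem:strict-factor-comparison} is stated globally on a ball with continuous $g$, whereas now the supersolution inequality is only available off a closed-in-$\{\psi_S=-\infty\}$ set. To bypass this, I would replace $g$ by the truncations $g_N=\min(g,N)$. These are continuous on $\overline{B_R}$ as extended-continuous functions, and the truncated density still controls the subsolution side: $(\ddc a)^n\ge c_* g\beta^n\ge c_*g_N\beta^n$. The supersolution inequality $(\ddc b)^n_+\le g_N\beta^n$, however, holds only where $g\le N$. So I would work on the open set $\{g<N\}\cap B_R$, which contains a neighbourhood of the maximum point. I would then apply the Guedj--Zeriahi comparison on a small ball around that point, using the strict maximum created by the quadratic term $\kappa(R^2-|z-x|^2)$ recentred at the maximum of $w_\epsilon-U$. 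The expected main obstacle is making this localization rigorous: the strict maximum must survive both recentring and the $\epsilon\psi_S$ perturbation, with constants independent of $N$ and $\epsilon$. I expect it to work because only local information at one non-pluripolar, finite-$\gamma$ point is needed, and the resulting contradiction is qualitative, so the constant in Theorem~\ref{thm:determinant-relative-Linfty} is unchanged.
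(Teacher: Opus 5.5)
Your reduction is the same as the paper's. Because $S$ is Lebesgue-null, the auxiliary equation, Lemma~\ref{lem:comparison-function-psh} and the energy and De Giorgi steps carry over unchanged. Everything then comes down to $v\le U$. That is proved by pushing the maximum off $S$ with a pole function and applying Lemma~\ref{lem:strict-factor-comparison} on a small ball where $e^\gamma$ is finite and continuous.

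\textbf{The gap.} The gap is the untruncated perturbation $w_\epsilon=w+\epsilon\psi_S$. The polar set $\{\psi_S=-\infty\}$ cannot in general be taken equal to $S$. For example, a psh function equal to $-\infty$ on a closed disc in a complex line is $-\infty$ on the whole line. Moreover, nothing in your choice prevents this polar set from accumulating at the new maximum point $x_*$, since $-\infty$ loci of psh functions can be dense. In that case $w_\epsilon$ is unbounded below on every ball around $x_*$, even one avoiding $S$. The Bedford--Taylor product $(\ddc(w_\epsilon+\theta))^n$ and the mixed-term positivity you invoke are then unavailable. Lemma~\ref{lem:strict-factor-comparison} also explicitly requires a bounded subsolution, since its proof uses the pluripotential--viscosity equivalence for bounded functions. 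Truncating the density by $\min(g,N)$ does not address this; the pole function itself must be truncated.

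\textbf{How the paper handles it.} The paper uses the bounded function $w_N=v_\delta+\kappa_1(R^2-|z-x_\delta|^2)+t\max\{\psi_S,-N\}$. The maximizer $x_\delta$ is not chosen ``generically'': it may itself lie in $S$. So the paper proceeds in three choices:
\begin{enumerate}[label=\textnormal{(\roman*)},nosep]
\item Pick $y$ near $x_\delta$ with $\psi_S(y)>-\infty$ and $(v_\delta-U)(y)$ nearly $m_\delta$. This is possible because $\{\psi_S=-\infty\}$ is Lebesgue-null, the quasi-psh $v_\delta$ satisfies the sub-mean value inequality, and $U$ is continuous at $x_\delta$.
\item Choose $t>0$ with $t\psi_S(y)>-\kappa_1R^2/4$.
\item Choose $N$ with $tN>\kappa_1R^2$.
\end{enumerate}
Then $w_N-U\le m_\delta+\kappa_1R^2-tN<(w_N-U)(y)$ on $S\cap B_R$, while $\limsup_{\partial B_R}(w_N-U)\le m_\delta$. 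Hence the maximum is attained at an interior $x_*\notin S$.

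Since $S$ is relatively closed, take $B_r(x_*)\Subset B_R\setminus S$; there $e^\gamma$ is already finite and continuous, so no density truncation is needed. Rather than recentring the existing quadratic, subtract $\kappa_2|z-x_*|^2$. Positivity must be budgeted in advance by $\kappa_1\beta,\kappa_2\beta\le\delta\delta_0\rho/2$, so that $(\chi+\ddc\widetilde w)^n_{\mathrm{BT}}\ge c_*e^\gamma\rho^n$ still holds. This makes $x_*$ a strict maximum with explicit gap $\kappa_2r^2$, contradicting Lemma~\ref{lem:strict-factor-comparison}.

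The obstacle you anticipated, uniformity in $N$ and $\epsilon$, does not arise. The parameters $t$, $N$ and the balls are fixed once and enter only this qualitative contradiction, so the constants are unchanged.
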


\begin{proof}
	The auxiliary equation, its solution with minimal singularities,
	and Lemma~\ref{lem:comparison-function-psh} remain valid: their
	measure data are unchanged by values on $S$. We only need to prove
	$v\leq U$, and hence \eqref{eq:pointwise-level-comparison} follows.
	
	Suppose $m=\sup_X(v-U)>0$. As in the proof of
	Lemma~\ref{lem:comparison-function-below-U}, choose $\zeta_0\leq v$, a small
	$\delta>0$, and $K_{s,k}$ with $T^r\leq K_{s,k}$, so that
	$v_\delta=(1-\delta)v+\delta\zeta_0$ has a maximum difference
	$m_\delta>m/2$ at $x_\delta\in\Omega_\chi\setminus Z$, and
	\begin{equation}
		\label{eq:extension-strict-factor}
		c_*=(1-\delta)^n\left(1+\frac{m}{4\varepsilon_{s,k} K_{s,k}}\right)>1.
	\end{equation}
	Take a coordinate ball $B_R=B_R(x_\delta)\Subset\Omega_\chi\setminus Z$
	with $v_\delta-U>m/4$ on its closure. If $x_\delta\notin S$, choose
	$B_R$ disjoint from $S$ and use the proof of
	Lemma~\ref{lem:comparison-function-below-U} directly. Otherwise local
	pluripolarity, after shrinking the ball, gives a psh function
	$g\leq0$ on a neighborhood of $\overline B_R$, not identically
	$-\infty$, with $S\cap\overline B_R\subset\{g=-\infty\}$.
	
	Set $g_N=\max\{g,-N\}$ and choose $\kappa_1>0$ with
	$\kappa_1\beta\leq\delta\delta_0\rho/2$. Choose a nearby point
	$y$ where $g(y)>-\infty$ and
	\[
		v_\delta(y)-U(y)+\kappa_1(R^2-|y-x_\delta|^2)
		>m_\delta+\kappa_1R^2/2,
	\]
	then choose $t>0$ so small that $t g(y)>-\kappa_1R^2/4$. For every $N$,
	the bounded local $\chi$-psh function
	\[
	w_N=v_\delta+\kappa_1(R^2-|z-x_\delta|^2)+t g_N
	\]
	satisfies
	\[
	(w_N-U)(y)>m_\delta+\kappa_1R^2/4,
	\qquad \limsup_{z\to\partial B_R}(w_N-U)\leq m_\delta.
	\]
	On $S\cap B_R$ it satisfies
	$w_N-U\leq m_\delta+\kappa_1R^2-tN$.
	Choose $N$ with $tN>\kappa_1R^2$. The upper semicontinuous
	function $w_N-U$ then attains its maximum at an interior point
	$x_*\notin S$. Extended continuity of $\gamma$ makes $S$ relatively
	closed, so choose $B_r(x_*)\Subset B_R\setminus S$.
	
	Choose $\kappa_2>0$ with $\kappa_2\beta\leq\delta\delta_0\rho/2$
	and put $\widetilde w=w_N-\kappa_2|z-x_*|^2$. Then
	by \eqref{eq:vdelta-positive-current}
	\[
	\chi+\ddc\widetilde w
	\geq(1-\delta)r\varepsilon_{s,k} T^{r-1}(\chi+\ddc\phi_{s,k}).
	\]
By the same calculation as in
Lemma~\ref{lem:comparison-function-below-U},
	\[
	(\chi+\ddc\widetilde w)^n_{\mathrm{BT}}\geq c_*e^\gamma\rho^n
	\quad\hbox{on }B_r(x_*).
	\]
	The density is finite, nonnegative, and continuous on this closed
	ball. After adding a local potential of $\chi$,
	Lemma~\ref{lem:strict-factor-comparison} applies to
	$\widetilde w$ and $U$. But their difference at $x_*$ exceeds its
	boundary limsup by at least $\kappa_2r^2$, a contradiction.
	
	Thus \eqref{eq:pointwise-level-comparison} holds. The weighted
	moment bounds, Lemma~\ref{lem:energygrowth-unified}, and the
	De Giorgi argument now give the asserted estimate with the same
	constants. The choices of $g,N,t,\delta$, and the balls enter only
	the qualitative comparison proof.
\end{proof}

\subsection{Consequences for \texorpdfstring{$\mathcal D$}{D}-subsolutions}
\label{subsec:singular-D-relative-estimate}

Proposition~\ref{prop:determinant-reduction} transfers the scalar
estimate to differences of potentials. For a smooth admissible
solution, the reference need only be a viscosity
$\mathcal D$-subsolution. Local regularizability is required when
the supersolution is nonsmooth.

\begin{thm}
	\label{thm:singular-D-relative-Linfty}
	Let $(F,\mathcal A)$ be an admissible pair that satisfies QSH, and let $u\in C^2(X)$
	be a normalized admissible solution of \eqref{eq:main-equation}.
	Let $[\chi]$ be big and let $Z\subset X$ be a closed proper set.
	Let $\gamma\in C(X\setminus Z,\mathbb R)$ and suppose that
	$\underline u\in C(X\setminus Z)$ is a singular viscosity
	$\mathcal D$-subsolution with these data and $\underline u\leq0$.
	Assume that the zero extension
	of $e^\gamma$ belongs to $L^{p_0}(X,\rho^n)$ for some $p_0>1$.
	If $q>n$ and
	$$
	\operatorname{Ent}_{\mathcal D,q}(\chi,\gamma,\rho)\leq H,
	$$
	then
	\begin{equation}
		\label{eq:singular-D-relative-bound}
		u\geq\underline u+W_\chi-C
		\quad\hbox{on }X\setminus Z,
	\end{equation}
	where
	$$
	C=C\bigl(X,\rho,\chi,q,H,
	b_{\mathcal A}+\|\operatorname{tr}_\rho\omega\|_{L^\infty(X)}\bigr).
	$$
	In particular, $u$ is bounded on each compact subset of
	$\Omega_\chi\setminus Z$ by the displayed data and a lower bound
	for $\underline u+W_\chi$ there.
\end{thm}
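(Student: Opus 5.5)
The plan is to reduce to Theorem~\ref{thm:determinant-relative-Linfty}, case~\ref{enu:thm-exp}, applied to the function
$$
U:=u-\underline u\ \hbox{on }X\setminus Z,\qquad U:=+\infty\ \hbox{on }Z .
$$
I would first check the qualitative hypotheses. Since $u\in C^2(X)$ is bounded and $\underline u$ is continuous on $X\setminus Z$ with $\underline u(x)\to-\infty$ as $x\to Z$ (Definition~\ref{def:regularizable-D-subsolution}), $U$ is continuous as a map into $\mathbb R\cup\{+\infty\}$. It also satisfies $\{U=+\infty\}=Z$. Because $\gamma$ is real-valued and continuous on $X\setminus Z$, we have $e^\gamma>0$ everywhere there. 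The $L^{p_0}$ condition on the zero extension of $e^\gamma$ is assumed.

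Next I would obtain the viscosity determinant bound for $U$ on $X\setminus Z$ from the second assertion of Proposition~\ref{prop:determinant-reduction}, applied on $M=X\setminus Z$. There $u$ is a $C^2$ admissible solution, hence a supersolution, and $\underline u$ is a continuous viscosity $\mathcal D$-subsolution. No local regularizability is needed in this case, which is why the statement only assumes a viscosity $\mathcal D$-subsolution.

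For the remaining quantitative hypotheses, note that $W_\chi\leq0$ and $\underline u\leq0$. Hence on $X\setminus Z$
$$
W_\chi-U=W_\chi-u+\underline u\leq -u,
$$
and $(W_\chi-U)_+=0$ on $Z$. Lemma~\ref{lem:admissible-L1} then gives
$$
\int_X(W_\chi-U)_+\rho^n\leq\int_X(-u)\rho^n\leq C(X,\rho)\bigl(b_{\mathcal A}+\|\tr_\rho\omega\|_{L^\infty}\bigr),
$$
which is \eqref{eq:thm-L1-hypothesis}. Choosing $K$ with $\chi\leq K\rho$, Lemma~\ref{lem:uniform-alpha-integrability} supplies $\alpha,C_\alpha$ depending only on $(X,\rho,\chi)$. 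Together with $\operatorname{Ent}_{\mathcal D,q}\leq H$, this verifies case~\ref{enu:thm-exp} with $M$ equal to the maximum of these three bounds. The theorem then yields $\sup_X(W_\chi-U)\leq C$, which on $X\setminus Z$ is exactly \eqref{eq:singular-D-relative-bound}, with the stated dependence of $C$.

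For the final claim, take a compact set $K'\subset\Omega_\chi\setminus Z$. The normalization $\sup_Xu=0$ gives $u\leq0$ there. On the other side, $u\geq\inf_{K'}(\underline u+W_\chi)-C$, and this infimum is finite because $W_\chi$ is continuous on $\Omega_\chi$ and $\underline u$ is continuous off $Z$.

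I expect the only delicate point to be the extended-sense continuity of $U$ at $Z$. It is needed both for Theorem~\ref{thm:determinant-relative-Linfty} and to ensure that $h=(W_\chi-U)_+$ is bounded. It holds because $u$ is bounded while $\underline u\to-\infty$ at $Z$. Beyond that, the argument is bookkeeping of constants.
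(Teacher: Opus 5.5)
Your proposal is correct and matches the paper's proof step for step. The paper also sets $U=u-\underline u$, extended by $+\infty$ on $Z$, and takes the viscosity determinant bound from the last assertion of Proposition~\ref{prop:determinant-reduction}. It then uses $(W_\chi-U)_+\leq -u$ with Lemma~\ref{lem:admissible-L1}, supplies exponential integrability via Lemma~\ref{lem:uniform-alpha-integrability}, and concludes by case~\ref{enu:thm-exp} of Theorem~\ref{thm:determinant-relative-Linfty}. Your additional checks (extended continuity of $U$, $e^\gamma>0$ off $Z$, and finiteness of $\inf(\underline u+W_\chi)$ on compact subsets of $\Omega_\chi\setminus Z$) are the qualitative points the paper states briefly or leaves implicit.
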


\begin{proof}
	The last assertion of Proposition~\ref{prop:determinant-reduction}
	shows that $U=u-\underline u$ satisfies the viscosity determinant
	bound on $X\setminus Z$. Since $u$ is continuous on $X$ and
	$\underline u\to-\infty$ along $Z$, its extension by $+\infty$
	is continuous in the extended sense. Moreover,
	$$
	(W_\chi-U)_+=(W_\chi+\underline u-u)_+\leq-u,
	$$
	because $W_\chi\leq0$, $\underline u\leq0$, and $\sup_Xu=0$.
	Lemma~\ref{lem:admissible-L1}
	provides the $L^1$ bound
	required by Theorem~\ref{thm:determinant-relative-Linfty}.
	Lemma~\ref{lem:uniform-alpha-integrability} supplies the exponential
	moment bound for the fixed smooth form $\chi$. Case~\ref{enu:thm-exp}
	therefore proves \eqref{eq:singular-D-relative-bound}.
\end{proof}

Local regularizability implies the viscosity $\mathcal D$-subsolution
property by Proposition~\ref{prop:D-consistency-stability},
but is not required in Theorem~\ref{thm:singular-D-relative-Linfty}.
When a subsolution is constructed by regularization, neither the
sequence nor the size of its derivatives enters the estimate.
The singular set need not be analytic; analyticity is useful in
applications for constructing the subsolution.

\begin{cor}
	\label{cor:singular-D-uniform-family}
	Let $(F_j,\mathcal A_j,\omega_j,\psi_j,u_j,\underline u_j,\gamma_j)$
	satisfy the hypotheses of
	Theorem~\ref{thm:singular-D-relative-Linfty} with the same
	$(X,\rho,\chi,Z,q)$. The qualitative exponents $p_{0,j}>1$ may vary.
	If
	$$
	\sup_j\operatorname{Ent}_{\mathcal D,q}(\chi,\gamma_j,\rho)\leq H,
	\qquad
	\sup_j\left(b_{\mathcal A_j}
	+\|\operatorname{tr}_\rho\omega_j\|_{L^\infty(X)}\right)\leq B,
	$$
	then
	$$
	u_j\geq\underline u_j+W_\chi-C(X,\rho,\chi,q,H,B)
	\quad\hbox{on }X\setminus Z.
	$$
\end{cor}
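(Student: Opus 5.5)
This follows from applying Theorem~\ref{thm:singular-D-relative-Linfty} to each member of the family and checking that the constant it produces depends only on the uniform data. For each $j$, the hypotheses of Theorem~\ref{thm:singular-D-relative-Linfty} hold by assumption. That theorem gives $u_j\geq\underline u_j+W_\chi-C_j$ on $X\setminus Z$, with $C_j=C(X,\rho,\chi,q,H_j,B_j)$. The point is that $C_j$ depends only on the fixed geometric data and on upper bounds for $\operatorname{Ent}_{\mathcal D,q}(\chi,\gamma_j,\rho)$ and for $b_{\mathcal A_j}+\|\operatorname{tr}_\rho\omega_j\|_{L^\infty}$. Both bounds can be replaced by the uniform $H$ and $B$, since the constant is nondecreasing in these bounds.

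To make this rigorous, I would retrace the proof of Theorem~\ref{thm:singular-D-relative-Linfty} and list where each constant enters.
\begin{enumerate}
\item Lemma~\ref{lem:admissible-L1} gives $\int_X(W_\chi-U_j)_+\rho^n\leq\int_X(-u_j)\rho^n\leq C(X,\rho)B$. This bounds the $L^1$ quantity $L$ uniformly in $j$.
\item Lemma~\ref{lem:uniform-alpha-integrability} depends only on $(X,\rho,K)$ with $\chi\leq K\rho$. Since $\chi$ is fixed, $\alpha$ and $C_\alpha$ are independent of $j$.
\item In case~\ref{enu:thm-exp} of Theorem~\ref{thm:determinant-relative-Linfty}, set $M=\max\{H,C_\alpha,C(X,\rho)B\}$. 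This $M$ is uniform.
\end{enumerate}
Theorem~\ref{thm:determinant-relative-Linfty} then yields $C=C(n,q,\alpha,M)$, with no dependence on $j$.

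The one point needing care is that the exponents $p_{0,j}$ and the qualitative constants must not enter the final bound. These include the bounds $C_{s,k}$ and $K_{s,k}$ for the auxiliary solutions and the boundedness of $h$. The $L^{p_0}$ hypothesis is used only to solve the auxiliary equation \eqref{eq:auxiliary-MA-extended} through Proposition~\ref{prop:big-class-inputs}. The constants $C_{s,k}$ and $K_{s,k}$ appear only in the qualitative comparison of Lemma~\ref{lem:comparison-function-below-U}. Lemma~\ref{lem:energygrowth-unified} states explicitly that its constants $E_0$ and $B_0$ depend only on $(n,\theta,M_\theta,M_0,H,L)$ and not on the initial bound for $h$. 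Here $M_\theta=C(\alpha,q)M$ and $M_0=\mu(X)\leq M$. The De Giorgi lemma, Theorem~\ref{thm:Degiorgi}, uses only $B_0$, $E_0$ and $\delta=1/n-1/q$. So the final $S$, and hence $C$, is uniform.

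The relative bound passes from $U_j=u_j-\underline u_j$ back to the stated inequality on $X\setminus Z$ exactly as before. The only obstacle is this bookkeeping of dependencies, and it is settled by the explicit dependence statements of those lemmas.
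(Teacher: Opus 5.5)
Your proposal is correct and follows the paper's argument. The paper's proof just observes that the constants in Theorem~\ref{thm:singular-D-relative-Linfty} are uniform under these assumptions, and your retracing is the bookkeeping that sentence abbreviates: the $L^1$ bound via Lemma~\ref{lem:admissible-L1}, the Skoda constant via Lemma~\ref{lem:uniform-alpha-integrability}, and the $h$-independent constants of Lemma~\ref{lem:energygrowth-unified} and Theorem~\ref{thm:Degiorgi}, with $p_{0,j}$, $C_{s,k}$, $K_{s,k}$ entering only qualitatively.
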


\begin{proof}
	The constants in Theorem~\ref{thm:singular-D-relative-Linfty} are
	uniform under these assumptions.
\end{proof}

For a continuous viscosity supersolution $\overline u$ of
\eqref{eq:main-equation}, assume in addition that $\underline u$
is locally regularizable. Proposition~\ref{prop:determinant-reduction}
and Theorem~\ref{thm:determinant-relative-Linfty} give
$\overline u\geq\underline u+W_\chi-C$ whenever
$U=\overline u-\underline u$ satisfies the
continuity, boundary divergence, and relative $L^1$ hypotheses of
that theorem. These hypotheses are not inferred merely from the
poles of $\underline u$.

The power case of Theorem~\ref{thm:determinant-relative-Linfty}
also applies to these differences whenever its density and potential
moment bounds hold.

\begin{proof}[Proof of Theorem~\ref{thm:intro-relative-Linfty}]
	Apply Theorem~\ref{thm:singular-D-relative-Linfty} with
	$Z=\varnothing$ and $\underline u=0$.
	By Proposition~\ref{prop:D-consistency-stability}, the zero
	function is a viscosity $\mathcal D$-subsolution with the stated
	data. Continuity of $\gamma$ ensures the required integrability,
	so the estimate follows with the stated constant dependence.
	
	If $\chi\geq0$, $W_\chi=0$. Together with
	$\sup_Xu=0$, this yields the claimed $L^\infty$ bound.
\end{proof}

The following result is one of the main consequences.

\begin{cor}
	\label{cor:det-elliptic-Linfty}
	Suppose that $(F,\mathcal A)$ satisfies QSH and
	\eqref{eq:det-elliptic}
	with constants $\eta,\beta>0$. Let $u\in C^2(X)$ solve
	\eqref{eq:main-equation}, and suppose that $[\chi]$ is big and
	$\omega-\chi\in\mathcal A$. Define
	\begin{equation}
		\label{eq:det-elliptic-density}
		d=\left[\eta^{-1}\bigl(\psi-F_x(\omega-\chi)\bigr)_+\right]^{1/\beta},
		\qquad f=\frac d{V_\chi}.
	\end{equation}
	If $\int_X f(1+\log_+f)^q\rho^n\leq H$ for some $q>n$,
	then the conclusions of
	Theorem~\ref{thm:intro-relative-Linfty} hold.

For a G{\aa}rding polynomial operator $F=G$ of degree $m\geq1$,
the same conclusion holds with $\beta=m/n$ and
$\omega-\chi\in\mathcal C_G^{(1)}$, provided
$G_x^{\mathrm{top}}(P)\geq\eta\det_\rho(P)^{m/n}$ for $P\geq0$.
QSH is still required on the solution's admissible region.
\end{cor}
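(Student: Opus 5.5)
The plan is to reduce Corollary~\ref{cor:det-elliptic-Linfty} to Theorem~\ref{thm:singular-D-relative-Linfty}, taking $Z=\varnothing$ and $\underline u=0$. The first step is to produce a $\mathcal D$-subsolution. Since $\omega-\chi\in\mathcal A$, Proposition~\ref{prop:det-elliptic-D-subsolution} applied with $\underline u=0$ shows that every $P\geq0$ satisfying \eqref{eq:D-subsolution-test} obeys $\det_\rho P\leq d(x)$. Here $d$ is exactly the function in \eqref{eq:det-elliptic-density}, and it is continuous because $\psi$ and $F$ are smooth and $\omega-\chi$ is smooth with values in the open set $\mathcal A$.

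The difficulty is that $\gamma=\log d$ may equal $-\infty$ where $\psi\leq F(\omega-\chi)$, while Theorem~\ref{thm:singular-D-relative-Linfty} requires finite $\gamma$. I will handle this in one of two ways.
\begin{itemize}
\item \emph{Perturbation.} Set $\gamma_\epsilon=\log(d+\epsilon)$. This is continuous and finite, and $e^{\gamma_\epsilon}\geq d$, so Proposition~\ref{prop:det-elliptic-D-subsolution} makes $(\omega,\chi,\gamma_\epsilon)$ a $\mathcal D$-subsolution. Its entropy $\int_X f_\epsilon(1+\log_+f_\epsilon)^q\rho^n$, with $f_\epsilon=(d+\epsilon)/V_\chi$, is at most $2^{q}(H+\text{const})$ for $\epsilon\leq1$, using $(d+\epsilon)\leq 2\max(d,\epsilon)$ and the bound on the volume of $X$. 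Corollary~\ref{cor:singular-D-uniform-family} then gives a constant uniform in $\epsilon$.
\item \emph{Direct.} Use Proposition~\ref{prop:determinant-reduction} together with the extended-valued $\gamma$ allowed in Theorem~\ref{thm:determinant-relative-Linfty}.
\end{itemize}
The perturbation route is cleaner. I also have to check the hypothesis $e^\gamma>0$ almost everywhere. For $\gamma_\epsilon$ it holds trivially, and $L^{p_0}$ integrability follows from continuity. Theorem~\ref{thm:singular-D-relative-Linfty} with $\underline u=0$ then gives $u\geq W_\chi-C$, and $u$ is bounded when $\chi\geq0$. These are exactly the conclusions of Theorem~\ref{thm:intro-relative-Linfty}.

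For the G{\aa}rding polynomial statement, I first need det-ellipticity of $(G,\mathcal A)$ with exponent $m/n$ from the top-part bound. I will invoke the implication \ref{enu:intro-capacity-2}$\Rightarrow$\ref{enu:intro-capacity-1} of Theorem~\ref{thm:intro-capacity-characterization}, applied pointwise in $x$ with the same $\eta$, on $\mathcal A=\operatorname{int}\mathcal C_G^{(1)}$. I must also confirm that $(G,\mathcal A)$ is an admissible pair, using Proposition~\ref{prop:positive-branch-admissible}.

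The one mismatch is that the hypothesis gives only $\omega-\chi\in\mathcal C_G^{(1)}$, which may lie on the boundary, rather than in the interior. I would handle this as follows. Proposition~\ref{prop:det-elliptic-D-subsolution} is only used through $G(A+P)-G(A)\geq\eta\det_\rho(P)^{m/n}$ with $A=\omega-\chi$, and any test with $A+P\in\mathcal A$ is what matters. That inequality extends to $A\in\mathcal C_G^{(1)}$ by continuity of the polynomial $G$: approximate $A$ by $A+t\rho\in\operatorname{int}\mathcal C_G^{(1)}$, which holds by the derivative-nesting and $\mathcal K$-invariance, and let $t\to0$. So the bound $\det P\leq d$ still holds, and the rest proceeds as above.

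The main obstacle I expect is exactly this boundary issue. It requires that $\mathcal C_G^{(1)}+\mathcal Q^{++}\subset\operatorname{int}\mathcal C_G^{(1)}$, which I would take from the $\mathcal K$-G{\aa}rding structure developed in Section~\ref{sec:K-Garding}. QSH is assumed, as stated.
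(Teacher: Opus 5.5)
Your perturbation route is essentially the paper's proof. The paper replaces $d$ by $d+\varepsilon V_\chi$, so the normalized density is exactly $f+\varepsilon$ with entropy bounded by $C_q(H+\rho^n(X))$, and then applies the relative estimate with $\underline u=0$ and $Z=\varnothing$, uniformly in $\varepsilon$. In the polynomial case the paper simply cites Corollary~\ref{cor:polynomial-D-subsolution}, because the Taylor increment \eqref{eq:conic-taylor-increment} already holds for every $A\in\mathcal C_G^{(1)}$; your detour through the interior plus continuity is valid (since $\mathcal C_G^{(1)}+\mathcal Q^{++}\subseteq\operatorname{int}\mathcal C_G^{(1)}$ by upward closedness) but unnecessary.
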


\begin{proof}
	By Proposition~\ref{prop:det-elliptic-D-subsolution}, or Corollary~\ref{cor:polynomial-D-subsolution} in the polynomial case, the function
	$d$ in \eqref{eq:det-elliptic-density} bounds $\det_\rho P$ for every
	$P$ satisfying \eqref{eq:D-subsolution-test} with $\underline u=0$.
	To allow zeros of $d$, set
	$$
	d_\varepsilon=d+\varepsilon V_\chi,
	\qquad \gamma_\varepsilon=\log d_\varepsilon,
	\qquad 0<\varepsilon\leq1.
	$$
	Then $(\omega,\chi,\gamma_\varepsilon)$ is a $\mathcal D$-subsolution
	with a positive continuous density. Its normalized density is
	$f+\varepsilon$, and
	$$
	\int_X(f+\varepsilon)(1+\log_+(f+\varepsilon))^q\rho^n
	\leq C_q\bigl(H+\rho^n(X)\bigr).
	$$
	Theorem~\ref{thm:intro-relative-Linfty}
	applies with constants independent of $\varepsilon$.
\end{proof}

\begin{rem}
\label{rem:det-elliptic-degenerate-eta}
In Corollary~\ref{cor:det-elliptic-Linfty}, the coefficient $\eta$
may be continuous and nonnegative, with proper locally pluripolar
zero set $S:=\{\eta=0\}$. Retain the reference condition
$\omega-\chi\in\mathcal A$ and impose \eqref{eq:det-elliptic}
on $X\setminus S$.
For a G{\aa}rding polynomial operator $F=G$ of degree $m$,
these conditions may be replaced by
\[
\omega-\chi\in\mathcal C_G^{(1)},\qquad
G_x^{\mathrm{top}}(P)\geq\eta(x)\det_\rho(P)^{m/n}
\quad(x\notin S,\ P\geq0),
\]
with $\beta=m/n$.

Proposition~\ref{prop:det-elliptic-D-subsolution} gives the
determinant test bound $\det_\rho P\leq d$ on $X\setminus S$, where
\[
d=\left[\eta^{-1}
\bigl(\psi-F_x(\omega-\chi)\bigr)_+\right]^{1/\beta}.
\]
Corollary~\ref{cor:polynomial-D-subsolution} gives the same bound
in the polynomial case.

Choose $\gamma\in C(X,\mathbb R\cup\{+\infty\})$, continuous
in the extended sense, with $\{\gamma=+\infty\}=S$ and
$e^\gamma\geq d$ on $X\setminus S$, and set $f=e^\gamma/V_\chi$.
The determinant bound is then automatic on $S$.
If $f\in L^{p_0}(X,\rho^n)$ for some $p_0>1$ and
\[
\int_X f(1+\log_+f)^q\rho^n\leq H,\qquad q>n,
\]
Theorem~\ref{thm:infinite-density-relative-Linfty} applies with
$Z=\varnothing$ and $U=u$.
The required $L^1$ bound follows from
$(W_\chi-u)_+\leq-u$ and Lemma~\ref{lem:admissible-L1};
the exponential integrability follows from
Lemma~\ref{lem:uniform-alpha-integrability}.
\end{rem}

The explicit constant in Theorem~\ref{thm:determinant-relative-Linfty}
gives uniform estimates for the degenerating semipositive
backgrounds used in the examples below.

\begin{cor}
	\label{cor:semipositive-uniform-Linfty}
	Let $\chi_j$ be smooth closed real $(1,1)$-forms satisfying
	$$
	0\leq\chi_j\leq K\rho,
	\qquad
	V_{\chi_j}=\int_X\chi_j^n\geq v_0>0.
	$$
	Let $\gamma_j\in C(X,\mathbb R)$ and $U_j\in C(X)$ with
	$\sup_XU_j=0$. Suppose that $U_j$ satisfies the viscosity
	determinant bound for $(\chi_j,\gamma_j)$ and that, for some $q>n$,
	$$
	\sup_j\operatorname{Ent}_{\mathcal D,q}(\chi_j,\gamma_j,\rho)\leq H,
	\qquad
	\sup_j\int_X(-U_j)\rho^n\leq L.
	$$
	Then
	$$
	\sup_j\|U_j\|_{L^\infty(X)}
	\leq C(X,\rho,K,v_0,q,H,L).
	$$
	No positive lower bound for $\chi_j$ as a form is required.
\end{cor}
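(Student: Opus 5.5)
The plan is to apply Theorem~\ref{thm:determinant-relative-Linfty}, case~\ref{enu:thm-exp}, with $Z=\varnothing$ and $U=U_j$, and then check that every constant is uniform in $j$. Since $\chi_j\geq0$, the zero function lies in $\operatorname{PSH}(X,\chi_j)$ and is a competitor in \eqref{eq:intro-envelope}. Hence $W_{\chi_j}=0$ and $[\chi_j]$ is big, because $V_{\chi_j}\geq v_0>0$.

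The hypotheses then check off directly.
\begin{itemize}
\item The $L^1$ hypothesis \eqref{eq:thm-L1-hypothesis} reads $\int_X(-U_j)_+\rho^n=\int_X(-U_j)\rho^n\leq L$, using $\sup_X U_j=0$.
\item The entropy bound $\operatorname{Ent}_{\mathcal D,q}(\chi_j,\gamma_j,\rho)\leq H$ is assumed.
\item The exponential moment bound comes from Lemma~\ref{lem:uniform-alpha-integrability}: since $\chi_j\leq K\rho$, it gives $\alpha,C_\alpha$ depending only on $(X,\rho,K)$, uniform in $j$.
\item The qualitative hypotheses hold because $\gamma_j$ is continuous and finite on compact $X$. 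Then $e^{\gamma_j}>0$ everywhere and $e^{\gamma_j}\in L^\infty\subset L^{p_0}$; these do not enter the constant.
\end{itemize}
Taking $M=\max\{L,H,C_\alpha\}$, the theorem gives $\sup_X(-U_j)\leq C(n,q,\alpha,M)$.

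Combined with $\sup_XU_j=0$, this gives $\|U_j\|_{L^\infty}\leq C$. Here $\alpha$ depends on $(X,\rho,K)$ and $M$ on $(H,L,X,\rho,K)$.

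The main obstacle is checking that $v_0$, and nothing finer about $\chi_j$, is what enters the constants, since the theorem quotes $C=C(n,q,\alpha,M)$. The volume appears in the normalization $f=e^{\gamma}/V_\chi$ and in the moment bound \eqref{eq:entropy-weighted-moment}. Step~(a) of the proof converts $\sup_\varphi\int e^{-\alpha\varphi}\rho^n$ into a weighted moment via \eqref{eq:entropy-young}, with $M_\theta=C(\alpha,q)M$. Since entropy already includes the $1/V_\chi$ weight, $v_0$ enters only through bounding $\mu(X)\leq\operatorname{Ent}\leq H$, so it is harmless.

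I would nonetheless re-read Lemmas~\ref{lem:fasterdecayphi} and~\ref{lem:energygrowth-unified} to confirm that no hidden dependence on $\chi_j$ arises. The candidates are the auxiliary solutions $\phi_{s,k}$ and the constants $C_{s,k}$, $K_{s,k}$, $\zeta_0$, $\delta_0$. These enter only the qualitative comparison Lemma~\ref{lem:comparison-function-below-U}, not the final bound. The lower bound $v_0$ is recorded as harmless, for instance to control $\log V_\chi$ terms if one rewrites the entropy.
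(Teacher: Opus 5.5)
Your proposal is correct and matches the paper's proof. You obtain $W_{\chi_j}=0$ and bigness from $\chi_j\ge0$ and $V_{\chi_j}\ge v_0$, uniform exponential integrability from Lemma~\ref{lem:uniform-alpha-integrability} via $\chi_j\le K\rho$, and the qualitative $L^{p_0}$ condition from continuity of $\gamma_j$; the explicit constant $C(n,q,\alpha,M)$ of Theorem~\ref{thm:determinant-relative-Linfty}, case~\ref{enu:thm-exp}, then gives the bound. One minor correction: $\mu(X)\le\operatorname{Ent}_{\mathcal D,q}\le H$ needs no $v_0$, so $v_0$ enters only qualitatively through bigness, not through that bound.
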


\begin{proof}
	Each $[\chi_j]$ is big and $W_{\chi_j}=0$. By
	Lemma~\ref{lem:uniform-alpha-integrability}, the exponential
	integrability constants are uniform. The qualitative
	$L^{p_0}$ condition holds for each continuous density separately.
	The explicit constant in
	Theorem~\ref{thm:determinant-relative-Linfty} therefore gives the
	assertion.
\end{proof}

In particular, if $\chi_j$ converge smoothly to a semipositive big
form and the determinant densities are uniformly bounded, the
entropy bounds in Corollary~\ref{cor:semipositive-uniform-Linfty}
are automatic. The ABP estimate in Theorem~\ref{thm:ABP-explicit-delta} assumes
$\chi\geq\delta\rho$ and depends on this lower positivity constant.

\begin{rem}
	\label{rem:qiao-entropy}
	Under the structural hypotheses in \cite{MR4913058}, Qiao obtains
	$L^\infty$ estimates at the weaker scale
	$L^1(\log L)^n(\log\log L)^r$, $r>n$.
	We retain the simpler condition $L^1(\log L)^q$, $q>n$, here;
	extending the singular relative estimate to that scale would
	require a corresponding refinement of the entropy and iteration
	arguments.
\end{rem}

\section{\texorpdfstring{${\mathcal K}$}{K}-\gar{} polynomials and \gar{} elliptic operators}
\label{sec:K-Garding}

\gar{} polynomials are introduced in \cite{Fang-MaGard}. This family of polynomials is invariant under positive affine maps and is designed to capture ellipticity as an algebraic condition.

To apply G{\aa}rding theory to the PDEs studied in this paper, we
extend the notion of \gar{} polynomials to the following conic
setting.

Let \(W\) be a real finite-dimensional vector space. We use $A,B,\cdots$ to represent elements of $W$. For a polynomial \(F\in\mathbb{R}[W]\) and a vector \(P\in W\), we write
\[
\pdv_P F(A)
:=
\left.\frac{d}{dt}\right|_{t=0}F(A+tP)
\]
for the directional derivative of \(F\) in the direction \(P\).
\subsection{Conic ray test}
We introduce the conic ray test, which generalizes the positive ray
test defined in \cite{fang2026idealgaardingpolynomials} by replacing the positive orthant with a prescribed convex cone.
\begin{defn}[Conic ray test]
\label{def:K-CRT}
Let \(W\) be a finite-dimensional real vector space, and let
\(\mathcal K\subsetneq W\) be a \emph{closed} convex cone with nonempty
interior. A subset
\(\mathcal U\subseteq W\) is said to pass the
\emph{\(\mathcal K\)-conic ray test}, or \emph{\(\mathcal K\)-CRT},
if
\[
\mathcal U+\mathcal K\subseteq\mathcal U.
\]
\end{defn}
\begin{lem}
\label{lem:uniqueness-K-component}
Notations as above. Let \(F\in\mathbb{R}[W]\) be nonzero. Then \(\{F>0\}\) has at most
one connected component satisfying the \(\mathcal K\)-conic ray
test.
\end{lem}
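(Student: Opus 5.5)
The strategy is to show that any two components of $\{F>0\}$ passing the $\mathcal K$-CRT must intersect, so they coincide. Let $\mathcal U_1,\mathcal U_2$ be two such components. Fix a point $E\in\operatorname{int}\mathcal K$; this is possible because $\mathcal K$ has nonempty interior. Pick $A_1\in\mathcal U_1$ and $A_2\in\mathcal U_2$.

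The first step is to find a common point of $A_1+\mathcal K$ and $A_2+\mathcal K$. Since $E$ is interior, $E+B\in\mathcal K$ for all $B$ with $\|B\|\le r$, for some $r>0$. Hence $tE+B\in\mathcal K$ whenever $\|B\|\le tr$. For $t$ large we get
\[
C:=A_1+tE=A_2+\bigl(tE+(A_1-A_2)\bigr)\in A_2+\mathcal K,
\]
and also $C\in A_1+\mathcal K$. The CRT gives $\mathcal U_i+\mathcal K\subseteq\mathcal U_i$, so $C\in\mathcal U_1\cap\mathcal U_2$. Distinct connected components of $\{F>0\}$ are disjoint, so $\mathcal U_1=\mathcal U_2$.

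This argument never uses that $F$ is a nonzero polynomial beyond $\{F>0\}$ being an open subset (nonzero-ness only ensures the statement is nonvacuous). It also does not use that $\mathcal K\ne W$ or that $\mathcal K$ is closed; the only needed input is $\operatorname{int}\mathcal K\neq\varnothing$. The one point that needs care is checking that the shifted vector $tE+(A_1-A_2)$ lies in $\mathcal K$. This follows from openness of the interior together with the cone property: $tE+B=t\bigl(E+B/t\bigr)$, and $E+B/t\in\mathcal K$ once $\|B\|/t\le r$.
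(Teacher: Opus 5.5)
Your proposal is correct and follows essentially the same route as the paper's proof: pick an interior direction of $\mathcal K$, use openness of $\operatorname{int}\mathcal K$ and the cone property to absorb $A_1-A_2$ into $tE$ (or $TP$) for large $t$, and conclude that the two components share the point $A_1+tE$. The only differences are cosmetic: your roles of $A_1$ and $A_2$ are swapped relative to the paper, and you phrase the openness step with an explicit ball of radius $r$ rather than the limit $P+(A_2-A_1)/T\to P$.
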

\begin{proof}
Suppose that \(\mathcal C_1\) and \(\mathcal C_2\) are two such
components. Choose \(A_i\in\mathcal C_i\) and
\(P\in\operatorname{int}\mathcal K\). Then
\[
P+\frac{A_2-A_1}{T}\longrightarrow P
\qquad\text{as }T\longrightarrow+\infty,
\]
which implies that
\(
P+\frac{A_2-A_1}{T}\in\operatorname{int}\mathcal K
\)
for all sufficiently large \(T\). Since \(\mathcal K\) is a cone,
\(
TP+A_2-A_1\in\mathcal K.
\)
Hence
\[
A_2+TP
=
A_1+(TP+A_2-A_1)
\in\mathcal C_1.
\]
Since also \(TP\in\mathcal K\), we have
\(
A_2+TP\in\mathcal C_2.
\)
Thus
\(
\mathcal C_1\cap\mathcal C_2\neq\varnothing,
\)
and therefore
\(
\mathcal C_1=\mathcal C_2.
\)
\end{proof}
\begin{lem}
\label{lem:K-contained-homogeneous-component}
Let \(\mathcal K\subseteq W\) be a closed convex cone with nonempty
interior, and let \(F\in\mathbb{R}[W]\) be a nonzero homogeneous
polynomial. Suppose that \(\mathcal C\) is a nonempty connected
component of \(\{F>0\}\) satisfying \(\mathcal K\)-CRT. Then
\[
\operatorname{int}\mathcal K\subseteq\mathcal C
\qquad\text{and}\qquad
\mathcal K\subseteq\overline{\mathcal C}.
\]
\end{lem}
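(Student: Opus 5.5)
The plan is to use homogeneity to push points of $\mathcal C$ toward the interior of $\mathcal K$, and then to close up. Let $m=\deg F$. Since $F$ is homogeneous, $F(tA)=t^mF(A)$, so $\{F>0\}$ is invariant under positive dilations $A\mapsto tA$, $t>0$. A dilation is a homeomorphism of $\{F>0\}$ onto itself, so it permutes the connected components. It also preserves the $\mathcal K$-CRT, because $t(\mathcal C+\mathcal K)=t\mathcal C+\mathcal K$ when $\mathcal K$ is a cone. By Lemma~\ref{lem:uniqueness-K-component}, $t\mathcal C$ is therefore a component satisfying the CRT, so $t\mathcal C=\mathcal C$ for every $t>0$. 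In other words, $\mathcal C$ is an open cone.

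Now fix $P\in\operatorname{int}\mathcal K$ and pick any $A\in\mathcal C$. As in the proof of Lemma~\ref{lem:uniqueness-K-component}, $P-A/T\in\operatorname{int}\mathcal K$ for $T$ large. Hence $TP-A\in\mathcal K$, and so $TP=A+(TP-A)\in\mathcal C+\mathcal K\subseteq\mathcal C$. Since $\mathcal C$ is a cone, dividing by $T$ gives $P\in\mathcal C$. This proves $\operatorname{int}\mathcal K\subseteq\mathcal C$.

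For the second inclusion we use that $\mathcal K$ is a closed convex set with nonempty interior, so $\mathcal K=\overline{\operatorname{int}\mathcal K}$. Explicitly, for $Q\in\mathcal K$ and a fixed $P_0\in\operatorname{int}\mathcal K$, the points $Q+\varepsilon P_0$ lie in $\operatorname{int}\mathcal K$ and converge to $Q$. Therefore $\mathcal K\subseteq\overline{\operatorname{int}\mathcal K}\subseteq\overline{\mathcal C}$.

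The only step needing care is the dilation invariance $t\mathcal C=\mathcal C$; this is exactly where homogeneity of $F$ and the uniqueness lemma enter. One could avoid the uniqueness lemma by noting that $\{tA:t\in[1,t_0]\}$ is a connected subset of $\{F>0\}$ through $A$, so $tA$ lies in the same component as $A$. I would actually use this direct path argument, since it is cleaner and needs neither CRT nor uniqueness for the cone property.
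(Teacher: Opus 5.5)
Your proof is correct and is essentially the paper's argument. The paper likewise uses homogeneity to get scaling invariance of $\mathcal C$, writes $P=\varepsilon A+(P-\varepsilon A)$ with $P-\varepsilon A\in\operatorname{int}\mathcal K$ (your $TP=A+(TP-A)$ rescaled by $\varepsilon=1/T$), and closes up using that a closed convex cone with nonempty interior is the closure of its interior; your path argument for $t\mathcal C=\mathcal C$ simply spells out the scaling invariance that the paper asserts without proof.
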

\begin{proof}
Fix \(P\in\operatorname{int}\mathcal K\) and choose
\(A\in\mathcal C\). Since \(\operatorname{int}\mathcal K\) is open,
for all sufficiently small \(\varepsilon>0\),
\begin{equation}\label{add2}
P-\varepsilon A\in\operatorname{int}\mathcal K
\subseteq\mathcal K.\end{equation}
Since \(F\) is homogeneous, \(\mathcal C\) is invariant under
positive scaling, and hence
\begin{equation}\label{add1}
\varepsilon A\in\mathcal C.
\end{equation}
By \eqref{add2}, \eqref{add1} and \(\mathcal K\)-CRT,
\(
P
=
\varepsilon A+(P-\varepsilon A)
\in\mathcal C.
\)
Thus
\(
\operatorname{int}\mathcal K\subseteq\mathcal C.
\)
Since \(\mathcal K\) is closed and convex with nonempty interior,
\(
\mathcal K\subseteq\overline{\mathcal C}.
\)
\end{proof}
\begin{defn}[\(\mathcal K\)-G{\aa}rding component]
\label{def:K-Garding-component}
Let \(F\in\mathbb{R}[W]\) be a nonzero polynomial. If \(\{F>0\}\)
has a connected component satisfying the \(\mathcal K\)-conic ray
test, we denote this unique component by
\(\mathcal C_F^{\mathcal K}\) and call it the
\emph{\(\mathcal K\)-G{\aa}rding component} of \(F\).
\end{defn}
For a positive constant polynomial \(F\), we adopt the convention
\(
\mathcal C_F^{\mathcal K}=W.
\)
\subsection{\(\mathcal K\)-G{\aa}rding polynomials}
We extend the recursive definition of G{\aa}rding polynomials
by replacing the positive coordinate directions with the directions
in \(\mathcal K\).
\begin{defn}[\(\mathcal K\)-G{\aa}rding polynomial]
\label{def:K-Garding}
A nonzero polynomial \(F\in\mathbb{R}[W]\) is called
\emph{\(\mathcal K\)-G{\aa}rding} if its
\(\mathcal K\)-G{\aa}rding component
\(\mathcal C_F^{\mathcal K}\) exists and, whenever
\(\deg F\geq 1\), for every \(P\in\mathcal K\) such that
\(\pdv_P F\not\equiv 0\), the polynomial \(\pdv_P F\) is
\(\mathcal K\)-G{\aa}rding and
\begin{equation}\label{eq:inclusion}
\mathcal C_F^{\mathcal K}
\subseteq
\mathcal C_{\pdv_P F}^{\mathcal K}.
\end{equation}
We denote the class of \(\mathcal K\)-G{\aa}rding polynomials on
\(W\) by
\(
\mathrm{G}^{\mathcal K}[W].
\)
\end{defn}
\eqref{eq:inclusion} in Definition~\ref{def:K-Garding} is the
analogue of the nesting condition
\(
\mathcal C_F\subseteq\mathcal C_{\partial_i F}
\)
for  G{\aa}rding polynomials.

Let \(F\) be a \(\mathcal K\)-G{\aa}rding polynomial of degree \(d\).
For \(0\leq j\leq d\), define
\[
\bigl(\mathcal C_F^{\mathcal K}\bigr)^{(j)}
:=
\bigcap_{\substack{
P_1,\ldots,P_j\in\mathcal K\\
\pdv_{P_1}\cdots\pdv_{P_j}F\not\equiv 0
}}
\mathcal C_{\pdv_{P_1}\cdots\pdv_{P_j}F}^{\mathcal K},
\]
with
\[
\bigl(\mathcal C_F^{\mathcal K}\bigr)^{(0)}
:=
\mathcal C_F^{\mathcal K}.
\]
These components satisfy
\[
\mathcal C_F^{\mathcal K}
=
\bigl(\mathcal C_F^{\mathcal K}\bigr)^{(0)}
\subseteq
\bigl(\mathcal C_F^{\mathcal K}\bigr)^{(1)}
\subseteq
\cdots
\subseteq
\bigl(\mathcal C_F^{\mathcal K}\bigr)^{(d)}
=
W.
\]

\begin{rem}
\label{rem:derived-admissible-region}
The derived sets are upward closed and contain
$\mathcal C_F^{\mathcal K}$. For analytic applications, we choose
\[
\mathcal A_F:=\operatorname{int}\bigl((\mathcal C_F^{\mathcal K})^{(1)}\bigr),
\qquad
\mathcal C_F^{\mathcal K}\subseteq\mathcal A_F
\subseteq(\mathcal C_F^{\mathcal K})^{(1)}.
\]
The Taylor increment estimate holds on the full first-derived set,
which may therefore contain the reference forms used to construct
$\mathcal D$-subsolutions. QSH is imposed separately. 
\end{rem}

\subsection{Scalarization and top homogeneous parts}

We use the recursive characterization of ordinary G{\aa}rding
polynomials, their closure under positive directional derivatives,
and the nesting of their distinguished components from
\cite{Fang-MaGard}. Thus the ordinary theory is the case
$\mathcal K=\overline{\Gamma_m^+}$ of the conic definition.

\begin{thm}
\label{thm:affine-scalarization-K-Garding}
Let $F\in\GS^{\mathcal K}[W]$, let $P_1,\ldots,P_m\in\mathcal K$,
and set
\begin{equation}\label{add3}
L(\x)=A+\sum_i x_iP_i,\qquad f=F\circ L.
\end{equation}
If    $L(\mathbb R^m)\cap\mathcal C_F^{\mathcal K}\neq\varnothing$,   then $f$
is G{\aa}rding. This holds, in particular, if
$A\in\mathcal C_F^{\mathcal K}$ or
$\sum_iP_i\in\operatorname{int}\mathcal K$.
If $A\in\mathcal C_F^{\mathcal K}$, then $f\in\GS_+[\x]$.
\end{thm}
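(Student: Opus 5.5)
We argue by induction on $\deg F$, following the recursive definition of ordinary G{\aa}rding polynomials from \cite{Fang-MaGard}. The ordinary notion is the case $\mathcal K=\overline{\Gamma_m^+}$: $f$ is G{\aa}rding when $\{f>0\}$ has a component $\mathcal C_f$ passing the positive ray test, and for each $i$ with $\partial_i f\not\equiv0$, $\partial_i f$ is G{\aa}rding with $\mathcal C_f\subseteq\mathcal C_{\partial_i f}$. The link between the two notions is the chain rule
\[
\partial_i f=(\partial_{P_i}F)\circ L .
\]
Since $P_i\in\mathcal K$, the polynomial $\partial_{P_i}F$ is again $\mathcal K$-G{\aa}rding whenever it is nonzero. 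This is what makes an induction on degree possible.

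\textbf{The component.} Fix $\mathbf x_0$ with $L(\mathbf x_0)\in\mathcal C_F^{\mathcal K}$. Let $\mathcal C_f$ be the connected component of $\{f>0\}$ containing $\mathbf x_0$. Set $\mathcal U:=L^{-1}(\mathcal C_F^{\mathcal K})$.
\begin{itemize}
\item $\mathcal U$ is open and satisfies $f>0$ on it.
\item $\mathcal U$ passes the positive ray test: if $\mathbf y\in\overline{\Gamma_m^+}$, then $L(\mathbf x+\mathbf y)=L(\mathbf x)+\sum_i y_iP_i$ with $\sum_i y_iP_i\in\mathcal K$, and $\mathcal C_F^{\mathcal K}$ passes $\mathcal K$-CRT.
\end{itemize}
We claim $\mathcal U$ is connected, so that $\mathcal U\subseteq\mathcal C_f$. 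Take $\mathbf x,\mathbf x'\in\mathcal U$ and let $\mathbf x''$ be their coordinatewise maximum. The straight segments from $\mathbf x$ to $\mathbf x''$ and from $\mathbf x'$ to $\mathbf x''$ increase in every coordinate, so they stay in $\mathcal U$ by the ray property. Hence $\mathcal U$ is connected.

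It remains to show $\mathcal C_f$ itself passes the positive ray test. This needs an argument, since $\mathcal C_f$ may be strictly larger than $\mathcal U$. The plan is to follow the proof in \cite{Fang-MaGard} that components of ordinary G{\aa}rding polynomials are upward closed, using the inductive nesting below. First assume the derivative statements for $f$ are known. Along a ray $\mathbf x+t\mathbf e_i$ starting in $\mathcal C_f$, we have $\partial_i f=(\partial_{P_i}F)\circ L>0$ wherever the ray lies in $\mathcal C_{\partial_i f}$. Using $\mathcal C_f\subseteq\mathcal C_{\partial_i f}$ and the upward closedness of $\mathcal C_{\partial_i f}$ (inductive hypothesis), $f$ increases along the ray. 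So the ray never leaves $\{f>0\}$, and therefore stays in $\mathcal C_f$.

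\textbf{Derivatives and nesting.} Fix $i$ with $\partial_i f\not\equiv0$; then $\partial_{P_i}F\not\equiv0$. By definition $\partial_{P_i}F\in\GS^{\mathcal K}[W]$, and by \eqref{eq:inclusion},
\[
L(\mathbf x_0)\in\mathcal C_F^{\mathcal K}\subseteq\mathcal C^{\mathcal K}_{\partial_{P_i}F}.
\]
So $\partial_{P_i}F$ and the same $L$ satisfy the hypothesis of the theorem in lower degree. By induction, $\partial_i f$ is G{\aa}rding, and its component contains $L^{-1}(\mathcal C^{\mathcal K}_{\partial_{P_i}F})\supseteq\mathcal U\ni\mathbf x_0$. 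The nesting $\mathcal C_f\subseteq\mathcal C_{\partial_i f}$ then follows: $\mathcal C_f$ is connected, contains $\mathbf x_0$, and lies in $\{\partial_i f>0\}$. The last fact comes from the monotonicity argument above, run as a simultaneous induction.

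\textbf{Main obstacle.} We expect the hardest step to be the proof that the whole component $\mathcal C_f$, and not only $\mathcal U$, lies in $\{\partial_i f>0\}$ and is ray-closed. The first approach to try is to use the recursive characterization from \cite{Fang-MaGard}. It states that $f$ is G{\aa}rding as soon as some point $\mathbf x_0$ lies in a ray-closed region where $f>0$, all $\partial_i f$ are G{\aa}rding, and their components contain that region; the component $\mathcal C_f$ is then produced by that theorem. Our construction supplies exactly these data with region $\mathcal U$, so invoking this characterization would avoid the direct argument.

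\textbf{Special cases.}
\begin{itemize}
\item If $A\in\mathcal C_F^{\mathcal K}$, take $\mathbf x_0=0$. Then $f(0)>0$ and $0\in\mathcal C_f$, which is the meaning of $f\in\GS_+[\x]$.
\item If $P:=\sum_iP_i\in\operatorname{int}\mathcal K$, the argument of Lemma~\ref{lem:uniqueness-K-component} gives $A+TP\in\mathcal C_F^{\mathcal K}$ for $T$ large. Thus $L(T\mathbf 1)\in\mathcal C_F^{\mathcal K}$, and the intersection hypothesis holds.
\end{itemize}
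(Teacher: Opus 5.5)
\textbf{There is a genuine gap.} It sits exactly at the step you flag as the main obstacle, and it comes from a false belief: $\mathcal C_f$ can never be strictly larger than $\mathcal U=L^{-1}(\mathcal C_F^{\mathcal K})$. The reason is as follows.
\begin{itemize}
\item $L(\mathcal C_f)$ is connected.
\item It lies in $\{F>0\}$, because $f=F\circ L>0$ on $\mathcal C_f$.
\item It meets $\mathcal C_F^{\mathcal K}$ at $L(\mathbf x_0)$.
\end{itemize}
Since $\mathcal C_F^{\mathcal K}$ is an entire connected component of $\{F>0\}$, this forces $L(\mathcal C_f)\subseteq\mathcal C_F^{\mathcal K}$. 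So $\mathcal C_f\subseteq\mathcal U$, and with your inclusion $\mathcal U\subseteq\mathcal C_f$ we get $\mathcal C_f=\mathcal U$. This preimage identification is the heart of the paper's proof.

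Once you have it, the ray test for $\mathcal C_f$ is the one you already proved for $\mathcal U$. If you also put the identification $\mathcal C_f=L^{-1}(\mathcal C_F^{\mathcal K})$ into the inductive statement, the nesting is immediate:
\[
\mathcal C_f=L^{-1}(\mathcal C_F^{\mathcal K})\subseteq L^{-1}(\mathcal C^{\mathcal K}_{\partial_{P_i}F})=\mathcal C_{\partial_i f},
\]
using \eqref{eq:inclusion}. The ordinary recursive criterion then applies exactly as in the paper.

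Without this identification, neither of your workarounds closes the gap.
\begin{itemize}
\item \textbf{The monotonicity argument is circular.} It assumes $\mathcal C_f\subseteq\mathcal C_{\partial_i f}$ in order to keep rays inside $\mathcal C_f$. You then invoke it to obtain $\mathcal C_f\subseteq\{\partial_i f>0\}$, which is the nesting itself. Induction on $\deg F$ cannot break this loop, since both statements concern the same top-degree component $\mathcal C_f$. Moreover, the argument only keeps rays inside $\mathcal C_f$; it never shows $\partial_i f>0$ there.
\item \textbf{The ``recursive characterization'' you propose to cite is false as stated.} Take $f(x)=x^3-3x+3$ and $\mathcal U=(2,\infty)$. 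Then $\mathcal U$ is ray-closed and $f>0$ on it. Also $f'=3x^2-3$ is G{\aa}rding with $\mathcal C_{f'}=(1,\infty)\supseteq\mathcal U$. But the only real root of $f$ lies in $(-3,-2)$, so $0\in\mathcal C_f$ while $0\notin\mathcal C_{f'}$, and $f$ is not G{\aa}rding.
\end{itemize}
The nesting must hold for the full component, which is why $\mathcal C_f=\mathcal U$ is indispensable.

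A smaller point concerns $f\in\GS_+[\x]$. The paper checks nonnegative coefficients directly: $[\x^\alpha]f=\partial_{P_1}^{\alpha_1}\cdots\partial_{P_m}^{\alpha_m}F(A)/\alpha!\geq0$, by recursive nesting at $A\in\mathcal C_F^{\mathcal K}$. Your observation $0\in\mathcal C_f$ yields the same conclusion by that recursion, but you should state that step.
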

\begin{proof}
Induct on $\deg F$, the constant case being immediate.
The nonempty set $C=L^{-1}(\mathcal C_F^{\mathcal K})$ is upward
closed, hence connected: any two points join a common coordinatewise
upper bound. It is a component of $\{f>0\}$, since every connected
subset of $\{f>0\}$ meeting $C$ maps into $\mathcal C_F^{\mathcal K}$.
For each nonzero $\partial_{P_i}F$, the pullback
$\partial_i f=(\partial_{P_i}F)\circ L$ is positive on $C$.
Induction and the same preimage description give
$C\subseteq\mathcal C_{\partial_i f}$ by component nesting. The recursive
criterion proves that $f$ is G{\aa}rding.

If $Q=\sum_iP_i\in\operatorname{int}\mathcal K$, choose
$B\in\mathcal C_F^{\mathcal K}$. For large $t$,
$A+tQ-B\in\mathcal K$, so $L(t\one)\in\mathcal C_F^{\mathcal K}$.
If $A\in\mathcal C_F^{\mathcal K}$, recursive nesting gives
\[
[\x^\alpha]f
=\frac{\partial_{P_1}^{\alpha_1}\cdots
\partial_{P_m}^{\alpha_m}F(A)}{\alpha!}\geq0.
\]
\end{proof}

\begin{cor}
\label{cor:homogeneous-K-scalarization}
Let $H$ be homogeneous and $\mathcal K$-G{\aa}rding of degree
$d\geq1$. If $P_i\in\mathcal K$ and
$\sum_iP_i\in\operatorname{int}\mathcal K$, then
\[
h(\x)=H\left(\sum_i x_iP_i\right)
\]
is homogeneous G{\aa}rding of degree $d$, with nonnegative coefficients.
\end{cor}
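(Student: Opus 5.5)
Corollary~\ref{cor:homogeneous-K-scalarization} is a direct specialization of Theorem~\ref{thm:affine-scalarization-K-Garding} with $F=H$ and $A=0$, so $L(\x)=\sum_i x_iP_i$ and $f=h$. Because $\sum_iP_i\in\operatorname{int}\mathcal K$, that theorem tells us $h$ is G{\aa}rding. Homogeneity of degree $d$ is immediate from the homogeneity of $H$ and the linearity of $L$. What remains is to check that $h\not\equiv0$, so that $h$ really has degree exactly $d$, and that its coefficients are nonnegative.

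\emph{Nonvanishing.} Set $Q=\sum_iP_i$. By Lemma~\ref{lem:K-contained-homogeneous-component}, $\operatorname{int}\mathcal K\subseteq\mathcal C_H^{\mathcal K}$, so $h(\one)=H(Q)>0$. In particular $h$ is a nonzero homogeneous polynomial of degree $d$.

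\emph{Nonnegative coefficients.} The final clause of Theorem~\ref{thm:affine-scalarization-K-Garding} requires $A\in\mathcal C_H^{\mathcal K}$, and $A=0$ is not in $\{H>0\}$ when $d\geq1$. So I would re-center at a point that is in the component. Write $h(\one+\x)=H(Q+\sum_ix_iP_i)$; this is the affine scalarization with base point $A=Q\in\mathcal C_H^{\mathcal K}$. By the theorem it lies in $\GS_+[\x]$, and its coefficients are the values $\partial_{P_1}^{\alpha_1}\cdots\partial_{P_m}^{\alpha_m}H(Q)/\alpha!\geq0$.

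Now pass back to $h$. Since $H$ is homogeneous of degree $d$, each derivative $\partial_{P}^{\alpha}H$ with $|\alpha|=d$ is a constant. For the top-degree coefficients we therefore get
\[
[\x^\alpha]h=\frac{\partial_P^\alpha H(0)}{\alpha!}=\frac{\partial_P^\alpha H(Q)}{\alpha!}\geq0,\qquad|\alpha|=d.
\]
Since $h$ is homogeneous, these are all of its coefficients, so $h$ has nonnegative coefficients.

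\emph{The delicate point.} The only non-immediate step is the sign of $\partial_P^\alpha H(Q)$ when some iterated derivative vanishes identically. In that case the value is $0$, which is still nonnegative. Otherwise the recursive nesting \eqref{eq:inclusion} gives $Q\in\mathcal C_H^{\mathcal K}\subseteq\mathcal C^{\mathcal K}_{\partial_P^\alpha H}$, so the (constant) derivative is positive. This is exactly the argument already used in the proof of the theorem, and it completes the plan.
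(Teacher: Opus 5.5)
Your proof is correct. The first half matches the paper. You get the G{\aa}rding property from Theorem~\ref{thm:affine-scalarization-K-Garding} at $A=0$, and exact degree $d$ from $h(\one)=H(Q)>0$ via Lemma~\ref{lem:K-contained-homogeneous-component}.

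For the nonnegativity of the coefficients you take a different route. The paper simply cites the fact from \cite{Fang-MaGard} that homogeneous G{\aa}rding polynomials have nonnegative coefficients. You instead recenter at $Q\in\mathcal C_H^{\mathcal K}$ and apply the final clause of the theorem to $h(\one+\x)$. You then observe that, since $h$ is homogeneous of degree $d$, its coefficients are exactly the top-degree coefficients of $h(\one+\x)$, because the $d$-th order directional derivatives of $H$ are constants.

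Your version avoids the external citation and stays entirely within the conic framework, using only the nesting \eqref{eq:inclusion}. As a bonus, it identifies the support: for $|\alpha|=d$, $[\x^\alpha]h>0$ exactly when $\partial_{P_1}^{\alpha_1}\cdots\partial_{P_m}^{\alpha_m}H\not\equiv0$. The paper's version is shorter but relies on the ordinary theory.

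Your closing ``delicate point'' paragraph re-proves the last clause of the theorem you had already invoked. It is redundant but harmless.
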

\begin{proof}
By Lemma~\ref{lem:K-contained-homogeneous-component},
$\sum_iP_i\in\mathcal C_H^{\mathcal K}$. Thus
Theorem~\ref{thm:affine-scalarization-K-Garding} applies at $A=0$.
Since $h(\one)>0$, its degree is $d$; homogeneous G{\aa}rding
polynomials have nonnegative coefficients \cite{Fang-MaGard}.
\end{proof}

\begin{lem}
\label{lem:homogeneous-conic-reconstruction}
Let $H\in\mathbb R[W]$ be homogeneous and positive on
$\operatorname{int}\mathcal K$. Suppose
$H(\sum_i x_iP_i)$ is G{\aa}rding for every finite family
$P_i\in\mathcal K$ with $\sum_iP_i\in\operatorname{int}\mathcal K$.
Then $H$ is $\mathcal K$-G{\aa}rding.
\end{lem}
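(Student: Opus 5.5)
We must show that $H$ is $\mathcal K$-G{\aa}rding. That means producing the $\mathcal K$-G{\aa}rding component $\mathcal C_H^{\mathcal K}$ and checking the recursive nesting condition for every $P\in\mathcal K$ with $\partial_PH\not\equiv0$. We induct on $d=\deg H$. The constant case is trivial, so assume $d\geq1$.

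\emph{Step 1: the component.} Since $H>0$ on the convex, hence connected, set $\operatorname{int}\mathcal K$, there is a component $\mathcal C$ of $\{H>0\}$ containing $\operatorname{int}\mathcal K$. We must check $\mathcal K$-CRT: $\mathcal C+\mathcal K\subseteq\mathcal C$. Take $A\in\mathcal C$ and $P\in\mathcal K$. Because $\mathcal C$ is open, we may perturb and assume $P\in\operatorname{int}\mathcal K$. It then suffices to show that the segment $A+tP$, $t\in[0,1]$, stays in $\{H>0\}$.

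The key step is to scalarize. Since $\operatorname{int}\mathcal K$ spans $W$, the point $A$ is a difference of elements of $\operatorname{int}\mathcal K$. Write $A=P_1-P_2$ with $P_i\in\operatorname{int}\mathcal K$, and consider $h(\x)=H(x_1P_1+x_2P_2+x_3P)$. The hypothesis makes $h$ G{\aa}rding and homogeneous. Its G{\aa}rding component $\mathcal C_h$ contains the positive orthant and is upward closed in the coordinate directions. We want $(1,-1,0)\in\mathcal C_h$. For this, connect $A$ to a point of $\operatorname{int}\mathcal K$ by a path in $\mathcal C$. Generically, and after enlarging the family of directions, we can make this path lie in the span of finitely many directions $P_i\in\mathcal K$ with $\sum P_i\in\operatorname{int}\mathcal K$. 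The path then pulls back to a path in $\{h>0\}$ from the orthant to the preimage of $A$, and that preimage therefore lies in $\mathcal C_h$. Upward closure in $x_3$ gives $A+tP\in\{H>0\}$ for all $t\ge0$. Hence $\mathcal C$ passes $\mathcal K$-CRT, and $\mathcal C=\mathcal C_H^{\mathcal K}$ by Lemma~\ref{lem:uniqueness-K-component}. Realizing a path in $\mathcal C$ inside a finite-dimensional scalarization is harmless: a piecewise-linear path uses finitely many vertices, each a difference of interior elements of $\mathcal K$, so one scalarization with all those directions suffices.

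\emph{Step 2: derivatives.} Let $P\in\mathcal K$ with $\partial_PH\not\equiv0$. Then $\partial_PH$ is homogeneous of degree $d-1$. For any family $P_i\in\mathcal K$ with $\sum_iP_i\in\operatorname{int}\mathcal K$, adjoin $P$ as an extra direction $P_0$. The scalarization $H(x_0P+\sum_i x_iP_i)$ is G{\aa}rding by hypothesis. By the closure of ordinary G{\aa}rding polynomials under $\partial_0$, its derivative $(\partial_PH)(\sum_i x_iP_i+x_0P)$ is G{\aa}rding or zero, and restricting to $x_0=0$ keeps it G{\aa}rding. Positivity of $\partial_PH$ on $\operatorname{int}\mathcal K$ follows from the nesting $\mathcal C_h\subseteq\mathcal C_{\partial_0h}$ at points of the positive orthant, together with nonvanishing: the derivative is a nonzero homogeneous G{\aa}rding polynomial, so its coefficients are nonnegative and it is positive on the open orthant. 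Induction then shows $\partial_PH$ is $\mathcal K$-G{\aa}rding.

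\emph{Step 3: nesting.} We need $\mathcal C_H^{\mathcal K}\subseteq\mathcal C_{\partial_PH}^{\mathcal K}$. Given $A\in\mathcal C_H^{\mathcal K}$, use the same finite scalarization as in Step 1, containing $P$ and a path from $A$ into $\operatorname{int}\mathcal K$. Ordinary component nesting $\mathcal C_h\subseteq\mathcal C_{\partial_0h}$ says the pulled-back path lies in $\{\partial_0h>0\}$. Pushing it forward gives a path in $\{\partial_PH>0\}$ from $A$ into $\operatorname{int}\mathcal K\subseteq\mathcal C_{\partial_PH}^{\mathcal K}$, so $A\in\mathcal C_{\partial_PH}^{\mathcal K}$.

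The main obstacle is Step 1, namely transferring connected-component information from $W$ to finite scalarizations. Representing arbitrary points and paths via a common family of $\mathcal K$-directions handles it.
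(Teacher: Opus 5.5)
Your outline has the same architecture as the paper's proof: induction on $\deg H$, the component $\mathcal C$ of $\{H>0\}$ containing $\operatorname{int}\mathcal K$, scalarizations with $P$ adjoined to transfer the ray test and nesting, and, for the induction hypothesis on $J=\partial_PH$, differentiating in the extra variable and restricting to $x_0=0$. However, Step~2 has a gap exactly where the paper does its real work. There are two problems.

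\emph{The restriction claim.} The assertion that ``restricting to $x_0=0$ keeps it G{\aa}rding'' is not a closure property of G{\aa}rding polynomials. For example, $x_0x_1-1$ is G{\aa}rding, with component $\{x_0>0,\ x_0x_1>1\}$, but its restriction to $x_0=0$ is $-1$. Theorem~\ref{thm:affine-scalarization-K-Garding} gives the restriction only when the section meets $\mathcal C_{\partial_0h}$. The coordinate directions spanning the section do not sum to an interior point of the orthant, so this has to be checked.

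\emph{Nonvanishing of $\partial_0h$.} The nesting $\mathcal C_h\subseteq\mathcal C_{\partial_0h}$ is available only when $\partial_0h\not\equiv0$. For an arbitrary family this does not follow from $J\not\equiv0$ on $W$, since $J$ could a priori vanish on the span of the chosen directions. You assert this nonvanishing, and the restriction claim, before $J>0$ on $\operatorname{int}\mathcal K$ has been established.

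Both points are repaired as in the paper.
\begin{itemize}
\item Take interior $B_i$ spanning $W$, and let $L(s,\mathbf x)=sP+\sum_ix_iB_i$ and $q=H\circ L$.
\item Since $L$ is surjective, $\partial_sq=J\circ L\not\equiv0$.
\item Also $L^{-1}(\mathcal C)$ is a connected component of $\{q>0\}$ containing the open orthant, hence equals $\mathcal C_q$.
\item The ray test and nesting for $q$ then give $\mathcal C+P\subseteq\mathcal C$ and $J>0$ on $\mathcal C\supseteq\operatorname{int}\mathcal K$ at once.
\end{itemize}
Now take an arbitrary family with $Q=\sum_iP_i$ interior, and set $h(x_0,\mathbf x)=H(x_0P+\sum_ix_iP_i)$.
\begin{itemize}
\item The point $(0,\mathbf 1)$ satisfies $h(0,\mathbf 1)=H(Q)>0$ and lies in the closure of the open orthant, so $(0,\mathbf 1)\in\mathcal C_h$.
\item Nesting is legitimate because $\partial_0h(0,\mathbf 1)=J(Q)>0$, so $(0,\mathbf 1)\in\mathcal C_{\partial_0h}$.
\item Theorem~\ref{thm:affine-scalarization-K-Garding} applied on the section $x_0=0$ then shows that $J(\sum_ix_iP_i)$ is G{\aa}rding, and the induction applies.
\end{itemize}

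Your piecewise-linear lifting in Steps~1 and~3 is correct, but it becomes unnecessary. Since $J>0$ on the connected set $\mathcal C$, and $\mathcal C$ meets $\mathcal C_J^{\mathcal K}$ along $\operatorname{int}\mathcal K$ (Lemma~\ref{lem:K-contained-homogeneous-component}), you already get $\mathcal C\subseteq\mathcal C_J^{\mathcal K}$.
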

\begin{proof}
Induct on $\deg H$, starting with positive constants. Let $C$ be
the component of $\{H>0\}$ containing $\operatorname{int}\mathcal K$.
For $P\in\mathcal K$, choose $B_i\in\operatorname{int}\mathcal K$
spanning $W$ and set
\[
L(s,\x)=sP+\sum_i x_iB_i,\qquad q=H\circ L.
\]
Surjectivity of $L$ makes $L^{-1}(C)$ connected; it contains the
positive orthant and therefore equals $\mathcal C_q$.
The ray test and component nesting for $q$ give
$C+P\subset C$ and, whenever $J:=\partial_PH\not\equiv0$,
$J>0$ on $C$.

For such a nonzero $J$ and any $Q_i\in\mathcal K$ with
$Q=\sum_iQ_i$ interior, set
$r(s,\x)=H(sP+\sum_i x_iQ_i)$.
The point $(0,\one)$ lies in $\mathcal C_r$ because it is a limit
of positive points and $H(Q)>0$. Since $J(Q)>0$, component nesting
places it in $\mathcal C_{\partial_s r}$.
Apply Theorem~\ref{thm:affine-scalarization-K-Garding} to
$\partial_s r$ on the section $s=0$: the scalarization
$J(\sum_i x_iQ_i)$ is G{\aa}rding.
Induction makes $J$ $\mathcal K$-G{\aa}rding. Since $J>0$ on $C$
and both components contain $\operatorname{int}\mathcal K$,
$C\subseteq\mathcal C_J^{\mathcal K}$.
This proves the recursive criterion.
\end{proof}

\begin{prop}
\label{prop:conic-top-part}
If $F$ is $\mathcal K$-G{\aa}rding of degree $d\geq1$, then
$H=F^{\mathrm{top}}$ is $\mathcal K$-G{\aa}rding and positive on
$\operatorname{int}\mathcal K$.
\end{prop}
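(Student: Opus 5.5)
The plan is to reduce the statement to Lemma~\ref{lem:homogeneous-conic-reconstruction}. That lemma leaves two things to check for $H=F^{\mathrm{top}}$: that $H>0$ on $\operatorname{int}\mathcal K$, and that every scalarization $H(\sum_i x_iP_i)$ with $P_i\in\mathcal K$ and $\sum_iP_i\in\operatorname{int}\mathcal K$ is G{\aa}rding. Both will come from the affine scalarization theorem applied to $F$, combined with the closure of ordinary G{\aa}rding polynomials under taking top homogeneous parts, which is part of the ordinary theory of \cite{Fang-MaGard}.

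\textbf{Step 1: scalarizations.} Fix $P_1,\ldots,P_m\in\mathcal K$ with $Q=\sum_iP_i\in\operatorname{int}\mathcal K$. By Theorem~\ref{thm:affine-scalarization-K-Garding} with $A=0$, the polynomial $f(\x)=F(\sum_ix_iP_i)$ is G{\aa}rding. Its top-degree part is
\[
f^{\mathrm{top}}(\x)=H\Bigl(\sum_ix_iP_i\Bigr),
\]
provided this polynomial is not identically zero, that is, provided $\deg f=d$. The ordinary theory then says $f^{\mathrm{top}}$ is G{\aa}rding and positive on the open positive orthant.

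\textbf{Step 2: the degree does not drop.} This is the main obstacle: one must exclude $H\circ L\equiv0$, where $L(\x)=\sum_ix_iP_i$. The key is a positivity argument in a spanning family. Choose $B_1,\ldots,B_N\in\operatorname{int}\mathcal K$ spanning $W$. Scalarizing $F$ along this family gives a G{\aa}rding polynomial $g(\x)=F(\sum_jx_jB_j)$. Since the linear map $\x\mapsto\sum_jx_jB_j$ is surjective, $g^{\mathrm{top}}=H\circ(\sum_jx_jB_j)$ has degree exactly $d$, so $H\circ(\sum_jx_jB_j)$ is a nonzero top part of a G{\aa}rding polynomial. By the ordinary theory it is positive on the open orthant. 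Every point of $\operatorname{int}\mathcal K$ can be written as $\sum_jx_jB_j$ with all $x_j>0$: perturb a small positive combination by an interior point. Hence $H>0$ on $\operatorname{int}\mathcal K$, which is the positivity claim. In particular $H(Q)=(H\circ L)(\one)>0$, so $\deg f=d$ in Step 1.

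If the ordinary closure result for top parts is unavailable in exactly this form, I would prove positivity of $g^{\mathrm{top}}$ on the orthant directly. Take $\x$ in $\mathcal C_g$, which exists because the orthant shifted far out lies in it. Then $g(t\x)/t^d\to g^{\mathrm{top}}(\x)$ as $t\to\infty$, and this limit is $\geq0$ since $t\x\in\mathcal C_g$. Strictness would come from $d$-fold differentiation in positive directions: $\partial_{\x}^dg$ is a positive constant by the nesting property, and it is a multiple of $g^{\mathrm{top}}(\x)$.

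\textbf{Step 3: conclusion.} With positivity on $\operatorname{int}\mathcal K$ and all admissible scalarizations G{\aa}rding, Lemma~\ref{lem:homogeneous-conic-reconstruction} shows that $H$ is $\mathcal K$-G{\aa}rding.
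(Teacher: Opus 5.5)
Your architecture is the paper's. You reduce to Lemma~\ref{lem:homogeneous-conic-reconstruction}. You get $H>0$ on $\operatorname{int}\mathcal K$ from a scalarization along a spanning family of interior vectors, with surjectivity forcing degree $d$. You get the G{\aa}rding property of $H(\sum_i x_iP_i)$ as the top part of a G{\aa}rding scalarization via the ordinary theory.

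\textbf{The gap is in Step~2.} For a \emph{fixed} finite family $B_1,\dots,B_N\in\operatorname{int}\mathcal K$, it is not true that every $Q\in\operatorname{int}\mathcal K$ equals $\sum_jx_jB_j$ with all $x_j>0$. Every nonzero nonnegative combination of the $B_j$ is a positive multiple of a convex combination of them. So the cone they generate lies in $\operatorname{int}\mathcal K\cup\{0\}$, and its unit-sphere section is a compact subset of $\operatorname{int}\mathcal K$. It therefore misses interior points close to $\partial\mathcal K$ as soon as $\dim W\geq2$; for instance $\mathcal K=Q(V)^+$ with $n\geq2$.

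As written, you only prove $H>0$ on the open cone generated by your family. Lemma~\ref{lem:homogeneous-conic-reconstruction}, and the degree count in your Step~1, need positivity at every interior $Q$. The fix is to let the family depend on $Q$, which is exactly what the paper does. For a basis $E_1,\dots,E_N$ of $W$, take the $2N$ vectors $Q/(2N)\pm\varepsilon E_j$. For small $\varepsilon>0$ they are interior, span $W$, and sum to $Q$, so $H(Q)=g^{\mathrm{top}}(\one)>0$.

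With this repair your proof is correct. It differs from the paper only in the base point. The paper scalarizes at $A\in\mathcal C_F^{\mathcal K}$, so Theorem~\ref{thm:affine-scalarization-K-Garding} already gives nonnegative coefficients, and $f^{\mathrm{top}}(\one)>0$ is immediate once $\deg f=d$. Scalarizing at $A=0$, you must instead call on top-part closure and on positivity of homogeneous G{\aa}rding polynomials on the open orthant.

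Your fallback for that step is circular as stated. Nesting makes $\partial_{\x}^dg$ a positive constant only if it is nonzero, which is what you are trying to show. The correct version works coordinatewise. For $|\alpha|=d$, $\partial^\alpha g=\alpha!\,[\x^\alpha]g$ is either $0$ or, by iterated nesting, a G{\aa}rding constant, hence positive. So $g^{\mathrm{top}}$ is a nonzero polynomial with nonnegative coefficients, and it is positive on the open orthant.
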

\begin{proof}
Fix $A\in\mathcal C_F^{\mathcal K}$ and $Q\in\operatorname{int}\mathcal K$.
Choose interior vectors $B_i$ spanning $W$ with $\sum_iB_i=Q$,
using small opposite perturbations of $Q/(2\dim W)$.
By Theorem~\ref{thm:affine-scalarization-K-Garding},
$f(\x)=F(A+\sum_i x_iB_i)$ has nonnegative coefficients;
surjectivity gives $\deg f=d$. Hence $H(Q)=f^{\mathrm{top}}(\one)>0$.

For any $P_i\in\mathcal K$ with interior sum,
$F(A+\sum_i x_iP_i)$ is G{\aa}rding of degree $d$.
Its top part $H(\sum_i x_iP_i)$ is G{\aa}rding by top-degree
truncation, polarization, and diagonal specialization
\cite{Fang-MaGard}. Lemma~\ref{lem:homogeneous-conic-reconstruction}
therefore applies.
\end{proof}

\begin{lem}
\label{lem:positive-linear-pullback}
Let $p\in\GS[\mathbb R^m]$ and let $L:W\to\mathbb R^m$ be
surjective with $L(\mathcal K)\subseteq\mathbb R_+^m$.
Then $p\circ L$ is $\mathcal K$-G{\aa}rding, with component
$L^{-1}(\mathcal C_p)$.
\end{lem}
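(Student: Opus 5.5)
We prove the statement by induction on $\deg p$, following the recursive definition (Definition~\ref{def:K-Garding}). We set $F=p\circ L$ and $C:=L^{-1}(\mathcal C_p)$. For the degree-zero case, $p$ is a positive constant, so $F$ is a positive constant. Its component is $W=L^{-1}(\mathbb R^m)$ by convention, and $\mathcal C_p=\mathbb R^m$ also by convention.

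\emph{Step 1: $C$ is the $\mathcal K$-G{\aa}rding component of $F$.} Because $L$ is surjective and linear, it is an open quotient map whose fibers are affine subspaces, hence connected. The preimage of a connected open set under such a map is connected: if $C=U_1\sqcup U_2$ with $U_i$ open, each fiber lies in one $U_i$, so the $U_i$ are saturated, and their images give a disconnection of $\mathcal C_p$. Thus $C$ is connected and open, and it lies in $\{F>0\}$. It is a full component because any connected subset of $\{F>0\}$ meeting $C$ maps to a connected subset of $\{p>0\}$ meeting $\mathcal C_p$, hence into $\mathcal C_p$. For the ray test, $L(\mathcal K)\subseteq\mathbb R^m_+$ gives $L(C+\mathcal K)\subseteq\mathcal C_p+\mathbb R^m_+\subseteq\mathcal C_p$, using the positive ray test for ordinary G{\aa}rding components. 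Lemma~\ref{lem:uniqueness-K-component} then identifies $C=\mathcal C_F^{\mathcal K}$.

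\emph{Step 2: derivatives.} Take $P\in\mathcal K$ and write $LP=\sum_i a_ie_i$ with $a_i\geq0$. Then $\partial_PF=(\partial_{LP}p)\circ L$ and $\partial_{LP}p=\sum_ia_i\partial_ip$. The main obstacle is that the ordinary theory gives nesting for each coordinate derivative $\partial_ip$ separately, while here we need it for the nonnegative combination $q:=\sum a_i\partial_ip$. We handle this with the closure of ordinary G{\aa}rding polynomials under positive directional derivatives from \cite{Fang-MaGard}, which was already invoked in the scalarization subsection. It gives that $q$ is G{\aa}rding whenever $q\not\equiv0$, and that $\mathcal C_p\subseteq\mathcal C_q$.

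If a self-contained argument is wanted, we would argue as follows. Each nonzero $\partial_ip$ is positive on $\mathcal C_p$, so $q>0$ on $\mathcal C_p$. Then $q$ is realized as $\partial_s$ of the scalarization $p(\x+s\,a)$ at $s=0$, and Theorem~\ref{thm:affine-scalarization-K-Garding} is applied with $\mathcal K=\overline{\Gamma^+}$.

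Once $q$ is known to be G{\aa}rding with $\mathcal C_p\subseteq\mathcal C_q$, the induction hypothesis (note $\deg q<\deg p$) shows that $\partial_PF=q\circ L$ is $\mathcal K$-G{\aa}rding with component $L^{-1}(\mathcal C_q)$. Hence
\[
\mathcal C_F^{\mathcal K}=L^{-1}(\mathcal C_p)\subseteq L^{-1}(\mathcal C_q)=\mathcal C^{\mathcal K}_{\partial_PF}.
\]
Finally, $\partial_PF\not\equiv0$ forces $q\not\equiv0$, because $L$ is surjective. This verifies every requirement of Definition~\ref{def:K-Garding}.
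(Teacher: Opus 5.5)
Your proposal is correct and follows the paper's proof, with more detail filled in. Surjectivity of $L$ makes $L^{-1}(\mathcal C_p)$ a connected component of $\{p\circ L>0\}$, the inclusion $L(\mathcal K)\subseteq\mathbb R^m_+$ gives the conic ray test, and the identity $\partial_P(p\circ L)=(\partial_{LP}p)\circ L$ together with closure of ordinary G{\aa}rding polynomials under nonnegative directional derivatives (with nesting) closes the induction on degree; the only quibble is that $\partial_PF\not\equiv0\Rightarrow q\not\equiv0$ holds trivially, and surjectivity is what gives the converse.
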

\begin{proof}
Surjectivity makes $L^{-1}(\mathcal C_p)$ a connected component
of $\{p\circ L>0\}$; the cone inclusion makes it upward closed.
The identity
$\partial_P(p\circ L)=(\partial_{L(P)}p)\circ L$,
ordinary directional-derivative closure, and component nesting
prove the assertion by induction on degree.
\end{proof}

\subsection{G{\aa}rding elliptic operator}
We apply the $\mathcal{K}$-\gar{} construction in the following setting.

Let \(V\) be a complex vector space of dimension \(n\), and let \(W=Q(V)\) be the real vector space of Hermitian forms on \(V\). For convenience, we fix a nondegenerate positive Hermitian form \(\rho\) on \(V\). Therefore, $\rho\in Q(V)^{++}$.

\begin{defn}[G{\aa}rding elliptic operator]\label{def:Garding-elliptic-operator}A polynomial\[ G:Q(V)\longrightarrow\mathbb R\]is called a \emph{G{\aa}rding elliptic operator} if it is \(Q(V)^+\)-G{\aa}rding.\end{defn}
For a G{\aa}rding elliptic operator, we abbreviate
\[
\mathcal C_G := \mathcal C_G^{Q(V)^+}
\qquad\text{and}\qquad
\mathcal C_G^{(j)} := \bigl(\mathcal C_G^{Q(V)^+}\bigr)^{(j)}.
\]

We also set $\mathcal A_G:=\operatorname{int}\mathcal C_G^{(1)}$.

\begin{prop}
\label{prop:positive-branch-admissible}
Let $G$ be a G{\aa}rding elliptic operator of degree $d\geq1$.
Then $(G,\mathcal A_G)$ satisfies \textnormal{(A1)--(A3)}. For
$A\in\mathcal C_G^{(1)}$ and $P\geq0$,
\begin{equation}\label{eq:conic-taylor-increment}
G(A+P)-G(A)\geq G^{\mathrm{top}}(P).
\end{equation}
Moreover, $\mathcal C_G^{(1)}\cap\{G>0\}=\mathcal C_G$.
\end{prop}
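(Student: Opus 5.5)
The plan is to prove the Taylor increment estimate \eqref{eq:conic-taylor-increment} first. The identity $\mathcal C_G^{(1)}\cap\{G>0\}=\mathcal C_G$ and the conditions (A1)--(A3) then follow from it by connectedness arguments along rays in the direction $\rho$.

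\emph{Taylor increment.} Fix $A\in\mathcal C_G^{(1)}$ and $P\in\mathcal Q(V)^+$, and write
\[
G(A+P)-G(A)=\sum_{k=1}^{d}\frac{\partial_P^kG(A)}{k!}.
\]
The top term is $\partial_P^dG(A)/d!=G^{\mathrm{top}}(P)$, since $\partial_P^d$ kills all lower-degree parts. If $\partial_PG\equiv0$, both sides of \eqref{eq:conic-taylor-increment} vanish. Otherwise, $J:=\partial_PG$ is $\mathcal Q(V)^+$-G{\aa}rding by Definition~\ref{def:K-Garding}, and $A\in\mathcal C_J$ by the definition of $\mathcal C_G^{(1)}$. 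We apply Theorem~\ref{thm:affine-scalarization-K-Garding} to $J$ with the single direction $P$ and base point $A\in\mathcal C_J$. It shows that $t\mapsto J(A+tP)$ lies in $\GS_+[t]$, so all its Taylor coefficients $\partial_P^{k-1}J(A)/(k-1)!$ are nonnegative. Hence every term with $1\le k\le d$ is nonnegative; in particular $G^{\mathrm{top}}(P)\ge0$. Dropping the terms $k<d$ gives \eqref{eq:conic-taylor-increment}. The step needing the most care is this one: one must use $A\in\mathcal C_{\partial_PG}$, rather than $A\in\mathcal C_G$, to get nonnegativity of all higher derivatives. That is exactly why $\mathcal C_G^{(1)}$, and not $\mathcal C_G$, appears in the statement.

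\emph{The identity $\mathcal C_G^{(1)}\cap\{G>0\}=\mathcal C_G$.} The inclusion $\supseteq$ is immediate. For the converse, let $A\in\mathcal C_G^{(1)}$ with $G(A)>0$. By \eqref{eq:conic-taylor-increment} with $P=t\rho$, and by Proposition~\ref{prop:conic-top-part},
\[
G(A+t\rho)\ge G(A)+t^dG^{\mathrm{top}}(\rho)>0\qquad\text{for all }t\ge0.
\]
Fix $B\in\mathcal C_G$. For large $t$ we have $A+t\rho-B\in\mathcal Q(V)^+$, so the conic ray test gives $A+t\rho\in\mathcal C_G$. Thus the ray $\{A+t\rho:t\ge0\}$ is a connected subset of $\{G>0\}$ meeting $\mathcal C_G$. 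It therefore lies in the component $\mathcal C_G$, and in particular $A\in\mathcal C_G$.

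\emph{(A1)--(A3).} For (A1), each $\mathcal C^{\mathcal K}_{\partial_{P_1}\cdots}$ passes the conic ray test, so $\mathcal C_G^{(1)}$ is upward closed. Translation by $P\ge0$ is a homeomorphism, so $\operatorname{int}\mathcal C_G^{(1)}+P\subseteq\operatorname{int}\mathcal C_G^{(1)}$. Also $\mathcal C_G$ is open and contained in $\mathcal C_G^{(1)}$, so $\mathcal A_G\supseteq\mathcal C_G\neq\varnothing$.

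For (A2), if $A\in\mathcal A_G$ and $P>0$, then \eqref{eq:conic-taylor-increment} and Proposition~\ref{prop:conic-top-part} give $G(A+P)-G(A)\ge G^{\mathrm{top}}(P)>0$.

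For (A3), let $A_0\in\partial\mathcal A_G$. By continuity it suffices to show $G(A_0)\le0$, so suppose instead that $G(A_0)>0$.
\begin{enumerate}[label=\textup{(\roman*)},nosep]
\item For each $t>0$ we have $A_0+t\rho\in\mathcal A_G$. Indeed, choose $A\in\mathcal A_G$ so close to $A_0$ that $A_0-A+t\rho\ge0$; then $A_0+t\rho=A+(A_0-A+t\rho)\in\mathcal A_G$ by (A1).
\item For small $t>0$ we also have $G(A_0+t\rho)>0$, so $A_0+t\rho\in\mathcal C_G$ by the identity just proved.
\item The segment $\{A_0+t\rho:0\le t\le t_0\}$ is a connected subset of $\{G>0\}$ that meets $\mathcal C_G$, hence $A_0\in\mathcal C_G\subseteq\mathcal A_G$.
\end{enumerate}
Since $\mathcal A_G$ is open, this contradicts $A_0\in\partial\mathcal A_G$. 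Hence $G(A_0)\le0$, which proves (A3).
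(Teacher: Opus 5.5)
Your proof is correct and follows essentially the paper's argument. Both proofs get nonnegative directional Taylor coefficients on $\mathcal C_G^{(1)}$ from recursive nesting, which gives the increment bound and (A1)--(A2), and both use a ray-connectivity argument into $\mathcal C_G$ for (A3) and for the identity $\mathcal C_G^{(1)}\cap\{G>0\}=\mathcal C_G$. The only difference is in (A3): you prove the identity first and derive (A3) from it via upward closedness of $\mathcal A_G$, whereas the paper extends the Taylor-coefficient nonnegativity to the boundary point $A_0$ by continuity and runs the ray argument there directly.
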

\begin{proof}
Recursive nesting makes all positive-order directional Taylor
coefficients nonnegative on $\mathcal C_G^{(1)}$. Hence
\[
G(A+P)-G(A)=\sum_{k=1}^d\frac{\partial_P^kG(A)}{k!}
\geq\frac{\partial_P^dG(A)}{d!}=G^{\mathrm{top}}(P).
\]
The top part is positive for $P>0$ by
Proposition~\ref{prop:conic-top-part}. Since
$\mathcal C_G\subseteq\mathcal A_G$ and the interior of an upward
closed set is upward closed, \textnormal{(A1)--(A2)} follow.

Suppose $A_0\in\partial\mathcal A_G$ and $G(A_0)>0$.
The Taylor coefficients at $A_0$ remain nonnegative by continuity,
so $G(A_0+tQ)\geq G(A_0)>0$ for $Q>0$ and $t\geq0$.
Fix $B\in\mathcal C_G$. For large $t$, $A_0+tQ-B>0$, whence
$A_0+tQ\in\mathcal C_G$. The positive ray connects $A_0$ to this
component, giving $A_0\in\mathcal C_G\subseteq\mathcal A_G$, a
contradiction. Thus $G\leq0$ on $\partial\mathcal A_G$, proving
\textnormal{(A3)}. The same ray argument proves the last assertion.
\end{proof}

\begin{cor}
\label{cor:polynomial-D-subsolution}
Let $(G,\mathcal A)$ be an admissible pair of polynomial operators
of degree $d\geq1$, with each $G_x$ G{\aa}rding elliptic, and suppose
\[
G_x^{\mathrm{top}}(P)\geq\eta\det_\rho(P)^{d/n}
\qquad(P\geq0)
\]
for some $\eta>0$. If a smooth reference form
$B=\omega_{\underline u}-\chi$ lies pointwise in
$\mathcal C_{G_x}^{(1)}$, then $(\omega_{\underline u},\chi,\gamma)$
is a $\mathcal D$-subsolution whenever
\[
e^\gamma\geq
\left[\eta^{-1}\bigl(\psi-G_x(B)\bigr)_+\right]^{n/d}.
\]
In particular, the reference form need not belong to $\mathcal A$.
\end{cor}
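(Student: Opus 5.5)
The plan is to reduce the statement to Proposition~\ref{prop:positive-branch-admissible} and then run the argument of Proposition~\ref{prop:det-elliptic-D-subsolution}, with one modification: the reference form $B$ is only assumed to lie in $\mathcal C_{G_x}^{(1)}$, not in $\mathcal A$. Fix $x\in X$ and $P\in\mathcal Q_x^+$ satisfying the test conditions \eqref{eq:D-subsolution-test}:
\[
B(x)+P\in\mathcal A_x,
\qquad
G_x\bigl(B(x)+P\bigr)\leq\psi(x).
\]
We must show $\det_\rho P\leq e^{\gamma(x)}$.

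The key step is the Taylor increment inequality \eqref{eq:conic-taylor-increment}. It holds for every $A\in\mathcal C_{G_x}^{(1)}$ and every $P\geq0$, and it needs no membership in the open admissible region. Applying it with $A=B(x)$ and combining with the hypothesis on the top part gives
\[
\eta\det_\rho(P)^{d/n}
\leq G_x^{\mathrm{top}}(P)
\leq G_x\bigl(B(x)+P\bigr)-G_x\bigl(B(x)\bigr)
\leq\psi(x)-G_x\bigl(B(x)\bigr).
\]
The left-hand side is nonnegative, so the right-hand side may be replaced by its positive part. Raising to the power $n/d$ then yields
\[
\det_\rho P\leq\Bigl[\eta^{-1}\bigl(\psi(x)-G_x(B(x))\bigr)_+\Bigr]^{n/d}\leq e^{\gamma(x)}.
\]
This is the required $\mathcal D$-subsolution bound.

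The only point needing care, and the one I would check first, is that the Taylor estimate really holds on the full first-derived set $\mathcal C_{G_x}^{(1)}$ rather than only on its interior $\mathcal A_{G_x}$. By Proposition~\ref{prop:positive-branch-admissible}, the directional Taylor coefficients $\partial_P^kG(A)$ for $k\geq1$ are nonnegative whenever $A$ lies in the component of every nonzero first derivative $\partial_{P'}G$. This follows from recursive nesting: $\mathcal C_{\partial_{P'}G}\subseteq\mathcal C_{\partial^2 G}\subseteq\cdots$, so all higher derivatives are positive there. This is exactly the defining property of $\mathcal C_G^{(1)}$, and no openness is used.

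This also justifies the final remark of the statement: the test only involves $B+P\in\mathcal A$, never $B\in\mathcal A$, so the reference form need not belong to $\mathcal A$.
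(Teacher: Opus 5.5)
Your proposal is correct and matches the paper's proof. The paper gives exactly the chain $\eta\det_\rho(P)^{d/n}\leq G_x^{\mathrm{top}}(P)\leq G_x(B+P)-G_x(B)\leq\psi-G_x(B)$, obtained from \eqref{eq:conic-taylor-increment} at $A=B(x)\in\mathcal C_{G_x}^{(1)}$, and your observation that this bound needs only membership in the full first-derived set is what allows $B\notin\mathcal A$. One minor correction: the higher Taylor coefficients are strictly positive only when not identically zero, so in general they are merely nonnegative.
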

\begin{proof}
For every $P\geq0$ in the test of Definition~\ref{def:D-subsolution},
\[
\eta\det_\rho(P)^{d/n}
\leq G_x(B+P)-G_x(B)\leq\psi-G_x(B),
\]
by \eqref{eq:conic-taylor-increment}.
\end{proof}

\subsection{Conic and polarized spectral examples}
\label{subsec:K-Garding-examples}

We first recall the conic Hessian examples and then verify the
analytic properties of polarized spectral operators directly.
We conclude with conic examples depending on fixed Hermitian forms. With respect to the reference form $\rho$ fixed
above, we write
$$
\sigma_j(A):=\sigma_j\bigl(\lambda_\rho(A)\bigr)
=\binom{n}{j}\frac{A^j\wedge\rho^{n-j}}{\rho^n},
\qquad 0\leq j\leq n.
$$

\begin{example}
\label{example:sigma-k-K-Garding}
\label{example:determinant-K-Garding}
For $1\leq k\leq n$, the polynomial $G(A)=\sigma_k(A)$ is
G{\aa}rding elliptic, with distinguished component
$$
\mathcal C_{\sigma_k}
=\left\{A\in Q(V):\sigma_j(A)>0,\quad 1\leq j\leq k\right\}.
$$
This follows from the classical theory of hyperbolic polynomials.
The cases $k=1$ and $k=n$ give
$$
\sigma_1(A)=\operatorname{tr}(\rho^{-1}A),
\qquad
\sigma_n(A)=\det_\rho A.
$$
In particular, $\mathcal C_{\sigma_n}$ is the cone of positive
definite Hermitian forms.
\end{example}

\begin{prop}[Polarized spectral admissible pairs]
\label{example:polarized-spectral-operators}
\label{thm:spectral-lift-polarization}
Let $p(t)=\sum_{j=0}^d a_jt^j$ be a univariate G{\aa}rding
polynomial, where $1\leq d\leq n$ and $a_d>0$, and set
\[
f(\x)=\pol_n p(\x)=\sum_{j=0}^d\frac{a_j}{\binom nj}\sigma_j(\x),
\qquad G_p(A)=f(\lambda_\rho(A)).
\]
Define the ordinary first-derived region and the spectral region by
\[
\mathcal C_f^{(1)}:=\bigcap_{i=1}^n\mathcal C_{\partial_i f},
\qquad
\Gamma_f:=
\begin{cases}
\mathcal C_f^{(1)},&d\geq2,\\
\mathcal C_f,&d=1,
\end{cases}
\qquad
\mathcal A_p:=\{A:\lambda_\rho(A)\in\Gamma_f\}.
\]
Then $(G_p,\mathcal A_p)$ is admissible and satisfies QSH. Whenever
$\lambda_\rho(A)\in\mathcal C_f^{(1)}$ and $P\geq0$,
\begin{equation}\label{eq:polarized-spectral-increment}
G_p(A+P)-G_p(A)
\geq\frac{a_d}{\binom nd}\sigma_d(P)
\geq a_d\det_\rho(P)^{d/n}.
\end{equation}
\end{prop}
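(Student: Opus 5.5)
The plan is to reduce everything to the ordinary Gårding theory for the symmetric polynomial $f$ on $\mathbb R^n$, using the Schur–Horn/hyperbolicity structure of elementary symmetric functions. First, I would check that $f=\pol_n p$ is an ordinary Gårding polynomial. It is symmetric and multiaffine, and its restriction to the diagonal is $p$; this is the polarization statement from \cite{Fang-MaGard}. Its directional derivatives $\partial_i f$ are again polarizations of $p'$ in $n-1$ variables, so the recursive nesting holds.

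The key algebraic input is the expansion of $G_p$ along Hermitian directions. Each $\sigma_j(A)$ is a mixed-volume form, $\binom nj A^j\wedge\rho^{n-j}/\rho^n$. Hence, for $P\ge0$,
\[
G_p(A+P)-G_p(A)=\sum_{k\ge1}\frac{1}{k!}\partial_P^k G_p(A).
\]
Each $\partial_P^kG_p(A)$ is a mixed form built from $P$ and $A$. I would show it is nonnegative when $\lambda_\rho(A)\in\mathcal C_f^{(1)}$ as follows. Diagonalize $A$ with respect to $\rho$. By Gårding's inequality for the hyperbolic polynomials $\sigma_j$ (equivalently, Lemma~\ref{lem:positive-linear-pullback} applied after reducing to the diagonal), $\partial_P G_p(A)$ equals $\sum_i \partial_i f(\lambda)\,P_{i\bar i}$ plus off-diagonal contributions that are controlled. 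More cleanly, I would use the known fact that $A\mapsto f(\lambda(A))$ is $Q(V)^+$-Gårding whenever $f$ is symmetric Gårding; this is the spectral lift. Its derived region contains $\lambda^{-1}(\mathcal C_f^{(1)})$, and then Proposition~\ref{prop:positive-branch-admissible} supplies the Taylor estimate \eqref{eq:conic-taylor-increment}:
\[
G_p(A+P)-G_p(A)\ge G_p^{\mathrm{top}}(P)=\frac{a_d}{\binom nd}\sigma_d(P).
\]
The second inequality in \eqref{eq:polarized-spectral-increment} is Maclaurin, since $\sigma_d(P)/\binom nd\ge \det_\rho(P)^{d/n}$ for $P\ge0$.

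Admissibility then follows. (A1) holds because $\Gamma_f$ is upward closed and $\lambda$ is monotone in the Loewner order (Weyl). (A2) follows from the increment bound, since $\sigma_d(P)>0$ for $P>0$. For (A3), I would argue as in Proposition~\ref{prop:positive-branch-admissible}: a boundary point with $G_p>0$ would, via the positive ray, lie in the component, which is a contradiction. In the case $d=1$, $\Gamma_f$ is the half-space $\{a_0+a_1\operatorname{tr}/n>0\}$ and $G_p$ vanishes on its boundary. QSH is the statement that $\Gamma_f\subseteq\{\sum\lambda_i\ge -b\}$. For $d\ge2$, I would get this from $\mathcal C_f^{(1)}\subseteq\mathcal C_{\partial^{d-1}f}$, where the $(d-1)$-st derivative is affine with positive trace coefficient, namely $\sum_i\lambda_i\cdot c+c'>0$. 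For $d=1$, $\Gamma_f$ is itself such a half-space.

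The main obstacle is justifying the spectral lift, that is, that $G_p$ is $Q(V)^+$-Gårding with derived set containing $\lambda^{-1}(\mathcal C_f^{(1)})$. I would first try to write $G_p$ as a positive combination of the hyperbolic $\sigma_j$ and invoke Lemma~\ref{lem:homogeneous-conic-reconstruction} in its affine form through Theorem~\ref{thm:affine-scalarization-K-Garding}. Failing that, I would prove nonnegativity of $\partial_P^kG_p(A)$ directly: $\partial_P^k\sigma_j(A)$ is a mixed discriminant, and its positivity on the relevant cone follows from the Gårding cone structure of the $\sigma_j$ and univariate interlacing of $p$.
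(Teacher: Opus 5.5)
There is a genuine gap at the central step, the first inequality in \eqref{eq:polarized-spectral-increment}.

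You obtain it from Proposition~\ref{prop:positive-branch-admissible}. That proposition requires $G_p$ to be $Q(V)^+$-G{\aa}rding with $\lambda_\rho^{-1}(\mathcal C_f^{(1)})\subseteq\mathcal C_{G_p}^{(1)}$. Nothing in your sketch proves this ``spectral lift'', and the paper proves the proposition without it. It is also not a standard result you can quote. Here $p$ is only G{\aa}rding, not real-rooted, so $f$ and $G_p$ need not be hyperbolic. Moreover, the derived set $\mathcal C_{G_p}^{(1)}$ is built from the components of the nonspectral polynomials $\partial_PG_p$, which you never control.

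Your fallbacks do not close the gap:
\begin{itemize}
\item $G_p$ is not a positive combination of the $\sigma_j$, because the $a_j$ can be negative, as in the paper's $p(t)=t^3-3t-3$.
\item For the same reason, termwise positivity of the mixed discriminants $\partial_P^k\sigma_j(A)$ cannot suffice. In that example the $j=1$ term contributes $-\frac3n\operatorname{tr}_\rho P<0$ to $\partial_PG_p(A)$. Nonnegativity therefore requires cancellation across degrees, and ``univariate interlacing of $p$'' is not a mechanism for it.
\item Lemma~\ref{lem:homogeneous-conic-reconstruction} concerns homogeneous polynomials. Its hypothesis, G{\aa}rding scalarizations along arbitrary noncommuting $P_i\geq0$, is the same difficulty again.
\end{itemize}

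The missing idea is an exact identity that handles all degrees at once. Work in a $\rho$-unitary basis diagonalizing $A$, with $\lambda=\lambda_\rho(A)$. Expand each principal minor of $\operatorname{diag}(\lambda)+P$ multilinearly in its columns. Because $f$ is symmetric and multi-affine, the coefficient of $\det(P_{S,S})$ is exactly $\partial_Sf(\lambda)$, so
\[
G_p(A+P)-G_p(A)=\sum_{\varnothing\ne S\subseteq[n]}\det(P_{S,S})\,\partial_Sf(\lambda),\qquad\partial_S=\prod_{i\in S}\partial_i .
\]
For $P\geq0$, each $\det(P_{S,S})\geq0$. For $\lambda\in\mathcal C_f^{(1)}$, each nonzero $\partial_Sf$ is positive at $\lambda$ by ordinary component nesting for $f$, starting from $\lambda\in\mathcal C_{\partial_if}$ with $i\in S$. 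Keeping only the terms with $|S|=d$, where $\partial_Sf\equiv a_d/\binom nd$, gives $\frac{a_d}{\binom nd}\sigma_d(P)$.

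Your formula $\partial_PG_p(A)=\sum_i\partial_if(\lambda)P_{i\bar i}$ is the $|S|=1$ part of this identity. At higher order the off-diagonal entries of $P$ enter only through principal minors, which are nonnegative. This argument uses only the ordinary G{\aa}rding theory of $f$ on $\mathbb R^n$.

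The rest of your plan matches the paper's proof: Weyl monotonicity for (A1), the ray argument along $\one$ at points of $\partial\mathcal C_f^{(1)}$ for (A3), QSH from the affine derivative $\partial_{\one}^{d-1}f>0$, and Maclaurin for the second inequality.
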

\begin{proof}
Polarization makes $f$ symmetric, multi-affine, and G{\aa}rding
\cite{fang2026idealgaardingpolynomials}. The finite intersection
$\mathcal C_f^{(1)}$ is open and contains $\mathcal C_f$.
Weyl monotonicity makes $\mathcal A_p$ upward closed; increasing
segments to a common large scalar matrix show it is connected.
For $d\geq2$, if $\lambda_0\in\partial\mathcal C_f^{(1)}$ and
$f(\lambda_0)>0$, nonnegative Taylor coefficients give
$f(\lambda_0+t\one)>0$ for $t\geq0$. This ray eventually enters
$\mathcal C_f$, so $\lambda_0\in\mathcal C_f$, a contradiction.
Thus $f\leq0$ on that boundary, and $G_p\leq0$ on
$\partial\mathcal A_p$. For $d=1$, the boundary value is zero.

In a $\rho$-unitary basis diagonalizing $A$, put
$\lambda=\lambda_\rho(A)$. Expanding principal minors gives
\begin{equation}\label{eq:spectral-principal-minor-expansion}
G_p(A+P)-G_p(A)
=\sum_{\varnothing\ne S\subseteq[n]}
\det(P_{S,S})\,\partial_Sf(\lambda),
\qquad \partial_S=\prod_{i\in S}\partial_i.
\end{equation}
For $\lambda\in\mathcal C_f^{(1)}$, component nesting from the
first derivatives makes every term nonnegative. The terms with
$|S|=d$ sum to $a_d\sigma_d(P)/\binom nd$. Maclaurin's inequality
proves \eqref{eq:polarized-spectral-increment} and strict ellipticity.
For $d\geq2$, derivative nesting, and for $d=1$, $f(\lambda)>0$, give
\[
0<\partial_{\one}^{d-1}f(\lambda)
=(d-1)!\left(\frac{da_d}{n}\operatorname{tr}_\rho A+a_{d-1}\right).
\]
Hence QSH holds with $b_{\mathcal A_p}=\max\{0,na_{d-1}/(da_d)\}$.

\end{proof}

\begin{example}
The polynomial $p(t)=t^3-3t-3$ has one real root $r>2$, whereas
the largest roots of $p'$ and $p''$ are $1$ and $0$.
Thus it is G{\aa}rding, and Proposition~\ref{thm:spectral-lift-polarization}
applies, for $n\geq3$, to
\[
G_p(A)=\frac{\sigma_3(A)}{\binom n3}-\frac3n\sigma_1(A)-3.
\]
\end{example}

\begin{example}[The supercritical dHYM/LYZ polynomial]
\label{example:dHYM-Garding}
For $n\geq2$ and $0<\theta<\pi$, the polynomial
$p_\theta(t)=-\operatorname{Im}(e^{-i\theta}(t+i)^n)$ has leading
coefficient $\sin\theta>0$ and roots
$\cot((\theta+\ell\pi)/n)$, $0\leq\ell<n$. Its Hermitian lift is
\[
G_\theta(A)=-\operatorname{Im}\bigl(e^{-i\theta}
\det(\rho^{-1}A+iI)\bigr)
=\sum_{j=0}^n\sin\left(\theta-\frac{(n-j)\pi}{2}\right)\sigma_j(A).
\]
With $\operatorname{arccot}\in(0,\pi)$ and
$\lambda=\lambda_\rho(A)$, put
\[
\Theta_\rho(A)=\sum_j\operatorname{arccot}\lambda_j,
\qquad
\mathcal A_\theta=
\left\{A:\max_i\sum_{j\ne i}\operatorname{arccot}\lambda_j<\theta\right\}.
\]
The identity
\[
G_\theta(A)=\prod_j\sqrt{1+\lambda_j^2}\,
\sin(\theta-\Theta_\rho(A))
\]
identifies its distinguished positivity component as
$\{\Theta_\rho<\theta\}$. Applying the same identity to each
coordinate derivative identifies $\mathcal A_\theta$ with the
spectral first-derived region. Proposition~\ref{thm:spectral-lift-polarization}
gives an admissible QSH pair and
\[
G_\theta(A+P)-G_\theta(A)\geq\sin\theta\,\det_\rho P
\qquad(A\in\mathcal A_\theta,\ P\geq0).
\]
The phase level $\Theta_\rho=\theta$ lies in $\mathcal A_\theta$
but has $G_\theta=0$. Since \eqref{eq:main-equation} requires
$\psi>0$, we use the direct determinant tests of
Section~\ref{sec:examples-applications}.
\end{example}

\begin{rem}
\label{rem:dHYM-critical-phase}
At $\theta_{\mathrm{crit}}=\pi$, the degree of
$p_\theta$ drops to $n-1$. The polynomial remains real-rooted,
and its Hermitian lift is
$$
G_{\theta_{\mathrm{crit}}}(A)
=\sum_{r=0}^{\lfloor(n-1)/2\rfloor}
(-1)^r\sigma_{n-1-2r}(A).
$$
This is the polynomial associated with the critical LYZ equation
studied by Fu--Yau--Zhang \cite{fu2026criticallyzequationkahler}.
For $n\geq3$, its spectral first-derived region is
$\mathcal A_\pi=\{A:\max_i\sum_{j\ne i}\operatorname{arccot}\lambda_j<\pi\}$.
The critical phase $\Theta_\rho=\pi$ lies in this open region.

\end{rem}

\begin{example}[Mixed-form operators]
\label{example:mixed-form-operators}
Fix $1\leq k\leq n$ and positive definite Hermitian forms
$B_1,\ldots,B_{n-k}$. Then
$$
G_{B_1,\ldots,B_{n-k}}(A)
:=\frac{A^k\wedge B_1\wedge\cdots\wedge B_{n-k}}{\rho^n}
=\frac{k!}{n!}\,
\pdv_{B_1}\cdots\pdv_{B_{n-k}}\det_\rho(A)
$$
is G{\aa}rding elliptic by closure under positive directional
derivatives. The same conclusion holds for semipositive $B_i$
whenever the resulting polynomial is nonzero. These operators
need not be spectral with respect to $\rho$. Taking
$B_1=\cdots=B_{n-k}=\rho$ recovers
$\binom{n}{k}^{-1}\sigma_k$.
\end{example}

\begin{example}[Products of partial traces]
\label{example:products-K-Garding}
Consider a $\rho$-orthogonal splitting
$$
V=E\oplus F,\qquad
\dim_{\mathbb C}E=m\geq1,\quad
\dim_{\mathbb C}F=k\geq1,\quad m+k=n.
$$
Let $\pi_E,\pi_F$ be the corresponding projections, and write
$$
\operatorname{tr}_E A:=\operatorname{tr}(\pi_E\rho^{-1}A),
\qquad
\operatorname{tr}_F A:=\operatorname{tr}(\pi_F\rho^{-1}A).
$$
For positive integers $a,b$, set
$$
H(A):=
\left(\frac{\operatorname{tr}_E A}{m}\right)^a
\left(\frac{\operatorname{tr}_F A}{k}\right)^b.
$$

The map $A\mapsto(\operatorname{tr}_E A/m,\operatorname{tr}_F A/k)$
is surjective and sends semipositive forms to the nonnegative orthant.
Lemma~\ref{lem:positive-linear-pullback}, applied to $x^ay^b$,
shows that $H$ is G{\aa}rding elliptic.
Its distinguished component is
$$
\mathcal C_H
=\{A\in Q(V):\operatorname{tr}_E A>0,\
                  \operatorname{tr}_F A>0\}.
$$
\end{example}
\begin{rem}
\label{rem:monotonicity-direction-cone}
If $\mathcal K_1\subseteq\mathcal K_2$ are closed convex cones
with nonempty interior, every $\mathcal K_2$-G{\aa}rding
polynomial is $\mathcal K_1$-G{\aa}rding. Positive directional
differentiation preserves the direction cone, although it lowers
the degree of the polynomial.
\end{rem}

\section{Det-ellipticity and determinantal capacity}
\label{sec:det-ellipticityanddet}
The preceding results reduce the construction of $\mathcal D$-subsolutions to the det-ellipticity of the admissible pair. It is therefore natural to ask which fully nonlinear operators are det-elliptic. For a G{\aa}rding elliptic polynomial of degree $d\geq1$, we
characterize the determinant increment bound with exponent $d/n$.
Throughout, the top homogeneous part is G{\aa}rding elliptic by
Proposition~\ref{prop:conic-top-part}.

Let \(\R_+[\x]\) denote the cone of polynomials in
\(\x=(x_1,\ldots,x_n)\) with nonnegative coefficients. For \(\alpha\in\N^n\), we write \([\x^\alpha]f\) for the coefficient
of the monomial \(\x^\alpha\) in \(f\). The superscript $\mathfrak{h}$ denotes homogeneity.

\subsection{\(\beta\)-capacity and slope stability for polymatroid} \label{subsec:capacity-polymatroid-stability}
In this subsection, we study capacity for polynomials and its relation with slope stability of polymatroid.

The polynomial capacity was introduced by Gurvits \cite{MR2200852}.
Here we use its weighted version defined by Anari-Oveis Gharan and Gurvits-Leake \cite{AnariGharan21,GurvitsLeake21}.

\begin{defn}[\(\beta\)-capacity]
\label{def:beta-capacity}
Let \(0\neq f\in\R_+[\x]\) and
\(\beta\in\overline{\Gamma_n^+}\). The \emph{\(\beta\)-capacity} of
\(f\) is
\[
\Capacity_\beta(f)
:=
\inf_{\x\in\Gamma_n^+}
\frac{f(\x)}{\x^\beta},
\qquad
\x^\beta:=\prod_{i=1}^n x_i^{\beta_i}.
\]
\end{defn}
\begin{defn}[Newton polytope]
\label{def:newton-polytope}
The support of \(0\neq f\in\R_+[\x]\) is
\[
\operatorname{supp}(f)
:=
\left\{
\alpha\in\N^n:[\x^\alpha]f>0
\right\},
\]
and the associated \emph{Newton polytope} is
\[
\Newt(f)
:=
\operatorname{conv}\bigl(\operatorname{supp}(f)\bigr)
\subseteq
\overline{\Gamma_n^+}.
\]
\end{defn}

Anari-Oveis Gharan gave  a characterization of $f$ with positive $\beta$-capacity. See \cite[Fact 2.18]{AnariGharan21} for the proof.
\begin{lem}
\label{lem:beta-capacity-newton}
For \(0\neq f\in\R_+[\x]\) and
\(\beta\in\overline{\Gamma_n^+}\),
\[
\Capacity_\beta(f)>0
\quad\Longleftrightarrow\quad
\beta\in\Newt(f).
\]
\end{lem}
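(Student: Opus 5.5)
The plan is to prove the two implications separately. Throughout, $f\in\R_+[\x]$ is nonzero, $\beta\in\overline{\Gamma_n^+}$, and we use the convex reparametrization $x_i=e^{y_i}$. In these coordinates
\[
\frac{f(\x)}{\x^\beta}=\sum_{\alpha\in\operatorname{supp}(f)}c_\alpha e^{\langle\alpha-\beta,y\rangle},\qquad c_\alpha>0,
\]
so that $\Capacity_\beta(f)=\inf_{y\in\R^n}\sum_\alpha c_\alpha e^{\langle\alpha-\beta,y\rangle}$.

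\emph{($\Leftarrow$).} Suppose $\beta\in\Newt(f)$. Write $\beta=\sum_\alpha\lambda_\alpha\alpha$ as a convex combination with $\lambda_\alpha\ge0$, $\sum\lambda_\alpha=1$. By weighted AM--GM,
\[
\sum_\alpha c_\alpha e^{\langle\alpha-\beta,y\rangle}
\ \ge\ \sum_{\lambda_\alpha>0}\lambda_\alpha\,\frac{c_\alpha}{\lambda_\alpha}e^{\langle\alpha-\beta,y\rangle}
\ \ge\ \prod_{\lambda_\alpha>0}\Bigl(\frac{c_\alpha}{\lambda_\alpha}\Bigr)^{\lambda_\alpha}e^{\sum\lambda_\alpha\langle\alpha-\beta,y\rangle}.
\]
The exponent on the right vanishes because $\sum\lambda_\alpha(\alpha-\beta)=0$. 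This gives a positive lower bound independent of $y$, so $\Capacity_\beta(f)>0$.

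\emph{($\Rightarrow$).} Suppose $\beta\notin\Newt(f)$. Since $\Newt(f)$ is a compact convex polytope, strict separation gives $w\in\R^n$ with $\langle\alpha-\beta,w\rangle<0$ for every $\alpha\in\operatorname{supp}(f)$; let $-\epsilon$ be the largest of these finitely many values. Take $y=tw$ with $t\to\infty$. Each term is then at most $c_\alpha e^{-\epsilon t}$, so the finite sum tends to $0$. Translated back, the point $\x=(e^{tw_i})\in\Gamma_n^+$ drives $f(\x)/\x^\beta$ to $0$, and hence $\Capacity_\beta(f)=0$.

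There is no real obstacle. The only care points are the use of the strict-positivity domain $\Gamma_n^+$, which the exponential change of variables handles bijectively, and restricting AM--GM to the indices with $\lambda_\alpha>0$.
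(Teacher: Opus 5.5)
Your proof is correct and complete. In logarithmic coordinates, weighted AM--GM over a convex decomposition $\beta=\sum\lambda_\alpha\alpha$ gives the explicit bound $\Capacity_\beta(f)\ge\prod_{\lambda_\alpha>0}(c_\alpha/\lambda_\alpha)^{\lambda_\alpha}>0$. Strict separation of $\beta$ from the polytope $\Newt(f)$ gives a ray $\x=(e^{tw_i})$ along which $f(\x)/\x^\beta\to0$.

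The paper does not prove this lemma; it only cites \cite[Fact~2.18]{AnariGharan21}. Your AM--GM and separating-hyperplane argument is the standard one behind that fact, so it makes the statement self-contained.
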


Next, we interpret positivity of the \(\beta\)-capacity as a polymatroid
slope-semistability condition. Let \(E:=[n]\), and, for
\(\alpha\in\mathbb R^E\) and \(S\subseteq E\), write
\[
\alpha(S):=\sum_{i\in S}\alpha_i.
\]
\begin{defn}[Polymatroid]
\label{def:integral-polymatroid}
An (integral) \emph{polymatroid} on \(E\) is a pair
\(\mathcal P=(E,r)\), where the rank function
\(r\colon 2^E\to\mathbb N\) satisfies:
\begin{enumerate}
    \item \(r(\varnothing)=0\);
    \item {Monotonicity:} if \(S\subseteq T\), then \(r(S)\leq r(T)\);
    \item {Submodularity:}
    $
    r(S)+r(T)\geq r(S\cap T)+r(S\cup T)$
    for all \(S,T\subseteq E\).
\end{enumerate}
The associated \emph{base polytope} is
\[
B(r)
:=
\left\{
\gamma\in\R_+^E:
\sum_{i\in S} \gamma_i\leq r(S)\ \text{for every }S\subseteq E,\
\sum_{i\in E} \gamma_i=r(E)
\right\}.
\]
\end{defn}

We introduce the weighted slope stability for  $\mathcal P$.
\begin{defn}[\(\beta\)-weighted slope semistability]
\label{def:beta-polymatroid-semistability}
Let $\mathcal P=(E,r)$ have total rank $d:=r(E)>0$, and let \(\beta\in\overline{\Gamma_n^+}\) satisfy \(\beta(E)=d\).
For \(S\subseteq E\) with \(r(S)>0\), define the
\emph{\(\beta\)-weighted slope} by
\[
\mu_\beta(S):=\frac{\beta(S)}{r(S)}.
\]
We say that \(\mathcal P\) is
\emph{\(\beta\)-weighted slope semistable} if
\(\mu_\beta(S)\leq\mu_\beta(E)=1\) for every \(S\subseteq E\) with
\(r(S)>0\), and \(\beta(S)=0\) whenever \(r(S)=0\).
\end{defn}

We use the standard correspondence between \(M\)-convex sets and
the integral points of integral base polyhedra; see
\cite[Chapter~4]{Murota03} and
\cite[Chapter~3]{MR2171629}.
\begin{lem}[Rank function of an \(M\)-convex set]
\label{lem:M-convex-support-rank}
Let \(B\subseteq\N^E\) be a nonempty \(M\)-convex set and define
\[
r_B(S):=\max_{\alpha\in B}\alpha(S),
\qquad S\subseteq E.
\]
Then \(r_B\) is the rank function of a polymatroid and
\[
\operatorname{conv}(B)=B(r_B),
\qquad
B=B(r_B)\cap\mathbb Z^E.
\]
\end{lem}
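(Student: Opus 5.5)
We have to prove Lemma~\ref{lem:M-convex-support-rank}: for a nonempty $M$-convex $B\subseteq\N^E$, $r_B(S)=\max_{\alpha\in B}\alpha(S)$ is a polymatroid rank function, and $\operatorname{conv}(B)=B(r_B)$ with $B=B(r_B)\cap\mathbb Z^E$.

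The plan is to reduce everything to the classical theory of $M$-convex sets and integral base polyhedra \cite[Chapter~4]{Murota03}, \cite[Chapter~3]{MR2171629}, checking only the normalizations special to our setting (nonnegativity, finiteness, $r(\varnothing)=0$).

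First, the elementary properties. Since $B\subseteq\N^E$ is nonempty, $r_B(\varnothing)=0$. Every $\alpha\in B$ has the same total $\alpha(E)=:d$: this follows from the exchange axiom, which preserves coordinate sums. Hence $B$ lies in the finite set $\{\alpha\in\N^E:\alpha(E)=d\}$, so $B$ is finite and $r_B$ is $\N$-valued. Monotonicity holds because $\alpha\geq0$ gives $\alpha(S)\leq\alpha(T)$ for $S\subseteq T$, and taking maxima preserves this.

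Submodularity is the main step. Since $B$ is finite, $\operatorname{conv}(B)$ is a polytope, and $r_B$ is its support function evaluated at indicator vectors $\one_S$. I would invoke the fundamental theorem for $M$-convex sets \cite[Theorem~4.15]{Murota03}: $\operatorname{conv}(B)$ is an integral base polyhedron $\{x:x(S)\leq\rho(S)\ \forall S,\ x(E)=\rho(E)\}$ for some submodular $\rho$, and $B$ is exactly its set of integer points. For a base polyhedron the greedy algorithm shows $\max\{x(S):x\in\operatorname{conv}B\}=\rho(S)$. The maximum of a linear functional over $\operatorname{conv}(B)$ equals its maximum over $B$, so $r_B=\rho$, and $r_B$ is submodular. (A direct alternative: take $\alpha$ maximizing $\alpha(S\cap T)$ and then $\alpha(S\cup T)$ lexicographically, and use exchange to show that $\alpha$ is simultaneously optimal for both; this is the greedy argument itself.) With these properties, $r_B$ is the rank function of a polymatroid.

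Next, the polytope identities. The base polytope $B(r_B)$ in Definition~\ref{def:integral-polymatroid} also imposes $\gamma\geq0$. This constraint is redundant: for $\gamma$ in the base polyhedron of $r_B$, we have $\gamma_i=\gamma(E)-\gamma(E\setminus\{i\})\geq r_B(E)-r_B(E\setminus\{i\})\geq0$ by monotonicity. Hence $B(r_B)$ coincides with the base polyhedron of $\rho=r_B$, which gives $\operatorname{conv}(B)=B(r_B)$, and $B=B(r_B)\cap\mathbb Z^E$ by the integer-point statement of the same theorem.

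The only nontrivial input is the cited equivalence between $M$-convex sets and integer points of integral base polyhedra. I expect the main obstacle to be carefully matching its conventions (Murota works with $\mathbb Z^E$ and possibly unbounded sets) to ours, namely nonnegativity and the $\gamma\in\R_+^E$ constraint; this is exactly what the redundancy argument above handles.
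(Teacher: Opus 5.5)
Your proposal is correct and follows the paper's route. The paper gives no separate argument for this lemma: it invokes the standard correspondence between $M$-convex sets and integer points of integral base polyhedra \cite[Chapter~4]{Murota03}, \cite[Chapter~3]{MR2171629}, the same theorem you cite. Your added normalizations (finiteness and $\mathbb N$-valuedness, and the redundancy of $\gamma\geq0$ via monotonicity) are sound.

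One small correction to the constant-total step. Sum preservation under exchange is not what forces it. When $x(E)\neq y(E)$, repeated exchanges eventually reach a pair where $\operatorname{supp}^+(x-y)\neq\varnothing$ but $\operatorname{supp}^-(x-y)=\varnothing$. That violates the axiom, which requires an index in $\operatorname{supp}^-(x-y)$.
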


For the remainder of this subsection, let
\begin{equation}
0\neq f\in\R_+[x_1,\cdots,x_n]
\label{eq:M-convex-polynomial}
\end{equation}
be homogeneous of degree \(d\) with \(M\)-convex support.
By Lemma~\ref{lem:M-convex-support-rank},
\[
\mathcal P_f:=(E,r_f), \qquad r_f(S)
:=
\max_{\alpha\in\operatorname{supp}(f)}\alpha(S),
\] defines a polymatroid on $E$.
Moreover,
\begin{equation}
r_f(E)=d,
\qquad
\Newt(f)=B(r_f).
\label{eq:support-polymatroid-base}
\end{equation}

\begin{thm}
\label{thm:capacity-polymatroid-stability}
Let \(f\) satisfy \eqref{eq:M-convex-polynomial}, and let
\(\beta\in\overline{\Gamma_n^+}\) satisfy \(\beta(E)=d\). Then the
following are equivalent:
\begin{enumerate}[label=\textnormal{(\arabic*)}]
\item \(\Capacity_\beta(f)>0\).
\item \(\beta\in\Newt(f)\).
\item \(\beta\in B(r_f)\).
\item \(\beta(S)\leq r_f(S)\) for every \(S\subseteq E\).
\item \(\mathcal P_f\) is
\(\beta\)-weighted slope semistable.
\end{enumerate}
\end{thm}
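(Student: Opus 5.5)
The plan is to run the chain of equivalences $(1)\Leftrightarrow(2)\Leftrightarrow(3)\Rightarrow(4)\Rightarrow(3)$ and $(4)\Leftrightarrow(5)$, where every link uses a result already recorded. The equivalence $(1)\Leftrightarrow(2)$ is Lemma~\ref{lem:beta-capacity-newton} applied to $f$ and $\beta$. For $(2)\Leftrightarrow(3)$ we use \eqref{eq:support-polymatroid-base}, namely $\Newt(f)=B(r_f)$, which follows from Lemma~\ref{lem:M-convex-support-rank} because $\operatorname{supp}(f)$ is $M$-convex.

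For $(3)\Leftrightarrow(4)$, unwind Definition~\ref{def:integral-polymatroid} of $B(r_f)$. Membership requires three things: $\beta\geq0$, $\beta(S)\leq r_f(S)$ for all $S$, and $\beta(E)=r_f(E)$. The first holds because $\beta\in\overline{\Gamma_n^+}$. The last holds because $\beta(E)=d=r_f(E)$. Hence (3) reduces exactly to the family of inequalities in (4).

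For $(4)\Leftrightarrow(5)$, compare (4) with Definition~\ref{def:beta-polymatroid-semistability}.
\begin{itemize}
\item When $r_f(S)>0$, the inequality $\beta(S)\leq r_f(S)$ is the same as $\mu_\beta(S)\leq1$.
\item When $r_f(S)=0$, the inequality (4) together with $\beta\geq0$ forces $\beta(S)=0$. Conversely, $\beta(S)=0$ gives (4) trivially.
\item The normalization $\mu_\beta(E)=\beta(E)/r_f(E)=1$ holds by hypothesis.
\end{itemize}
Thus the two conditions coincide set by set.

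No step is a genuine obstacle, since the substantive input is the cited Anari--Oveis Gharan lemma and the $M$-convex/base-polytope correspondence. The only care needed is the degenerate case $r_f(S)=0$ and the use of nonnegativity of $\beta$ there. One should also check that the supremum in Lemma~\ref{lem:M-convex-support-rank} gives $r_f(E)=d$, which holds because $f$ is homogeneous of degree $d$, so every support point has total weight $d$.
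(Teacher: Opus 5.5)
Your proposal is correct and follows essentially the same route as the paper. It uses Lemma~\ref{lem:beta-capacity-newton} for $(1)\Leftrightarrow(2)$, the identity $\Newt(f)=B(r_f)$ from \eqref{eq:support-polymatroid-base} for $(2)\Leftrightarrow(3)$, and unwinds the base polytope using $\beta\geq0$ and $\beta(E)=d=r_f(E)$ to reach (4) and then (5). Your explicit treatment of the sets with $r_f(S)=0$, where nonnegativity of $\beta$ forces $\beta(S)=0$, supplies a detail the paper leaves implicit when it says that (4) is ``precisely'' $\beta$-weighted slope semistability.
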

\begin{proof}
By Lemma~\ref{lem:beta-capacity-newton},
\[
\Capacity_\beta(f)>0
\quad\Longleftrightarrow\quad
\beta\in\Newt(f).
\]
By \eqref{eq:support-polymatroid-base},
\(\Newt(f)=B(r_f)\). Since \(\beta(E)=r_f(E)=d\), membership in
\(B(r_f)\) is equivalent to
\(\beta(S)\leq r_f(S)\) for every \(S\subseteq E\).  This is precisely
\(\beta\)-weighted slope semistability.
\end{proof}
\begin{cor}[Balanced slope criterion]
\label{cor:balanced-polymatroid-stability}
Under the assumptions of
Theorem~\ref{thm:capacity-polymatroid-stability}, set
\(\beta=\frac{d}{n}\one\). Then for every \(S \subseteq E\) with \(r_f(S)>0\), \[ \beta(S)\leq r_f(S) \quad\Longleftrightarrow\quad \frac{d}{n}|S|\leq r_f(S) \quad\Longleftrightarrow\quad \frac{|S|}{r_f(S)}\leq\frac{n}{d}. \] \end{cor}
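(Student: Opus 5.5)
The plan is to unwind the definitions directly; no new input beyond Theorem~\ref{thm:capacity-polymatroid-stability} is needed. First I will check that $\beta=\frac dn\one$ is an admissible weight there. It lies in $\overline{\Gamma_n^+}$, and $\beta(E)=\frac dn\cdot n=d=r_f(E)$ by \eqref{eq:support-polymatroid-base}. So the theorem applies, and semistability of $\mathcal P_f$ is equivalent to the family of inequalities $\beta(S)\leq r_f(S)$.

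Next, for any $S\subseteq E$, I will compute $\beta(S)=\sum_{i\in S}\frac dn=\frac dn|S|$. This gives the first equivalence immediately, as an identity of the two left-hand sides.

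For the second equivalence, fix $S$ with $r_f(S)>0$. Since $f\neq0$ is homogeneous of degree $d$, and in the intended application $d\geq1$, we have $n/d>0$. I will multiply $\frac dn|S|\leq r_f(S)$ by the positive number $\frac{n}{d\,r_f(S)}$, which yields $|S|/r_f(S)\leq n/d$; the converse is the same step reversed. If $d=0$ were allowed, the slope formulation would be meaningless, so I will note explicitly that $d=r_f(E)>0$ is part of the standing hypotheses of Definition~\ref{def:beta-polymatroid-semistability}.

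Finally, I will remark on the excluded sets with $r_f(S)=0$. For these, the condition $\beta(S)=0$ in Definition~\ref{def:beta-polymatroid-semistability} forces $|S|=0$, that is $S=\varnothing$. So balanced semistability also requires that every nonempty $S$ has positive rank, which is consistent with reading $\frac dn|S|\leq r_f(S)$ for all $S$. There is no real obstacle; the only point requiring care is the positivity of $r_f(S)$ and of $d$ used when dividing.
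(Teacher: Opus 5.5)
Your proposal is correct and matches the paper's proof, which simply observes $\beta(S)=\frac dn|S|$ and invokes Theorem~\ref{thm:capacity-polymatroid-stability}. One small simplification: you do not need a separate hypothesis to get $d>0$, since monotonicity gives $0<r_f(S)\leq r_f(E)=d$ for any $S$ in the statement.
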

\begin{proof}
Since \(\beta(S)=\frac{d}{n}|S|\), the result follows directly from
Theorem~\ref{thm:capacity-polymatroid-stability}.
\end{proof}

\subsection{The subspace polymatroid}
\label{subsec:subspace-polymatroid}
We now give an intrinsic formulation of the polymatroid ranks arising
from unitary scalarizations of \(H\). We use the rank-function
language of polymatroids; see
\cite{MR270945,MR2171629,MR1956924,MR1956925,MR1956926}.
\begin{defn}[Subspace polymatroid]
\label{def:abstract-subspace-polymatroid}
Let \(V\) be a finite-dimensional complex vector space of dimension $n$ and let
\(\mathcal L(V)\) denote its lattice of complex subspaces. A
\emph{subspace polymatroid} on \(V\) is a pair
\(\mathcal P=(V,r)\), where
\(r\colon\mathcal L(V)\to\mathbb R_{\geq 0}\) satisfies: for all \(U,W\in\mathcal L(V)\),
\begin{enumerate}
    \item \(r(\{0\})=0\);
    \item {Monotonicity:} if \(U\subseteq W\), then \(r(U)\leq r(W)\);
    \item {Submodularity:}
    $
    r(U)+r(W)\geq r(U\cap W)+r(U+W)$
\end{enumerate}
It is called \emph{integral} if \(r\) is integer-valued.
\end{defn}

Let $H\in\mathbb R[Q(V)]$ be a homogeneous $Q(V)^+$-G{\aa}rding
polynomial of degree $d\geq1$. For a complex subspace \(U\subseteq V\),
let \(\pi_U\in\overline{Q(V)^+}\) denote the
\(\rho\)-orthogonal projection onto \(U\).
\begin{defn}[Subspace rank]
\label{def:subspace-rank}Notations as above.
For \(U\in\mathcal L(V)\), define
\begin{equation}
r_H(U):=\deg_t H(I+t\pi_U).
\label{eq:subspace-rank}
\end{equation}
\end{defn}
Since \(H\) is homogeneous of degree \(d\), one has
\(r_H(0)=0\) and \(r_H(V)=d\).
\begin{lem}
\label{lem:directional-degree-image}
Let \(A,B\in\overline{Q(V)^+}\). If
\(\operatorname{im}A=\operatorname{im}B\), then
\[
\deg_t H(I+tA)=\deg_t H(I+tB).
\]
Consequently, if \(U=\operatorname{im}A\), then
\(\deg_t H(I+tA)=r_H(U)\).
\end{lem}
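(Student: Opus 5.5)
The plan is to compare both degrees with the degree of a single univariate polynomial with nonnegative coefficients, using domination in the semipositive cone. Since $\operatorname{im}A=\operatorname{im}B=U$ and both are semipositive, there are constants $0<c\leq C$ with
\[
c\,B\leq A\leq C\,B
\]
as Hermitian forms. Indeed, both $A$ and $B$ restrict to positive definite forms on $U$ (and vanish on $U^\perp$ in the sense of the kernel), so on the finite-dimensional space $U$ their ratio of quadratic forms is bounded above and below. By symmetry it suffices to prove $\deg_t H(I+tA)\leq\deg_t H(I+tB)$ given $A\leq CB$. Here $I$ denotes $\rho$, an interior point of $Q(V)^+$.

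The key step is to scalarize. Apply Theorem~\ref{thm:affine-scalarization-K-Garding}, or more directly Corollary~\ref{cor:homogeneous-K-scalarization}, to the family $P_1=\rho$, $P_2=B$, $P_3=CB-A\in Q(V)^+$, whose sum lies in the interior. Then
\[
h(x_1,x_2,x_3)=H(x_1\rho+x_2B+x_3(CB-A))
\]
has nonnegative coefficients and is homogeneous of degree $d$. The relevant univariate polynomials are specializations:
\[
H(I+tA)=H\bigl(\rho+tCB-t(CB-A)\bigr),\qquad H(I+tCB)=h(1,tC,0).
\]
The first involves a negative coefficient, so direct comparison is awkward. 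I would instead use the alternative family $P_1=\rho$, $P_2=A$, $P_3=CB-A$, so that
\[
H(I+tA)=g(1,t,0),\qquad H(I+tCB)=g(1,t,t),
\]
with $g(x_1,x_2,x_3)=H(x_1\rho+x_2A+x_3(CB-A))\in\mathbb R_+[\x]$.

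For a polynomial with nonnegative coefficients, there is no cancellation among monomials. Hence $\deg_t g(1,t,0)$ is the maximum of $\alpha_2$ over the support monomials with $\alpha_3=0$, while $\deg_t g(1,t,t)$ is the maximum of $\alpha_2+\alpha_3$ over the whole support. The latter is clearly at least the former. This gives
\[
\deg_t H(I+tA)\leq\deg_t H(I+tCB)=\deg_t H(I+tB),
\]
where the last equality holds because rescaling $t$ by $C>0$ does not change the degree. Swapping the roles of $A$ and $B$, using $B\leq c^{-1}A$, gives the reverse inequality.

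For the consequence, take $B=\pi_U$. Its image is $U$, so $\deg_t H(I+tA)=\deg_t H(I+t\pi_U)=r_H(U)$ by definition \eqref{eq:subspace-rank}.

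The only delicate point is justifying the nonnegative-coefficient claim. This requires the summed direction $\rho+A+(CB-A)$ to be interior, which holds because $\rho$ is included in the family. It also requires $H$ to be $Q(V)^+$-G{\aa}rding and homogeneous, which is the standing assumption. The domination constants themselves are routine linear algebra.
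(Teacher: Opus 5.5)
Your proof is correct and is essentially the paper's argument: the paper also writes $A_2=A_1+C$ with $C\ge0$ and scalarizes $q(x,y,z)=H(xI+yA_1+zC)$ to get nonnegative coefficients. It then compares $\deg_t q(1,t,0)\le\deg_t q(1,t,t)$ for monotonicity, and uses $cB\le A\le CB$ with rescaling invariance of the degree before specializing to $B=\pi_U$. Your abandoned first family is unnecessary, but the final version you settle on is the paper's route.
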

\begin{proof}
First suppose that \(0\leq A_1\leq A_2\), and write
\(A_2=A_1+C\) with \(C\geq 0\). The scalarization
\[
q(x,y,z):=H(xI+yA_1+zC)
\]
is a homogeneous G{\aa}rding polynomial with nonnegative
coefficients. Hence
\[
\deg_t H(I+tA_1)
=
\deg_t q(1,t,0)
\leq
\deg_t q(1,t,t)
=
\deg_t H(I+tA_2).
\]
Now suppose that \(\operatorname{im}A=\operatorname{im}B\).
Since \(A\) and \(B\) are positive definite on their common image
and vanish on its orthogonal complement, there exist \(c,C>0\) such
that \(cB\leq A\leq CB\). The preceding monotonicity, together with
invariance of the degree under positive rescaling of the direction,
gives
\[
\deg_t H(I+tB)=\deg_t H(I+tA).
\]
Taking \(B=\pi_U\) proves the final assertion.
\end{proof}
\begin{thm}
\label{thm:subspace-rank-submodular}
With $r_H$ in Definition~\ref{def:subspace-rank},
\(\mathcal P_H:=(V,r_H)\) is an integral subspace polymatroid.
\end{thm}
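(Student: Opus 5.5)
The plan is to reduce all four axioms to statements about a single scalarization of $H$ in a basis adapted to the pair of subspaces being compared. After that reduction, the polymatroid property follows from Lemma~\ref{lem:M-convex-support-rank}.

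\textbf{Easy axioms.} Integrality holds because each $r_H(U)$ is the degree of a polynomial in $t$. Normalization $r_H(0)=0$ was already noted after Definition~\ref{def:subspace-rank}. For monotonicity, note that $U\subseteq W$ implies $0\le\pi_U\le\pi_W$. The first part of the proof of Lemma~\ref{lem:directional-degree-image} then gives $\deg_tH(I+t\pi_U)\le\deg_tH(I+t\pi_W)$.

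\textbf{Submodularity: adapted scalarization.} Fix $U,W\in\mathcal L(V)$ and choose a (not necessarily unitary) basis $v_1,\dots,v_n$ of $V$ together with index sets
\[
S_{U\cap W}=[a],\qquad S_U=[b],\qquad S_W=[a]\cup\{b+1,\dots,c\},\qquad S_{U+W}=[c].
\]
These are chosen so that $\{v_i:i\in S_X\}$ spans $X$ for each $X\in\{U\cap W,U,W,U+W\}$. This is possible because a basis of $U\cap W$ extends separately to bases of $U$ and $W$, whose union is a basis of $U+W$, and that basis extends to all of $V$. By construction,
\[
S_U\cap S_W=S_{U\cap W},\qquad S_U\cup S_W=S_{U+W}.
\]
Put $P_i=v_i\otimes\overline{v_i}\in Q(V)^+$; then $Q:=\sum_iP_i$ is positive definite. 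By Corollary~\ref{cor:homogeneous-K-scalarization}, the polynomial
\[
h(\x)=H\Bigl(\sum_i x_iP_i\Bigr)
\]
is homogeneous G{\aa}rding of degree $d$ with nonnegative coefficients.

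\textbf{Submodularity: reading off the ranks.} For $A_X:=\sum_{i\in S_X}P_i$ we have $\operatorname{im}A_X=X$. Lemma~\ref{lem:directional-degree-image} therefore gives $r_H(X)=\deg_tH(I+tA_X)$. I then replace $I$ by $Q$. Since $cQ\le I\le CQ$, the same monotonicity argument as in Lemma~\ref{lem:directional-degree-image} applies, using the nonnegative-coefficient scalarization $(x,y,z)\mapsto H(xQ+yA_X+zC)$ with $C\ge0$. This shows that $\deg_tH(I+tA_X)=\deg_tH(Q+tA_X)$. Now $H(Q+tA_X)=h(\one+t\one_{S_X})$. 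Because all coefficients of $h$ are nonnegative, there is no cancellation, and so
\[
r_H(X)=\max_{\alpha\in\operatorname{supp}(h)}\alpha(S_X).
\]
Thus $r_H(X)=r_h(S_X)$ for the four subspaces in question.

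\textbf{Main obstacle: M-convexity of the support.} The remaining step is to know that $\operatorname{supp}(h)$ is $M$-convex. I expect this to be the hard point. I would invoke the structure theory of homogeneous G{\aa}rding polynomials from \cite{Fang-MaGard}, where nonnegative homogeneous G{\aa}rding polynomials have $M$-convex (jump-system-free) support, in analogy with Lorentzian and stable polynomials. Granting this, Lemma~\ref{lem:M-convex-support-rank} shows that $r_h$ is submodular on $2^{[n]}$. Using the identities $S_U\cap S_W=S_{U\cap W}$ and $S_U\cup S_W=S_{U+W}$, we get
\[
r_H(U)+r_H(W)=r_h(S_U)+r_h(S_W)\ge r_h(S_{U\cap W})+r_h(S_{U+W})=r_H(U\cap W)+r_H(U+W).
\]
If $M$-convexity is not directly available in \cite{Fang-MaGard}, the fallback is a direct exchange argument on the support. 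It would use closure of G{\aa}rding polynomials under $\partial_i$ and under the specialization $x_i=0$ to establish the exchange property inductively on the degree $d$.
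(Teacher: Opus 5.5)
Your proof is correct, and it differs from the paper's in how the four ranks are read off a single scalarization.

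\textbf{The paper's route.} It splits orthogonally, $U=Z\oplus U_0$ and $W=Z\oplus W_0$ with $Z=U\cap W$. It scalarizes in only four directions, $q(s)=H(s_1I+s_2\pi_{U_0}+s_3\pi_{W_0}+s_4\pi_Z)$, and keeps the base point $I$ as the variable $s_1$. Its specialization sets the unused coordinates to zero. It therefore needs an $M$-convex exchange against the vertex $de_1\in\operatorname{supp}q$ to show that $\max_{\alpha\in\operatorname{supp}q}\alpha(S)$ is attained by a monomial supported in $S\cup\{1\}$. It then applies submodularity to $\{2,4\}$ and $\{3,4\}$, and identifies $\pi_{U_0}+\pi_Z=\pi_U$ and the other sums through Lemma~\ref{lem:directional-degree-image}.

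\textbf{Your route.} You take a non-unitary rank-one frame adapted simultaneously to $U\cap W$, $U$, $W$ and $U+W$. You evaluate at $Q=\sum_iP_i$, which is the point $\one$ in scalarized coordinates. Every monomial then survives in $h(\one+t\one_{S_X})$, so $r_H(X)=r_h(S_X)$ follows from nonnegative coefficients alone, with no exchange step.

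The price is the base-point change $\deg_tH(I+tA)=\deg_tH(Q+tA)$. This does follow from the lemma's argument, but it needs both directions. One inequality uses a scalarization with $I-cQ\geq0$, and the other uses one with $Q-C^{-1}I\geq0$.

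\textbf{Relation to the frame lemma.} Your frame is the natural non-unitary substitute for Lemma~\ref{lem:adapted-frame-rank}. Such a substitute is genuinely required, because a $\rho$-unitary basis adapted to both $U$ and $W$ exists only when $\pi_U$ and $\pi_W$ commute.

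\textbf{What each buys.} The paper stays with $I$ and orthogonal projections in four variables, but pays with the exchange argument. You avoid the exchange, but pay with a non-orthogonal frame and the base-point lemma.

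\textbf{The step you flagged.} The step you call the main obstacle is exactly the input the paper itself cites. Nonnegative homogeneous G{\aa}rding polynomials are Lorentzian \cite[Theorem~11.2]{Fang-MaGard}. Hence they have $M$-convex support with submodular rank \cite[Theorem~2.23]{BrandenHuh20}. With that citation your argument is complete, and the vague inductive-exchange fallback is unnecessary.
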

\begin{proof}
$r_H(\{0\})=0$ and the monotonicity of $r_H$ is obvious. So it remains to prove the submodularity.

Let $U,W\subseteq V$, set $Z:=U\cap W$, and write
$
U=Z\oplus U_0,$ $ W=Z\oplus W_0,$
where both sums are $\rho$-orthogonal. Write $$A_1=I,\quad A_2=\pi_{U_0},\quad A_3=\pi_{W_0},\quad A_4=\pi_{Z}$$
By Corollary \ref{cor:homogeneous-K-scalarization}, the polynomial
\[
q(s_1,s_2,s_3,s_4):=H(\sum_{i=1}^4 s_i A_i),
\]
is homogeneous \gar{} polynomial of degree $d:=\deg H$, and hence Lorentzian by \cite[Theorem 11.2]{Fang-MaGard}. Then, \cite[Theorem 2.23]{BrandenHuh20} shows that
$\operatorname{supp}q$ is $M$-convex and its rank function
\[
\varrho(S):=\max_{\alpha\in\operatorname{supp}q}\sum_{i\in S}\alpha_i,
\qquad S\subseteq\{1,2,3,4\},
\]
is submodular.

We claim that
\begin{equation}
\varrho(S)
=
\deg_t H\left(I+t\sum_{i\in S}A_i\right),
\qquad S\subseteq\{2,3,4\}.
\label{eq:support-rank-directional-degree}
\end{equation}
Let $e_1,\ldots,e_4$ denote the standard basis of $\mathbb R^4$.
Since
\[
q(s_1,0,0,0)=H(I)s_1^d
\quad\text{and}\quad H(I)>0,
\]
we have $d e_1\in\operatorname{supp}q$.
Choose $\alpha\in\operatorname{supp}q$ attaining $\varrho(S)$.
If $\alpha_j>0$ for some $j\notin S\cup\{1\}$, apply the
$M$-convex exchange property to $\alpha$ and $d e_1$ at
the coordinate $j$. The only coordinate at which $\alpha$
can be smaller than $d e_1$ is the first, so
\[
\alpha-e_j+e_1\in\operatorname{supp}q.
\]
Repeating this operation eliminates all coordinates outside
$S\cup\{1\}$ while preserving $\sum_{i\in S}\alpha_i$.
Thus a monomial attaining $\varrho(S)$ survives the
specialization
\[
s_1=1,\qquad
s_i=
\begin{cases}
t,&i\in S,\\
0,&i\in\{2,3,4\}\setminus S.
\end{cases}
\]
This proves \eqref{eq:support-rank-directional-degree}.

Submodularity applied to $\{2,4\}$ and $\{3,4\}$ gives
\begin{equation}
\varrho(\{2,4\})+\varrho(\{3,4\})
\geq
\varrho(\{2,3,4\})+\varrho(\{4\}).
\label{eq:directional-rank-submodularity}
\end{equation}
Now
\[
A_2+A_4=\pi_U,\quad
A_3+A_4=\pi_W,\quad
A_4=\pi_{U\cap W}, \quad \mathrm{im} \left(\sum_{i=2}^4 A_i\right)=U+W.
\]
Combining \eqref{eq:support-rank-directional-degree} with
Lemma~\ref{lem:directional-degree-image}, we identify
\eqref{eq:directional-rank-submodularity} with
\[
r_H(U)+r_H(W)
\geq
r_H(U+W)+r_H(U\cap W).
\]
Thus $r_H$ is submodular. If \(U\subseteq W\), then \(\pi_U\leq\pi_W\), so the monotonicity
proved in Lemma~\ref{lem:directional-degree-image} gives
\(r_H(U)\leq r_H(W)\).
\end{proof}
We next connect the intrinsic rank \(r_H\) with unitary
scalarizations. Let   \( \v=\{v_1,\ldots,v_n\}\) be a \(\rho\)-unitary basis of \(V\). Set
\(E_i^{\mathbf v}:=v_i\otimes\overline{v_i}\), and define
\begin{equation}
g_{\mathbf v}(\mathbf x)
:=
H\left(\sum_{i=1}^n x_iE_i^{\mathbf v}\right).
\label{eq:unitary-scalarization-H}
\end{equation}
By Corollary~\ref{cor:homogeneous-K-scalarization},
\(g_{\mathbf v}\) is homogeneous of degree \(d\), G{\aa}rding, and
has nonnegative coefficients. Therefore, by \cite{Fang-MaGard}, it is Lorentzian and
\(\operatorname{supp}(g_{\mathbf v})\) is \(M\)-convex. Denote
the rank function of its support polymatroid by
\[
r_{\mathbf v}(S)
:=
\max_{\alpha\in\operatorname{supp}(g_{\mathbf v})}\alpha(S),
\qquad
S\subseteq[n].
\]
\begin{lem}
\label{lem:adapted-frame-rank}
Suppose that \(\mathbf v\) is adapted to \(U\subseteq V\), so that
\(U=\operatorname{span}_{\mathbb C}\{v_i:i\in S\}\) for some
\(S\subseteq[n]\). Then
\[
r_H(U)=r_{\mathbf v}(S).
\]
\end{lem}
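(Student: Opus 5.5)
Since $\mathbf v$ is $\rho$-unitary and adapted to $U$, the projection is $\pi_U=\sum_{i\in S}E_i^{\mathbf v}$ and the identity is $I=\sum_{i=1}^nE_i^{\mathbf v}$. The plan is to compute $r_H(U)$ directly through the scalarization $g_{\mathbf v}$:
\[
H(I+t\pi_U)=g_{\mathbf v}(\mathbf x(t)),\qquad
x_i(t)=\begin{cases}1+t,& i\in S,\\ 1,& i\notin S.\end{cases}
\]
Then $r_H(U)=\deg_t g_{\mathbf v}(\mathbf x(t))$ by Definition~\ref{def:subspace-rank}. One could also invoke Lemma~\ref{lem:directional-degree-image}, but it is not needed here, since $\pi_U$ appears literally.

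Next, expand $g_{\mathbf v}=\sum_{\alpha\in\operatorname{supp}(g_{\mathbf v})}c_\alpha\mathbf x^\alpha$, where every $c_\alpha>0$ by Corollary~\ref{cor:homogeneous-K-scalarization}. Each monomial contributes $c_\alpha(1+t)^{\alpha(S)}$. This is a polynomial in $t$ of degree $\alpha(S)$ with positive leading coefficient $c_\alpha$.

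The key step is to rule out cancellation. All coefficients $c_\alpha$ are positive, and every term $(1+t)^{\alpha(S)}$ has nonnegative coefficients. So the coefficient of $t^m$ in the sum, with $m=\max_\alpha\alpha(S)$, is $\sum_{\alpha(S)=m}c_\alpha>0$, and no higher powers of $t$ occur. Hence
\[
\deg_t H(I+t\pi_U)=\max_{\alpha\in\operatorname{supp}(g_{\mathbf v})}\alpha(S)=r_{\mathbf v}(S).
\]

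I expect no real obstacle. The only point needing care is the positivity of the coefficients, which Corollary~\ref{cor:homogeneous-K-scalarization} supplies. It applies because $E_i^{\mathbf v}\in Q(V)^+$ and $\sum_iE_i^{\mathbf v}=I$, which lies in the interior of $Q(V)^+$. $M$-convexity of the support is not needed for this lemma.
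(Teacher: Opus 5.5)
Your proposal is correct and is essentially the paper's own proof: you rewrite $H(I+t\pi_U)$ as $g_{\mathbf v}\bigl((1+t)\mathbf 1_S+\mathbf 1_{S^c}\bigr)=\sum_\alpha c_\alpha(1+t)^{\alpha(S)}$ with $c_\alpha\geq0$, and then observe that the top $t$-degree cannot cancel. Your explicit check that the coefficient of $t^m$ equals $\sum_{\alpha(S)=m}c_\alpha>0$ spells out the step the paper states in one line.
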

\begin{proof}
Since \(I=\sum_{i=1}^nE_i^{\mathbf v}\) and
\(\pi_U=\sum_{i\in S}E_i^{\mathbf v}\),
\[
H(I+t\pi_U)
=
g_{\mathbf v}\bigl((1+t)\one_S+\one_{S^c}\bigr).
\]
Writing
\(g_{\mathbf v}(\mathbf x)
=\sum_\alpha c_\alpha(\mathbf v)\mathbf x^\alpha\), with
\(c_\alpha(\mathbf v)\geq 0\), gives
\[
H(I+t\pi_U)
=
\sum_\alpha
c_\alpha(\mathbf v)(1+t)^{\alpha(S)}.
\]
There is no cancellation in the highest \(t\)-degree, and hence
\[
r_H(U)
=
\max_{\alpha\in\operatorname{supp}(g_{\mathbf v})}\alpha(S)
=
r_{\mathbf v}(S).
\]
\end{proof}
\begin{defn}[Subspace slope semistability]
\label{def:subspace-slope-semistability}
For a nonzero complex subspace \(U\subseteq V\), define
\[
\mu_H(U):=\frac{\dim_{\mathbb C}U}{r_H(U)},
\]
with the convention \(\mu_H(U)=+\infty\) if \(r_H(U)=0\). We call
\(\mathcal P_H\) \emph{slope semistable} if
\(\mu_H(U)\leq\mu_H(V)=n/d\) for every nonzero proper complex
subspace \(U\subsetneq V\).
\end{defn}
The following theorem relates the intrinsic subspace polymatroid to
the Newton polytopes and capacities of all unitary scalarizations.
\begin{thm}
\label{thm:capacity-subspace-slope-stability}
Set
\(
\beta:=\frac{d}{n}\one\in\mathbb R^n.\) Then the following
statements are equivalent:
\begin{enumerate}[label=\textnormal{(\arabic*)}]
\item\label{enu:subspace-slope-1} The subspace polymatroid \(\mathcal P_H\) is slope semistable.
\item\label{enu:subspace-slope-2} For every complex subspace \(U\subseteq V\),
\(
r_H(U)\geq\frac{d}{n}\dim_{\mathbb C}U.
\)
\item\label{enu:subspace-slope-3} For every \(\rho\)-unitary basis \(\mathbf v\) and every
\(S\subseteq[n]\),
\(
r_{\mathbf v}(S)\geq\frac{d}{n}|S|.
\)
\item\label{enu:subspace-slope-4} For every \(\rho\)-unitary basis \(\mathbf v\),
\(
\beta\in\Newt(g_{\mathbf v}).
\)
\item\label{enu:subspace-slope-5} For every \(\rho\)-unitary basis \(\mathbf v\),
\(
\Capacity_\beta(g_{\mathbf v})>0.
\)
\item\label{enu:subspace-slope-6} There exists  $c>0$ such that for any \(\rho\)-unitary basis \(\mathbf v\),
\(
\Capacity_\beta(g_{\mathbf v})>c.
\)
\end{enumerate}
\end{thm}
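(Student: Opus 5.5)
The plan is to prove the chain \ref{enu:subspace-slope-1}$\Leftrightarrow$\ref{enu:subspace-slope-2}$\Leftrightarrow$\ref{enu:subspace-slope-3}$\Leftrightarrow$\ref{enu:subspace-slope-4}$\Leftrightarrow$\ref{enu:subspace-slope-5}, and then handle \ref{enu:subspace-slope-6} separately by a compactness argument.

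\textbf{The combinatorial equivalences.}

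\ref{enu:subspace-slope-1}$\Leftrightarrow$\ref{enu:subspace-slope-2}. For $U$ nonzero and proper, $\mu_H(U)\leq n/d$ means $r_H(U)>0$ and $\dim U/r_H(U)\leq n/d$, which is exactly $r_H(U)\geq\frac dn\dim U$. The cases $U=0$ and $U=V$ hold trivially.

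\ref{enu:subspace-slope-2}$\Leftrightarrow$\ref{enu:subspace-slope-3}. This follows from Lemma~\ref{lem:adapted-frame-rank}. Every subspace $U$ admits an adapted $\rho$-unitary basis. Conversely, every pair $(\mathbf v,S)$ determines $U=\operatorname{span}\{v_i:i\in S\}$ with $\dim U=|S|$ and $r_H(U)=r_{\mathbf v}(S)$.

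\ref{enu:subspace-slope-3}$\Leftrightarrow$\ref{enu:subspace-slope-4}$\Leftrightarrow$\ref{enu:subspace-slope-5}. By Corollary~\ref{cor:homogeneous-K-scalarization}, each $g_{\mathbf v}$ is a nonzero homogeneous G{\aa}rding polynomial of degree $d$ with nonnegative coefficients. It is Lorentzian, so its support is $M$-convex, and $\beta(E)=d$. Theorem~\ref{thm:capacity-polymatroid-stability} applied to $f=g_{\mathbf v}$ then gives, basis by basis,
\[
\Capacity_\beta(g_{\mathbf v})>0\iff\beta\in\Newt(g_{\mathbf v})\iff\beta(S)\leq r_{\mathbf v}(S)\ \forall S.
\]
The last condition is \ref{enu:subspace-slope-3}. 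Finally, \ref{enu:subspace-slope-6}$\Rightarrow$\ref{enu:subspace-slope-5} is trivial.

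\textbf{The uniform bound \ref{enu:subspace-slope-5}$\Rightarrow$\ref{enu:subspace-slope-6}.} This is the main obstacle, because capacity is only an infimum and need not vary continuously in $\mathbf v$. I would avoid continuity and use finiteness instead.

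The coefficients $c_\alpha(\mathbf v)$ of $g_{\mathbf v}$ are polynomial in the entries of the unitary matrix, so they are continuous on the compact group $U(n)$. The supports range over finitely many $M$-convex subsets of $\{\alpha\in\N^n:|\alpha|=d\}$.

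Fix $\mathbf v_0$. By \ref{enu:subspace-slope-4}, write $\beta=\sum_{\alpha\in T}\lambda_\alpha\alpha$ as a convex combination with $T\subseteq\operatorname{supp}g_{\mathbf v_0}$. On a neighborhood $N$ of $\mathbf v_0$ we have $c_\alpha(\mathbf v)\geq c_\alpha(\mathbf v_0)/2$ for all $\alpha\in T$. Since the coefficients are nonnegative, weighted AM--GM gives, for all $\mathbf v\in N$ and $\x\in\Gamma_n^+$,
\[
g_{\mathbf v}(\x)\geq\sum_{\alpha\in T}c_\alpha(\mathbf v)\x^\alpha\geq\prod_{\alpha\in T}\Bigl(\frac{c_\alpha(\mathbf v)\x^\alpha}{\lambda_\alpha}\Bigr)^{\lambda_\alpha}\geq c_N\,\x^\beta,
\qquad
c_N:=\prod_{\alpha\in T}\Bigl(\frac{c_\alpha(\mathbf v_0)}{2\lambda_\alpha}\Bigr)^{\lambda_\alpha}>0.
\]
Hence $\Capacity_\beta(g_{\mathbf v})\geq c_N$ on $N$. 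This is exactly where the naive route would fail, and AM--GM is what makes positivity of the capacity lower semicontinuous. Covering the compact unitary frame space by finitely many such neighborhoods, and taking $c$ to be half the minimum of the resulting constants, yields \ref{enu:subspace-slope-6}.
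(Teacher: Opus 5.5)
Your proof is correct. For (1)--(5) you follow the paper's route: the definition, adapted unitary bases, and the Newton-polytope, polymatroid and capacity dictionary applied to each $g_{\mathbf v}$. The real difference is in the uniform bound.

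The paper proves (4)$\Rightarrow$(6) by passing to $g_{\mathbf v}^n$. This is again Lorentzian with $M$-convex support and $\Newt(g_{\mathbf v}^n)=n\Newt(g_{\mathbf v})$, so the lattice point $n\beta=d\one$ lies in $\supp g_{\mathbf v}^n$. Then $g_{\mathbf v}(\x)^n\geq a_{\mathbf v}\x^{d\one}$ gives $\Capacity_\beta(g_{\mathbf v})\geq a_{\mathbf v}^{1/n}$, where $a_{\mathbf v}:=[\x^{d\one}]g_{\mathbf v}^n$ is a single continuous, everywhere positive function on the compact group $U(n)$.

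You instead prove local uniform positivity directly:
\begin{itemize}
\item a convex decomposition of $\beta$ into support points of $g_{\mathbf v_0}$;
\item persistence of those finitely many coefficients under small perturbation of the basis;
\item weighted AM--GM;
\item a finite subcover.
\end{itemize}

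What each buys:
\begin{itemize}
\item The paper's argument is shorter and gives one global certificate, whose minimum over $U(n)$ is the constant, instead of a patchwork of local constants. It relies on Lorentzian structure (products preserving $M$-convexity, so that integer points of $\Newt(g_{\mathbf v}^n)$ lie in the support) and on integrality of $n\beta$.
\item Your argument uses only nonnegativity of the coefficients and their continuity in $\mathbf v$. It therefore works for any $\beta$ in the Newton polytope and any continuous family of polynomials in $\R_+[\x]$.
\end{itemize}

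One detail to state explicitly: keep only the terms with $\lambda_\alpha>0$ in the convex combination, so that the factors $(c_\alpha/\lambda_\alpha)^{\lambda_\alpha}$ are defined.
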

\begin{proof}
The equivalence of \ref{enu:subspace-slope-1} and \ref{enu:subspace-slope-2} is the
definition of slope semistability.
Assume \ref{enu:subspace-slope-2}. For a \(\rho\)-unitary basis \(\mathbf v\)
and \(S\subseteq[n]\), apply \ref{enu:subspace-slope-2} to
\(U_S:=\operatorname{span}_{\mathbb C}\{v_i:i\in S\}\).
Lemma~\ref{lem:adapted-frame-rank} gives
\[
r_{\mathbf v}(S)
=
r_H(U_S)
\geq
\frac{d}{n}\dim_{\mathbb C}U_S
=
\frac{d}{n}|S|,
\]
so \ref{enu:subspace-slope-3} holds. Conversely, every complex subspace admits
an adapted \(\rho\)-unitary basis, so \ref{enu:subspace-slope-3} and
Lemma~\ref{lem:adapted-frame-rank} imply \ref{enu:subspace-slope-2}.
Since
\[
\Newt(g_{\mathbf v})=B(r_{\mathbf v})
\qquad\text{and}\qquad
\beta([n])=d,
\]
membership of \(\beta\) in \(\Newt(g_{\mathbf v})\) is equivalent to
\[
\beta(S)=\frac{d}{n}|S|\leq r_{\mathbf v}(S)
\qquad
(S\subseteq[n]).
\]
Hence \ref{enu:subspace-slope-3} and \ref{enu:subspace-slope-4} are equivalent.
Finally, Lemma~\ref{lem:beta-capacity-newton} gives
\[
\Capacity_\beta(g_{\mathbf v})>0
\quad\Longleftrightarrow\quad
 \beta\in\Newt(g_{\mathbf v}),
\]
which proves the equivalence of \ref{enu:subspace-slope-4} and
\ref{enu:subspace-slope-5}.

Finally,  $g_\v^n$ is also Lorentzian with  $$\Newt(g_\v^n)=\sum_{i=1}^n\Newt(g_\v)$$ under the Minkowski sum. Since $\supp g_\v^n$  is also  $M$-convex, if $\beta\in \Newt(g_{\v})$, then $$n\beta=d \mathbf{1}\in\Newt(g_\v^n)\cap \mathbb Z^n=\supp g_\v^n.$$  Thus,  $$(\Capacity_{\beta} (g_\v))^n\geq a_\v:=[\x^{d\mathbf{1}}]g_{\v}^n>0.$$ Since the coefficients of $g_\v$ are continuous functions of $\v$, and $U(n)$ is compact, we have $a_\v>c$ for some $c>0$ for all $\v$. This establishes \ref{enu:subspace-slope-4} $\Rightarrow$
\ref{enu:subspace-slope-6}. \ref{enu:subspace-slope-6} clearly implies \ref{enu:subspace-slope-5}. So we have finished the proof.
\end{proof}

\subsection{Determinantal Capacity and slope semistability}
We define the following capacity on the real vector space $Q(V)$.

\begin{defn}[Determinantal capacity]\label{det-capacity}
Let
\(
H\in \R[Q(V)]\)
be a homogeneous polynomial of degree $d\geq1$ that is positive on
\(Q(V)^{++}\). The \emph{determinantal capacity} of \(H\) is defined by
\[
\Capacity_{\det}(H)
:=
\inf_{P\in Q(V)^{++}}
\frac{H(P)}{\det_\rho(P)^{d/n}}.
\]
\end{defn}

\begin{thm}
\label{thm:det-ellipticity-capacity}
Let \(G\) be a \gar{} elliptic operator of degree $d\geq1$, \(H:=G^{\mathrm{top}}\), {$\mathcal A:=\operatorname{int}\mathcal C_G^{(1)}$}, and let \(\mathcal P_H=(V,r_H)\) be the associated subspace polymatroid. Let $\beta=\frac{d}{n}\mathrm{1}$. Then the following are equivalent:
\begin{enumerate}[label=\textnormal{(\arabic*)}]
\item\label{enu:det-ellipticity-1} \(\exists\,\eta>0\) s.t. \(\forall A\in\mathcal A,\;P\in Q(V)^+,\)
\[
G(A+P)-G(A)\;\ge\;\eta\,\det_\rho(P)^{d/n}
\]
\item\label{enu:det-ellipticity-2} \(\exists\,\eta>0\) s.t. \(P\in Q(V)^+\),
\[
H(P)\;\ge\;\eta\,\det_\rho(P)^{d/n}
\]
\item\label{enu:det-ellipticity-3} \(\Capacity_{\det}(H)>0\).
\item\label{enu:det-ellipticity-4} For any unitary scalarization  $g_\v$ defined in \eqref{eq:unitary-scalarization-H}, $\Capacity_\beta(g_\v)>0$.
\item\label{enu:det-ellipticity-5} \(\mathcal P_H\) is slope semistable.
\item\label{enu:det-ellipticity-6} For every unitary scalarization \(h_\v\) of \(H\), the associated
polymatroid \(\mathcal P_{h_\v}\) is \(\beta\)-semistable.
\end{enumerate}
Moreover,
\[
\eta_{\max}=\Capacity_{\det}(H).
\]
The same increment bound holds for all $A\in\mathcal C_G^{(1)}$.

\end{thm}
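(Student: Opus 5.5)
The plan is to prove the cycle \ref{enu:det-ellipticity-1}$\Leftrightarrow$\ref{enu:det-ellipticity-2}$\Leftrightarrow$\ref{enu:det-ellipticity-3}$\Leftrightarrow$\ref{enu:det-ellipticity-4}, and to import the polymatroid statements \ref{enu:det-ellipticity-4}$\Leftrightarrow$\ref{enu:det-ellipticity-5}$\Leftrightarrow$\ref{enu:det-ellipticity-6} from Theorems~\ref{thm:capacity-subspace-slope-stability} and~\ref{thm:capacity-polymatroid-stability}. For the latter, each $g_{\mathbf v}$ has $M$-convex support. Hence Theorem~\ref{thm:capacity-polymatroid-stability} with $\beta=\frac dn\one$ identifies positivity of $\Capacity_\beta(g_{\mathbf v})$ with $\beta$-semistability of $\mathcal P_{g_{\mathbf v}}$, and Theorem~\ref{thm:capacity-subspace-slope-stability} identifies this, for all $\mathbf v$, with slope semistability of $\mathcal P_H$.

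\emph{Analytic steps.} For \ref{enu:det-ellipticity-2}$\Rightarrow$\ref{enu:det-ellipticity-1}, apply the Taylor increment estimate \eqref{eq:conic-taylor-increment} of Proposition~\ref{prop:positive-branch-admissible}: for $A\in\mathcal C_G^{(1)}\supseteq\mathcal A$ and $P\ge0$, we have $G(A+P)-G(A)\ge H(P)\ge\eta\det_\rho(P)^{d/n}$. This also gives the final sentence of the theorem. For \ref{enu:det-ellipticity-1}$\Rightarrow$\ref{enu:det-ellipticity-2}, fix $A\in\mathcal A$ and $P>0$, replace $P$ by $tP$, divide by $t^d$, and let $t\to\infty$. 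Since $G(A+tP)=t^dH(P)+O(t^{d-1})$ and $\det(tP)^{d/n}=t^d\det P^{d/n}$, we obtain $H(P)\ge\eta\det_\rho(P)^{d/n}$; boundary semidefinite $P$ follow by continuity. The two limiting arguments show that the sets of admissible constants $\eta$ in \ref{enu:det-ellipticity-1} and \ref{enu:det-ellipticity-2} coincide. By homogeneity of degree $d$ on both sides, \ref{enu:det-ellipticity-2}$\Leftrightarrow$\ref{enu:det-ellipticity-3}, and the best constant is the infimum defining $\Capacity_{\det}(H)$. This yields $\eta_{\max}=\Capacity_{\det}(H)$.

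\emph{The bridge \ref{enu:det-ellipticity-3}$\Leftrightarrow$\ref{enu:det-ellipticity-4}.} For \ref{enu:det-ellipticity-3}$\Rightarrow$\ref{enu:det-ellipticity-4}, restrict to diagonal forms $P=\sum x_iE_i^{\mathbf v}$ with $x\in\Gamma_n^+$. Then $\det_\rho P=\x^{\one}$, so $g_{\mathbf v}(\x)\ge\Capacity_{\det}(H)\,\x^{\beta}$, and therefore $\Capacity_\beta(g_{\mathbf v})\ge\Capacity_{\det}(H)>0$. For the converse, every $P\in Q(V)^{++}$ is diagonal in some $\rho$-unitary basis $\mathbf v$ with eigenvalues $x$, so
\[
\frac{H(P)}{\det_\rho(P)^{d/n}}=\frac{g_{\mathbf v}(\x)}{\x^\beta}\ge\Capacity_\beta(g_{\mathbf v}).
\]
Hence $\Capacity_{\det}(H)=\inf_{\mathbf v}\Capacity_\beta(g_{\mathbf v})$.

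The main obstacle is that \ref{enu:det-ellipticity-4} gives positivity for each $\mathbf v$ separately, while we need a positive infimum over the noncompact-looking family. This is exactly the uniform bound \ref{enu:subspace-slope-6} of Theorem~\ref{thm:capacity-subspace-slope-stability}, which uses compactness of $U(n)$ and the $M$-convexity argument showing $[\x^{d\one}]g_{\mathbf v}^n>0$. Invoking it gives $\Capacity_{\det}(H)\ge c^{1/n}>0$ or $\ge c$, depending on normalization, which closes the cycle. I would double check that the coefficient $[\x^{d\one}]g_{\mathbf v}^n$ depends continuously on $\mathbf v$ and stays positive on all of $U(n)$. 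Positivity at each point holds by \ref{enu:det-ellipticity-4} and the Minkowski-sum argument, and a continuous positive function on a compact set has a positive minimum.
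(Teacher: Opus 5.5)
Your proposal is correct and follows essentially the same route as the paper. The paper also uses the Taylor increment bound \eqref{eq:conic-taylor-increment} and the $t\to\infty$ rescaling for \ref{enu:det-ellipticity-1}$\Leftrightarrow$\ref{enu:det-ellipticity-2}, and homogeneity for \ref{enu:det-ellipticity-2}$\Leftrightarrow$\ref{enu:det-ellipticity-3}. For \ref{enu:det-ellipticity-3}$\Leftrightarrow$\ref{enu:det-ellipticity-4} it uses unitary diagonalization to get $\Capacity_{\det}(H)=\inf_{\mathbf v}\Capacity_\beta(g_{\mathbf v})$, together with the compactness bound from Theorem~\ref{thm:capacity-subspace-slope-stability}, and it imports the remaining equivalences from the polymatroid theorems.

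Your hedge on normalization is well placed. Since $\Capacity_\beta(g_{\mathbf v})^n\ge[\x^{d\one}]g_{\mathbf v}^n>c$, the uniform lower bound that argument actually delivers is $c^{1/n}$.
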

\begin{proof}
\ref{enu:det-ellipticity-1}$\Rightarrow$\ref{enu:det-ellipticity-2}.
Fix \(A\in\mathcal A\), \(P\ge0\). Apply \ref{enu:det-ellipticity-1} to \(tP\):
\[
G(A+tP)-G(A)\ge \eta\,t^d\det_\rho(P)^{d/n}.
\]
Divide by \(t^d\), let \(t\to\infty\), we get \ref{enu:det-ellipticity-2}.

\ref{enu:det-ellipticity-2}$\Rightarrow$\ref{enu:det-ellipticity-1}.
For \(A\in\mathcal A\), \(P\ge0\), Taylor expansion in direction \(P\) gives
\[
G(A+P)-G(A)=\sum_{k=1}^d \frac{1}{k!}\partial_P^k G(A).
\]
{Since $A\in\mathcal A\subseteq\mathcal C_G^{(1)}$,} all nonzero \(\partial_P^k G(A)\ge0\), hence
\[
G(A+P)-G(A)\ge \frac{1}{d!}\partial_P^d G(A)=H(P).
\]
Combine with \ref{enu:det-ellipticity-2}, we get \ref{enu:det-ellipticity-1}. At the same time, we obtain the optimal constant claim.

\ref{enu:det-ellipticity-2}$\Leftrightarrow$\ref{enu:det-ellipticity-3}. It follows from Definition~\ref{det-capacity}.

\ref{enu:det-ellipticity-3}$\Leftrightarrow$\ref{enu:det-ellipticity-4}.
Unitary diagonalization gives
\[
\Capacity_{\det}(H)
=\inf_{\mathbf v\in U(n)}\Capacity_\beta(g_{\mathbf v}).
\]
Hence \ref{enu:det-ellipticity-3} implies \ref{enu:det-ellipticity-4}.
The converse follows from
\ref{enu:subspace-slope-5}$\Rightarrow$\ref{enu:subspace-slope-6} in
Theorem~\ref{thm:capacity-subspace-slope-stability}, which supplies
a uniform positive lower bound.

The rest of the claim follows from Theorem~\ref{thm:capacity-subspace-slope-stability}.
\end{proof}

\begin{example}
\label{rem:spectral-semistability}

The top parts in Example~\ref{example:sigma-k-K-Garding},
Proposition~\ref{thm:spectral-lift-polarization}, and
Example~\ref{example:dHYM-Garding} are positive multiples of
$\sigma_d$. Their subspace polymatroids are slope semistable
by the following symmetry argument.

More generally, let $G$ be a spectral G{\aa}rding elliptic operator
of degree $d\geq1$, and write
$$
H(A):=G^{\mathrm{top}}(A)=h\bigl(\lambda_\rho(A)\bigr).
$$
Then $h$ is homogeneous, symmetric, and has nonnegative
coefficients. If $\alpha\in\operatorname{supp}(h)$, every
coordinate permutation of $\alpha$ also belongs to the support.
Since $|\alpha|=d$,
$$
\frac{1}{n!}\sum_{\sigma\in\mathfrak S_n}\sigma\alpha
=\frac dn\one\in\Newt(h).
$$
Every unitary scalarization of $H$ equals $h$. Hence
Theorem~\ref{thm:det-ellipticity-capacity} implies that
$\mathcal P_H$ is slope semistable and $G$ is det-elliptic with exponent $d/n$.
\end{example}

\begin{example}
\label{example:partial-trace-capacity}
We return to the partial-trace product $H$ in
Example~\ref{example:products-K-Garding} and retain its
notation. Put $d=a+b$. By \eqref{eq:subspace-rank},
$$
r_H(E)=a,\qquad r_H(F)=b.
$$
Thus slope semistability requires
\[
a\geq\frac dn m,\quad  \text{ and } \quad  b\geq\frac dn k.
\]
Since $a+b=d$ and $m+k=n$, both inequalities must be equalities.
Consequently, the necessary balance condition is
\begin{equation}
    \label{1111}\frac am=\frac bk=\frac dn.
\end{equation}

Conversely, for the diagonal blocks $P_E,P_F$ of $P>0$,
the block determinant inequality and AM--GM give
\[
\det_\rho P\leq\det P_E\det P_F
\leq\left(\frac{\operatorname{tr}_E P}{m}\right)^m
\left(\frac{\operatorname{tr}_F P}{k}\right)^k.
\]
Under \eqref{1111}, raising to $d/n$ gives
$\det_\rho(P)^{d/n}\leq H(P)$, with equality at $I$;
hence $\Capacity_{\det}(H)=1$.
Otherwise, $P_t=tI_E\oplus t^{-m/k}I_F$ has determinant one and
$H(P_t)=t^{a-mb/k}\to0$ as $t\to0$ or $t\to\infty$,
so $\Capacity_{\det}(H)=0$.
\end{example}

\begin{rem}\label{rem:comparison-HL}
Our results extend the scope of the determinant-majorization theorem
of Harvey--Lawson \cite{HarveyLawsonMajorization} in three respects.
First, our framework accommodates nonhomogeneous G{\aa}rding
polynomials.

Second, even in the homogeneous setting, our G{\aa}rding framework
strictly extends the classical hyperbolic G{\aa}rding--Dirichlet
setting. For example, the polynomial
\[
f(x,y)=xy(x^2+cxy+y^2),\qquad \sqrt{3}\le c<2,
\]
is G{\aa}rding but not real stable
\cite[Example~11.4]{Fang-MaGard}.
Its matrix realization $H(A)=f(A_{11},A_{22})$ on real symmetric
$2\times2$ matrices is nonhyperbolic and has positive determinant
capacity.

Third, even within the classical hyperbolic setting, positive
determinant capacity is strictly weaker than the central-ray
hypothesis in \cite{HarveyLawsonMajorization} relative to the fixed background metric.
The latter implies that the capacity equals $H(I)$ and is attained
at $I$, whereas our criterion does not require attainment.
For instance,
\[
H(A)=A_{11}\operatorname{tr}A,\qquad A\in\operatorname{Sym}^2(\mathbb R^2),
\]
is hyperbolic with a G{\aa}rding cone containing the positive definite
cone, but its determinant capacity equals $1$ and is not attained.
In particular, no positive definite change of background makes
this operator satisfy the central-ray hypothesis.
\end{rem}

\section{Examples and Applications}
\label{sec:examples-applications}

We apply Theorem~\ref{thm:determinant-relative-Linfty} and its
corollaries to $J$-equations, deformed Hermitian Yang--Mills
equations, and inverse $\sigma_2$-equations.

We first give a relative estimate suited to semistable limits of the $J$-equation.

\begin{prop}
\label{prop:J-relative}
Let $(X,\rho)$ be compact K\"ahler of dimension $n\geq 2$, let
$\omega,\alpha$ be K\"ahler forms, and let $\chi$ be a smooth closed real
$(1,1)$-form with $[\chi]$ big. For $A>0$, write
$$
P_\alpha(A)=\mu_1+\cdots+\mu_{n-1},
$$
where $\mu_1\geq\cdots\geq\mu_n>0$ are the eigenvalues of $\alpha$
relative to $A$.

Let $\underline u\leq 0$ be continuous on $X\setminus Z$, with
$\underline u\to-\infty$ along a proper closed set $Z$. Assume that,
whenever $q$ touches $\underline u$ from above,
$$
B_q:=\omega+\ddc q-\chi\geq a\alpha,
\qquad
P_\alpha(B_q)\leq c-\eta
$$
for fixed $a,\eta>0$ and $c>0$.

Let $s_j,r_j\geq 0$ be uniformly bounded and let
$u_j\in C^\infty(X)$, $\sup_Xu_j=0$, solve
$$
\omega_j:=(1+s_j)\omega+\ddc u_j>0,
\qquad
\tr_{\omega_j}\alpha+r_j\det_{\omega_j}\alpha=c.
$$
Then there exists $C>0$, independent of $j$, such that
$$
u_j\geq \underline u+W_\chi-C
\qquad\text{on }X\setminus Z.
$$
In particular, if $\chi\geq 0$, then $W_\chi=0$ and
$$
u_j\geq \underline u-C
\qquad\text{on }X\setminus Z.
$$
\end{prop}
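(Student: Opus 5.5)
The plan is to reduce to Theorem~\ref{thm:singular-D-relative-Linfty}, or more precisely to Theorem~\ref{thm:determinant-relative-Linfty}, applied to $U_j:=u_j-\underline u$, which is extended by $+\infty$ on $Z$. The constant must not depend on $j$, so every input has to be controlled uniformly. There are three such inputs. First, $(W_\chi-U_j)_+\leq-u_j$, because $W_\chi\leq0$, $\underline u\leq0$ and $\sup u_j=0$. Second, $\int_X(-u_j)\rho^n$ is bounded via Lemma~\ref{lem:admissible-L1}: from $\omega_j>0$ we get $\Delta_\rho u_j\geq-(1+s_j)\operatorname{tr}_\rho\omega$, with $s_j$ bounded. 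Third, the exponential integrability comes from Lemma~\ref{lem:uniform-alpha-integrability} for the fixed $\chi$. What remains is to show that each $U_j$ satisfies the viscosity determinant bound of Definition~\ref{def:viscosity-determinant-bound} with one finite constant $\gamma\equiv\log K$ that does not depend on $j$. A constant density trivially has bounded $\operatorname{Ent}_{\mathcal D,q}$ and lies in every $L^{p_0}$, so case~\ref{enu:thm-exp} then yields the conclusion.

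For the determinant bound I would follow the last part of the proof of Proposition~\ref{prop:determinant-reduction}. Let $q$ touch $U_j$ from below at $x\in X\setminus Z$, and set $P:=\chi(x)+\ddc q(x)$; only the case $P>0$ needs treatment. Then $\varphi:=u_j-q$ is $C^2$ and touches $\underline u$ from above at $x$. The hypothesis therefore gives $B:=\omega+\ddc\varphi-\chi\geq a\alpha$ and $P_\alpha(B)\leq c-\eta$ at $x$. On the other hand,
\[
\omega_j=(1+s_j)\omega+\ddc u_j=B+P+s_j\omega\geq B+P .
\]
The eigenvalues of $\alpha$ relative to a form decrease as the form increases, and $P_\alpha\leq\operatorname{tr}_{(\cdot)}\alpha$. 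The equation $\operatorname{tr}_{\omega_j}\alpha+r_j\det_{\omega_j}\alpha=c$ with $r_j\geq0$ then gives
\[
P_\alpha(B+P)\leq\operatorname{tr}_{B+P}\alpha\leq c .
\]
Everything is thereby reduced to a pointwise algebraic lemma: if $B\geq a\alpha$, $P\geq0$, $P_\alpha(B)\leq c-\eta$ and $\operatorname{tr}_{B+P}\alpha\leq c$, then $\det_\rho P\leq K(a,\eta,c,\alpha,\rho)$. Since $X$ is compact, $\alpha$ and $\rho$ are uniformly equivalent, so it suffices to prove the lemma in $\alpha$-unitary coordinates.

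The main obstacle is this algebraic lemma, and here I am unsure the stated hypotheses suffice. The natural route is to work in $\alpha$-unitary coordinates, write $(B+P)^{-1}$ via Schur complements along the eigen-directions of $P$, and show that a large determinant forces a trace drop of at least $\eta$. That drop compares $\operatorname{tr}_{B+P}\alpha$ with $P_\alpha(B)=\operatorname{tr}_B\alpha-\mu_n(B)$. The dangerous configuration is a $P$ that is huge in one direction $e$ and tiny in all the others. In that case $\operatorname{tr}_{B+P}\alpha$ tends to the trace of the compression of $B$ to $e^{\perp}$. When $B=a\alpha$ this limit equals $(n-1)/a=P_\alpha(B)$, so no strict gap appears while $\det P$ stays arbitrary. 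I would first try to exclude this by using the exact form of $B$ at a touching point. Failing that, I would use the whole equation, including $r_j\det_{\omega_j}\alpha$ and the $s_j\omega$ term. If neither closes the gap, the lemma needs an extra hypothesis, for instance replacing $P_\alpha$ by the full trace $\operatorname{tr}_{(\cdot)}\alpha$ in the subsolution condition. With that replacement, the standard $\mathcal C$-subsolution argument bounds every eigenvalue of $P$, and hence bounds $\det_\rho P$ as well.

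Once the determinant bound holds with a uniform $K$, the case $\chi\geq0$ follows immediately from $W_\chi=0$.
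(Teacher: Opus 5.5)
Your outer reduction matches the paper's: contact at a lower test of $U_j=u_j-\underline u$, $(W_\chi-U_j)_+\le -u_j$, the Green-function $L^1$ bound, Lemma~\ref{lem:uniform-alpha-integrability}, and case~\ref{enu:thm-exp} of Theorem~\ref{thm:determinant-relative-Linfty} with a constant $\gamma$. The pointwise step, however, has a genuine gap: you use the equation in the wrong direction. From $\omega_j\ge B+P$, monotonicity gives $\tr_{B+P}\alpha\ge\tr_{\omega_j}\alpha$. So $\tr_{B+P}\alpha\le c$ is not justified when $s_j>0$. More importantly, an upper bound on a trace can never control $P$ from above. Indeed $P=t\alpha$ with $t\to\infty$ satisfies every hypothesis of your algebraic lemma while $\det_\rho P\to\infty$, so that lemma is false. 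Your conclusion that the proposition may need a stronger hypothesis comes from this misformulation, not from the statement. The $\mathcal D$-subsolution test is the sublevel condition $F_j(A)\le 1/c$ for the increasing operator $F_j(A)=(\tr_A\alpha+r_j\det_A\alpha)^{-1}$. That is the lower bound $\tr_A\alpha+r_j\det_A\alpha\ge c$, and it must be applied to the full form $A=\omega_j$, not to $B+P$.

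With this correction the lemma is immediate. Let $\mu_1\ge\cdots\ge\mu_n>0$ be the eigenvalues of $\alpha$ relative to $\omega_j$. Since $\omega_j=B+P+s_j\omega\ge B\ge a\alpha$, each $\mu_k\le a^{-1}$, and $\mu_1+\cdots+\mu_{n-1}=P_\alpha(\omega_j)\le P_\alpha(B)\le c-\eta$. The equation $\sum_k\mu_k+r_j\prod_k\mu_k=c$ then gives $\eta\le\mu_n\bigl(1+r_*a^{-(n-1)}\bigr)$ with $r_*=\sup_j r_j$. Hence $\mu_n\ge\delta>0$ uniformly, i.e.\ $\omega_j\le\delta^{-1}\alpha$. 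Since $0\le P\le\omega_j$, $\det_\rho P$ is bounded independently of $j$.

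This is exactly how your ``dangerous configuration'' is excluded. A $P$ that is huge in one direction makes $\omega_j$ huge there, which forces $\mu_n\to0$ and hence $\tr_{\omega_j}\alpha+r_j\det_{\omega_j}\alpha\le c-\eta+o(1)<c$, contradicting the equation. The required gap is the $\eta$ below $c$ built into the hypothesis, not a trace drop relative to $P_\alpha(B)$.
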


\begin{proof}
Set
$$
F_j(A):=
\left(\tr_A\alpha+r_j\det_A\alpha\right)^{-1},
\qquad A>0.
$$
Then $F_j(\omega_j)=1/c$.

We first claim that $\underline u$ is a viscosity $\mathcal D$-subsolution of the
$j$-th equation, with shift $\chi$ and a determinant bound independent
of $j$. Let $q$ be an upper test for $\underline u$ and let $P\geq 0$
satisfy
$$
A:=(1+s_j)\omega+\ddc q-\chi+P>0,
\qquad
F_j(A)\leq \frac1c.
$$
Since
$$
A=B_q+s_j\omega+P\geq B_q\geq a\alpha,
$$
if $\mu_1\geq\cdots\geq\mu_n>0$ are the eigenvalues of $\alpha$
relative to $A$, then
$$
\mu_1+\cdots+\mu_{n-1}
=
P_\alpha(A)
\leq
P_\alpha(B_q)
\leq c-\eta.
$$
Moreover,
$$
\mu_1+\cdots+\mu_n+r_j\mu_1\cdots\mu_n\geq c.
$$
Hence, writing $r_*:=\sup_j r_j$ and using $\mu_k\leq a^{-1}$,
$$
\eta
\leq
\mu_n\left(1+r_*a^{-(n-1)}\right).
$$
Thus $\mu_n\geq\delta>0$ uniformly in $j$, so $A\leq\delta^{-1}\alpha$.
Since
$$
0\leq P\leq A,
$$
we obtain
$$
\det_\rho P\leq C_{\det}
$$
for a constant $C_{\det}$ independent of $j$. Therefore
$(\underline u,\chi,\gamma)$ is a viscosity $\mathcal D$-subsolution for the
$j$-th equation with
$$
\gamma:=\log C_{\det}.
$$

Set
$$
U_j:=u_j-\underline u
$$
on $X\setminus Z$, and extend $U_j$ by $+\infty$ on $Z$.
Since $u_j$ is smooth, Proposition~\ref{prop:determinant-reduction} gives
$$
\det_{\rho,+}(\chi+\ddc q)\leq e^\gamma
$$
for every lower test $q$ of $U_j$.

We now apply Theorem~\ref{thm:determinant-relative-Linfty}, case~\ref{enu:thm-exp}. Since $\gamma$ is constant,
its entropy is uniformly bounded. Also,
$$
(W_\chi-U_j)_+
=
(W_\chi+\underline u-u_j)_+
\leq -u_j.
$$
If $s_*:=\sup_j s_j$, then $\omega_j>0$ gives
$$
\ddc u_j\geq -(1+s_*)\omega.
$$
Together with $\sup_Xu_j=0$, the Green-function estimate yields
$$
\sup_j\int_X(-u_j)\rho^n<\infty.
$$
Finally, $\chi$ is fixed and smooth, so Lemma~\ref{lem:uniform-alpha-integrability} gives the required
uniform exponential integrability. Hence Theorem~\ref{thm:determinant-relative-Linfty} gives
$$
\sup_X(W_\chi-U_j)\leq C,
$$
with $C$ independent of $j$. This is exactly
$$
u_j\geq \underline u+W_\chi-C
$$
on $X\setminus Z$. If $\chi\geq 0$, then $W_\chi=0$.
\end{proof}

\begin{example}
\label{ex:J-fourfold-null-curve}
We consider a semistable fourfold for the $J$-equation with a null curve.
Let
$$
X=\mathbb P_{\mathbb P^1}
\bigl(\mathcal O\oplus\mathcal O(-1)^{\oplus3}\bigr)
$$
in the lines convention, and write
$h=\pi^*c_1(\mathcal O_{\mathbb P^1}(1))$,
$\xi=c_1(\mathcal O_X(1))$, and
$C=\mathbb P_{\mathbb P^1}(\mathcal O)$.
The semiample class $\xi$ contracts exactly $C$. Set  K\"ahler classes 
$$
A=\xi+\frac1{12}h,\qquad
B=\frac3{10}\xi+\frac{13}{120}h,$$ and
$c=4\frac{B\cdot A^3}{A^4}=\frac{13}{10},
$
such that $cA-B=\xi$. Computation on
torus-invariant cycles shows that the numerical $J$-inequalities
are strict in dimensions two and three. Thus $(A,B)$ is
$J$-semistable, with $C$ its unique null curve.

The class
\begin{equation}\label{eq:fourfold-big-class}
\Theta=cA-3B
=\frac25\xi-\frac{13}{60}h
=\left(\frac3{20}\xi+\frac1{30}h\right)+\frac14(\xi-h)
\end{equation}
is big: the first summand is K\"ahler, and the linear system
$|\xi-h|$ has base locus $C$. Since $\Theta\cdot C=-13/60$,
we have $E_{\mathrm{nK}}(\Theta)=C$; in particular, $\Theta$ is not nef.

Fix arbitrary K\"ahler forms $\omega\in A$ and $\alpha\in B$.
By \eqref{eq:fourfold-big-class}, there is $\psi\leq0$, smooth
outside $C$ with analytic poles along $C$, such that
$$
R=c\omega-3\alpha+\ddc\psi\geq\delta\alpha
$$
for some $\delta>0$. For sufficiently small $\tau>0$, set
$b=(1-\tau)\psi/c$ and $\chi=\tau\omega$. Then, on $X\setminus C$,
$$
T:=\omega+\ddc b-\chi
=\frac{1-\tau}{c}(3\alpha+R)\geq a\alpha,
\qquad a>\frac3c.
$$

Put $\alpha_t=\alpha+t\omega$ and $c_t=c+4t$.
By the semistability of $(A,B)$, for every
$p$-dimensional subvariety $V$ with $1\leq p\leq3$,
\[
\bigl(c_tA^p-p(B+tA)A^{p-1}\bigr)\cdot V
\geq(4-p)tA^p\cdot V>0.
\]

The numerical criterion~\cite{MR4781476}
therefore gives smooth solutions

$$
\Omega_t=\omega+\ddc u_t>0,\qquad
\operatorname{tr}_{\Omega_t}\alpha_t=c_t,\qquad
\sup_Xu_t=0.
$$
For small $t>0$, the strict cone margin of $T$ relative to
$\alpha_t$ is uniform. The contact argument of
Proposition~\ref{prop:J-relative} therefore gives a uniformly
bounded determinant density for $u_t-b$, extended by $+\infty$
on $C$. Since $\chi$ is fixed and K\"ahler, the entropy bounds
are uniform; positivity of $\Omega_t$ gives the normalized $L^1$
bound. Theorem~\ref{thm:determinant-relative-Linfty} yields
\begin{equation}\label{eq:fourfold-relative-bound}
b-K\leq u_t\leq0\qquad\text{on }X\setminus C,
\end{equation}
and hence uniform local boundedness off $C$.

There is no smooth K\"ahler representative $\Omega\in A$ with
$c\Omega^3-3\alpha\wedge\Omega^2\geq0$: this would imply
$c\Omega-\alpha>0$, contradicting $(cA-B)\cdot C=0$.
Thus \eqref{eq:fourfold-relative-bound} uses a barrier singular
only along a codimension-three curve, without any symmetry
assumption on the background metrics.
\end{example}

\begin{example}
\label{ex:dHYM-critical-degeneration}
For dHYM/LYZ equation, we apply the critical-phase operator of
Remark~\ref{rem:dHYM-critical-phase} on
$X=\operatorname{Bl}_p\mathbb P^4$. Let $H$ and $E$ denote the
hyperplane pullback and exceptional divisor, and fix an arbitrary
K\"ahler form $\rho\in2H-E$. Let
$\eta=\pi^*\omega_{\mathrm{FS}}\in H$, where $\pi$ is the blowdown.
For $0<t\ll1$, set $r=1/\sqrt3$, $b_t=r+t$, and let $a_t$ be the
largest real root of
\[
2a_t^3-8a_t+b_t-b_t^3=0.
\]
Then $a_t\to a_0>2r$. The K\"ahler forms
$$
\beta_t:=b_t\rho+(a_t-2b_t)\eta\in a_tH-b_tE
$$
satisfy $\int_X(\rho+\sqrt{-1}\beta_t)^4<0$.
Since $\beta_t\geq b_t\rho$ and $3\arctan b_t>\pi/2$, they are
strict subsolutions at phase $\pi$. By
\cite[Theorem~1.1]{fu2026criticallyzequationkahler}, there are smooth solutions
$$
\alpha_t=\beta_t+\ddc u_t,\qquad
\sum_{i=1}^4\arctan\lambda_i^\rho(\alpha_t)=\pi,
\qquad \sup_Xu_t=0.
$$

Take
$$
\chi_t=\beta_t-r\rho
=t\rho+(a_t-2b_t)\eta
\longrightarrow(a_0-2r)\eta.
$$

If $q$ touches $u_t$ from below and $P=\chi_t+\ddc q>0$, then
$A:=r\rho+P\leq\alpha_t$. Since $A>r\rho$ and
$\Theta_\rho(\alpha_t)=\pi$, we have
$\pi\leq\Theta_\rho(A)<4\pi/3$, hence
$G_\pi(A)=\sigma_3^\rho(A)-\sigma_1^\rho(A)\leq0$.
At $r\rho$ the first derivatives vanish, so the reference lies on
the boundary of the first-derived region. Direct expansion gives
\[
\sigma_3^\rho(P)+\frac2{\sqrt3}\sigma_2^\rho(P)
\leq\frac8{3\sqrt3},\] \[
\det_\rho P\leq\left(1-\frac1{\sqrt3}\right)^4,
\]
where the determinant bound follows from Maclaurin's inequalities.

Thus the determinant density is uniformly bounded. The auxiliary
forms converge smoothly to a semipositive big form, while the phase
condition gives $\Delta_\rho u_t\geq-K$. The entropy and integrability
hypotheses of Theorem~\ref{thm:determinant-relative-Linfty} are therefore uniform,
and
$$
\sup_{0<t\ll1}\|u_t\|_{L^\infty(X)}<\infty.
$$

Nevertheless, the Hessians blow up. Let
$M_t=\sup_X\max_i|\lambda_i^\rho(\alpha_t)|$.

Let $\lambda_1\leq\cdots\leq\lambda_4$ be the ambient eigenvalues
and $\mu_1\leq\mu_2\leq\mu_3$ those on $TE$.
Interlacing, $\lambda_j\leq\mu_j\leq\lambda_{j+1}$, and
$\sum_j\arctan\lambda_j=\pi$ give
$|s-\pi|\leq\arctan M_t$ for $s=\sum_{j=1}^3\arctan\mu_j$.
Consequently,
\[
\sigma_2(\mu)-1
=-\cos s\prod_{j=1}^3\sqrt{1+\mu_j^2}\geq \cos (\arctan M_t)
\geq(1+M_t^2)^{-1/2},
\]
or, in terms of forms,
\[
\left.(3\alpha_t^2\wedge\rho-\rho^3)\right|_E
\geq(1+M_t^2)^{-1/2}\rho^3|_E.
\]

Integration gives
$$
3b_t^2-1\geq(1+M_t^2)^{-1/2},
$$
so $M_t\to\infty$ and hence
$\|\ddc u_t\|_{L^\infty(X,\rho)}\to\infty$.
{At $t=0$, the same restricted-form inequality for any smooth
solution would give a positive integral on $E$, whereas
$3b_0^2-1=0$. Thus no smooth endpoint solution exists.} The global $C^0$ estimate
persists at a boundary of smooth solvability within the K\"ahler
cone, without any symmetry assumption.
\end{example}

\begin{example}
\label{ex:inverse-sigma2-degeneration}
We discuss an inverse $\sigma_2$ equation example. Let $X=\operatorname{Bl}_p\mathbb P^4$, with $H$, $E$, and
$\eta=\pi^*\omega_{\mathrm{FS}}$ as above. Set $a=\sqrt{17/8}$
and fix a Calabi-symmetric K\"ahler form $\rho\in aH-E$.
For $0<t\ll1$, put
$$
\omega_t=\rho+(2-t-a)\eta\in(2-t)H-E,
\qquad
c_t=6\frac{a^2(2-t)^2-1}{(2-t)^4-1}
=3+3t+O(t^2).
$$
By \cite[Theorem~1.2]{MR3105762}, there are Calabi-symmetric
K\"ahler solutions
\begin{equation}\label{inversesigma2}
    \widehat\omega_t=\omega_t+\ddc u_t>0,\qquad
6\rho^2\wedge\widehat\omega_t^2=c_t\widehat\omega_t^4,
\qquad \sup_Xu_t=0.
\end{equation}

Set
$$
b_t=\sqrt{3/c_t},\qquad
\chi_t=\omega_t-b_t\rho
=(1-b_t)\rho+(2-t-a)\eta.
$$
These forms are K\"ahler and converge smoothly to the semipositive
big form $(2-a)\eta$. Note that the QSH condition can be checked directly.
At a lower contact point, put $A=b_t\rho+P$ with
$P=\chi_t+\ddc q>0$. Then $0<A\leq\widehat\omega_t$, so
$\sigma_2(\lambda_\rho(A)^{-1})\geq c_t$, or
$G_t(A):=c_t\det_\rho A-\sigma_2^\rho(A)\leq0$.
The first derivatives of $G_t$ vanish at $b_t\rho$ because
$c_tb_t^2=3$. With $Q=P/b_t$, direct expansion yields

$$
2\sigma_2^\rho(Q)+3\sigma_3^\rho(Q)+3\det_\rho Q\leq3.
$$
Maclaurin's inequalities consequently give
$$
\det_\rho P\leq b_t^4(\sqrt2-1)^4.
$$ We apply Corollary~\ref{cor:semipositive-uniform-Linfty}
to get
$$
\sup_{0<t\ll1}\|u_t\|_{L^\infty(X)}<\infty.
$$

Along $E$, the Calabi ansatz gives eigenvalues
$\nu_t,1,1,1$ for $\rho$ relative to $\widehat\omega_t$, where
$\nu_t=(c_t-3)/3=t+O(t^2)$. Hence
$$
P_t:=\chi_t+\ddc u_t=\widehat\omega_t-b_t\rho
$$
has an eigenvalue
\[
\nu_t^{-1}-b_t=t^{-1}+O(1)
\]
relative to $\rho$.
Consequently,
$$
\|\ddc u_t\|_{L^\infty(X,\rho)}\longrightarrow\infty.
$$
\end{example}

\appendix
\section{ABP Estimates under \texorpdfstring{$\mathcal D$}{D}-Subsolutions}
\label{ABP}
The following result is a variant of the ABP estimate established in \cite[Proposition 11]{MR3807322}. Although the statement there assumes the existence of a $\mathcal C$-subsolution, the argument only uses the pointwise determinant control \eqref{eq:D-subsolution-test} defining a $\mathcal D$-subsolution.

\begin{thm} \label{thm:ABP-explicit-delta} Let $(X,\rho)$ be a compact K\"ahler manifold and let $(F,\mathcal A)$ be an admissible pair satisfying QSH. Suppose $u$ is an admissible solution of \eqref{eq:main-equation}, normalized by $\sup_Xu=0$, and for some $\underline{u} \in C^2(X)$ and $\gamma\in C(X,\mathbb R)$ the triple $(\omega_{\underline u},\chi,\gamma)$ satisfies the $\mathcal D$-subsolution condition \eqref{eq:D-subsolution-test} of Definition~\ref{def:D-subsolution}, with $\chi\ge \delta\rho$ for some $\delta\in(0,1]$. Set
	\[
		B_{\underline u}:=b_{\mathcal A}
		+\|\operatorname{tr}_\rho\omega_{\underline u}\|_{L^\infty(X)}.
	\]
	Then for every $q>2$,
	\[
		\operatorname{osc}_X(u-\underline u)
		\le C(B_{\underline u},X,\rho,q)
		\left(\frac{\|e^\gamma\|_{L^q(X,\rho^n)}}{\delta^{\,n}}\right)^{\frac{2q}{q-2}}
		+C(X,\rho)\delta.
	\]
	If in addition $\sup_X\underline u=0$, then
	$\inf_X(u-\underline u)\le0\le\sup_X(u-\underline u)$, hence
	$\|u-\underline u\|_{L^\infty(X)}\le\operatorname{osc}_X(u-\underline u)$
	and the same right-hand side bounds $\|u-\underline u\|_{L^\infty(X)}$.
\end{thm}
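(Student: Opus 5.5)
We follow Sz\'ekelyhidi's ABP argument \cite[Proposition~11]{MR3807322}, replacing the eigenvalue bound of a $\mathcal C$-subsolution by the determinant bound of Definition~\ref{def:D-subsolution}. Set $w:=u-\underline u$ and $L:=\operatorname{osc}_X w$. After adding a constant we may assume $\inf_Xw=w(x_0)=-L$ and $\sup_X w=0$. The proof has three steps.

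\textbf{Step 1: $L^1$ control of $w$.} Since $\omega_u\in\mathcal A$, QSH gives
\[
\Delta_\rho w=\operatorname{tr}_\rho\omega_u-\operatorname{tr}_\rho\omega_{\underline u}\ge-B_{\underline u}.
\]
The Green function estimate then gives $\int_X(-w)\rho^n\le C(X,\rho)B_{\underline u}$, exactly as in Lemma~\ref{lem:admissible-L1} with $\sup w=0$.

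\textbf{Step 2: ABP on a coordinate ball.} Take a coordinate ball $B_1(x_0)$ of fixed size, chosen by the geometry of $(X,\rho)$. Set
\[
v:=w+\epsilon|z|^2,\qquad \epsilon\sim\delta .
\]
We take $\epsilon\le c\delta$, so that $\epsilon\,\ddc|z|^2\le\delta\rho/2$. Then $v(0)=-L$ and $v\ge -L+\epsilon$ on $\partial B_1$.

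Let $\Gamma$ be the lower contact set, consisting of points $x$ with $|Dv(x)|<\epsilon/2$ and $v(y)\ge v(x)+Dv(x)\cdot(y-x)$ for all $y\in B_1$. The ABP estimate gives
\[
c_0\epsilon^{2n}\le\int_\Gamma\det(D^2v),
\]
and on $\Gamma$ the real Hessian satisfies $D^2v\ge0$.

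The key point is at $x\in\Gamma$, where $\ddc v\ge0$. Put
\[
P:=\chi+\ddc w=\chi+\ddc v-\epsilon\,\ddc|z|^2\ge \chi-\delta\rho/2\ge0 .
\]
Then $\omega_{\underline u}-\chi+P=\omega_u\in\mathcal A$ and $F(\omega_u)=\psi$, so the $\mathcal D$-subsolution test gives $\det_\rho P\le e^{\gamma}$. Since $\ddc v\le P+\epsilon\,\ddc|z|^2$ and $\ddc v\ge 0$, this determinant bound only controls the complex Hessian. Sz\'ekelyhidi's convexity trick handles this: at a convex contact point, $\det D^2v\le C\,(\det\ddc v)^2$, and $\det\ddc v\le C\det_\rho(P+\epsilon\rho')$.

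The main obstacle is turning $\det_\rho(P+\delta\rho)$ into a bound by $e^\gamma$ with the explicit factor $\delta^{-n}$. We use $P\ge\delta\rho/2$ on $\Gamma$, which follows from $\chi\ge\delta\rho$, so that $P+\epsilon\rho\le 3P$. This yields $\det\ddc v\le C e^{\gamma}$. The $\delta^{-n}$ scaling arises because we normalize with $\epsilon\sim\delta$ and compare $\epsilon^{2n}$ with $(e^\gamma)^2$.

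\textbf{Step 3: measure of the contact set and the conclusion.} Combining Step 2 with H\"older,
\[
\epsilon^{2n}\le C\int_\Gamma e^{2\gamma}\le C\|e^\gamma\|_{L^q}^2|\Gamma|^{1-2/q},
\]
which is where $q>2$ enters. On $\Gamma$ we have $v\le -L+\epsilon$, so $w\le -L+2\epsilon$. Chebyshev and Step 1 then give
\[
|\Gamma|\le C B_{\underline u}/(L-2\epsilon)
\]
whenever $L>2\epsilon$. Hence either $L\le C\delta$, or
\[
L\le C B_{\underline u}\left(\|e^\gamma\|_{L^q}^2\epsilon^{-2n}\right)^{q/(q-2)}+2\epsilon ,
\]
which with $\epsilon\sim\delta$ is the stated bound.

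For the final assertion, if $\sup_X\underline u=0$ then $u\le0$. At a maximum point of $\underline u$ we get $u-\underline u\le0$, and at a maximum point of $u$ we get $u-\underline u\ge0$. Hence $\inf_X(u-\underline u)\le0\le\sup_X(u-\underline u)$, and the $L^\infty$ norm of $u-\underline u$ is bounded by the oscillation.
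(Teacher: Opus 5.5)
Your proposal is correct and follows essentially the same route as the paper: Sz\'ekelyhidi's ABP argument on a coordinate ball with $\epsilon\sim\delta$, the $\mathcal D$-subsolution test applied to $P=\chi+\ddc w$ at convex contact points, the bound $\det D^2v\le C(\det\ddc v)^2$, H\"older with $q>2$, and the QSH/Green-function $L^1$ bound combined with Chebyshev on $\Gamma$. The only difference is cosmetic: you pass through $P+\epsilon\,\ddc|z|^2\le 3P$, whereas the paper notes directly that $\ddc v=P-(\chi-\epsilon\,\ddc|z|^2)\le P$ because $\chi\ge\epsilon\,\ddc|z|^2$, so the step you call the main obstacle is immediate.
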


\begin{proof}
	We follow \cite[Proposition~11]{MR3807322}; see also
	\autocite{MR2156505}. Set
	\[
		\widehat\omega:=\omega+\ddc\underline u=\omega_{\underline u},
		\qquad
		U:=u-\underline u-\sup_X(u-\underline u).
	\]
	Then $\sup_XU=0$, $\widehat\omega+\ddc U=\omega+\ddc u=\omega_u$,
	and $(\widehat\omega,\chi,\gamma)$ satisfies the
	$\mathcal D$-subsolution condition \eqref{eq:D-subsolution-test}
	with zero reference potential. We apply the argument below to $U$
	with background $\widehat\omega$; note that
	$\operatorname{osc}_XU=\operatorname{osc}_X(u-\underline u)$.
	Let $m:=\inf_XU$, attained at $x_0$. Choose holomorphic
	coordinates $(z^i)$ centered at $x_0$ on the unit ball $B_1$, and
	write $\beta:=dd^c|z|^2$. By compactness,
	$c_0\beta\le\rho\le C_0\beta$ on $B_1$ for uniform constants
	$c_0,C_0>0$. Since $\chi\ge\delta\rho\ge\delta c_0\beta$, set
	\[
		\varepsilon:=\frac{c_0}{2}\delta,
		\qquad v:=U+\varepsilon|z|^2,
	\]
	so that $\varepsilon\beta\le\chi$ on $B_1$.

	Let $\Gamma$ be the set of contact points where $v-p\cdot z$
	attains its minimum in $B_1$ for some $p\in\mathbb R^{2n}$ with
	$|p|\le\varepsilon/2$. By \cite[Proposition~10]{MR3807322},
	\[
		\D^2v\ge0,\qquad v\le m+\frac{\varepsilon}{2},\qquad
		dd^cv\ge0\quad\text{on }\Gamma.
	\]
	Set $P:=\chi+dd^cU=\chi-\varepsilon\beta+dd^cv$. Then
	$0\le dd^cv\le P$ on $\Gamma$, and since
	\[(\widehat\omega-\chi)+P
	=\widehat\omega+dd^cU=\omega_u\in\mathcal A_x\] with
	$F_x(\omega_u)=\psi(x)$, the $\mathcal D$-subsolution condition
	for $(\widehat\omega,\chi,\gamma)$ gives
	$\det_\rho(dd^cv)\le\det_\rho P\le e^{\gamma(x)}$ on
	$\Gamma$. Hence
	\[
		\det_{\mathbb R}\D^2v
		\le 2^{2n}\det_{\mathbb C}(v_{i\bar j})^2
		\le C(X,\rho)\,e^{2\gamma(x)}
		\quad\text{on }\Gamma.
	\]
	The ABP maximum principle and H\"older's inequality then give
	\[
		C(n)\varepsilon^{2n}
		\le C(X,\rho)\int_\Gamma e^{2\gamma}\,\rho^n
		\le C(X,\rho)\,\|e^\gamma\|_{L^q(X,\rho^n)}^2
			|\Gamma|_\rho^{\,1-\frac{2}{q}},
	\]
	so
	\[
		|\Gamma|_\rho
		\ge
		C(X,\rho,q)\,
		\frac{\delta^{2nq/(q-2)}}
		{\|e^\gamma\|_{L^q}^{2q/(q-2)}}.
	\]
	On the other hand, Lemma~\ref{lem:admissible-L1}, applied to
	$U$ with background $\widehat\omega$, gives
	\[
		\int_X(-U)\,\rho^n
		\le C(X,\rho)\bigl(b_{\mathcal A}
			+\|\operatorname{tr}_\rho\widehat\omega\|_{L^\infty}\bigr)
		= C(X,\rho)\,B_{\underline u},
	\]
	while $U\le m+\frac{\varepsilon}{2}=m+\frac{c_0\delta}{4}$ on
	$\Gamma$, hence $-U\ge-m-\frac{c_0\delta}{4}$ there, and
	\[
		C(X,\rho)\,B_{\underline u}
		\ge |\Gamma|_\rho\Bigl(-m-\frac{c_0\delta}{4}\Bigr).
	\]
	Combining the two estimates for $|\Gamma|_\rho$,
	\[
		-m
		\le
		C(B_{\underline u},X,\rho,q)
		\frac{\|e^\gamma\|_{L^q}^{2q/(q-2)}}
		{\delta^{2nq/(q-2)}}
		+C(X,\rho)\delta.
	\]
	Since $\sup_XU=0$, this bounds
	\[\|U\|_{L^\infty}=\operatorname{osc}_XU
	=\operatorname{osc}_X(u-\underline u).\]
	This completes the proof.
\end{proof}

\raggedbottom
\printbibliography

@article{GuoPhongEntropyEnergy,
  author  = {Guo, Bin and Phong, Duong H.},
  title   = {Uniform entropy and energy bounds for fully non-linear equations},
  journal = {Communications in Analysis and Geometry},
  volume  = {32},
  number  = {8},
  year    = {2024},
  pages   = {2305--2325},
  doi     = {10.4310/CAG.241212013839}
}

@article{Fang-MaGard,
      title={G{\aa}rding Polynomials}, 
      author={Hao Fang and Biao Ma},
      year={2026},
      eprint={2604.27755},
      archivePrefix={arXiv},
      primaryClass={math.CO},
      url={https://arxiv.org/abs/2604.27755}, 
}

@article{GurvitsLeake21,
  title = {Counting Matchings via Capacity-Preserving Operators},
  author = {Gurvits, Leonid and Leake, Jonathan},
  year = 2021,
  month = nov,
  journal = {Combinatorics, Probability and Computing},
  volume = {30},
  number = {6},
  pages = {956--981},
  issn = {0963-5483, 1469-2163},
  doi = {10.1017/S0963548321000122}
}

@article{AnariGharan21,
  title = {A Generalization of Permanent Inequalities and Applications in Counting and Optimization},
  author = {Anari, Nima and Oveis Gharan, Shayan},
  year = 2021,
  month = jun,
  journal = {Advances in Mathematics},
  volume = {383},
  pages = {107657},
  issn = {0001-8708},
  doi = {10.1016/j.aim.2021.107657},
}

@article{HarveyLawsonMajorization,
  author  = {Harvey, F. Reese and Lawson, Jr., H. Blaine},
  title   = {A definitive determinant majorization result
             for nonlinear operators},
  journal = {Duke Mathematical Journal},
  volume  = {174},
  number  = {13},
  pages   = {2749--2763},
  year    = {2025},
  doi     = {10.1215/00127094-2025-0003}
}

@article{Pham26,
  author        = {Doanh Pham},
  title         = {Determinant majorization for hyperbolic polynomials
                   on Euclidean Jordan algebras},
  journal       = {arXiv preprint},
  year          = {2026},
  eprint        = {2606.16713},
  archivePrefix = {arXiv}
}

@article {MR4913058,
	AUTHOR = {Qiao, Yuxiang},
	TITLE = {Sharp {$\rm{L}^\infty$} estimates for fully non-linear
	elliptic equations on compact complex manifolds},
	JOURNAL = {Calc. Var. Partial Differential Equations},
	FJOURNAL = {Calculus of Variations and Partial Differential Equations},
	VOLUME = {64},
	YEAR = {2025},
	NUMBER = {5},
	PAGES = {Paper No. 173, 39},
	ISSN = {0944-2669,1432-0835},
	MRCLASS = {32W50 (32Q15 32Q99 35J60 35J70)},
	MRNUMBER = {4913058},
	MRREVIEWER = {L\"uping\ Chen},
	DOI = {10.1007/s00526-025-03027-0},
	URL = {https://doi.org/10.1007/s00526-025-03027-0},
}

@incollection {MR2884031,
	AUTHOR = {Berman, Robert and Demailly, Jean-Pierre},
	TITLE = {Regularity of plurisubharmonic upper envelopes in big
	cohomology classes},
	BOOKTITLE = {Perspectives in analysis, geometry, and topology},
	SERIES = {Progr. Math.},
	VOLUME = {296},
	PAGES = {39--66},
	PUBLISHER = {Birkh\"auser/Springer, New York},
	YEAR = {2012},
	ISBN = {978-0-8176-8276-7},
	MRCLASS = {32U05 (32Q15 32U40 32W20)},
	MRNUMBER = {2884031},
	MRREVIEWER = {Vincent\ Guedj},
	DOI = {10.1007/978-0-8176-8277-4\_3},
	URL = {https://doi.org/10.1007/978-0-8176-8277-4_3},
}

@article {MR3191972,
	AUTHOR = {Demailly, Jean-Pierre and Dinew, S\l awomir and Guedj, Vincent
	and Pham, Hoang Hiep and Ko\l odziej, S\l awomir and Zeriahi,
	Ahmed},
	TITLE = {H\"older continuous solutions to {M}onge-{A}mp\`ere equations},
	JOURNAL = {J. Eur. Math. Soc. (JEMS)},
	FJOURNAL = {Journal of the European Mathematical Society (JEMS)},
	VOLUME = {16},
	YEAR = {2014},
	NUMBER = {4},
	PAGES = {619--647},
	ISSN = {1435-9855,1435-9863},
	MRCLASS = {32W20 (32Q15 32U05 32U15 32U40 35B65 35J96 53C55)},
	MRNUMBER = {3191972},
	MRREVIEWER = {Muhammed\ Ali\ Alan},
	DOI = {10.4171/JEMS/442},
	URL = {https://doi.org/10.4171/JEMS/442},
}

@article {MR2050205,
	AUTHOR = {Boucksom, S\'ebastien},
	TITLE = {Divisorial {Z}ariski decompositions on compact complex
	manifolds},
	JOURNAL = {Ann. Sci. \'Ecole Norm. Sup. (4)},
	FJOURNAL = {Annales Scientifiques de l'\'Ecole Normale Sup\'erieure.
	Quatri\`eme S\'erie},
	VOLUME = {37},
	YEAR = {2004},
	NUMBER = {1},
	PAGES = {45--76},
	ISSN = {0012-9593},
	MRCLASS = {32J18 (32C30)},
	MRNUMBER = {2050205},
	MRREVIEWER = {Adam\ Gregory\ Harris},
	DOI = {10.1016/j.ansens.2003.04.002},
	URL = {https://doi.org/10.1016/j.ansens.2003.04.002},
}

@article {MR2156505,
	AUTHOR = {B\l ocki, Zbigniew},
	TITLE = {On uniform estimate in {C}alabi-{Y}au theorem},
	JOURNAL = {Sci. China Ser. A},
	FJOURNAL = {Science in China. Series A. Mathematics},
	VOLUME = {48},
	YEAR = {2005},
	PAGES = {244--247},
	ISSN = {1006-9283,1862-2763},
	MRCLASS = {32W20 (32Q25)},
	MRNUMBER = {2156505},
	MRREVIEWER = {S\l awomir\ Ko\l odziej},
	DOI = {10.1007/BF02884710},
	URL = {https://doi.org/10.1007/BF02884710},
}

@article {MR480350,
	AUTHOR = {Yau, Shing Tung},
	TITLE = {On the {R}icci curvature of a compact {K}\"ahler manifold and
	the complex {M}onge-{A}mp\`ere equation. {I}},
	JOURNAL = {Comm. Pure Appl. Math.},
	FJOURNAL = {Communications on Pure and Applied Mathematics},
	VOLUME = {31},
	YEAR = {1978},
	NUMBER = {3},
	PAGES = {339--411},
	ISSN = {0010-3640,1097-0312},
	MRCLASS = {53C55 (32C10 35J60)},
	MRNUMBER = {480350},
	MRREVIEWER = {Robert\ E.\ Greene},
	DOI = {10.1002/cpa.3160310304},
	URL = {https://doi.org/10.1002/cpa.3160310304},
}

@article {MR4567566,
	AUTHOR = {Sui, Zhenan and Sun, Wei},
	TITLE = {On {$L^\infty$} estimate for complex {H}essian quotient
	equations on compact {K}\"ahler manifolds},
	JOURNAL = {J. Geom. Anal.},
	FJOURNAL = {Journal of Geometric Analysis},
	VOLUME = {33},
	YEAR = {2023},
	NUMBER = {6},
	PAGES = {Paper No. 165, 26},
	ISSN = {1050-6926,1559-002X},
	MRCLASS = {58J05 (32W20)},
	MRNUMBER = {4567566},
	MRREVIEWER = {Freid\ Tong},
	DOI = {10.1007/s12220-023-01220-1},
	URL = {https://doi.org/10.1007/s12220-023-01220-1},
}

@misc{guo2024uniformlinftyestimatessubsolutions,
	title={Uniform $L^\infty$ estimates: subsolutions to fully nonlinear partial differential equations}, 
	author={Bin Guo and Duong H. Phong},
	year={2024},
	eprint={2401.11572},
	archivePrefix={arXiv},
	primaryClass={math.AP},
	url={https://arxiv.org/abs/2401.11572}, 
}

@book{Murota03,
author = {Murota, Kazuo},
title = {Discrete Convex Analysis},
publisher = {Society for Industrial and Applied Mathematics},
year = {2003},
doi = {10.1137/1.9780898718508},
address = {},
edition   = {},
URL = {https://epubs.siam.org/doi/abs/10.1137/1.9780898718508},
%eprint = {https://epubs.siam.org/doi/pdf/10.1137/1.9780898718508}
}

@article{MR2200852,
	author = {Gurvits, Leonid},
	doi = {10.1016/j.aim.2004.12.002},
	fjournal = {Advances in Mathematics},
	issn = {0001-8708,1090-2082},
	journal = {Adv. Math.},
	mrclass = {15A15},
	mrnumber = {2200852},
	mrreviewer = {My\ Hachem\ Lalaoui Rhali},
	number = {2},
	pages = {435--454},
	title = {The van der {W}aerden conjecture for mixed discriminants},
	url = {https://doi.org/10.1016/j.aim.2004.12.002},
	volume = {200},
	year = {2006}}

@article{BrandenHuh20,
	author = {Br{\"a}nd{\'e}n, Petter and Huh, June},
	doi = {10.4007/annals.2020.192.3.4},
	fjournal = {Annals of Mathematics. Second Series},
	issn = {0003-486X},
	journal = {Ann. Math. (2)},
	number = {3},
	pages = {821--891},
	title = {Lorentzian polynomials},
	volume = {192},
	year = {2020},
	zbl = {1454.52013},
	zbmath = {7285355}}

@article {MR2746347,
    AUTHOR = {Boucksom, S\'ebastien and Eyssidieux, Philippe and Guedj,
              Vincent and Zeriahi, Ahmed},
     TITLE = {Monge-{A}mp\`ere equations in big cohomology classes},
   JOURNAL = {Acta Math.},
  FJOURNAL = {Acta Mathematica},
    VOLUME = {205},
      YEAR = {2010},
    NUMBER = {2},
     PAGES = {199--262},
      ISSN = {0001-5962,1871-2509},
   MRCLASS = {32U40 (32Q20 32U15 32W20)},
  MRNUMBER = {2746347},
MRREVIEWER = {S\l awomir\ Dinew},
       DOI = {10.1007/s11511-010-0054-7},
       URL = {https://doi.org/10.1007/s11511-010-0054-7},
}

@incollection {MR270945,
    AUTHOR = {Edmonds, Jack},
     TITLE = {Submodular functions, matroids, and certain polyhedra},
 BOOKTITLE = {Combinatorial {S}tructures and their {A}pplications ({P}roc.
              {C}algary {I}nternat. {C}onf., {C}algary, {A}lta., 1969)},
     PAGES = {69--87},
 PUBLISHER = {Gordon and Breach, New York-London-Paris},
      YEAR = {1970},
   MRCLASS = {05.35},
  MRNUMBER = {270945},
MRREVIEWER = {A.\ W.\ Ingleton},
}

@article {MR4781476,
    AUTHOR = {Fang, Hao and Ma, Biao},
     TITLE = {On a fully nonlinear elliptic equation with differential
              forms},
   JOURNAL = {Adv. Math.},
  FJOURNAL = {Advances in Mathematics},
    VOLUME = {454},
      YEAR = {2024},
     PAGES = {Paper No. 109867, 89},
      ISSN = {0001-8708,1090-2082},
   MRCLASS = {53C55},
  MRNUMBER = {4781476},
MRREVIEWER = {Martin\ de Borbon},
       DOI = {10.1016/j.aim.2024.109867},
       URL = {https://doi.org/10.1016/j.aim.2024.109867},
}

@book {MR2171629,
    AUTHOR = {Fujishige, Satoru},
     TITLE = {Submodular functions and optimization},
    SERIES = {Annals of Discrete Mathematics},
    VOLUME = {58},
   EDITION = {Second},
 PUBLISHER = {Elsevier B. V., Amsterdam},
      YEAR = {2005},
     PAGES = {xiv+395},
      ISBN = {0-444-52086-4},
   MRCLASS = {90C10 (05A18 05B35 52A41 90C27 90C35 90C57)},
  MRNUMBER = {2171629},
}

@article {MR3284698,
    AUTHOR = {Guan, Bo},
     TITLE = {Second-order estimates and regularity for fully nonlinear
              elliptic equations on {R}iemannian manifolds},
   JOURNAL = {Duke Math. J.},
  FJOURNAL = {Duke Mathematical Journal},
    VOLUME = {163},
      YEAR = {2014},
    NUMBER = {8},
     PAGES = {1491--1524},
      ISSN = {0012-7094,1547-7398},
   MRCLASS = {35R01 (35B65 35J60 35J96 58J05)},
  MRNUMBER = {3284698},
MRREVIEWER = {Pierpaolo\ Esposito},
       DOI = {10.1215/00127094-2713591},
       URL = {https://doi.org/10.1215/00127094-2713591},
}

@article {MR4727578,
    AUTHOR = {Guan, Bo},
     TITLE = {On subsolutions and concavity for fully nonlinear elliptic
              equations},
   JOURNAL = {Adv. Nonlinear Stud.},
  FJOURNAL = {Advanced Nonlinear Studies},
    VOLUME = {24},
      YEAR = {2024},
    NUMBER = {1},
     PAGES = {15--28},
      ISSN = {1536-1365,2169-0375},
   MRCLASS = {35J15 (35B45 35J60 35R01 58J05)},
  MRNUMBER = {4727578},
       DOI = {10.1515/ans-2023-0116},
       URL = {https://doi.org/10.1515/ans-2023-0116},
}

@article {MR4593734,
    AUTHOR = {Guo, Bin and Phong, Duong H. and Tong, Freid},
     TITLE = {On {$L^\infty$} estimates for complex {M}onge-{A}mp\`ere
              equations},
   JOURNAL = {Ann. of Math. (2)},
  FJOURNAL = {Annals of Mathematics. Second Series},
    VOLUME = {198},
      YEAR = {2023},
    NUMBER = {1},
     PAGES = {393--418},
      ISSN = {0003-486X,1939-8980},
   MRCLASS = {35J60 (32W20 35J96 53C55 53C56)},
  MRNUMBER = {4593734},
       DOI = {10.4007/annals.2023.198.1.4},
       URL = {https://doi.org/10.4007/annals.2023.198.1.4},
}

@article {MR2411443,
    AUTHOR = {Gurvits, Leonid},
     TITLE = {Van der {W}aerden/{S}chrijver-{V}aliant like conjectures and
              stable (aka hyperbolic) homogeneous polynomials: one theorem
              for all},
      NOTE = {With a corrigendum},
   JOURNAL = {Electron. J. Combin.},
  FJOURNAL = {Electronic Journal of Combinatorics},
    VOLUME = {15},
      YEAR = {2008},
    NUMBER = {1},
     PAGES = {Research Paper 66, 26},
      ISSN = {1077-8926},
   MRCLASS = {15A15 (05A15 05C70)},
  MRNUMBER = {2411443},
MRREVIEWER = {Peter\ M.\ Gibson},
       DOI = {10.37236/790},
       URL = {https://doi.org/10.37236/790},
}

@article {MR4589710,
    AUTHOR = {Harvey, F. Reese and Lawson, Jr., H. Blaine},
     TITLE = {Determinant majorization and the work of {G}uo-{P}hong-{T}ong
              and {A}bja-{O}live},
   JOURNAL = {Calc. Var. Partial Differential Equations},
  FJOURNAL = {Calculus of Variations and Partial Differential Equations},
    VOLUME = {62},
      YEAR = {2023},
    NUMBER = {5},
     PAGES = {Paper No. 153, 28},
      ISSN = {0944-2669,1432-0835},
   MRCLASS = {58J52 (53A55)},
  MRNUMBER = {4589710},
MRREVIEWER = {S\l awomir\ Dinew},
       DOI = {10.1007/s00526-023-02485-8},
       URL = {https://doi.org/10.1007/s00526-023-02485-8},
}

@article {MR4965192,
    AUTHOR = {Reese Harvey, F. and Blaine Lawson, Jr., H.},
     TITLE = {A definitive determinant majorization result for nonlinear
              operators},
   JOURNAL = {Duke Math. J.},
  FJOURNAL = {Duke Mathematical Journal},
    VOLUME = {174},
      YEAR = {2025},
    NUMBER = {13},
     PAGES = {2749--2763},
      ISSN = {0012-7094,1547-7398},
   MRCLASS = {32W20 (35A23 35G20 53C55 58J32)},
  MRNUMBER = {4965192},
MRREVIEWER = {John\ Urbas},
       DOI = {10.1215/00127094-2025-0003},
       URL = {https://doi.org/10.1215/00127094-2025-0003},
}

@article {MR1618325,
    AUTHOR = {Ko\l odziej, S\l awomir},
     TITLE = {The complex {M}onge-{A}mp\`ere equation},
   JOURNAL = {Acta Math.},
  FJOURNAL = {Acta Mathematica},
    VOLUME = {180},
      YEAR = {1998},
    NUMBER = {1},
     PAGES = {69--117},
      ISSN = {0001-5962,1871-2509},
   MRCLASS = {32F07 (32C17 35J60)},
  MRNUMBER = {1618325},
MRREVIEWER = {M.\ Klimek},
       DOI = {10.1007/BF02392879},
       URL = {https://doi.org/10.1007/BF02392879},
}

@article {MR3807322,
    AUTHOR = {Sz\'ekelyhidi, G\'abor},
     TITLE = {Fully non-linear elliptic equations on compact {H}ermitian
              manifolds},
   JOURNAL = {J. Differential Geom.},
  FJOURNAL = {Journal of Differential Geometry},
    VOLUME = {109},
      YEAR = {2018},
    NUMBER = {2},
     PAGES = {337--378},
      ISSN = {0022-040X,1945-743X},
   MRCLASS = {58J05 (32W50 35J60 35J96 35R01 53C55)},
  MRNUMBER = {3807322},
MRREVIEWER = {Bianca\ Santoro},
       DOI = {10.4310/jdg/1527040875},
       URL = {https://doi.org/10.4310/jdg/1527040875},
}

@book {MR1956924,
    AUTHOR = {Schrijver, Alexander},
     TITLE = {Combinatorial optimization. {P}olyhedra and efficiency. {V}ol.
              {A}},
    SERIES = {Algorithms and Combinatorics},
    VOLUME = {24,A},
      NOTE = {Paths, flows, matchings,
              Chapters 1--38},
 PUBLISHER = {Springer-Verlag, Berlin},
      YEAR = {2003},
     PAGES = {xxxviii+647},
      ISBN = {3-540-44389-4},
   MRCLASS = {90-02 (05-02 52B55 68Q25 68R10 90C27 90C35 90C57)},
  MRNUMBER = {1956924},
MRREVIEWER = {Alexander\ I.\ Barvinok},
}

@book {MR1956925,
    AUTHOR = {Schrijver, Alexander},
     TITLE = {Combinatorial optimization. {P}olyhedra and efficiency. {V}ol.
              {B}},
    SERIES = {Algorithms and Combinatorics},
    VOLUME = {24,B},
      NOTE = {Matroids, trees, stable sets,
              Chapters 39--69},
 PUBLISHER = {Springer-Verlag, Berlin},
      YEAR = {2003},
     PAGES = {i--xxxiv and 649--1217},
      ISBN = {3-540-44389-4},
   MRCLASS = {90-02 (05-02 52B55 68Q25 68R10 90C27 90C35 90C57)},
  MRNUMBER = {1956925},
MRREVIEWER = {Alexander\ I.\ Barvinok},
}

@book {MR1956926,
    AUTHOR = {Schrijver, Alexander},
     TITLE = {Combinatorial optimization. {P}olyhedra and efficiency. {V}ol.
              {C}},
    SERIES = {Algorithms and Combinatorics},
    VOLUME = {24,C},
      NOTE = {Disjoint paths, hypergraphs,
              Chapters 70--83},
 PUBLISHER = {Springer-Verlag, Berlin},
      YEAR = {2003},
     PAGES = {i--xxxiv and 1219--1881},
      ISBN = {3-540-44389-4},
   MRCLASS = {90-02 (05-02 52B55 68Q25 68R10 90C27 90C35 90C57)},
  MRNUMBER = {1956926},
MRREVIEWER = {Alexander\ I.\ Barvinok},
}

@book {MR3617346,
    AUTHOR = {Guedj, Vincent and Zeriahi, Ahmed},
     TITLE = {Degenerate complex {M}onge-{A}mp\`ere equations},
    SERIES = {EMS Tracts in Mathematics},
    VOLUME = {26},
 PUBLISHER = {European Mathematical Society (EMS), Z\"urich},
      YEAR = {2017},
     PAGES = {xxiv+472},
      ISBN = {978-3-03719-167-5},
   MRCLASS = {32W20 (32Q20 32U15 32U20 32U40 35J96)},
  MRNUMBER = {3617346},
MRREVIEWER = {Slimane\ Benelkourchi},
       DOI = {10.4171/167},
       URL = {https://doi.org/10.4171/167},
}

@article {MR3105762,
    AUTHOR = {Fang, Hao and Lai, Mijia},
     TITLE = {Convergence of general inverse {$\sigma_k$}-flow on {K}\"ahler
              manifolds with {C}alabi ansatz},
   JOURNAL = {Trans. Amer. Math. Soc.},
  FJOURNAL = {Transactions of the American Mathematical Society},
    VOLUME = {365},
      YEAR = {2013},
    NUMBER = {12},
     PAGES = {6543--6567},
      ISSN = {0002-9947,1088-6850},
   MRCLASS = {53C44 (32W20 53C55)},
  MRNUMBER = {3105762},
MRREVIEWER = {Jeffrey\ D.\ Streets},
       DOI = {10.1090/S0002-9947-2013-05947-8},
       URL = {https://doi.org/10.1090/S0002-9947-2013-05947-8},
}

@article{fu2026criticallyzequationkahler,
      title={The Critical LYZ Equation in K\"ahler Geometry}, 
      author={Jixiang Fu and Shing-Tung Yau and Dekai Zhang},
      year={2026},
      eprint={2511.21492},
      archivePrefix={arXiv},
      primaryClass={math.DG},
      url={https://arxiv.org/abs/2511.21492}, 
}

@misc{fang2026liouvillerigidityrealcomplex,
      title={Liouville Rigidity for Real and Complex Degenerate Hessian Equations}, 
      author={Hao Fang and Biao Ma and Jinyang Wu},
      year={2026},
      eprint={2607.21024},
      archivePrefix={arXiv},
      primaryClass={math.AP},
      url={https://arxiv.org/abs/2607.21024}, 
}

@misc{fang2026idealgaardingpolynomials,
      title={Ideal G{\aa}rding polynomials}, 
      author={Hao Fang and Biao Ma},
      year={2026},
      eprint={2607.16832},
      archivePrefix={arXiv},
      primaryClass={math.CO},
      url={https://arxiv.org/abs/2607.16832}, 
}
\end{document}